\newtheorem{cor}[subsubsection]{Corollary}
\newtheorem{lem}[subsubsection]{Lemma}
\newtheorem{prop}[subsubsection]{Proposition}
\newtheorem{thm}[subsubsection]{Theorem}
\theoremstyle{definition}
\theoremstyle{remark}
\newtheorem{rem}[subsubsection]{Remark}
\newcommand{\thmref}[1]{Theorem~\ref{#1}}
\newcommand{\secref}[1]{Sect.~\ref{#1}}
\newcommand{\lemref}[1]{Lemma~\ref{#1}}
\newcommand{\propref}[1]{Proposition~\ref{#1}}
\newcommand{\corref}[1]{Corollary~\ref{#1}}
\numberwithin{equation}{section}
\newcommand{\nc}{\newcommand}
\nc{\renc}{\renewcommand}
\nc{\ssec}{\subsection}
\nc{\sssec}{\subsubsection}
\nc{\on}{\operatorname}
\nc\ol{\overline}
\nc\wt{\widetilde}
\nc\tboxtimes{\wt{\boxtimes}}
\nc\tstar{\wt{\star}}
\nc{\alp}{a}
\nc{\ZZ}{{\mathbb Z}}
\nc{\NN}{{\mathbb N}}
\nc{\OO}{{\mathbb O}}
\renc{\SS}{{\mathbb S}}
\nc{\DD}{{\mathbb D}}
\nc{\GG}{{\mathbb G}}
\nc{\Fq}{{\mathbb F}_q}
\nc{\Fqb}{\ol{{\mathbb F}_q}}
\nc{\Ql}{\ol{{\mathbb Q}_\ell}}
\nc{\id}{\text{id}}
\nc\X{\mathcal X}
\nc{\Hom}{\on{Hom}}
\nc{\Lie}{\on{Lie}}
\nc{\Loc}{\on{Loc}}
\nc{\Pic}{\on{Pic}}
\nc{\Bun}{\on{Bun}}
\nc{\IC}{\on{IC}}
\nc{\Fls}{\on{Fl}^{\frac{\infty}{2}}}
\nc{\ICs}{\on{IC}^{\frac{\infty}{2}}}
\nc{\ICsl}{\on{IC}^{\lambda+\frac{\infty}{2}}}
\nc{\ICslm}{\on{IC}^{\lambda+\frac{\infty}{2},-}}
\nc{\ICsm}{\on{IC}^{\frac{\infty}{2},-}}
\nc{\Aut}{\on{Aut}}
\nc{\rk}{\on{rk}}
\nc{\Sh}{\on{Sh}}
\nc{\Perv}{\on{Perv}}
\nc{\pos}{{\on{pos}}}
\nc{\Conv}{\on{Conv}}
\nc{\Sph}{\on{Sph}}
\nc{\Sat}{\on{Sat}}
\nc{\Sym}{\on{Sym}}
\nc{\BunBb}{\overline{\Bun}_B}
\nc{\BunNb}{\overline{\Bun}_N}
\nc{\BunTb}{\overline{\Bun}_T}
\nc{\BunBbm}{\overline{\Bun}_{B^-}}
\nc{\BunBbel}{\overline{\Bun}_{B,el}}
\nc{\BunBbmel}{\overline{\Bun}_{B^-,el}}
\nc{\Buno}{\overset{o}{\Bun}}
\nc{\BunPb}{{\overline{\Bun}_P}}
\nc{\BunBM}{\Bun_{B(M)}}
\nc{\BunBMb}{\overline{\Bun}_{B(M)}}
\nc{\BunPbw}{{\widetilde{\Bun}_P}}
\nc{\BunBP}{\widetilde{\Bun}_{B,P}}
\nc{\GUb}{\overline{G/U}}
\nc{\GUPb}{\overline{G/U(P)}}
\nc{\Hhom}{\underline{\on{Hom}}}
\nc\syminfty{\on{Sym}^{\infty}}
\nc\lal{\ol{\kappa_x}}
\nc\xl{\ol{x}}
\nc\thl{\ol{\theta}}
\nc\nul{\ol{\nu}}
\nc\mul{\ol{\mu}}
\nc{\oX}{\overset{o}{X}{}}
\nc{\hl}{\overset{\leftarrow}h{}}
\nc{\hr}{\overset{\rightarrow}h{}}
\nc{\M}{{\mathcal M}}
\nc{\N}{{\mathcal N}}
\nc{\F}{{\mathcal F}}
\nc{\D}{{\mathcal D}}
\nc{\Q}{{\mathcal Q}}
\nc{\Y}{{\mathcal Y}}
\nc{\G}{{\mathcal G}}
\nc{\E}{{\mathcal E}}
\nc{\CalC}{{\mathcal C}}
\nc\Dh{\widehat{\D}}
\nc{\C}{{\mathcal C}}
\nc{\K}{{\mathcal K}}
\renewcommand{\H}{{\mathcal H}}
\nc{\T}{{\mathcal T}}
\nc{\V}{{\mathcal V}}
\renc{\P}{{\mathcal P}}
\nc{\A}{{\mathcal A}}
\nc{\B}{{\mathcal B}}
\nc{\U}{{\mathcal U}}
\nc{\Gr}{{\on{Gr}}}
\nc{\frn}{{\check{\mathfrak u}(P)}}
\nc{\fC}{\mathfrak C}
\nc{\p}{\mathfrak p}
\nc{\q}{\mathfrak q}
\nc\f{{\mathfrak f}}
\nc{\qo}{{\mathfrak q}}
\nc{\po}{{\mathfrak p}}
\nc{\s}{{\mathfrak s}}
\nc\w{\text{w}}
\renewcommand{\mod}{{\on{-mod}}}
\nc\Spec{\on{Spec}}
\nc\Mod{\on{Mod}}
\nc{\tw}{\widetilde{\mathfrak t}}
\nc{\pw}{\widetilde{\mathfrak p}}
\nc{\qw}{\widetilde{\mathfrak q}}
\nc{\jw}{\widetilde j}
\nc{\grb}{\overline{\Gr}}
\nc{\I}{\mathcal I}
\nc{\kappach}{{\check\kappa_x}}
\nc{\Lambdach}{{\check\Lambda}{}}
\nc{\much}{{\check\mu}}
\nc{\omegach}{{\check\omega}}
\nc{\nuch}{{\check\nu}}
\nc{\etach}{{\check\eta}}
\nc{\alphach}{{\checka}}
\nc{\oblvtach}{{\check\oblvta}}
\nc{\pich}{{\check\pi}}
\nc{\ch}{{\check h}}
\nc{\Hb}{\overline{\H}}
\nc{\BA}{{\mathbb{A}}}
\nc{\BC}{{\mathbb{C}}}
\nc{\BE}{{\mathbb{E}}}
\nc{\BF}{{\mathbb{F}}}
\nc{\BH}{{\mathbb{H}}}
\nc{\BG}{{\mathbb{G}}}
\nc{\BM}{{\mathbb{M}}}
\nc{\BO}{{\mathbb{O}}}
\nc{\BD}{{\mathbb{D}}}
\nc{\BN}{{\mathbb{N}}}
\nc{\BP}{{\mathbb{P}}}
\nc{\BQ}{{\mathbb{Q}}}
\nc{\BR}{{\mathbb{R}}}
\nc{\BZ}{{\mathbb{Z}}}
\nc{\BS}{{\mathbb{S}}}
\nc{\CA}{{\mathcal{A}}}
\nc{\CB}{{\mathcal{B}}}
\nc{\CE}{{\mathcal{E}}}
\nc{\CF}{{\mathcal{F}}}
\nc{\CG}{{\mathcal{G}}}
\nc{\CH}{{\mathcal{H}}}
\nc{\CL}{{\mathcal{L}}}
\nc{\CC}{{\mathcal{C}}}
\nc{\CM}{{\mathcal{M}}}
\nc{\CN}{{\mathcal{N}}}
\nc{\cCN}{\check{{\mathcal{N}}}}
\nc{\CK}{{\mathcal{K}}}
\nc{\CO}{{\mathcal{O}}}
\nc{\CP}{{\mathcal{P}}}
\nc{\CQ}{{\mathcal{Q}}}
\nc{\CR}{{\mathcal{R}}}
\nc{\CS}{{\mathcal{S}}}
\nc{\CT}{{\mathcal{T}}}
\nc{\CU}{{\mathcal{U}}}
\nc{\CV}{{\mathcal{V}}}
\nc{\CW}{{\mathcal{W}}}
\nc{\CX}{{\mathcal{X}}}
\nc{\CY}{{\mathcal{Y}}}
\nc{\CZ}{{\mathcal{Z}}}
\nc{\CI}{{\mathcal{I}}}
\nc{\CJ}{{\mathcal{J}}}
\nc{\csM}{{\check{\mathcal A}}{}}
\nc{\oM}{{\overset{\circ}{\mathcal M}}{}}
\nc{\obM}{{\overset{\circ}{\mathbf M}}{}}
\nc{\oCA}{{\overset{\circ}{\mathcal A}}{}}
\nc{\obA}{{\overset{\circ}{\mathbf A}}{}}
\nc{\ooM}{{\overset{\circ}{M}}{}}
\nc{\osM}{{\overset{\circ}{\mathsf M}}{}}
\nc{\vM}{{\overset{\bullet}{\mathcal M}}{}}
\nc{\nM}{{\underset{\bullet}{\mathcal M}}{}}
\nc{\oD}{{\overset{\circ}{\mathcal D}}{}}
\nc{\obD}{{\overset{\circ}{\mathbf D}}{}}
\nc{\oA}{{\overset{\circ}{\mathbb A}}{}}
\nc{\op}{{\overset{\bullet}{\mathbf p}}{}}
\nc{\cp}{{\overset{\circ}{\mathbf p}}{}}
\nc{\oU}{{\overset{\bullet}{\mathcal U}}{}}
\nc{\oZ}{{\overset{\circ}{\mathcal Z}}{}}
\nc{\ofZ}{{\overset{\circ}{\mathfrak Z}}{}}
\nc{\oF}{{\overset{\circ}{\fF}}}
\nc{\fa}{{\mathfrak{a}}}
\nc{\fb}{{\mathfrak{b}}}
\nc{\fd}{{\mathfrak{d}}}
\nc{\ff}{{\mathfrak{f}}}
\nc{\fg}{{\mathfrak{g}}}
\nc{\fgl}{{\mathfrak{gl}}}
\nc{\fh}{{\mathfrak{h}}}
\nc{\fj}{{\mathfrak{j}}}
\nc{\fl}{{\mathfrak{l}}}
\nc{\fm}{{\mathfrak{m}}}
\nc{\fn}{{\mathfrak{n}}}
\nc{\fu}{{\mathfrak{u}}}
\nc{\fp}{{\mathfrak{p}}}
\nc{\fr}{{\mathfrak{r}}}
\nc{\fs}{{\mathfrak{s}}}
\nc{\ft}{{\mathfrak{t}}}
\nc{\fz}{{\mathfrak{z}}}
\nc{\fsl}{{\mathfrak{sl}}}
\nc{\hsl}{{\widehat{\mathfrak{sl}}}}
\nc{\hgl}{{\widehat{\mathfrak{gl}}}}
\nc{\hg}{{\widehat{\mathfrak{g}}}}
\nc{\chg}{{\widehat{\mathfrak{g}}}{}^\vee}
\nc{\hn}{{\widehat{\mathfrak{n}}}}
\nc{\chn}{{\widehat{\mathfrak{n}}}{}^\vee}
\nc{\fA}{{\mathfrak{A}}}
\nc{\fB}{{\mathfrak{B}}}
\nc{\fD}{{\mathfrak{D}}}
\nc{\fE}{{\mathfrak{E}}}
\nc{\fF}{{\mathfrak{F}}}
\nc{\fG}{{\mathfrak{G}}}
\nc{\fK}{{\mathfrak{K}}}
\nc{\fL}{{\mathfrak{L}}}
\nc{\fM}{{\mathfrak{M}}}
\nc{\fN}{{\mathfrak{N}}}
\nc{\fP}{{\mathfrak{P}}}
\nc{\fU}{{\mathfrak{U}}}
\nc{\fV}{{\mathfrak{V}}}
\nc{\fZ}{{\mathfrak{Z}}}
\nc{\ba}{{\mathbf{a}}}
\nc{\bb}{{\mathbf{b}}}
\nc{\bc}{{\mathbf{c}}}
\nc{\bd}{{\mathbf{d}}}
\nc{\bbf}{{\mathbf{f}}}
\nc{\be}{{\mathbf{e}}}
\nc{\bi}{{\mathbf{i}}}
\nc{\bj}{{\mathbf{j}}}
\nc{\bn}{{\mathbf{n}}}
\nc{\bo}{{\mathbf{o}}}
\nc{\bp}{{\mathbf{p}}}
\nc{\bq}{{\mathbf{q}}}
\nc{\bu}{{\mathbf{u}}}
\nc{\bv}{{\mathbf{v}}}
\nc{\bx}{{\mathbf{x}}}
\nc{\bs}{{\mathbf{s}}}
\nc{\by}{{\mathbf{y}}}
\nc{\bw}{{\mathbf{w}}}
\nc{\bA}{{\mathbf{A}}}
\nc{\bK}{{\mathbf{K}}}
\nc{\bB}{{\mathbf{B}}}
\nc{\bF}{{\mathbf{F}}}
\nc{\bC}{{\mathbf{C}}}
\nc{\bG}{{\mathbf{G}}}
\nc{\bD}{{\mathbf{D}}}
\nc{\bE}{{\mathbf{E}}}
\nc{\bH}{{\mathbf{H}}}
\nc{\bI}{{\mathbf{I}}}
\nc{\bM}{{\mathbf{M}}}
\nc{\bN}{{\mathbf{N}}}
\nc{\bO}{{\mathbf{O}}}
\nc{\bV}{{\mathbf{V}}}
\nc{\bW}{{\mathbf{W}}}
\nc{\bX}{{\mathbf{X}}}
\nc{\bZ}{{\mathbf{Z}}}
\nc{\bS}{{\mathbf{S}}}
\nc{\sA}{{\mathsf{A}}}
\nc{\sB}{{\mathsf{B}}}
\nc{\sC}{{\mathsf{C}}}
\nc{\sD}{{\mathsf{D}}}
\nc{\sF}{{\mathsf{F}}}
\nc{\sK}{{\mathsf{K}}}
\nc{\sM}{{\mathsf{M}}}
\nc{\sO}{{\mathsf{O}}}
\nc{\sW}{{\mathsf{W}}}
\nc{\sQ}{{\mathsf{Q}}}
\nc{\sP}{{\mathsf{P}}}
\nc{\sZ}{{\mathsf{Z}}}
\nc{\sr}{{\mathsf{r}}}
\nc{\bk}{{\mathsf{k}}}
\nc{\sg}{{\mathsf{g}}}
\nc{\sff}{{\mathsf{f}}}
\nc{\sfe}{{\mathsf{e}}}
\nc{\sfj}{{\mathsf{j}}}
\nc{\sfb}{{\mathsf{b}}}
\nc{\sfc}{{\mathsf{c}}}
\nc{\sd}{{\mathsf{d}}}
\nc{\sv}{{\mathsf{v}}}
\nc{\BK}{{\bar{K}}}
\nc{\tA}{{\widetilde{\mathbf{A}}}}
\nc{\tB}{{\widetilde{\mathcal{B}}}}
\nc{\tg}{{\widetilde{\mathfrak{g}}}}
\nc{\tG}{{\widetilde{G}}}
\nc{\TM}{{\widetilde{\mathbb{M}}}{}}
\nc{\tO}{{\widetilde{\mathsf{O}}}{}}
\nc{\tU}{{\widetilde{\mathfrak{U}}}{}}
\nc{\TZ}{{\tilde{Z}}}
\nc{\tx}{{\tilde{x}}}
\nc{\tbv}{{\tilde{\bv}}}
\nc{\tfP}{{\widetilde{\mathfrak{P}}}{}}
\nc{\tz}{{\tilde{\zeta}}}
\nc{\tmu}{{\tilde{\mu}}}
\nc{\urho}{\underline{\pi}}
\nc{\uB}{\underline{B}}
\nc{\uC}{{\underline{\mathbb{C}}}}
\nc{\ui}{\underline{i}}
\nc{\uj}{\underline{j}}
\nc{\ofP}{{\overline{\mathfrak{P}}}}
\nc{\oB}{{\overline{\mathcal{B}}}}
\nc{\og}{{\overline{\mathfrak{g}}}}
\nc{\oI}{{\overline{I}}}
\nc{\eps}{\varepsilon}
\nc{\hrho}{{\hat{\pi}}}
\nc{\one}{{\mathbf{1}}}
\nc{\two}{{\mathbf{t}}}
\nc{\Rep}{{\mathop{\operatorname{\rm Rep}}}}
\nc{\Tot}{{\mathop{\operatorname{\rm Tot}}}}
\nc{\Ker}{{\mathop{\operatorname{\rm Ker}}}}
\nc{\Hilb}{{\mathop{\operatorname{\rm Hilb}}}}
\nc{\End}{{\mathop{\operatorname{\rm End}}}}
\nc{\Ext}{{\mathop{\operatorname{\rm Ext}}}}
\nc{\CHom}{{\mathop{\operatorname{{\mathcal{H}}\it om}}}}
\nc{\CEnd}{{\mathop{\operatorname{{\mathcal{E}}\it nd}}}}
\nc{\GL}{{\mathop{\operatorname{\rm GL}}}}
\nc{\gr}{{\mathop{\operatorname{\rm gr}}}}
\nc{\Id}{{\mathop{\operatorname{\rm Id}}}}
\nc{\de}{{\mathop{\operatorname{\rm def}}}}
\nc{\length}{{\mathop{\operatorname{\rm length}}}}
\nc{\supp}{{\mathop{\operatorname{\rm supp}}}}
\nc{\Cliff}{{\mathsf{Cliff}}}
\nc{\Fl}{\on{Fl}}
\nc{\Fib}{{\mathsf{Fib}}}
\nc{\Coh}{{\mathsf{Coh}}}
\nc{\QCoh}{{\on{QCoh}}}
\nc{\IndCoh}{{\on{IndCoh}}}
\nc{\FCoh}{{\mathsf{FCoh}}}
\nc{\reg}{{\text{\rm reg}}}
\nc{\cplus}{{\mathbf{C}_+}}
\nc{\cminus}{{\mathbf{C}_-}}
\nc{\cthree}{{\mathbf{C}_*}}
\nc{\Qbar}{{\bar{Q}}}
\nc\Eis{\on{Eis}}
\nc\Eisb{\ol\Eis{}}
\nc\Eisr{\on{Eis}^{rat}{}}
\nc\wh{\widehat}
\nc{\Def}{\on{Def_{\check{\fb}}(E)}}
\nc{\barZ}{\overline{Z}{}}
\nc{\barbarZ}{\overline{\barZ}{}}
\nc{\barpi}{\overline\iota}
\nc{\barbarpi}{\overline\barpi}
\nc{\barpip}{\overline\iota{}^+}
\nc{\barpim}{\overline\iota{}^-}
\nc{\fq}{\mathfrak q}
\nc{\fqb}{\ol{\fq}{}}
\nc{\fpb}{\ol{\fp}{}}
\nc{\fpr}{{\fp^{rat}}{}}
\nc{\fqr}{{\fq^{rat}}{}}
\nc{\hattimes}{\wh\otimes}
\nc{\bh}{{\bar{h}}}
\nc{\bOmega}{{\overline{\Omega(\check \fn)}}}
\nc{\seq}[1]{\stackrel{#1}{\sim}}
\nc{\cT}{{\check{T}}}
\nc{\cG}{{\check{G}}}
\nc{\cM}{{\check{M}}}
\nc{\cB}{{\check{B}}}
\nc{\cN}{{\check{N}}}
\nc{\ct}{{\check{\mathfrak t}}}
\nc{\cg}{{\check{\fg}}}
\nc{\cb}{{\check{\fb}}}
\nc{\cn}{{\check{\fn}}}
\nc{\cLambda}{{\check\Lambda}}
\nc{\cla}{{\check\kappa_x}}
\nc{\cmu}{{\check\mu}}
\nc{\clambda}{{\check\lambda}}
\nc{\cnu}{{\check\nu}}
\nc{\ceta}{{\check\eta}}
\nc{\DefbE}{{\on{Def}_{\cB}(E_\cT)}}
\nc{\imathb}{{\ol{\imath}}}
\nc{\rlr}{\overset{\longrightarrow}{\underset{\longrightarrow}\longleftarrow}}
\nc{\KG}{K\backslash G}
\nc{\comult}{{co\text{-}mult}}
\nc{\counit}{{co\text{-}unit}}
\nc{\uHom}{{\underline{\Maps}}}
\nc{\dgSch}{\on{Sch}}
\nc{\Sch}{\on{Sch}}
\nc{\affdgSch}{\on{Sch}^{\on{aff}}}
\nc{\affSch}{\on{Sch}^{\on{aff}}}
\nc{\Groupoids}{\on{Grpd}}
\nc{\inftygroup}{\on{Spc}}
\nc{\inftyCat}{\infty\on{-Cat}}
\nc{\StinftyCat}{\inftyCat^{\on{St}}}
\nc{\MoninftyCat}{\infty\on{-Cat}^{\on{Mon}}}
\nc{\SymMoninftyCat}{\infty\on{-Cat}^{\on{SymMon}}}
\nc{\SymMonStinftyCat}{\on{DGCat}^{\on{SymMon}}}
\nc{\MonStinftyCat}{\on{DGCat}^{\on{Mon}}}
\nc{\inftystack}{\on{Stk}}
\nc{\inftystackalg}{Stk^{1\text{-}alg}}
\nc{\inftyprestack}{\on{PreStk}}
\nc{\inftydgnearstack}{\on{NearStk}}
\nc{\inftydgstack}{\on{Stk}}
\nc{\inftydgstackalg}{DGStk^{1\text{-}alg}}
\nc{\inftydgprestack}{\on{PreStk}}
\nc{\dgindSch}{\on{indSch}}
\nc{\indSch}{{}^{\on{cl}}\!\on{indSch}}
\nc{\infSch}{\on{infSch}}
\nc{\dr}{{\on{dR}}}
\nc{\mmod}{{\on{-}\!{\mathbf{mod}}}}
\nc{\starr}{\text{\dh}}
\nc{\Spectra}{\on{Spectra}}
\nc{\Crys}{\on{Crys}}
\nc{\oblv}{{\mathbf{oblv}}}
\nc{\ind}{{\mathbf{ind}}}
\nc{\coind}{{\mathbf{coind}}}
\nc{\inv}{{\mathbf{inv}}}
\nc{\triv}{{\mathbf{triv}}}
\nc{\CMaps}{{\mathcal Maps}}
\nc{\Maps}{\on{Maps}}
\nc{\bMaps}{\mathbf{Maps}}
\nc{\BMaps}{\ul{\on{Maps}}}
\nc{\Grid}{\on{Grid}}
\nc{\hGrid}{\on{Grid}^{\geq\,\on{dgnl}}}
\nc{\Diag}{\on{Diag}}
\nc{\bDelta}{\mathbf{\Delta}}
\nc{\tCateg}{(\infty\on{-2)-Cat}}
\nc{\ul}{\underline}
\nc{\Seg}{\on{Seq}}
\nc{\biSeg}{\on{bi-Seq}}
\nc{\triSeg}{\on{tri-Seq}}
\nc{\quadSeg}{\on{quad-Seq}}
\nc{\nSeg}{\on{n-Seq}}
\nc{\Segm}{\on{Seg}^{\on{mkd}}}
\nc{\fLm}{\fL^{\on{mkd}}}
\nc{\inftyCatm}{\inftyCat^{\on{mkd}}}
\nc{\Blocks}{\mathbf{Blocks}}
\nc{\Snakes}{\mathbf{Snakes}}
\nc{\bifL}{\on{bi-}\!\fL}
\nc{\Sets}{\on{Sets}}
\nc{\Ran}{{\on{Ran}}}
\nc{\Vect}{\on{Vect}}
\nc{\Shv}{\on{Shv}}
\nc{\unn}{\mathbf{union}}
\nc{\Spc}{\on{Spc}}
\nc{\ppart}{(\!(t)\!)}
\nc{\qqart}{[\![t]\!]}
\nc{\Dmod}{\on{D-mod}}
\nc{\cD}{\mathcal D}
\nc{\ocD}{\overset{\circ}{\cD}}
\nc{\sfp}{\mathsf{p}}
\nc{\sfq}{\mathsf{q}}
\nc{\DGCat}{\on{DGCat}}
\renc{\det}{\on{det}}
\nc{\Conf}{\on{Conf}}
\nc{\Whit}{\on{Whit}}
\nc{\Reg}{\on{Reg}}
\nc{\Res}{\on{Res}}
\nc{\BunNbx}{(\BunNb)_{\infty\cdot x}}
\nc{\bHecke}{\overset{\bullet}{\on{Hecke}}}
\nc{\Hecke}{\on{Hecke}}
\nc{\bCZ}{\ol\CZ}
\nc{\oCZ}{\overset{\circ}\CZ} 
\nc{\boCZ}{\ol{\oCZ}}
\nc{\sotimes}{\overset{!}\otimes}
\nc{\semiinf}{\on{SI}}
\nc{\coInd}{\on{coInd}}
\nc{\bCM}{\overset{\bullet}\CM{}}
\nc{\bCF}{\overset{\bullet}\CF{}}
\nc{\SI}{\on{SI}}
\nc{\KL}{\on{KL}}
\nc{\Av}{\on{Av}}
\nc{\sV}{\mathsf{V}}
\begin{document}

\author{Dennis Gaitsgory}

\title[The local and global versions of the Whittaker category]{The local and global versions of the Whittaker category}

\dedicatory{To Professor Kyoji Saito, with admiration} 

\date{\today}

\maketitle

\tableofcontents

\section*{Introduction}

\ssec{Whittaker categories}

\sssec{}

Passage to the Whittaker model is an important tool in representation theory of reductive groups over local fields,
and in the theory of automorphic functions.

\medskip

In the local situation, given a representation $V$ of $G(\bK)$ (let us say, for $\bK$ non-Archimedian),
the Whittaker model of $V$ is defined as the space 
of coinvariants 
$$\Whit(V):=V_{N(\bK),\chi},$$
where $N\subset G$ is the maximal unipotent, and $\chi:N(\bK)\to \BC^*$ is a non-degenerate character.

\medskip

In the global situation (let us say over a function field $K$), the global Whittaker space is 
$$\Whit_{\on{glob}}:=\on{Funct}((N(\BA),\chi)\backslash G(\BA)),$$
where $\chi:N(\BA)\to \BC^*$ is chosen so that it is trivial on $N(K)$. 

\medskip

A key feature of the global Whittaker space, which makes it particularly useful for \emph{local-to-global} constructions is that, unlike 
the space $\on{Funct}(G(K)\backslash G(\BA))$ of automorphic functions, the space $\Whit_{\on{glob}}$ is local in nature in that it splits as the
(restricted) tensor product
$$\underset{x}\otimes\, \on{Funct}((N(\bK_x),\chi)\backslash G(\bK_x))$$
(here $x$ runs through the set of places of $K$, and for a given place we denote by $\bK_x$ the corresponding local field). 

\sssec{}

In this paper we work in the geometric context, which by its nature forces us to go one level up in the hierarchy
$$\text{Elements }\to \text{ Sets }\to \text{ Categories } \to \text{ 2-Categories }\to \text{ etc.}$$

Locally, our object of study is \emph{categories equipped with an action of the loop group $\fL(G)$} (\secref{ss:action on categ}
for what this means). In practice (and for the most part in this paper), we will consider the action of $\fL(G)$ on the category
of sheaves on the quotient $\fL(G)/K_n$, where $K_n\subset \fL(G)$ is a congruence subgroup. 

\medskip

Given a category $\bC$ with an action of $\fL(G)$, we wish to attach to it its Whittaker model. However, geometry allows more flexibility 
than the classical theory, and as a result there are two ways in which one can proceed\footnote{We emphasize that the dichotomy explained
below does not seem to have an immediate analog in the classical theory.}: 

\medskip

One can consider the category $\Whit(\bC):=\bC^{\fL(N),\chi}$ by imposing $\fL(N)$-equivariance against $\chi$. Or one
can consider the corresponding category of coinvariants $\Whit(\bC)_{\on{co}}:=\bC_{\fL(N),\chi}$.

\medskip

Now, if instead of $\fL(N)$ we had a finite-dimensional algebraic group (or a pro-finite dimensional algebraic group),
we would know that the two definitions agree (see \thmref{t:coinvariants}). However, $\fL(N)$ is a group ind-scheme,
and there is a priori no reason for such an invariants/coinvariants equivalence to hold\footnote{Such an equivalence does,
however, hold for $\fL(G)$ with $G$ reductive, see \thmref{t:coinvariants and limits, loop}.}. Yet, one can define a functor
(by a non-tautological procedure)
\begin{equation} \label{e:Ps-Id intr}
\on{Ps-Id}:\Whit(\bC)_{\on{co}}\to \Whit(\bC).
\end{equation} 

One of the key results of the paper \cite{Ras} is that the functor \eqref{e:Ps-Id intr} is an equivalence. In the present 
paper, we give an alternative proof of this result (by methods that use global geometry). 

\medskip

Let us emphasize that the fact that \eqref{e:Ps-Id intr} is an equivalence is a specialty of the Whittaker situation. For example, 
an analogously defined functor would \emph{not} be an equivalence if instead of the non-degenerate character
$\chi$ we considered the trivial character (i.e., just invariants/coinvariants for $\fL(N)$, instead of the $\chi$-twisted
version). 

\sssec{}

The fact that the equivalence \eqref{e:Ps-Id intr} holds is really good news in that it says that the operation of passage
to the Whittaker model in the local geometric situation is a well-behaved operation. For example, it implies that the
assignment 
\begin{equation} \label{e:loc Whit intro}
\bC\mapsto \Whit(\bC)
\end{equation}
commutes with limits and colimits, and with the operation of passage to the dual category. 

\sssec{Why do we care about the local Whittaker model?}

The assignment \eqref{e:loc Whit intro}, viewed as a (2)-functor from the (2)-category of DG categories
equipped with an action of $\fL(G)$ to that of plain DG categories, plays a key role in the 
\emph{local geometric Langlands correspondence}. 

\medskip

To explain this, we need to place ourselves in the context of D-modules. In this case for every choice of 
\emph{level} (which is a $W$-invariant symmetric bilinear form $\Lambda\otimes \Lambda \to k$, where
$\Lambda$ is the coweight lattice of $G$) there corresponds the notion of \emph{category acted on by 
$\fL(G)$ at level $\kappa$}.  Denote the (2)-category of such by $\fL(G)\mmod_\kappa$.

\medskip

Assume that $\kappa$ is non-degenerate, i.e., it defines an \emph{isomorphism}
$$\ft\simeq k\underset{\BZ}\otimes \Lambda\to k\underset{\BZ}\otimes \cLambda\simeq \ct.$$
Transferring $\kappa$ to $\ct$, we obtain a form $\check\kappa$ on $\cLambda$. 

\medskip

The local geometric Langlands conjecture says that there exists a canonical (2)-equivalence of (2)-categories
\begin{equation} \label{e:loc geom Langlands}
\fL(G)\mmod_\kappa\simeq \fL(\cG)\mmod_{-\check\kappa}.
\end{equation}

\medskip

Now, the Whittaker model plays a crucial role in characterizing the (2)-equivalence \eqref{e:loc geom Langlands}. 
Namely, if 
$$\bC\in \fL(G)\mmod_\kappa \text{ and } \check\bC\in  \fL(\cG)\mmod_{-\check\kappa}$$
are two objects that \emph{correspond to each other} under the (2)-equivalence \eqref{e:loc geom Langlands},
then the Whittaker model of $\bC$, i.e., $\Whit(\bC)$, and the \emph{Kac-Moody model} of $\check\bC$ 
are equivalent \emph{as DG categories}. And vice versa, i.e., when the roles of $G$ and $\cG$ are swapped.  

\medskip

Here the Kac-Moody model of a category $\bC$ acted on by the loop group $\fL(G)$ at level $\kappa$,
denoted $\on{KM}(\bC)$, is the DG category of \emph{weak invariants} on $\bC$ with respect to
the loop group. Equivalently,
$$\on{KM}(\bC)=\on{Funct}_{\fL(G)\mmod_\kappa}(\hg\mod_\kappa,\bC),$$
where $\hg\mod_\kappa$ is the category of modules for the Kac-Moody algebra at level $\kappa$, viewed as an object of 
$\fL(G)\mmod_\kappa$.

\ssec{The global Whittaker category}

\sssec{}  \label{sss:why global}

We now come to the main point of focus of this paper. Let us take $\bC$ to be the category
of sheaves on $\fL(G)/K_n$, so that
$$\Whit(\bC)=\Shv(\fL(G)/K_n)^{\fL(N),\chi}.$$

\medskip

There are two issues one needs to address for practical applications:

\medskip

\noindent(a) The category $\Shv(\fL(G)/K_n)^{\fL(N),\chi}$ is inherently infinite-dimensional in 
nature in that all of its objects have infinite-dimensional support\footnote{That said, Raskin's results in 
\cite{Ras} show that $\Shv(\fL(G)/K_n)^{\fL(N),\chi}$ is a union of full subcategories, such that
compact objects in each of them can be expressed through sheaves with finite-dimensional support.}. So it would be desirable to
find another description of $\Shv(\fL(G)/K_n)^{\fL(N),\chi}$ that would involve sheaves on 
finite-dimensional algebro-geometric objects (such as algebraic stacks).

\medskip

\noindent(b) In the global Geometric Langlands theory, one studies the functor that 
relates the category $\Shv(\fL(G)/K_n)^{\fL(N),\chi}$
to the category of sheaves on $\Shv(\Bun_G^{\on{level}_{n\cdot x}})$, where 
$\Bun_G^{\on{level}_{n\cdot x}}$ is the moduli stack of $G$-bundles (on a given curve $X$)
with structure of level $n$ at $x$. The functor in question is 
$$\Shv(\fL(G)/K_n)^{\fL(N),\chi}\to \Shv(\fL(G)/K_n)\to \Shv(\Bun_G^{\on{level}_{n\cdot x}}),$$
where the first arrow is the forgetful functor, and the second arrow is the functor of !-direct image along
the uniformization map $\fL(G)/K_n\to \Bun_G^{\on{level}_{n\cdot x}}$. However, in order to control 
various properties of this functor (e.g., behavior with respect to Verdier duality), it would again be desirable
a finite-dimensional model for $\Shv(\fL(G)/K_n)^{\fL(N),\chi}$, as well as the above functor itself.

\medskip

The goal of this paper is to describe such a finite-dimensional model for $\Shv(\fL(G)/K_n)^{\fL(N),\chi}$,
addressing  points (a) and (b) above. 

\sssec{}

In order to explain what this finite-dimensional model is, let us return to the classical situation.
Consider the space of Whittaker functions that are non-ramified away from a particular place $x$, i.e.,
$$\on{Funct}((N(\BA),\chi)\backslash G(\BA)/K_n\times G(\BO^x)),$$
where
$$\BO^x=\underset{x'\neq x}\Pi\, \bO_{x'},$$
and $K_n$ is the $n$-th congruence subgroup at $x$. We normalize $\chi$ so that its conductor is $N(\BO)\subset N(\BA)$.

\medskip

Let
$$\on{Funct}((N(\BA),\chi)\backslash G(\BA)/K_n\times G(\BO^x))_0\subset \on{Funct}((N(\BA),\chi)\backslash G(\BA)/K_n\times G(\BO^x))$$
be the subspace of functions that are supported on 
$$G(\bK_x)\times N(\BA^x)\cdot G(\BO^x) \subset 
G(\bK_x)\times G(\BA^x)=G(\BA),$$
where 
$$\BA^x=\underset{x'\neq x}\Pi\, \bK_{x'},$$

Then we have an isomorphism 
\begin{equation} \label{e:funct loc to glob}
\on{Funct}((N(\BA),\chi)\backslash G(\BA)/K_n\times G(\BO^x))_0\simeq \on{Funct}((N(\bK_x),\chi)\backslash G(\bK_x)/K_n).
\end{equation}

\medskip

Thus, 
\begin{equation} \label{e:funct Whit glob}
\on{Funct}((N(\BA),\chi)\backslash G(\BA)/K_n\times G(\BO^x))_0
\end{equation}
is isomorphic to $\Whit(\on{Funct}(G(\bK_x)/K_n))$.

\sssec{}  \label{sss:glob Whit recipe}

The next observation is that the space \eqref{e:funct Whit glob} can be realized as the subspace of
functions on the quotient
\begin{equation} \label{e:N glob quo}
N(K)\backslash G(\bK_x) \times N(\BA^x)/K_n\times N(\BO^x).
\end{equation}

Moreover, this subspace can be characterized by a certain equivariance property, as follows. Choose a point $x'\neq x$, and consider the space
\begin{equation} \label{e:N glob quo prime}
N(K)\backslash G(\bK_x) \times N(\BA^x)/K_n\times N(\BO^{x\cup x'}),
\end{equation}
where $\BO^{x\cup x'}$ is defined in the same was as $\BO^x$ above with two places instead of one.

\medskip

This space is acted on by $N(\bK_{x'})$ on the right, so that
$$N(K)\backslash G(\bK_x) \times N(\BA^x)/K_n\times N(\BO^x)\simeq 
\left(N(K)\backslash G(\bK_x) \times N(\BA^x)/K_n\times N(\BO^{x\cup x'})\right)/N(\bO_{x'}).$$

Now
\begin{equation} \label{e:glob Whit intro funct}
\on{Funct}((N(\BA),\chi)\backslash G(\BA)/K_n\times G(\BO^x))_0 \subset N(K)\backslash G(\bK_x) \times N(\BA^x)/K_n\times N(\BO^x)
\end{equation}
consists of those elements that after pullback along
$$N(K)\backslash G(\bK_x) \times N(\BA^x)/K_n\times N(\BO^{x\cup x'})\to N(K)\backslash G(\bK_x) \times N(\BA^x)/K_n\times N(\BO^x)$$
transform along the character $\chi|_{\bN(K_{x'})}$ with respect to the above action of $N(\bK_{x'})$. 

\sssec{}

The point is that the space \eqref{e:N glob quo} (as well as \eqref{e:N glob quo prime}) has a direct analog in geometry,
and the resulting geometric object is an (ind)-algebraic stack, with one caveat. 

\medskip

The (ind)-algebraic stack whose $k$-points (almost) match \eqref{e:N glob quo} is a version of Drinfeld's compactification, 
denoted $\BunNbx^{G\on{-level}_{n\cdot x}}$; it is introduced in \secref{ss:Drinf comp}. (The (ind)-algebraic stack corresponding
to \eqref{e:N glob quo prime} is introduced in \secref{sss:extend to loop group}).  

\medskip

We introduce the global version of the Whittaker category to be a full subcategory
\begin{equation} \label{e:glob Whit intro}
\Whit(\BunNbx^{G\on{-level}_{n\cdot x}})\subset \Shv(\BunNbx^{G\on{-level}_{n\cdot x}}),
\end{equation}
by mimicking the recipe in \secref{sss:glob Whit recipe}. 

\begin{rem}

The caveat, alluded to above, is that $k$-points of $\BunNbx^{G\on{-level}_{n\cdot x}}$ do not really match 
\eqref{e:N glob quo}. In fact, the former is a proper subset of the latter. Geometrically, $\BunNbx^{G\on{-level}_{n\cdot x}}$
has a stratification, and \eqref{e:N glob quo} corresponds to the union of $k$-points of some of the strata (let us call these
strata {\it relevant}, and the other strata {\it irrelevant}).

\medskip

That said, a feature of the category $\Whit(\BunNbx^{G\on{-level}_{n\cdot x}})$ is that for all of its objects, their
restrictions (both $!$- and $*$-versions) to the irrelevant strata vanish. So, the subcategory \eqref{e:glob Whit intro}
is a geometric analog of the subspace \eqref{e:glob Whit intro funct}. 

\end{rem}

\sssec{}

We have a tautologically defined map
$$\pi:\fL(G)/K_n\to \BunNbx^{G\on{-level}_{n\cdot x}},$$
and one shows that the pullback functor
$$\pi^!:(\BunNbx^{G\on{-level}_{n\cdot x}})\to \Shv(\fL(G)/K_n)$$
sends
$$\Whit(\BunNbx^{G\on{-level}_{n\cdot x}})\mapsto \Whit(\fL(G)/K_n):=\Shv(\fL(G)/K_n)^{\fL(N),\chi}.$$

The main theorem of the present paper, \thmref{t:main}, says that the resulting functor
\begin{equation} \label{e:main intro}
\Whit(\BunNbx^{G\on{-level}_{n\cdot x}})\to \Whit(\fL(G)/K_n)
\end{equation}
is an equivalence.

\medskip

\thmref{t:main} is a geometric analog of the (more or less tautological) function-theoretic isomorphism
\eqref{e:funct loc to glob}. However, \thmref{t:main} is not tautological. It is easy to show that the functor
\eqref{e:main intro} induces a \emph{strata-wise} equivalence (and at the level of functions this is all one
needs to show). But the fact that the subquotients on both sides corresponding to different strata glue in
the same way requires a non-trivial argument. 

\sssec{}
 
The left-hand side of the equivalence \eqref{e:main intro} provides the sought-for finite-dimensional model for
$\Shv(\fL(G)/K_n)^{\fL(N),\chi}$, see \secref{sss:why global}(a). It also provides the answer to 
\secref{sss:why global}(b).  Namely, the corresponding functor
$$\Whit(\BunNbx^{G\on{-level}_{n\cdot x}})\to \Shv(\Bun_G^{\on{level}_{n\cdot x}})$$
is the composite 
$$\Whit(\BunNbx^{G\on{-level}_{n\cdot x}})\to \Shv(\BunNbx^{G\on{-level}_{n\cdot x}})\to \Shv(\Bun_G^{\on{level}_{n\cdot x}}),$$
where the first arrow is the tautological inclusion, and the second arrow is !-direct image with respect to the natural
morphism of algebraic stacks:
$$\BunNbx^{G\on{-level}_{n\cdot x}}\to \Bun_G^{\on{level}_{n\cdot x}}.$$

\begin{rem}
Historically, one has been using $\Whit(\BunNbx^{G\on{-level}_{n\cdot x}})$ as a surrogate for the local Whittaker category 
long before the appearance of the direct local definition of $\Whit(\fL(G)/K_n)$ as $\Shv(\fL(G)/K_n)^{\fL(N),\chi}$. So this paper 
provides a justification of why this surrogate is valid. 

\medskip

The model for $\Whit(\fL(G)/K_n)$ as $\Whit(\BunNbx^{G\on{-level}_{n\cdot x}})$
had been used for both local considerations (see, e.g., \cite{FGV}, where it is used to prove the geometric 
Casselman-Shalika formula, or \cite{Ga3}), and for global ones (see, e.g., \cite{FGKV,Ga4,Ga5}). 

\medskip

The reason for this was that in order to define $\Whit(\fL(G)/K_n)$ as $\Shv(\fL(G)/K_n)^{\fL(N),\chi}$, one needed to
overcome several (mostly psychological) obstructions: 

\medskip

For one thing, when definining $\Shv(\fL(G)/K_n)^{\fL(N),\chi}$ one needs to work with the \emph{large} category
$\Shv(\fL(G)/K_n)$ (i.e., the ind-completion of the more conventional category of sheaves with finite-dimensional
support). 

\medskip

Second, as we shall see in \propref{p:invis}, the objects of $\Shv(\fL(G)/K_n)^{\fL(N),\chi}$ are \emph{invisible}
from the point of view of the t-structure on $\Shv(\fL(G)/K_n)$ (technically, all these objects are \emph{infinitely connective}).
Thus, one had to really leave the world of abelian categories to define $\Shv(\fL(G)/K_n)^{\fL(N),\chi}$. (That said, we should
mention that the category $\Shv(\fL(G)/K_n)^{\fL(N),\chi}$ carries its own t-structure with a non-trivial heart.)

\medskip

And third, prior to Raskin's paper or our \thmref{t:main}, even if one defined $\Whit(\fL(G)/K_n)$ as $\Shv(\fL(G)/K_n)^{\fL(N),\chi}$,
it would be totally unclear how to compute anything in it: we need a finite-dimensional model to perform actual computations.

\end{rem}

\ssec{What is actually done in this paper?}

Here is a brief synopsis of the mathematical contents of this paper. 

\sssec{Definition of the local Whittaker category} \hfill

\medskip

\noindent--We define the Whittaker category $\Whit(\CY)$ for $\CY=\fL(G)/K_n$ as $\Shv(\CY)^{\fL(N),\chi}$. 

\smallskip

\noindent--We state (and subsequently prove) the non-obvious fact that $\Whit(\CY)$ is compactly generared. 

\smallskip

\noindent--We define a stratification on $\Whit(\CY)$ that corresponds to the stratification of $\Gr_G=\fL(G)/\fL^+(G)$
by $\fL(N)$-orbits. We show that the category on each stratum can be expressed in terms of finite-dimensional 
algebro-geometric objects. 

\sssec{Dual definition} \hfill

\medskip

\noindent--We define the dual version of the Whittaker category, denoted $\Whit(\CY)_{\on{co}}$.  

\smallskip

\noindent--We study the strata-wise behavior of $\Whit(\CY)_{\on{co}}$, and we show that it reproduces that of 
$\Whit(\CY)$.

\smallskip

\noindent--We define the functor $\on{Ps-Id}:\Whit(\CY)_{\on{co}}\to \Whit(\CY)$ and state (and subsequently prove)
the theorem that says that it is an equivalence. 

\sssec{Global definition} \hfill

\medskip

\noindent--We define the global Whittaker category $\Whit(\BunNbx^{G\on{-level}_{n\cdot x}})$. 

\medskip

\noindent--We construct a functor $\Whit(\BunNbx^{G\on{-level}_{n\cdot x}})\to \Whit(\CY)$ and state (and subsequently prove)
our main result, \thmref{t:main}, which says that the above functor is an equivalence. We explain that the non-trivial
part is the fully-faithfulness assertion (something that does not have a counterpart in the classical theory). 

\medskip

\noindent--We introduce a \emph{Ran version} of $\Whit(\fL(G)/K_n)$, denoted $\Whit((\fL(G)/K_n)_{\Ran})$. We prove that the pullback
functor $\Whit(\BunNbx^{G\on{-level}_{n\cdot x}})\to \Whit((\fL(G)/K_n)_{\Ran})$ is fully faithful. 
 
\medskip

\noindent--We prove the equivalence $\Whit((\fL(G)/K_n)\times \Ran(X)_x)\simeq \Whit((\fL(G)/K_n)_{\Ran})$. This implies the 
fully-faithfulness of the functor in \thmref{t:main} by an easy retraction argument. 

\sssec{Generalizations} \hfill

\medskip

\noindent--We define the Whittaker models $\Whit(\bC)$ and $\Whit(\bC)_{\on{co}}$ for an abstract DG category $\bC$ with
an action of $\fL(G)$, and use \thmref{t:main} to deduce an equivalence $\Whit(\bC)\simeq \Whit(\bC)_{\on{co}}$. 

\medskip

\noindent--We consider the ``factorizable" situation, when instead of a fixed formal disc (that the loop group $\fL(G)$ 
is attached to), we consider the \emph{multi-disc} parameterized by points of $X^n$ for some integer $n$. The results
of this paper hold in this more general situation with no major modifications. 

\sssec{Groups acting on categories} \hfill

\medskip

\noindent--We review the theory of actions of a (finite-dimensional) algebraic group on a DG category. We prove
that in this case, the categories of invariants and coinvariants are canonically equivalent.

\medskip

\noindent--We review the theory of \emph{placid (ind)-schemes} and sheaves on these objects. 

\medskip

\noindent--We review the notion of action of a loop group $\fL(G)$ on a DG category.  We show that for $G$
reductive, the resulting categories of invariants and coinvariants are canonically equivalent.

\ssec{Structure of the paper}

We will now describe the contents of the paper, section-by-section.

\sssec{}

In \secref{s:prelim} we collect some preliminaries: loop and arc spaces, categories of sheaves, group actions on categories and
equivariance.

\medskip

The reader familiar with this material is advised to skip this section and return to it when necessary. 

\sssec{}

In \secref{s:loc Whit} we introduce our basic object of study: the local Whittaker category. 

\medskip

We define the local Whittaker category $\Whit(\CY)$ as $\Shv(\CY)^{\fL(N),\chi}$, where $\CY=\fL(G)/K_n$.
We show how the stratification of the affine Grassmannian $\fL(G)/\fL^+(G)$ by $\fL(N)$-orbits defines a stratification of 
$\Whit(\CY)$ with explicit subquotients.

\medskip

We discuss the question of compact generation of $\Whit(\CY)$. This is not obvious, as the definition of $\Whit(\CY)$
involves an infinite limit. We introduce the notion of \emph{adapted object} of $\Shv(\CY)$; these are objects for
which the functor of !-averaging along $\fL(N)$ against the character $\chi$ is defined and well-behaved. We show
that if $\Shv(\CY)$ has ``enough" adapted objects, then $\Whit(\CY)$ is compactly generated. We then exhibit
a supply of adapted objects, using a recipe from \cite{Ras}. 

\sssec{}

In \secref{s:dual} we define the other version of the local Whittaker category, denoted $\Whit(\CY)_{\on{co}}$,
as $(\fL(N),\chi)$-coinvariants of $\Shv(\CY)$. We show that the stratification of $\Whit(\CY)_{\on{co}}$ 
that arises from the stratification of the affine Grassmannian has subquotients isomorphic to those of 
$\Whit(\CY)$. 

\medskip

We show that the supply of adapted objects in $\Shv(\CY)$
makes $\Whit(\CY)_{\on{co}}$ compactly generated as well, and that it is the category dual of $\Whit(\CY)$
(up to replacing $\chi$ by its inverse). 

\medskip

We introduce the ``non-standard" averaging functor 
\begin{equation} \label{e:PsId intro}
\on{Ps-Id}:\Whit(\CY)_{\on{co}}\to \Whit(\CY),
\end{equation}
which would automatically be an equivalence if instead of $\fL(N)$ we had a (pro)finite-dimensional algebraic
group. We state the theorem that $\on{Ps-Id}$ is an equivalence in our case as well. We emphasize that
for the validity of this assertion it is crucial that we are working with a non-degenerate character $\chi$. 

\sssec{}

In \secref{s:global} we introduce the global Whittaker category, by mimicking the procedure in \secref{sss:glob Whit recipe}.
The underlying geometric object is a version of Drinfeld's compactification, denoted $\BunNbx^{G\on{-level}_{n\cdot x}}$. 
To spell out the definition, we first choose a collection of auxiliary points $\ul{y}$ on our curve, and then we show that the
definition is independent of this choice. 

\medskip

We show that a natural stratification (by the order of degeneration) on $\BunNbx^{G\on{-level}_{n\cdot x}}$ defines a 
stratification on the Whittaker category $\Whit(\BunNbx^{G\on{-level}_{n\cdot x}})$, with only the ``relevant" strata
carrying non-zero objects. 

\sssec{}

In \secref{s:global-to-local} we show that pullback along the map
$$\pi:\CY=\fL(G)/K_n\to \BunNbx^{G\on{-level}_{n\cdot x}}$$
defines a functor
\begin{equation} \label{e:main functor intro}
\Whit(\BunNbx^{G\on{-level}_{n\cdot x}})\to \Whit(\CY).
\end{equation}

We first show that this functor is a strata-wise equivalence. We then proceed to state our main result,
\thmref{t:main}, which says that the functor \eqref{e:main functor intro} is an equivalence. Given the strata-wise
equivalence, we see that \thmref{t:main} is equivalent to the assertion that the functor \eqref{e:main functor intro} 
is fully faithful.

\medskip

We show that
the equivalence \eqref{e:PsId intro} follows formally from \thmref{t:main}.

\sssec{}

In \secref{s:Ran} we prove \thmref{t:main}. The idea of the proof is to consider two more versions 
of the category $\Whit(\CY)$ that involve the Ran space, denoted $\Whit(\CY_{\Ran_x} )$ and $\Whit(\CY\times \Ran(X)_x)$. 
We will have a commutative diagram of functors
$$
\CD
\Whit(\BunNbx^{G\on{-level}_{n\cdot x}})   @>>> \Whit(\CY_{\Ran_x} ) \\
@V{\pi^!}VV   @VVV \\
\Whit(\CY)  @>>> \Whit(\CY\times \Ran(X)_x).
\endCD
$$

We will see that the two horizontal functors are fully faithful: this is a general contractibility-type
assertion. Finally, we will show that the right vertical arrow is an equivalence. This would involve
showing that there are ``enough" adapted objects, so we will essentially use Raskin's recipe again. 
As a result, we will see that $\pi^!$ is fully faithful, as required.

\sssec{}

In \secref{s:gen} we discuss several generalizations of \thmref{t:main}. 

\medskip

We show that instead of considering a fixed punctured disc, we can consider the multi-disc parameterized by points of $X^n$.
In this way we obtain a \emph{factorizable version} of the results obtained in the preceding sections. 

\medskip

We consider the abstract setting of a DG category $\bC$ acted on by the loop group $\fL(G)$, and study its associated
Whittaker categories $\Whit(\bC)$ and $\Whit(\bC)_{\on{co}}$. We show that the fact that the functor \eqref{e:PsId intro}
is an equivalence implies that the corresponding functor $\Whit(\bC)_{\on{co}}\to \Whit(\bC)$ is also an equivalence.

\sssec{}

In \secref{s:proof of Rask} we review the input we need from Raskin's work \cite{Ras} for the present paper;
we provide a detailed proof of the relevant geometric results.

\sssec{}

In \secref{s:coinv} we show that for a category $\bC$ acted on by a finite-dimensional group $H$, the functor
$\bC_H\to \bC^H$, given by *-averaging with respect to $H$, is an equivalence.

\sssec{}

In \secref{s:placid} we review the notion of {\it placid} scheme (resp., ind-scheme). These are algebro-geometric
objects of \emph{inifinite type}, but ones for which one can easily bootstrap the theory of sheaves from the
finite type situation\footnote{In the present paper we are not (yet) trying to attack sheaf theory in infinite type 
directly, e.g., \`a la \cite{BS}.}. 

\medskip

We should emphasize, however, that for a placid (ind)-scheme $\CY$, the resulting category $\Shv(\CY)$ does \emph{not} 
come equipped with a t-structure. Choosing a t-structure on $\Shv(\CY)$ involves a trivialization of a certain $\BZ$-gerbe
(the dimension gerbe). We will not pursue this in the present paper. 

\medskip

We show that the loop group $\fL(G)$ is a placid ind-scheme, so the category $\Shv(\fL(G))$
is something manageable. 

\sssec{}

Finally, in \secref{s:loop coinv} we show that if $G$ is reductive, for a category $\bC$ acted on by $\fL(G)$, there
exists a canonical equivalence $\bC^{\fL(G)}\simeq \bC_{\fL(G)}$. This extends the result from \secref{s:coinv} from
the case of a finite-dimensional group to the case of a loop group of a reductive group $G$. 

\ssec{Conventions}

\sssec{}  \label{sss:algebraic geometry}

We will be working over an algebraically closed ground field, denoted $k$.   In this paper we will \emph{not} need
derived algebraic geometry (this is because we will work with sheaf theories of topological nature, see \secref{sss:shvs}). 

\medskip

We let $\affSch$ denote the category of affine schemes over $k$, and by
$\affSch_{\on{ft}}\subset \affSch$ 
its full subcategory consisting of affine schemes of finite type over $k$.

\medskip

All other algebro-geometric objects that we will encounter are classical \emph{prestacks}, i.e., (accessible) functors
\begin{equation} \label{e:prestk}
(\affSch)^{\on{op}}\to \on{Grpds},
\end{equation}
where $\on{Grpds}$ is the category of classical groupoids. 

\medskip

We let $\on{PrStk}_{\on{lft}}\subset \on{PrStk}$ be the full subcategory of prestacks locally of finite type. 
By definition, an object $\CY\in \on{PrStk}$ belongs to $\on{PrStk}_{\on{lft}}$, if when viewed as a functor
\eqref{e:prestk}, it takes filtered limits in $\affSch$ to colimits in $\on{Grpds}$. Equivalently, $\CY\in \on{PrStk}_{\on{lft}}$
if it is isomorphic to the left Kan extension of its own restriction to 
$$(\affSch_{\on{ft}})^{\on{op}}\subset (\affSch)^{\on{op}}.$$

Thus, we can identify $\on{PrStk}_{\on{lft}}$ with the category of functors
$$(\affSch_{\on{ft}})^{\on{op}}\to \on{Grpds}.$$

\sssec{}  \label{sss:DG categories}

We let $\sfe$ denote the field of coefficients. We will be working with DG categories over $\sfe$; we refer the 
reader to \cite[Chapter 1, Sect. 10]{GR1} for a detailed exposition of the theory of DG categories. 

\medskip

Unless specified otherwise, we will assume our DG categories to be cocomplete (i.e., contain infinite direct sums,
equivalently colimits). 

\medskip

We let $\on{DGCat}_{\on{cont}}$ denote the $\infty$-category, whose objects are \emph{cocomplete} DG
categories and whose 1-morphisms are \emph{continuous} (i.e., colimit preserving) functors. 

\medskip

The category $\on{DGCat}_{\on{cont}}$ carries a symmetric monoidal structure, given by the Lurie tensor product.
Thus, for $\bC\in \on{DGCat}_{\on{cont}}$ it makes sense to talk about its dualizability as an object of $\on{DGCat}_{\on{cont}}$.

\sssec{}

Given a DG category $\bC$, we let $\bC_c$ denote its full (but not cocomplete) subcategory consisting
of compact objects. 

\medskip

We remind the reader that if $\bC$ is a compactly generated category, then it is dualizable, and we have a canonical
equivalence 
$$(\bC^\vee)_c\simeq (\bC_c)^{\on{op}}.$$

\sssec{}

We will fix a sheaf theory, see \secref{sss:shvs}, whose field of coefficients is $\sfe$. So for every
$\CY\in \on{PrStk}_{\on{lft}}$, we have a well-defined object $\Shv(\CY)\in \on{DGCat}_{\on{cont}}$.

\medskip

This category is compactly generated for $\CY=S\in \affSch_{\on{ft}}$, more or less by definition. From
here one can deduce that it is compactly generated also for \emph{schemes} (resp., \emph{ind-schemes})
that are of finite type (resp., of ind-finite type). In general, the question of compact generation of $\Shv(\CY)$ 
for a given prestack is a non-trivial one. 

\sssec{}

We let $G$ denote a reductive group over $k$. We fix a Bore subgroup $B\subset G$. Let $N\subset B$
denote its unipotent radical, and let $T$ denote the Cartan quotient of $B$.

\medskip

We let $\Lambda$ denote the coweight lattice of $T$, and $\cLambda$ its dual, i.e., the weight lattice. 
Let $\Lambda^+\subset \Lambda$ denote the sub-monoid of dominant coweights, and similarly for $\cLambda$.

\medskip

We let
$\Lambda^{\on{pos}}\subset \Lambda$ be sub-monoid equal to the non-negative integral span of simple coroots.

\ssec{Acknowledgements}

I would like to thank Sam Raskin for numerous discussions related to subjects treated in this paper.
In particular, I am grateful to him for explaining to me the contents of \cite{Ras}, which this paper crucially relies on.  

\medskip

The definition of the local Whittaker category as $\bC^{\fL(N),\chi}$
was the result of my conversations with Jacob Lurie back in 2008. I am much indebted to him
for teaching me not only higher algebra, but also how to think systematically about big vs 
small categories, compact generation, etc. 

\medskip

I am grateful to E.~Frenel, D. ~Kazhdan and K.~Vilonen for the collaboration in \cite{FGKV} and \cite{FGV}--it is through that work
that the idea to use the Drinfeld compactification for defining the global version of the Whittaker category was created. 

\medskip

The author's research is supported by NSF grant DMS-1063470. He has also received support from ERC grant 669655.

\section{Preliminaries}  \label{s:prelim}

In this section we collect some miscellanea: loop spaces, sheaf theory, ind--schemes and group actions 
on categories.

\medskip

The reader in encouraged to skip this section and return to it when necessary. 

\ssec{The geometric objects}

\sssec{}

Let $Z$ be a scheme of finite type. We define the scheme of arcs $\fL^+(Z)$ to represent the functor on $k$-algebras
$$R\mapsto \Hom(\Spec(R\qqart),Z).$$

We have:
$$\fL^+(Z)\simeq \underset{n\in \BZ^{\geq 0}}{\on{lim}}\, \fL^+(Z)_n,$$
where each $\fL^+(Z)_n$ is a scheme (of finite type) given by the functor
$$R\mapsto \Hom(\Spec(R[t]/t^n),Z).$$

\medskip

Note that if $Z$ is smooth, then the transition maps
$$\fL^+(Z)_{n''}\to \fL^+(Z)_{n'},\quad n''\geq n'$$
are smooth.

\medskip

We define the prestack of loops $\fL(Z)$ to represent the functor on $k$-algebras
$$R\mapsto \Hom(\Spec(R\ppart),Z).$$

One easily shows (by reducing to the case of the affine space) that when $Z$ is affine, the prestack 
$\fL(Z)$ is an ind-scheme, which contains $\fL^+(Z)$ as a closed subscheme. 

\sssec{}

Let $G$ be a an affine algebraic group. Then by functoriality $\fL^+(G)$ and all $\fL^+(G)_n$
are group-schemes, and $\fL(G)$ is a group ind-scheme. In what follows we will denote by $K_n\subset \fL^+(G)$
the $k$-th congruence subgroup, i.e., the kernel of the projection $\fL^+(G)\to \fL^+(G)_n$. 

\medskip

For each $n$ we can consider the stack-quotient $\CY:=\fL(G)/K_n$ (i.e., we take the prestack 
quotient, and sheafify it in the \'etale (or, which would produce the same result, fpqc) topology). 

\medskip

The prestack $\CY$ is known to be an ind-scheme of ind-finite type, and it represents the functor that sends 
a $k$-algebra $R$ to the set of triples $(\CP_G,\gamma,\epsilon)$, where $\CP_G$ is a $G$-bundle 
on $\Spec(R\qqart)$, $\gamma$ is a trivialization of the restriction to $\CP_G$ to $\Spec(R\ppart)$,
and $\epsilon$ is a trivialization of the restriction to $\CP_G$ to $\Spec(R[t]/t^n)$. 

\medskip

We can write $\CY$ as a (filtered) union of its closed $\fL^+(G)$-stable subschemes $Y_i$. The action of 
$\fL^+(G)$ on each $Y_i$ factors through the quotient $\fL^+(G)\to \fL^+(G)_n$ for all $n$ sufficiently large
(depending on $Y_i$). 

\sssec{}

Let $T$ be a torus and let $\lambda:\BG_m\to T$ be a co-character. We will denote by $\lambda(t)$ the point of
$\fL(T)$ given by the map
$$\Spec(k\ppart)\to \Spec(k[t,t^{-1}])=\BG_m\overset{\lambda}\to T.$$

\sssec{}

Let $N$ be the unipotent radical of a Borel inside a reductive group $G$. Consider the corresponding group ind-scheme
$\fL(N)$. We observe that in addition to being a group ind-scheme (i.e., a group-object in the category of ind-schemes),
it is naturally an ind group-scheme (i.e., an ind-object in the category of group-schemes). In other words, $\fL(N)$
can be written as a filtered union of its closed group sub-schemes: 
\begin{equation} \label{e:loop N as colim}
\fL(N)\simeq \underset{\alpha\in A}{\on{colim}}\, N^\alpha,
\end{equation} 
where $A$ is a filtered category.

\medskip

Indeed, we can take the index category to be the set of dominant coweights in the adjoint quotient of $G$; for each
such coweight (denoted $\lambda$), we let the corresponding subgroup be
$$\on{Ad}_{-\lambda(t)}(\fL^+(N)).$$

 \medskip

Each group-scheme $N^\alpha$, can be written as
\begin{equation} \label{e:arc N as lim}
\underset{\beta\in B_{\alpha,i}}{\on{lim}}\, N^\alpha_\beta,
\end{equation} 
where:

\medskip

\noindent(i) $B_{\alpha,i}$ is a filtered category;

\smallskip

\noindent(ii) Each $N^\alpha_\beta$ is a unipotent algebraic group (of finite type);

\smallskip

\noindent(iii) For every $(\beta\to \beta')\in B_{\alpha,i}$ the corresponding map
$N^\alpha_{\beta'}\to N^\alpha_\beta$
is surjective. 

\medskip

Moreover, if $N^\alpha$ acts on a scheme $Y$ of finite type, this action 
comes from a compatible family of actions of $N^\alpha_\beta$'s on $Y$.

\sssec{}

The choice of the uniformizer $t\in k\qqart$ gives rise to a homomorphism
$$\fL(\BG_a)\to \BG_a, \quad \underset{n}\Sigma, a_n\cdot t^n\mapsto a_{-1}.$$

From here we obtain a homomorphism $\chi:\fL(N)\to \BG_a$
equal to
$$\fL(N) \to \fL(N/[N,N])\to \fL(\BG_a)\to \BG_a,$$
where the second arrows comes from the map
$$N/[N,N]\simeq \underset{i\in \CI}\Pi\, \BG_a \overset{\on{sum}}\longrightarrow \BG_a,$$
where $\CI$ is the set of vertices of the Dynkin diagram of $G$.

\ssec{Categories of sheaves}  

\sssec{}  \label{sss:shvs}

We adopt the conventions regarding sheaf theory from \cite{Ga6}.  We will denote by
\begin{equation} \label{e:Shv}
\Shv:(\affSch_{\on{ft}})^{\on{op}}\to \on{DGCat}_{\on{cont}}
\end{equation} 
the functor that attaches to an affine scheme of finite type $S$ the (DG) category $\Shv(S)$ and to a morphism
$S_1\overset{f}\to S_2$ the pullback functor $f^!:\Shv(S_2)\to \Shv(S_1)$.

\medskip

Examples of such theories are:

\smallskip

\noindent(i) If the ground field $k$ has characteristic $0$, we can take $\Shv(S)=\Dmod(S)$ (see \cite[Chapter 4, Sect. 1.2]{GR2}
with the caveat that {\it loc.cit.} the notation is $\on{Crys}(S)$ rather than $\Dmod(S)$). In this case the field of coefficients $\sfe$ 
equals $k$. 

\smallskip

\noindent(ii) For any ground field, one can take $\Shv(S)$ to be the ind-completion of the DG category of constructible
$\BQ_\ell$-adic sheaves (for $\ell$ invertible in $k$), as defined in \cite[Sect. 2.3.2]{GL}. In this case, the field $\sfe$ is 
coefficients is $\BQ_\ell$ or a finite extension thereof. 

\smallskip

\noindent(iii) If the ground field is the field of complex numbers $\BC$, one can take $\Shv(S)$ to be the ind-completion 
of the DG category of constructible $\sfe$-sheaves, for any field characteristic zero field $\sfe$. 

\sssec{}

We apply to \eqref{e:Shv} the procedure of right Kan extension and obtain a functor
\begin{equation}  \label{e:shvs on prestacks}
\Shv:(\on{PreStk}_{\on{lft}})^{\on{op}}\to \on{DGCat}_{\on{cont}}.
\end{equation}

In particular, for every $\CY\in \on{PreStk}_{\on{lft}}$ we have a well-defined DG category
$\Shv(\CY)$, and for a morphism $f:\CY_1\to \CY_2$ we have a continuous functor
$$f^!:\Shv(\CY_2)\to \Shv(\CY_1).$$

\sssec{}

The functor \eqref{e:shvs on prestacks} has a remarkable feature that it encodes not only
the !-pullback functor, but also the *-pushforward functor for schematic morphisms. 

\medskip

Namely, for a quasi-compact schematic map $f:\CY_1\to \CY_2$ , we have a well-defined functor
$$f_*:\Shv(\CY_1)\to \Shv(\CY_2)$$ 
that satisfies base change against !-pullbacks, and which is the left adjoint of $f^!$ is $f$ is proper. 

\medskip

Following \cite[Chapter 7]{GR1}, one can combine the !-pullback and *-pushforward functoriality in saying that the functor 
$\Shv$ uniquely extends to a functor
\begin{equation}  \label{e:Shv corr}
\on{Corr}(\on{PreStk}_{\on{lft}})^{\on{proper}}_{\on{sch},\on{all}}\to \on{DGCat}^{2\on{-Cat}}_{\on{cont}},
\end{equation}
where $\on{Corr}(\on{PreStk}_{\on{lft}})^{\on{proper}}_{\on{qc-sch},\on{all}}$ is the 2-category of correspondences,
whose objects are prestacks $\CY$ locally of finite type, 1-morphisms are diagrams
$$
\CD
\CY_{0,1} @>{g}>>  \CY_0 \\
@V{f}VV  \\
\CY_1
\endCD
$$
with $f$ schematic and quasi-compact, and where 2-morphisms are given by maps $h:\CY'_{0,1} \to \CY''_{0,1}$
that are schematic and proper. 

\sssec{}

The functor 
$$\Shv: \on{PreStk}_{\on{lft}}\to \on{DGCat}_{\on{cont}}$$ 
is \emph{lax} symmetric monoidal, i.e., for $\CY_1,\CY_2\in \on{PreStk}_{\on{lft}}$
we have a natural map
\begin{equation}  \label{e:external product}
\Shv(\CY_1)\otimes \Shv(\CY_2)\to \Shv(\CY_1\times \CY_2),
\end{equation}
given by the external product $\CF_1,\CF_2\mapsto \CF_1\otimes \CF_2$.

\sssec{}  \label{sss:external product}

The functor \eqref{e:external product} is in general not an equivalence. However, we have the following two observations:

\smallskip

\noindent(a) If $\Shv(-)=\Dmod(-)$, then \eqref{e:external product} is an equivalence for $\CY_1,\CY_2\in \affSch_{\on{ft}}$. 
This formally implies that \eqref{e:external product} is an equivalence any time either $\Shv(\CY_1)$ or $\Shv(\CY_2)$
is dualizable as a DG category, see \cite[Chapter 3, Proposition 3.1.7]{GR1}. In particular, this is the case
when $\CY_i$ is a scheme of finite type (or more generally, an ind-scheme, see \secref{sss:indsch} below).  

\medskip

\noindent(b) For sheaf theories (i) and (ii) in \secref{sss:shvs}, for $\CY_i=S_i\in \affSch_{\on{ft}}$, 
the functor \eqref{e:external product} sends compacts to compacts and is fully faithful. This formally implies 
that the same remains true for the functor \eqref{e:external product} for prestacks any time either 
$\Shv(\CY_1)$ or $\Shv(\CY_2)$ is dualizable as a DG category. 

\ssec{Ind-schemes}

\sssec{}   \label{sss:indsch}

We will be particularly interested in evaluating the functor \eqref{e:shvs on prestacks} on
$$\on{IndSch}_{\on{lft}}\subset \on{PreStk}_{\on{lft}}.$$

Recall that an object $\CY\in \on{PreStk}_{\on{lft}}$ is an ind-scheme if it can be written as a \emph{filtered} colimit
\begin{equation} \label{e:present ind-scheme}
\CY\simeq \underset{i}{\on{colim}}\, Y_i,
\end{equation}
where $Y_i$ are schemes of finite type, and for every arrow $i\overset{\alpha}\longrightarrow j$ in the category of indices, the corresponding map
$Y_i\overset{f_\alpha}\longrightarrow Y_j$ is a closed embedding.  

\sssec{}

Given an ind-scheme $\CY$, we can consider \emph{category} of its presentations as \eqref{e:present ind-scheme}. This category 
is contractible. In fact, it has a final object, where $I$ is the category of all closed subfunctors of $\CY$. 

\medskip

So, any two presentations of $\CY$ as \eqref{e:present ind-scheme} are essentially equivalent.

%

\sssec{}  \label{sss:limits and colimits}

Recall the following general phenomenon. Let 
$$I\to \on{DGCat}_{\on{cont}}, \quad i\mapsto \bC_i$$ 
be a functor, where $I$ is some index category. Suppose that for every arrow $(i\to j)\in I$, the corresponding functor
$\bC_i\to \bC_j$ admits a continuous right adjoint. By passing to the right adjoints we obtain another functor
$$I^{\on{op}}\to \on{DGCat}_{\on{cont}}.$$

\medskip

Then for every $i_0\in I$, the tautological functor
$$\on{ins}_{i_0}:\bC_{i_0}\to \underset{i\in I}{\on{colim}}\, \bC_i$$
admits a continuous right adjoint. Furthermore, the resulting functor 
$$\underset{i\in I}{\on{colim}}\, \bC_i \to \underset{i\in I^{\on{op}}}{\on{colim}}\, \bC_i$$
is an equivalence. 

\sssec{}  \label{sss:shv on indsch}

We apply the situation of \secref{sss:limits and colimits}, by setting 
$$\bC_i:=\Shv(Y_i),$$
where for $(i\overset{\alpha}\to j)\in I$ we have the functor
$$\Shv(Y_i) \overset{(f_\alpha)_*}\longrightarrow \Shv(Y_j)$$
and its right adjoint
$$\Shv(Y_i) \overset{(f_\alpha)^!}\longleftarrow \Shv(Y_j).$$

Hence, we obtain that $\Shv(\CY)$, which is, by definition, given as
$$\underset{i\in I^{\on{op}}}{\on{lim}}\, \Shv(Y_i),$$
with respect to !-pullbacks, can also be written as
\begin{equation} \label{e:categ on indsch as colim}
\underset{i\in I}{\on{colim}}\, \Shv(Y_i),
\end{equation} 
with respect to *-pushforwards. 

\medskip

In particular, $\Shv(\CY)$ is compactly generated by the essential images of $\Shv(Y_{i_0})_c$ under the tautological
functors
$$\on{ins}_{i_0}:\Shv(Y_{i_0})\to \Shv(\CY).$$

\sssec{}  \label{sss:duality}

If $\CY$ is an ind-scheme, the category $\Shv(\CY)$, being compactly generated, is also dualizable. However,
\secref{sss:shv on indsch} implies that $\Shv(\CY)$ is canonically self-dual. This self-duality can be described 
in the following equivalent ways:

\medskip

\noindent(i) Under the identifications 
$$\Shv(\CY)^\vee\simeq \Shv(\CY) \text{ and }\Shv(Y_i)^\vee\simeq \Shv(Y_i),$$ 
the functor dual to $\on{ins}_i:\Shv(Y_i)\to \Shv(\CY)$ is the evaluation functor $\Shv(\CY)\to \Shv(Y_i)$.

\medskip

\noindent(ii) The functors $\on{ins}_i$ are compatible with the contravariant equivalences
$$\BD_\CY:\Shv(\CY)^{\on{op}}_c\to \Shv(\CY)^{\on{op}}_c \text{ and } \BD_{Y_i}:\Shv(Y_i)_c\to \Shv(Y_i)_c.$$

\medskip

\noindent(iii) The pairing $\Shv(\CY)\otimes \Shv(\CY)\to \Vect$ is given, in terms of \eqref{e:categ on indsch as colim},
by the compatible family of pairings $\Shv(Y_i)\otimes \Shv(Y_i)\to \Vect$, corresponding to the usual self-duality of
each $\Shv(Y_i)$.



\sssec{}  \label{sss:t ind-sch}

Let $\CY$ be an ind-scheme. The presentation of $\Shv(\CY)$ as in \eqref{e:categ on indsch as colim}
shows that it carries a unique t-structure compatible with colimits, characterized by the property that the
functors 
$$\on{ins}_i:\Shv(Y_i)\to \Shv(\CY)$$
are t-exact.

\medskip

Equivalently, an object of $\Shv(\CY)$ is coconnective if and only if its restriction to every $Y_i$ is coconnective
as an object of $\Shv(Y_i)$. 

\ssec{Categories acted on by groups}

\sssec{}  \label{sss:definition of action}

Let $H$ be an algebraic group (of finite type). We consider $\Shv(H)$ as a monoidal category, where the monoidal 
operation is \emph{convolution}, i.e., 
$$\Shv(H)^{\otimes n}\overset{\boxtimes}\longrightarrow \Shv(H^n) \overset{\on{mult}^n_*}\longrightarrow \Shv(H).$$

\medskip

Verdier duality on $H$ defines an equivalence $\Shv(H)^\vee\simeq \Shv(H)$. This gives $\Shv(H)$ a comonoidal
structure. This comonoidal structure can be expressed in terms of the pullback
\begin{equation} \label{e:mult pullback}
\Shv(H) \overset{(\on{mult}^n)^!}\longrightarrow \Shv(H^n)
\end{equation}
as follows.

\medskip

The co-tensor product
$$\Shv(H) \to \Shv(H)^{\otimes n}$$
is the composite of \eqref{e:mult pullback} and the functor 
$$\Shv(H^n)\to \Shv(H)^{\otimes n}$$
right adjoint to the fully faithful functor
$\Shv(H)^{\otimes n}\to \Shv(H^n)$, see \secref{sss:external product}. 

\sssec{}  

By an action of $H$ on a DG category $\bC$ we shall mean an action on $\bC$ of the monoidal category
$\Shv(H)$. We denote the $\infty$-category of DG categories acted on by $H$ by $H\mmod$. 

%
%
%
%
%
%
%
%
%

\sssec{An example}  \label{sss:geom action}

Let $\CY$ be a prestack acted on by $H$. Then the operation of pushforward along the action map
defines an action of $H$ on 
$\Shv(\CY)$. 

\sssec{}   \label{sss:inv}

We shall say that an action is trivial if it factors through the augmentation 
$$\Shv(H)\to \Vect,\quad \CF\mapsto \on{C}^\cdot(H,\CF).$$

\medskip

Unless specified otherwise, we will regard $\Vect$ as equipped with the trivial action of $H$.

\medskip

For $\bC\in H\mmod$ we let 
$$\bC^H=\on{Funct}_{H\mmod}(\Vect,\bC).$$

Equivalently, using the self-duality of $\Shv(H)$, we can rewrite $\bC^H$ as the totalization of the cosimplicial
category $\bC^\bullet$ with terms
$$\bC^n:=\Shv(H)^{\otimes n}\otimes \bC.$$

\sssec{}   \label{sss:Averaging functors}

The cosimplicial DG category $\bC^\bullet$ of \secref{sss:inv} satisfies the \emph{comonadic Beck-Chevalley condition};
see \cite[Defn. C.1.3]{Ga7} for what this means. In particular, this implies that the forgetful functor
$$\oblv_H:\bC^H\to \bC$$
admits a right adjoint, to be denoted $\on{Av}^H_*$, and $\bC^H$ identifies with comodules in $\bC$ for the comonad
$\on{Av}^H_*\circ \oblv_H$. 

\medskip

It follows formally that the endo-functor $\on{Av}^H_*\circ \oblv_H$ of $\bC$ is given by
$$c\mapsto \sfe_H\star c.$$

\sssec{}  \label{sss:inv geom}

Let us return to the example of \secref{sss:geom action}. Note that one can give an a priori different definition 
of the category $\Shv(\CY)^H$, namely by setting it be equal to 
$$\Shv(\CY/H):=\on{Tot}(\CY_\bullet), \quad \CY_n=H^n\times \CY.$$

We claim, however, that the two definitions agree. Indeed, the right adjoints to the fully faithful functors
$$\Shv(H)^{\otimes n}\otimes \Shv(\CY)\to \Shv(H^n\times \CY)$$
define a map of cosimplicial categories
\begin{equation} \label{e:two versions tot}
\Shv(\CY_\bullet)\to \Shv(\CY)^\bullet,
\end{equation}
and we claim that the functor \eqref{e:two versions tot} induces an equivalence on totalizations:

\begin{proof}

One shows that the forgetful functor
$$\Shv(\CY/H)\to \Shv(\CY)$$
is also comonadic, and the map \eqref{e:two versions tot} induces
an isomorphism of the corresponding comonads. 

\end{proof} 

\sssec{}

Note that in the situation of \secref{sss:inv geom}, the functor $\on{Av}^H_*$ identifies with direct image 
along the map $\CY\to \CY/H$. Hence, the functor $\oblv_H$ identifies with its left adjoint, which is the functor
of *-pullback along the above map.

\medskip

The endo-functor $\on{Av}^H_*\circ \oblv_H$ of $\Shv(\CY)$ is given by
$$\CF\mapsto \on{act}_*\circ \on{pr}^*(\CF),$$
where 
$$\on{pr},\on{act}:H\times \CY\to \CY$$
are the projection and the action maps, respectively. 

\sssec{} \label{sss:coinvariants and limits}

Although we will not need this in the main body of the paper, we remark that the functor
$$\bC\mapsto \bC^H, \quad H\mmod\to \on{DGCat}_{\on{cont}}$$
has the following (non-tautological property): it commutes with colimits, see \corref{c:inv and colimits} for a proof. 

\medskip

This formally implies that if $\bC$ is dualizable as a plain category, then so is $\bC^H$. Moreover, we have a 
canonical identification
$$(\bC^H)^\vee\simeq (\bC^\vee)^H,$$
so that the dual of the functor $\oblv_H:\bC^H\to \bC$ is the functor $\on{Av}^H_*:\bC^\vee\to (\bC^\vee)^H$, and
the dual of the functor $\on{Av}^H_*:\bC\to \bC^H$ is the functor $\oblv_H:(\bC^\vee)^H\to \bC^\vee$. 

\sssec{}

The category $H\mmod$ carries a natural symmetric monoidal structure that commutes with
the forgetful functor to $\on{DGCat}_{\on{cont}}$. 

\medskip

Namely, for $\bC_1,\bC_2\in H\mmod$ we define the action of $\Shv(H)$ on $\bC_1\otimes \bC_2$ by precomposing
the natural action of $\Shv(H)\otimes \Shv(H)$ on $\bC_1\otimes \bC_2$ with the monoidal functor
$$\Shv(H) \overset{\Delta_*}\longrightarrow \Shv(H\times H) \to \Shv(H)\otimes \Shv(H),$$
where the second arrow is the right adjoint to $\Shv(H)\otimes \Shv(H)\to \Shv(H\times H)$. 

\ssec{The twisted case}

\sssec{Character sheaves} 

Let $\CL$ be a 1-dimensional local system on $H$, which is character sheaf, i.e., we have an isomorphism
$$\on{mult}^*(\CL)\simeq \CL\boxtimes \CL,$$
that behave associatively. 

\sssec{}

Given a character sheaf $\CL$, the functor
$$\Shv(H)\to \Vect, \quad \CF\mapsto \on{C}^\cdot(H,\CF\otimes \CL)$$
has a natural monoidal structure. 

\medskip

This defines on $\Vect$ a different structure
of category acted on by $H$; we denote the resulting object of $H\mmod$ by $\Vect_\CL$. 

\sssec{} 

For $\bC\in H\mmod$, we can twist the initial action of $H$ on $\bC$ by considering the object
$$\bC_\CL:=\bC\otimes \Vect_\CL\in H\mmod.$$ 

Explicitly, $\bC_\CL$ identifies with $\bC$ as a plain DG category, but the new action is given by
the formula
$$\CF\overset{\on{new}}\star c:= (\CF\otimes \CL) \overset{\on{old}}\star c.$$

\sssec{}  \label{sss:with coeff}

We denote
$$\bC^{H,\CL}:=\on{Funct}_{H\mmod}(\Vect_\CL,\bC)\simeq (\bC_{\CL^{-1}})^H.$$

We let
$$\oblv_{H,\CL}:\bC\rightleftarrows \bC^{H,\CL}:\on{Av}_*^{H,\CL}$$
denote the resulting adjoint pair.

\medskip

Note that the endo-functor $\oblv_{H,\CL}\circ \on{Av}_*^{H,\CL}$ of $\bC$ is 
given by
$$c\mapsto \CL^{-1}\star c.$$

\sssec{}

The basic examples of character sheaves are the Kummer sheaf on $\BG_m$ and the Artin-Schreier sheaf
on $\BG_a$, denoted $\on{A-Sch}$, the latter being more relevant for this paper.

\medskip

A priori, $\on{A-Sch}$ is defined either for $\Shv(-)=\Dmod$ in the guise of the exponential D-module,
or in the context of $\ell$-adic sheaves when the ground field $k$ has positive characteristic, in which
case it depends on the choice of a non-trivial character $\BF_p\to \sfe^\times$. 

\medskip

For a group $H$ and a homomorphism $\chi:H\to \BG_a$, we will often write 
$$\bC^{H,\chi}$$
instead of
$$\bC^{H,\chi^*(\on{A-Sch})}.$$

\ssec{Characteristic $0$ situation}

\sssec{}

The Artin-Schreier sheaf does not exist as a constructible sheaf if the ground field $k$ has 
characteristic $0$. E.g., it does not exist for the sheaf theory (iii) of \secref{sss:shvs}. So
for $\bC\in H\mmod$, and a homomorphism $\chi:H\to \BG_m$, the notation $\bC^{H,\chi}$
does not make sense. 

\medskip

However, one \emph{can} define a category equivalent to $\bC^{H,\chi}$, given some additional
data. 

\sssec{}

First, let us replace $\bC$ by $\bC^{H'}$, where $H':=\on{ker}(\chi)$, so we can assume that
we are dealing with a category acted on by $\BG_a$ itself. 

\medskip

Assume now that the $\BG_a$-action on $\bC$ extends to an action of the 
semi-direct product $\BG_m\ltimes \BG_a$, where $\BG_m$ acts on $\BG_a$ my dilations. 

\medskip

Consider the full subcategory
$$\on{ker}(\on{Av}^{\BG_a}_*)=:\bC' \subset \bC,$$
and set
$$\on{Kir}(\bC):=(\bC')^{\BG_m}.$$
(Here ``Kir" is a short-hand for the ``Kirillov model".)

\medskip

We have a tautological forgetful functor
$$\on{Kir}(\bC)\to \bC^{\BG_m},$$
which admits a left adjoint, given by
$$c\mapsto \on{Cone}(\on{Av}^{\BG_a}_*(c)\to c).$$

\sssec{}

We claim that $\on{Kir}(\bC)$ is a substitute of 
$$\Whit(\bC):=\bC^{\BG_a,\on{A-Sch}}$$
for all practical purposes. 

\medskip

For instance, in the situation when $\on{A-Sch}$ is defined, we claim that there exists a canonical
equivalence 
\begin{equation}  \label{e:Whit vs Kir}
\Whit(\bC)\simeq \on{Kir}(\bC).
\end{equation}

Namely, the functor $\to$ in \eqref{e:Whit vs Kir} is defined by
$$c\mapsto \on{Av}^{\BG_m}_*\circ \oblv_{\BG_a,\on{A-Sch}}(c).$$

The functor $\leftarrow$ in \eqref{e:Whit vs Kir} is defined by
$$c'\mapsto \on{Av}^{\BG_a,\on{A-Sch}}_!\circ \oblv_{\BG_m}(c'),$$
where one can show that $\on{Av}^{\BG_a,\on{A-Sch}}_!$ is defined and isomorphic to
$\on{Av}^{\BG_a,\on{A-Sch}}_*[2]$ on the essential image of $\oblv_{\BG_m}$.

\section{The local Whittaker category}  \label{s:loc Whit}

In this section we define the local Whittaker category and study its basic properties. 

\ssec{Definition of the local Whittaker category}  \label{ss:inv}

In this subsection we introduce the main object in this paper--the local Whittaker category.  We do this by imposing
equivariance with respect to the group indscheme $\fL(N)$. The ``ind" direction in $\fL(N)$ will cause objects of this 
category to be of substantially infinite-dimensional nature. 

\sssec{}

Consider the ind-scheme $\CY:=\fL(G)/K_n$ as acted on from the left by $\fL(G)$. We define the Whittaker
category $$\Whit(\CY):=\Shv(\CY)^{\fL(N),\chi}\subset \Shv(\CY)$$
to be the full subcategory that consists of $(\fL(N),\chi)$-equivariant objects. 

\medskip

Let us decipher what this means (we will essentially copy the definition from \cite[Sect. 1.2]{Ga1}).

\sssec{}  \label{sss:N alpha}

Recall the presentation \eqref{e:loop N as colim}. We set
\begin{equation} \label{e:intersect alpha} 
\Shv(\CY)^{\fL(N),\chi}:=\underset{\alpha}{\on{lim}}\, \Shv(\CY)^{N^\alpha,\chi},
\end{equation} 
where each $\Shv(\CY)^{N^\alpha,\chi}$ is a full subcategory of $\Shv(\CY)$, and for $(\alpha'\to \alpha'')\in A$, we have
$$\Shv(\CY)^{N^{\alpha''},\chi}\subset \Shv(\CY)^{N^{\alpha'},\chi}$$
as full subcategories in $\Shv(\CY)$. Note that the limit in \eqref{e:intersect alpha} amounts to the intersection
$$\underset{\alpha}\cap\, \Shv(\CY)^{N^\alpha,\chi}$$
as full subcategories in $\Shv(\CY)$.

\medskip

Let us now explain what the subcategories 
$$\Shv(\CY)^{N^\alpha,\chi}\subset \Shv(\CY)$$
are. 

\sssec{}

For a fixed index $\alpha$, the ind-scheme $\CY$, when viewed as equipped with an action of $N^\alpha$,
is naturally an ind-object in the category of schemes equipped with an action of $N^\alpha$.

\medskip

I.e., we can represent $\CY$ as 
\begin{equation}  \label{e:indsch as colim}
\underset{i\in I}{\on{colim}}\, Y_i,\quad Y_i\overset{f_{i,i'}}\longrightarrow Y_{i'}
\end{equation}
where each $Y_i$ is stable under the $N^\alpha$-action and $f_{i,i'}$ are closed embeddings
(automatically compatible with the $N^\alpha$-actions). 

\medskip

We set
$$\Shv(\CY)^{N^\alpha,\chi}:=\underset{i\in I}{\on{lim}}\, \Shv(Y_i)^{N^\alpha,\chi},$$
viewed as a full subcategory of 
$$\Shv(\CY)\simeq \underset{i\in I}{\on{lim}}\, \Shv(Y_i).$$

Thus, it remains to explain what we mean by
$$\Shv(Y_i)^{N^\alpha,\chi}\subset \Shv(Y_i)$$
for each $\alpha$ and $i$, so that for $(i\to i')$, the corresponding functor 
$$\Shv(Y_{i'}) \overset{f_{i,i'}^!}\longrightarrow \Shv(Y_i)$$
sends $\Shv(Y_{i'})^{N^\alpha,\chi}$ to $\Shv(Y_i)^{N^\alpha,\chi}$.

\sssec{}

Recall the presentation \eqref{e:arc N as lim}. 
For any $\beta\in B_{\alpha,i}$, we can consider the corresponding equivariant category
$\Shv(Y_i)^{N^\alpha_\beta,\chi}$. Since $N^\alpha_\beta$ is unipotent, the forgetful functor
$$\Shv(Y_i)^{N^\alpha_\beta,\chi}\to \Shv(Y_i)$$ is fully faithful, and  
for every $(\beta'\to \beta'')\in B_{\alpha,i}$, we have
$$\Shv(Y_i)^{N^\alpha_{\beta'},\chi}=\Shv(Y_i)^{N^\alpha_{\beta''},\chi}$$
as subcategories of $\Shv(Y_i)$. 

\medskip

We set $\Shv(Y_i)^{N^\alpha,\chi}\subset \Shv(Y_i)$ to be $\Shv(Y_i)^{N^\alpha_\beta,\chi}$ 
for some/any $\beta\in B_{\alpha,i}$. 

\sssec{}

Going back, it is clear that for a map $(i\to i')\in I$, the corresponding functor
$$\Shv(Y_{i'}) \overset{f_{i,i'}^!}\longrightarrow \Shv(Y_i)$$
indeed sends $\Shv(Y_{i'})^{N^\alpha,\chi}$ to $\Shv(Y_i)^{N^\alpha,\chi}$.

\medskip

It is also clear that for a map $(\alpha'\to \alpha'')\in A$, we have
$$\Shv(\Gr)^{N^{\alpha''},\chi}\subset \Shv(\Gr)^{N^{\alpha'},\chi}$$
as full subcategories of $\Shv(\Gr)$. 

\medskip

This completes the definition of $\Shv(\CY)^{\fL(N),\chi}$ as a full subcategory of $\Shv(\CY)$. 

\ssec{Structure of the local Whittaker category}

In this subsection we will discuss the very first general properties of the local Whittaker category
$\Whit(\CY)$. 

\sssec{}  \label{sss:Av alpha}

For what follows, let us note that for every fixed $\alpha$, the forgetful functor 
$$\oblv_{N^\alpha,\chi}:\Shv(\CY)^{N^\alpha,\chi}\to \Shv(\CY)$$
admits a \emph{continuous} right adjoint $\Av^{N^\alpha,\chi}_*$. 

\medskip

Let us describe the functor $\Av^{N^\alpha,\chi}_*$ explicitly.
Writing $\CY$ as \eqref{e:indsch as colim}, the functor $\Av^{N^\alpha,\chi}_*$ corresponds to a compatible
family of functors
\begin{equation} \label{e:Av alpha i}
\Av^{N^\alpha,\chi}:\Shv(Y_i)\to \Shv(Y_i)^{N^\alpha,\chi}.
\end{equation}

For every individual $i$, writing $N^\alpha$ as \eqref{e:arc N as lim}, the corresponding functor \eqref{e:Av alpha i}
equals the functor 
\begin{equation} \label{e:Av alpha beta i}
\Av^{N^\alpha_\beta,\chi}_*:\Shv(Y_i)\to \Shv(Y_i)^{N^\alpha_\beta,\chi}=\Shv(Y_i)^{N^\alpha,\chi}.
\end{equation}

\sssec{} \label{sss:*-av}

The functor $\oblv_{\fL(N),\chi}$, being continuous, admits a right adjoint, to be denoted 
$\on{Av}^{\fL(N),\chi}_*$. But the functor $\on{Av}^{\fL(N),\chi}_*$ is \emph{discontinuous}. Explicitly, we have
$$\on{Av}^{\fL(N),\chi}_*(\CF) \simeq \underset{\alpha}{\on{lim}}\, \on{Av}^{N^\alpha,\chi}_*(\CF),$$
where the limit is taken in $\Shv(\CY)$. 

\medskip

Even though each individual functor $\on{Av}^{N^\alpha,\chi}_*$ is continuous, the inverse limit destroys
this property. 

\sssec{} \label{sss:!-av}

Consider again the forgetful functor
$$\oblv_{\fL(N),\chi}:\Shv(\CY)^{\fL(N),\chi}\to \Shv(\CY).$$

As any functor, it admits a partially defined left adjoint\footnote{For a functor $F:\bC\to \bD$, we shall say that
its partially defined adjoint $F^L$ is defined on $d\in \bD$, if the functor $c\mapsto \CHom_\bD(d,F(c))$ is co-representable.
In this case we set $F^L(d)\in \bC$ to be the co-representing object.}, to be denoted 
$\on{Av}^{\fL(N),\chi}_!$.

\medskip

We \emph{do not} claim that $\on{Av}^{\fL(N),\chi}_!$ is defined on all of $\Shv(\CY)$
(however, this is the case when $n=0$).  

\medskip

Nevertheless, it will turn out
that the functor $\on{Av}^{\fL(N),\chi}_!$ \emph{is} defined on a sufficiently large class of objects
of $\Shv(\CY)$ to ensure that the category $\Shv(\CY)^{\fL(N),\chi}$ is well-behaved,
see \thmref{t:enough} below.  In particular, in \secref{sss:proof compact} we will prove:

\begin{thm}  \label{t:comp gen}
The category $\Shv(\CY)^{\fL(N),\chi}$ is compactly generated.
\end{thm}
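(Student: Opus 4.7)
The plan is to exhibit explicit compact generators of $\Whit(\CY) := \Shv(\CY)^{\fL(N),\chi}$ as outputs $\on{Av}^{\fL(N),\chi}_!(\CF)$ of the partially defined left adjoint from \secref{sss:!-av} applied to a carefully chosen class of compact objects $\CF \in \Shv(\CY)_c$. The strategy splits into three steps: first, isolate a class of \emph{adapted} objects on which $\on{Av}^{\fL(N),\chi}_!$ makes sense and produces compact output; second, reduce compact generation of $\Whit(\CY)$ to a sufficiently rich supply of such objects; third, produce this supply via the geometric recipe alluded to in \secref{sss:glob Whit recipe}.

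For the first step, I would call a compact $\CF \in \Shv(\CY)_c$ \emph{adapted} if, writing $\fL(N)$ as the filtered colimit \eqref{e:loop N as colim} of group-subschemes $N^\alpha$, there is a cofinal subsystem in $A$ on which each partial left adjoint $\on{Av}^{N^\alpha,\chi}_!$ of $\oblv_{N^\alpha,\chi}$ is defined on $\CF$, and the canonical transition maps
$$\on{Av}^{N^{\alpha'},\chi}_!(\CF) \to \on{Av}^{N^{\alpha''},\chi}_!(\CF)[d_{\alpha',\alpha''}]$$
(shifted by the difference of pro-unipotent dimensions) become isomorphisms for all sufficiently large $\alpha$. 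The common stable value then defines $\on{Av}^{\fL(N),\chi}_!(\CF) \in \Whit(\CY)$, and a formal adjunction
$$\Hom_{\Whit(\CY)}(\on{Av}^{\fL(N),\chi}_!(\CF), \CG) \simeq \Hom_{\Shv(\CY)}(\CF, \oblv_{\fL(N),\chi}(\CG))$$
shows this object is compact, since the fully faithful embedding $\oblv_{\fL(N),\chi}$ preserves the colimits computed inside $\Whit(\CY)$.

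For the second step, if $\CG \in \Whit(\CY)$ were right-orthogonal to every such $\on{Av}^{\fL(N),\chi}_!(\CF)$, the displayed adjunction would force $\CG$ to be right-orthogonal, inside $\Shv(\CY)$, to every compact adapted object. It therefore suffices to produce a family of compact adapted sheaves whose joint right-orthogonal in $\Shv(\CY)$ intersects $\Whit(\CY)$ trivially; equivalently, on each finite-type piece $Y_i \subset \CY$ from a presentation \eqref{e:indsch as colim}, one must construct enough adapted sheaves that their $\fL^+(G)$-translates detect all of $\Shv(Y_i)$ modulo the equivariance condition.

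For the third step, I would build the adapted objects along the recipe sketched in \secref{sss:glob Whit recipe}: stratify $\CY$ by pulling back the $\fL(N)$-orbit stratification of $\Gr_G$ through the dominant coweights $\lambda$, pull the Artin--Schreier sheaf back along $\chi$ on each stratum, and $!$-extend to $\CY$ (after remembering the level-$n$ structure at $x$). The main obstacle will be verifying the stabilization clause in the definition of adaptedness for these sheaves: one must check that, once $N^\alpha$ is deep enough relative to the support on $Y_i$, the successive $!$-averagings against $\chi$ differ only by a cohomological shift reflecting added pro-unipotent dimensions, rather than acquiring new cohomology. This is exactly where the \emph{non-degeneracy} of $\chi$ must be used: for a degenerate character the corresponding $!$-averages would have unbounded cohomological amplitude along the orbit direction and could not stabilize, whereas non-degeneracy forces the $!$-fiber of $\chi$ along each simple coroot direction to contract, giving the required equivalence. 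Granted this stabilization, compact generation of $\Whit(\CY)$ follows by combining steps one and two.
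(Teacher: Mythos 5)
Your Steps 1--2 recover the framework of \thmref{t:enough}: reduce compact generation of $\Whit(\CY)$ to producing a supply of compact $\CF\in \Shv(\CY)_c$ on which the partial left adjoint $\Av^{\fL(N),\chi}_!$ is defined, since then $\Av^{\fL(N),\chi}_!(\CF)$ is compact in $\Whit(\CY)$ by adjunction. The gap is Step 3: the sheaves you propose (Artin--Schreier objects on an $\fL(N)$-orbit stratum $\CY^\mu$, $!$-extended to $\CY$) already belong to $\Whit(\CY)$ and are supported on the whole infinite-type stratum, so they are \emph{not} compact in $\Shv(\CY)$. More forcefully, \propref{p:invis} shows that every object of $\Whit(\CY)$ is infinitely connective, so $\Whit(\CY)\cap\Shv(\CY)_c=0$: the adapted objects cannot be found inside $\Whit(\CY)$ itself, and exhibiting a non-trivial class of compacts of $\Shv(\CY)$ on which $\Av^{\fL(N),\chi}_!$ is well-behaved is the whole difficulty. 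The paper's supply consists of objects in the essential image of $\oblv_{I^j,\chi}$, where $I^j=\on{Ad}_{-j\check\rho(t)}(\overset{\circ}I{}^j)$ is the $j$-th ``adolescent Iwahori'' shifted by $-j\check\rho(t)$; these are compact because $I^j$ is a pro-smooth group-scheme, they generate because $I^j\cap\fL^+(B^-)$ shrinks to $\{1\}$ as $j\to\infty$, and the adaptedness is Raskin's \thmref{t:Raskin} --- a genuine geometric input (a cleanness statement on a compactified action map, proved in \secref{s:proof of Rask}) that cannot be obtained formally.

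Two smaller points. Your definition of ``adapted'' via stabilization of $\Av^{N^\alpha,\chi}_!(\CF)$ up to shifts $d_{\alpha',\alpha''}$ is not the right one: the diagram computing the honest left adjoint of $\oblv_{\fL(N),\chi}$ has to stabilize \emph{without} shifts, whereas a system that only stabilizes after shifting by $2\dim(N^{\alpha''}/N^{\alpha'})$ is computing the renormalized $*$-average $\Av^{\fL(N),\chi}_{*,\on{ren}}$ of \secref{ss:pseudo-id}, which is a different functor and is not a left adjoint. Finally, there \emph{is} a more elementary route to \thmref{t:comp gen} that bypasses adapted objects and Raskin's theorem entirely: each $\Whit(\CY^\mu)\simeq\Shv(Y^\mu)^{N^\mu,\chi}$ is compactly generated by \lemref{l:reduce to fd}, the restrictions $(\iota^\mu)^!:\Whit(\CY)\to\Whit(\CY^\mu)$ are continuous and jointly conservative, and their left adjoints $(\iota^\mu)_!$ preserve the Whittaker condition; $!$-extending compact generators of the strata categories therefore gives compact generators of $\Whit(\CY)$. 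That argument, however, does not yield the stronger \thmref{t:enough} (the version with an auxiliary $\bC$), which the paper needs for \thmref{t:Whit and tens}, so the adapted-object machinery is not redundant.
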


\sssec{}

Note that the definition of the Whittaker category $\Shv(\CY)^{\fL(N),\chi}$ has a variant 
$$(\Shv(\CY)\otimes \bC)^{\fL(N),\chi},$$
where $\bC$ is an arbitrary DG category. 

\medskip

We have the forgetful functor
$$(\oblv_{\fL(N),\chi}\otimes \on{Id}_\bC):\Shv(\CY)^{\fL(N),\chi}\otimes \bC \to \Shv(\CY)\otimes \bC,$$
whose essential image is easily seen to belong to $(\Shv(\CY)\otimes \bC)^{\fL(N),\chi}$.
Hence, we obtain a functor:
\begin{equation} \label{e:Whit and tens}
F_\bC :\Shv(\CY)^{\fL(N),\chi}\otimes \bC\to (\Shv(\CY)\otimes \bC)^{\fL(N),\chi}.
\end{equation}

In \secref{ss:enough} we will prove:

\begin{thm}  \label{t:Whit and tens}
The functor $F_\bC$ of \eqref{e:Whit and tens} is an equivalence for any $\bC$.
\end{thm}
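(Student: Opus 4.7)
The plan is to prove the equivalence in two stages: first for each subgroup $N^\alpha$ individually, and then pass to the limit over $\alpha\in A$ by exploiting the compact generation of the Whittaker category provided by \thmref{t:comp gen}.

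For the first step, I would establish that for each fixed $\alpha$,
$$\Shv(\CY)^{N^\alpha,\chi}\otimes\bC\to(\Shv(\CY)\otimes\bC)^{N^\alpha,\chi}$$
is an equivalence. The key input is that $\Av^{N^\alpha,\chi}_*$ is continuous (\secref{sss:Av alpha}), so the fully faithful embedding $\Shv(\CY)^{N^\alpha,\chi}\hookrightarrow\Shv(\CY)$ is a colocalization whose essential image is cut out by invertibility of the counit of the continuous endofunctor $\oblv_{N^\alpha,\chi}\circ\Av^{N^\alpha,\chi}_*$. Tensoring with $\bC$ preserves this colocalization structure: the inclusion remains fully faithful with right adjoint $\Av^{N^\alpha,\chi}_*\otimes\on{Id}_\bC$, and its essential image is characterized by the same $\bC$-linearly extended counit condition, which is precisely what defines $(\Shv(\CY)\otimes\bC)^{N^\alpha,\chi}$. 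Reducing via the $*$-pushforward colimit presentation $\Shv(\CY)=\on{colim}_i\Shv(Y_i)$ of \secref{sss:shv on indsch} (which preserves equivariance, and over which $-\otimes\bC$ commutes with the colimit), the statement becomes the standard compatibility of $(N^\alpha_\beta,\chi)$-equivariance with tensor products for a finite-type unipotent group, handled by the comonadic description of equivariance.

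For the second step, I would pass from individual $\alpha$ to all of $\fL(N)$ via compact generation. By \thmref{t:comp gen}, compact generators of $\Shv(\CY)^{\fL(N),\chi}$ are obtained as $\Av^{\fL(N),\chi}_!(\CF)$ for $\CF$ ranging over adapted objects (see \secref{sss:!-av} and \thmref{t:enough}). For $\CF$ adapted in $\Shv(\CY)$ and any $c\in\bC$, the object $\CF\otimes c$ is adapted for the induced $\fL(N)$-action on $\Shv(\CY)\otimes\bC$: the defining conditions concerning the existence and good behavior of $\Av^{N^\alpha,\chi}_!$ are $\bC$-linear and pull back verbatim. Since $\Av^{\fL(N),\chi}_!$, where defined, is a left adjoint and hence commutes with tensoring, we obtain
$$\Av^{\fL(N),\chi}_!(\CF\otimes c)\simeq F_\bC\bigl(\Av^{\fL(N),\chi}_!(\CF)\otimes c\bigr),$$
and applying \thmref{t:comp gen} to $\Shv(\CY)\otimes\bC$ shows that objects of the left-hand form generate $(\Shv(\CY)\otimes\bC)^{\fL(N),\chi}$, so $F_\bC$ sends generators to generators.

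Full faithfulness of $F_\bC$ on this generating family then follows from the adjunction $(\Av^{\fL(N),\chi}_!,\oblv_{\fL(N),\chi})$ on both sides: a mapping space between two generators $\Av^{\fL(N),\chi}_!(\CF_i)\otimes c_i$ reduces in either category to $\on{Map}_{\Shv(\CY)\otimes\bC}\bigl(\CF_1\otimes c_1,\oblv_{\fL(N),\chi}\Av^{\fL(N),\chi}_!(\CF_2)\otimes c_2\bigr)$, giving the same answer. The main obstacle is the technical verification that the adapted-object formalism of \thmref{t:enough} transports unchanged to the category $\Shv(\CY)\otimes\bC$; this should be essentially automatic, as the formalism is phrased intrinsically in terms of the $\fL(N)$-action on an abstract ambient DG category, a structure which $\Shv(\CY)\otimes\bC$ carries by $\bC$-linear extension.
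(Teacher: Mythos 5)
Your overall strategy matches the paper's: generation of the target via Theorem~\ref{t:enough} and adapted objects, plus an adjunction argument for full faithfulness. But there are two issues worth flagging.

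First, your ``stage~1'' is a detour that does not contribute to the proof and is precisely the kind of reasoning the paper's \emph{Warning} after the theorem statement is cautioning against. The equivalence $\Shv(\CY)^{N^\alpha,\chi}\otimes\bC\simeq(\Shv(\CY)\otimes\bC)^{N^\alpha,\chi}$ for each fixed $\alpha$ is indeed straightforward (colocalization with continuous right adjoint $\Av^{N^\alpha,\chi}_*$), but the passage to $\fL(N)$ requires commuting $-\otimes\bC$ with the infinite limit $\lim_\alpha$, which fails in general because the inclusions $\Shv(\CY)^{N^{\alpha''},\chi}\hookrightarrow\Shv(\CY)^{N^{\alpha'},\chi}$ do not admit continuous left adjoints. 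That failure \emph{is} the content of the theorem, so stage~1 cannot be leveraged; and indeed your stage~2 never uses it.

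Second, the full-faithfulness step is asserted rather than proved. You write that a mapping space between generators ``reduces in either category to $\on{Map}_{\Shv(\CY)\otimes\bC}(\CF_1\otimes c_1,\oblv_{\fL(N),\chi}\Av^{\fL(N),\chi}_!(\CF_2)\otimes c_2)$''. For the \emph{target} side this is the $(\Av_!,\oblv)$-adjunction in $(\Shv(\CY)\otimes\bC)^{\fL(N),\chi}$ vs.\ $\Shv(\CY)\otimes\bC$, which is fine. But for the \emph{source} side there is no such adjunction a priori: the adjunction lives between $\Shv(\CY)^{\fL(N),\chi}$ and $\Shv(\CY)$, and whether it transports across $-\otimes\bC$ is exactly the subtle point. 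The paper handles this by introducing Lemma~\ref{l:internal Hom} (computing $\CHom_{\bD\otimes\bC}(\bd\otimes\bc,-)$ via $\CHom_\bD(\bd,-)$ for $\bd$ compact) on \emph{both} sides, and then comparing the two resulting functors $\Shv(\CY)^{\fL(N),\chi}\otimes\bC\to\bC$ by checking them on simple tensors via the universal property of the Lurie tensor product, where the ordinary $(\Av_!,\oblv)$-adjunction applies. Your argument can be completed along those lines, but as written it skips the step where the real content lies.
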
 

\noindent \emph{Warning}: the assertion of \thmref{t:Whit and tens} is not at all tautological. 

\sssec{}

Recall (see \secref{sss:t ind-sch}) that the category $\Shv(\CY)$ is equipped with a t-structure. A feature that makes
$\Shv(\CY)^{\fL(N),\chi}$ ``very non-classical" is that the objects of this subcategory are ``invisible" from the point
of view of this t-structure. Namely, we will prove: 

\begin{prop} \label{p:invis}
Every $\CF\in \Shv(\CY)^{\fL(N),\chi}$ is \emph{infinitely connective}, i.e., lies in $(\Shv(\CY))^{\leq -n}$ for every $n$.
\end{prop}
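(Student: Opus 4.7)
My strategy is to exploit that $\fL(N)$ is an ind-group scheme of ``infinite dimension" and that $\chi$ is non-degenerate, so that the equivariance condition forces unbounded cohomological shifts.

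First, I would reduce the claim to an $\on{RHom}$-estimate. By the characterization of the t-structure in \secref{sss:t ind-sch} together with the presentation of $\Shv(\CY)$ as a colimit of $\Shv(Y_i)$ under $*$-pushforwards (see \secref{sss:shv on indsch}), it suffices to show that for every finite-type closed $Y_i\subset \CY$ and every compact $\CG_0\in \Shv(Y_i)_c$ one has
\[
\on{RHom}_{\Shv(\CY)}(\on{ins}_{i,*}(\CG_0), \CF)\in \on{Vect}^{\geq n}
\]
for every integer $n$.

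Second, I would use the $(N^\alpha,\chi)$-equivariance of $\CF$ for each $\alpha\in A$, together with the adjunction $\oblv_{N^\alpha,\chi}\dashv \on{Av}^{N^\alpha,\chi}_*$ of \secref{sss:Av alpha}. The equivariance lets me rewrite the $\on{RHom}$ above, when the partially-defined left adjoint $\on{Av}^{N^\alpha,\chi}_!$ is applicable, as
\[
\on{RHom}_{\Shv(\CY)^{N^\alpha,\chi}}(\on{Av}^{N^\alpha,\chi}_!(\on{ins}_{i,*}(\CG_0)),\, \CF).
\]
The key structural input is that on a fixed $N^\alpha$-stable finite-type $Y_i$, the action of the pro-unipotent group $N^\alpha=\on{Ad}_{-\lambda(t)}(\fL^+(N))$ factors through a finite-dimensional unipotent quotient $N^\alpha_\beta$ (see \eqref{e:arc N as lim} and \secref{sss:N alpha}), so one has a finite-dimensional comparison
\[
\on{Av}^{N^\alpha,\chi}_!\big|_{\Shv(Y_i)}\;\simeq\; \on{Av}^{N^\alpha,\chi}_*\big|_{\Shv(Y_i)}[2\, d(\alpha,i)]
\]
(up to a character twist), where $d(\alpha,i)$ is the dimension of the finite-dimensional unipotent group that governs the averaging. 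The defined-ness of $\on{Av}^{N^\alpha,\chi}_!$ on compacts with finite-type support would be invoked from the theory of adapted objects, to be established later in the paper (\secref{sss:!-av} and \thmref{t:enough}).

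Third, I would show that by varying $\alpha\in A$, the effective dimension $d(\alpha,i)$ tends to $\infty$: as $\lambda\in\Lambda^+$ grows, $N^\alpha=t^{-\lambda}\fL^+(N)t^\lambda$ captures more and more of the negative-power part of $\fL(N)$, whose image acting on a fixed $Y_i$ becomes arbitrarily large. Combined with the preceding shift, this yields $\on{RHom}(\on{ins}_{i,*}(\CG_0),\CF)\in \on{Vect}^{\geq 2d(\alpha,i)}$ for every $\alpha$; letting $\alpha$ range over $A$ gives the desired infinite connectivity of $\CF$.

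The main obstacle is the third step: one must rigorously control the growth of the effective dimension $d(\alpha,i)$ and ensure that at each stage the restriction of $\chi$ interacts with the finite-dimensional quotient in a way that makes the cohomological-amplitude comparison valid. This amounts to a careful book-keeping of the finite-dimensional approximations $N^\alpha_\beta$ and of the non-degeneracy of $\chi$ as one enlarges $\alpha$, and it is where the specific structure of $\fL(N)$ (as opposed to a generic ind-group scheme) is essential.
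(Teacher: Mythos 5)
Your reduction in the first step is incorrect, and the error is fatal. You claim it suffices to show $\on{RHom}_{\Shv(\CY)}(\on{ins}_{i,*}(\CG_0),\CF)\in \Vect^{\geq n}$ for every $i$, every compact $\CG_0$, and every $n$; but $\bigcap_n \Vect^{\geq n}=0$, and the objects $\on{ins}_{i,*}(\CG_0)$ compactly generate $\Shv(\CY)$, so your criterion is equivalent to $\CF=0$. That is strictly stronger than what the Proposition asserts, and it is false: by \thmref{t:comp gen}, $\Whit(\CY)$ is compactly generated, hence nonzero. The underlying confusion is that connectivity of $\CF$ cannot be detected by $\on{RHom}$ from compact generators: the t-structure axiom gives vanishing of $\Hom$ from connective objects to strictly coconnective ones, not the reverse, so for $\CG$ bounded and $\CF$ connective there is no a priori lower bound on $\on{RHom}(\CG,\CF)$. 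The entire content of the Proposition is precisely that the t-structure on $\Shv(\CY)$ fails to be left separated---i.e.\ $\bigcap_n \Shv(\CY)^{\leq -n}\neq 0$---and your criterion would only amount to infinite connectivity if it \emph{were} left separated, in which case the Proposition would be vacuous. Moreover, even granting the criterion, the shift bookkeeping in your third step is circular: using $\on{Av}^{N^\alpha,\chi}_*(\CG_0)\simeq \on{Av}^{N^\alpha,\chi}_!(\CG_0)[-2d(\alpha,i)]$ together with the $(\on{Av}_!,\oblv)$-adjunction, one gets $\on{RHom}_{\Shv(\CY)^{N^\alpha,\chi}}(\on{Av}^{N^\alpha,\chi}_*(\CG_0),\CF)[-2d(\alpha,i)]\simeq \on{RHom}_{\Shv(\CY)}(\on{ins}_{i,*}\CG_0,\CF)$ tautologically, so no new bound is produced without an independent control on the left-hand $\on{RHom}$, which you do not supply.

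The paper follows a different route. Using that any finite-dimensional closed subscheme of $\CY$ meets only finitely many strata $\CY^\mu$, one reduces to showing that the $!$-restriction of $\CF$ to each $\CY^\mu$ is infinitely connective. Then \lemref{l:reduce to fd} identifies the forgetful functor on $\CY^\mu$ with $-\boxtimes\,\chi^!(\on{A-Sch})$ valued in $\Shv(Y^\mu\times N')$, and the object $\chi^!(\on{A-Sch})\in\Shv(N')$ is manifestly infinitely connective: its $!$-restriction to the finite-dimensional subgroups $N^\alpha\cap N'$ lies in the single perverse degree $-\dim(N^\alpha\cap N')$, and writing $\chi^!(\on{A-Sch})$ as the canonical colimit of the pushforwards of these restrictions exhibits it inside $\Shv(N')^{\leq -n}$ for every $n$. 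Your underlying intuition---that the exhaustion of $\fL(N)$ by subgroups $N^\alpha$ of growing dimension forces cohomological degrees to run off to $-\infty$---is the right one, but the way to make it rigorous is to realize $\CF$ (or its restriction to each stratum) as a colimit of objects whose connective amplitude tends to $-\infty$, not to estimate $\on{RHom}$'s into $\CF$ from compacts.
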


\ssec{A stratification}  \label{ss:on stratum}

The stratification of the affine Grassmannian $\Gr_{G,x}$ by $\fL(N)$-orbits gives rise to a stratification of $\CY$. 
This will define a stratification of $\Whit(\CY)$ but some more easily understood categories. 


\sssec{}   \label{sss:strata Y}

Consider the projection
\begin{equation} \label{e:proj to Gr}
\CY\to \fL(G)/K_0=\Gr_{G,x}.
\end{equation}

Recall that $\fL(N)$-orbits on $\Gr_{G,x}$ are in bijection with elements of the coweight lattice $\Lambda$; for each
$\mu\in \Lambda$, let us denote by $S^\mu$ the corresponding orbit, i.e.,
$$S^\mu=\fL(N)\cdot t^\mu.$$

\medskip

Let $\CY^\mu$ denote the preimage of $S^\mu$ under \eqref{e:proj to Gr}. Let 
$$\iota^\mu:\CY^\mu\hookrightarrow \CY$$
denote the corresponding locally closed embedding. 

\sssec{}

Consider the corresponding full subcategory
$$\Shv(\CY^\mu)^{\fL(N),\chi}\subset \Shv(\CY^\mu),$$
defined in the same way as
$$\Shv(\CY)^{\fL(N),\chi}\subset \Shv(\CY).$$

The functors
$$(\iota^\mu)_*:\Shv(\CY^\mu)\to \Shv(\CY) \text{ and }
(\iota^\mu)^!:\Shv(\CY)\to \Shv(\CY^\mu)$$
restrict to functors on the corresponding $(\fL(N),\chi)$-equivariant subcategories.

\medskip

We will prove:

\begin{prop}  \label{p:generated} \hfill

\smallskip

\noindent{\em(a)} The category
$\Shv(\CY^\mu)^{\fL(N),\chi}$ is non-zero only when $\mu+n\cdot \rho\in \Lambda^+_\BQ$.

\smallskip

\noindent{\em(b)} The category $\Shv(\CY)^{\fL(N),\chi}$ is generated by the essential images of the functors 
$(\iota^\mu)_*$ for $\mu+n\cdot \rho\in \Lambda^+_\BQ$. 

\smallskip

\noindent{\em(c)} An object of $\Shv(\CY)$ belongs to $\Shv(\CY)^{\fL(N),\chi}$ if and only if its !-restriction to
each $\CY^\mu$ belongs to $\Shv(\CY^\mu)^{\fL(N),\chi}\subset \Shv(\CY^\mu)$. 

\end{prop}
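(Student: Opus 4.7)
The proof reduces to stratum-wise analysis, exploiting that $\{\CY^\mu\}$ is an $\fL(N)$-stable stratification of $\CY$ (pulled back from the $\fL(N)$-orbit stratification of $\Gr_{G,x}$). For part (a), I examine the stabilizer of the base point $t^\mu K_n \in \CY^\mu$: since $\CY^\mu \to S^\mu$ is an $\fL(N)$-equivariant bundle with fiber $\fL^+(G)/K_n$, this stabilizer equals $\fL(N) \cap \on{Ad}_{t^\mu}(K_n)$. Intersecting with the $i$-th simple root subgroup $\fL(\BG_a) \cong k\ppart$, one obtains $t^{n+\langle \alpha_i,\mu\rangle} k\qqart$; on this subspace the $i$-th component $\chi_i$ of $\chi$ (the residue extracting the coefficient of $t^{-1}$) vanishes iff $n + \langle \alpha_i, \mu\rangle \geq 0$. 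A nonzero $(\fL(N),\chi)$-equivariant sheaf forces $\chi$ to vanish on this stabilizer, yielding $\langle \alpha_i, \mu\rangle \geq -n$ for every simple $\alpha_i$, which rearranges via $\langle \alpha_i, \rho\rangle = 1$ to $\mu + n\rho \in \Lambda^+_\BQ$.

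For part (c), equivariance under each $N^\alpha_\beta$ (in the presentations \eqref{e:loop N as colim} and \eqref{e:arc N as lim}) amounts, on each $Y_i$ in a presentation \eqref{e:indsch as colim} of $\CY$, to a compatible system of isomorphisms $\on{act}^!(\CF|_{Y_i}) \simeq \on{pr}^!(\CF|_{Y_i}) \otimes \chi^*(\on{A-Sch})$ on $N^\alpha_\beta \times Y_i$. Because $\{\CY^\mu\}$ is $\fL(N)$-stable, it pulls back to a stratification of $N^\alpha_\beta \times Y_i$ by $N^\alpha_\beta \times (Y_i \cap \CY^\mu)$; by the locality of isomorphism-testing across an (ind-)schematic stratification, existence of such an equivariance datum on $Y_i$ is equivalent to its existence on each stratum. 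The reverse direction (equivariance on $\CY$ implies equivariance on each $\CY^\mu$) is immediate from functoriality of $!$-pullback.

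For part (b), a transfinite induction along the closure-ordered filtration $\{\CY^{\leq\mu}\}$, together with the attendant recollement restricted to the equivariant subcategory, shows that any $\CF \in \Whit(\CY)$ is assembled from its restrictions $(\iota^\mu)^! \CF \in \Shv(\CY^\mu)^{\fL(N),\chi}$; by (a), only strata with $\mu + n\rho \in \Lambda^+_\BQ$ contribute nonzero subquotients, so $\Whit(\CY)$ is generated by the images of $(\iota^\mu)_!$ for such $\mu$. Passing from $(\iota^\mu)_!$ to $(\iota^\mu)_*$ requires cleanness of the Whittaker category: the natural map $(\iota^\mu)_! \CG \to (\iota^\mu)_* \CG$ is an isomorphism for $\CG \in \Shv(\CY^\mu)^{\fL(N),\chi}$ at dominant $\mu$, a standard consequence of the non-degeneracy of $\chi$ combined with the vanishing in (a) on non-dominant boundary strata. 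This cleanness statement is the only non-formal input and is the main obstacle in writing out the complete proof.
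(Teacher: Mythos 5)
Your arguments for parts (a) and (c) match the paper's in substance. For (a), the paper's proof reduces (via \lemref{l:reduce to fd}) to $\Shv(Y^\mu)^{N^\mu,\chi}$ and exhibits a one-parameter subgroup $\BG_a\overset{t^{-1}}\to\fL(\BG_a)\overset{\check\alpha_i}\to\fL(N)$ lying in $N^\mu$, acting trivially on $Y^\mu$, and carrying a non-trivial restriction of $\chi$; your computation of the stabilizer $\fL(N)\cap\on{Ad}_{t^\mu}(K_n)$ and its intersection $t^{n+\langle\mu,\check\alpha_i\rangle}k\qqart$ with each root subgroup is the same calculation packaged differently (and in fact this stabilizer coincides with the kernel of the $N^\mu$-action on $Y^\mu$, which is what the argument really uses). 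Your elaboration of (c) via the $\fL(N)$-equivariance of the stratification is a reasonable expansion of the paper's ``follows similarly.''

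Part (b) is where the proposal goes wrong, and in two ways. First, you do not need cleanness. The recollement triangle for a closed--open decomposition $\CY^{<\mu}\overset{i}\hookrightarrow\CY^{\leq\mu}\overset{j}\hookleftarrow\CY^\mu$ reads
$$i_*i^!\CF\to\CF\to j_*j^!\CF,$$
and since $i$, $j$ are $\fL(N)$-equivariant and $i^!$, $i_*$, $j^!$, $j_*$ all preserve the $(\fL(N),\chi)$-equivariant subcategories, this triangle \emph{already} expresses any $\CF\in\Whit(\CY^{\leq\mu})$ in terms of $*$-extensions from the strata. No passage from $(\iota^\mu)_!$ to $(\iota^\mu)_*$ is required; the $*$-extensions appear directly. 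The only genuine content (which the paper emphasizes) is that the set of relevant $\mu$'s on each connected component has a minimal element for the standard order on $\Lambda$, so the induction is well-founded. Second, the cleanness assertion you state is \emph{false} as written. The boundary $\ol{\CY^\mu}\setminus\CY^\mu=\bigcup_{\nu<\mu}\CY^\nu$ is not contained in the non-dominant locus: for $\mu$ deeply dominant and $\nu=\mu-\alpha_i$ (a simple coroot), the stratum $\CY^\nu$ still satisfies $\nu+n\rho\in\Lambda^+_\BQ$ and carries non-zero Whittaker objects, so the cone of $(\iota^\mu)_!\CG\to(\iota^\mu)_*\CG$ has no a priori reason to vanish on such strata. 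You have thus introduced a spurious ``main obstacle'' that is neither needed nor true; the actual argument is formal once (a) and the well-foundedness observation are in hand.
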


\sssec{Example}

For $n=0$, we obtain that the category $\Shv(\CY^\mu)^{\fL(N),\chi}$ is non-zero if and only if $\mu$ is dominant,
and in the latter case it is equivalent to $\Vect$. 

\medskip

When we go from $n$ to $n+1$ more and more strata $\Shv(\CY^\mu)^{\fL(N),\chi}$ become non-zero. The geometric reason for that
is that the stabilizers become smaller. 

\sssec{}  \label{sss:Y mu}

For what follows we will need some more notation: 

\medskip

For a fixed $\mu\in \Lambda$, let $Y^\mu\subset \CY^\mu$ denote the preimage of the point $t^\mu\in \Gr_{G,x}$ 
under \eqref{e:proj to Gr}. Denote 
$$N^\mu:=\on{Ad}_{t^\mu}(\fL^+(N))\subset \fL(N).$$

\medskip

The action of $N^\mu$ on $\CY^\mu$ preserves $Y^\mu$. Moreover, it is 
easy to see that we can find a group ind-scheme $N'\subset \fL(N)$ so that
$$\fL(N)=N^\mu \cdot N', \quad N^\mu\cap N'=\{1\}.$$
Hence, we can identify 
$$\CY^\mu\simeq Y^\mu\times N'.$$

\begin{lem}  \label{l:reduce to fd}
Restriction along $Y^\mu\hookrightarrow \CY^\mu$ defines an equivalence
$$\Shv(\CY^\mu)^{\fL(N),\chi}\simeq \Shv(Y^\mu)^{N^\mu,\chi},$$
so that the forgetful functor
$$\oblv_{\fL(N),\chi}:\Shv(\CY^\mu)^{\fL(N),\chi}\to \Shv(\CY^\mu)$$
identifies with
$$\Shv(Y^\mu)^{N_\mu,\chi}\overset{\oblv_{N^\mu,\chi}} \longrightarrow 
\Shv(Y^\mu)\overset{-\boxtimes \chi^!(\on{A-Sch})}\longrightarrow 
\Shv(Y^\mu\times N')=\Shv(\CY^\mu).$$
\end{lem}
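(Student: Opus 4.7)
The plan is to exploit the product decomposition $\CY^\mu \simeq Y^\mu \times N'$ together with the factorization $\fL(N) = N^\mu \cdot N'$ to decouple the $(\fL(N),\chi)$-equivariance condition into a piece on $N'$ (which is automatic/coordinate-like once we trivialize the $N'$-torsor structure) and a piece on $N^\mu$ (which is precisely the RHS).

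First, I would reformulate $(\fL(N),\chi)$-equivariance via the factorization: since $\fL(N) = N^\mu \cdot N'$ with $N^\mu \cap N' = \{e\}$, a sheaf $\CF \in \Shv(\CY^\mu)$ is $(\fL(N),\chi)$-equivariant if and only if it is simultaneously $(N^\mu, \chi|_{N^\mu})$-equivariant and $(N', \chi|_{N'})$-equivariant. In the $\alpha$-by-$\alpha$ limit definition from \secref{sss:N alpha}, one checks that the two equivariance conditions combine to give the full one because every $N^\alpha$ can be sandwiched, after translation, between products of the form $N^\mu_\beta \cdot N'_{\beta'}$.

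Next, the central step: using the identification $\CY^\mu \simeq Y^\mu \times N'$ in which $N'$ acts by left translation on the second factor only, I would show
$$\Shv(\CY^\mu)^{N',\,\chi|_{N'}} \;\simeq\; \Shv(Y^\mu),$$
with !-restriction to $Y^\mu = Y^\mu \times \{e\}$ as the forward functor and $\CF_0 \mapsto \CF_0 \boxtimes \chi^!(\on{A-Sch})|_{N'}$ as its inverse. Writing $N' = \underset{\beta}{\on{colim}}\, N'_\beta$ as a filtered colimit of finite-dimensional unipotent group schemes (cf.\ \eqref{e:arc N as lim} adapted to $N'$) and writing $\CY^\mu$ as a filtered union of closed subschemes stable under the successive $N'_\beta$, this reduces on each finite stage to the classical fact that for a free action of a connected unipotent group $N'_\beta$ on a scheme of finite type, !-restriction to any section is an equivalence between the $(N'_\beta, \chi|_{N'_\beta})$-equivariant category and sheaves on the section, with inverse given by the external product with $\chi^!(\on{A-Sch})$. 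Passing to the limit over $\beta$ and over the exhaustion of $\CY^\mu$ gives the claim.

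Finally, I would observe that under this equivalence the residual $(N^\mu, \chi|_{N^\mu})$-equivariance on $\Shv(\CY^\mu)$ corresponds to the $(N^\mu, \chi)$-equivariance on $\Shv(Y^\mu)$: indeed $N^\mu$ preserves $Y^\mu \subset \CY^\mu$ and the !-restriction functor intertwines the two actions. Combining the two steps yields the stated equivalence $\Shv(\CY^\mu)^{\fL(N),\chi} \simeq \Shv(Y^\mu)^{N^\mu,\chi}$, and the description of $\oblv_{\fL(N),\chi}$ follows by tracing through the inverse $\CF_0 \mapsto \CF_0 \boxtimes \chi^!(\on{A-Sch})|_{N'}$.

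The main obstacle I expect is the careful handling of the ind-scheme nature of $N'$ in the central step: one must check that the descent equivalence is compatible with the filtered limit defining $(N',\chi|_{N'})$-equivariance, and that the inverse functor (which a priori involves an external product with a character sheaf on an ind-scheme) is well-defined in the limit. This is essentially a bootstrap argument from the finite-dimensional case, but the compatibilities between the $N'_\beta$'s and between the pieces of $\CY^\mu$ need to be spelled out.
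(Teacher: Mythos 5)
Your proposal is correct and follows essentially the same strategy as the paper: both exploit the identification $\CY^\mu\simeq Y^\mu\times N'$, the choice of exhaustion $N^\alpha=N^\mu\cdot N'_\alpha$, reduction to finite stages via the filtered (co)limit structure with a diagonal-cofinality argument, and the finite-dimensional contraction principle for a freely acting unipotent group. The only organizational difference is that the paper keeps the full group $N^\mu\cdot N'_\alpha$ together at each finite stage and establishes the equivalence $\Shv(Y^\mu\times N'_\alpha)^{N^\mu\cdot N'_\alpha,\chi}\simeq\Shv(Y^\mu)^{N^\mu,\chi}$ in one stroke, whereas you separate the $N'$-equivariance from the residual $N^\mu$-equivariance and treat them sequentially; this is a harmless reformulation.
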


\begin{proof}

We can choose the family of subgroups $N^\alpha$ to be of the form $N^\mu \cdot N'_\alpha$
for $N'_\alpha\subset N'$. We have:
\begin{multline*}
\Shv(\CY^\mu)^{N^\mu\cdot N',\chi}=
\Shv(Y^\mu\times N')^{N^\mu\cdot N',\chi}\simeq
\underset{\alpha}{\on{lim}}\, \Shv(Y^\mu\times N')^{N^\mu\cdot N'_\alpha,\chi}\simeq \\
\simeq \underset{\alpha}{\on{lim}}\, \underset{\alpha_1\geq \alpha}{\on{lim}}\,  
\Shv(Y^\mu\times N'_{\alpha_1})^{N^\mu\cdot N'_\alpha,\chi}
\simeq  \underset{\alpha_1\geq \alpha}{\on{lim}}\, \Shv(Y^\mu\times N'_{\alpha_1})^{N^\mu\cdot N'_\alpha,\chi}
\end{multline*}

Since the diagonal $\{\alpha_1=\alpha\}$ is cofinal in the poset of $\{\alpha_1\leq \alpha\}$, the above limit
identifies with 
$$\underset{\alpha}{\on{lim}}\, \Shv(Y^\mu\times N'_\alpha)^{N^\mu\cdot N'_\alpha,\chi}.$$
Now, it is clear that for each $\alpha$, the restriction functor 
$$\Shv(Y^\mu\times N'_\alpha)^{N^\mu\cdot N'_\alpha,\chi}\to \Shv(Y^\mu)^{N^\mu,\chi}$$
is an equivalence and the forgetful functor
$$\oblv_{N^\mu\cdot N'_\alpha,\chi}: \Shv(Y^\mu\times N'_\alpha)^{N^\mu\cdot N'_\alpha,\chi}\to 
\Shv(Y^\mu\times N'_\alpha)$$
identifies with 
$$\Shv(Y^\mu)^{N_\mu,\chi}\overset{\oblv_{N^\mu,\chi}} \longrightarrow 
\Shv(Y^\mu)\overset{-\boxtimes \chi^!(\on{A-Sch})}\longrightarrow 
\Shv(Y^\mu\times N'_\alpha).$$

\medskip

Hence,
$$\underset{\alpha}{\on{lim}}\, \Shv(Y^\mu\cdot N'_\alpha)^{N^\mu\times N'_\alpha,\chi}\to 
\Shv(Y^\mu)^{N^\mu,\chi}$$
is also an equivalence, and the statement concerning $\oblv_{\fL(N),\chi}$ follows as well.

\end{proof} 

We can now prove \propref{p:invis}:

\begin{proof}[Proof of \propref{p:invis}]

Since each (finite-dimensional) closed subscheme $Y\subset \CY$ intersects finitely many of the strata $\CY^\mu$,
it is enough to show that the !-restriction 
of $\CF$ to any $\CY^\mu$ is infinitely connective.

\medskip

However, this follows immediately from \lemref{l:reduce to fd}, since 
 $$\chi^!(\on{A-Sch})\in \Shv(N')$$
is infinitely connective. Indeed, its further restriction to every $N^\alpha\cap N'$ lives in the (perverse) cohomological
degree equal to $-\dim(N^\alpha\cap N')$.

\end{proof} 

Finally, let us prove \propref{p:generated}:

\begin{proof} 

Suppose $\mu+n\cdot \rho\notin \Lambda^+_\BQ$. Then there exists a simple
root $\check\alpha_i$ such that
$$\mu(\check\alpha_i)<n.$$

Then the subgroup 
$$\BG_a \overset{t^{-1}}\to \fL(\BG_a) \overset{\check\alpha_i}\to \fL(N)$$
belongs to $N^\mu$, and acts trivially on $Y^\mu$. Since the restriction of $\chi$
to the above subgroup is non-trivial, this implies that 
$$\Shv(Y^\mu)^{\BG_a}=0\,\Rightarrow\, \Shv(Y^\mu)^{N^\mu,\chi}=0.$$

This implies point (a) of the proposition using \lemref{l:reduce to fd}.

\medskip

Point (a) formally implies point (b). Indeed, for every connected component of $\CY$, every subset of $\mu$'s for 
which stratum $\CY^\mu$ intersects this component and for which $\mu+n\cdot \rho\in \Lambda^+_\BQ$
has a minimal element with respect to the standard order relation 
$$\mu_1\leq \mu_2\, \Leftrightarrow\, \mu_2-\mu_1\in \Lambda^{\on{pos}}.$$

\medskip

Point (c) follows similarly. 

\end{proof} 

\ssec{Adapted objects} \label{ss:enough}

In this subsection we will describe a procedure to construct a particularly manageable family of compact objects inside $\Shv(\CY)^{\fL(N),\chi}$. 

\sssec{}

We will say that an object $\CF\in \Shv(\CY)$ is ``$(\fL(N),\chi)$-adapted" if:

\begin{itemize}

\item For any DG category $\bC$ and $\bc\in \bC$, the partially defined 
functor $\on{Av}^{\fL(N),\chi}_!$, left adjoint to 
$$\oblv_{\fL(N),\chi}: (\Shv(\CY)\otimes \bC)^{\fL(N),\chi}\to \Shv(\CY)\otimes \bC,$$
is defined on $\CF\otimes \bc\in \Shv(\CY)\otimes \bC$ (this condition is automatic in the constructible contexts);

\medskip

\item The tautological map
$$\on{Av}^{\fL(N),\chi}_!(\CF\otimes \bc)\to F_\bC(\on{Av}^{\fL(N),\chi}_!(\CF)\otimes \bc)$$
(where $F_\bC$ is as in \eqref{e:Whit and tens}), is an isomorphism.

\end{itemize}

\medskip

Since $\oblv_{\fL(N),\chi}$ is continuous, if $\CF\in \Shv(\CY)$ is compact and $(\fL(N),\chi)$-adapted, then
$$\on{Av}^{\fL(N),\chi}_!(\CF)\in \Shv(\CY)^{\fL(N),\chi}$$
is compact.

\medskip

We will prove:

\begin{thm}  \label{t:enough} 
For any DG category $\bC$, the category $(\Shv(\CY)\otimes \bC)^{\fL(N),\chi}$ is generated by objects of the form 
$\on{Av}^{\fL(N),\chi}_!(\CF\otimes \bc)$ with
$\CF\in \Shv(\CY)$ compact and $(\fL(N),\chi)$-adapted.
\end{thm}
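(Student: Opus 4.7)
The plan is to combine the stratification result \propref{p:generated} with an explicit construction of compact adapted objects stratum-by-stratum, following the recipe of \cite{Ras}. Concretely, I aim to produce, for each $\mu$ with $\mu+n\rho\in\Lambda^+_\BQ$ and each compact $\CG\in\Shv(Y^\mu)$, a compact $(\fL(N),\chi)$-adapted object $\CF_\mu\in\Shv(\CY)$ such that $\on{Av}^{\fL(N),\chi}_!(\CF_\mu\otimes\bc)$ realizes the generator of the $\mu$-stratum category in the $\bC$-tensored setting; running over all $\mu$ and $\CG$ then provides the required generating set.

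First I would upgrade \propref{p:generated} to the $\bC$-tensored setting: $(\Shv(\CY)\otimes\bC)^{\fL(N),\chi}$ is generated by the $((\iota^\mu)_*\otimes \id)$-images of $(\Shv(\CY^\mu)\otimes\bC)^{\fL(N),\chi}$ for $\mu+n\rho\in\Lambda^+_\BQ$. The argument of \propref{p:generated} carries over unchanged since it is a stratum-by-stratum induction using the standard partial order on $\Lambda$, which is unaffected by tensoring with $\bC$. By the $\bC$-tensored version of \lemref{l:reduce to fd}, each stratum category identifies with $(\Shv(Y^\mu)\otimes\bC)^{N^\mu,\chi}$, and since $Y^\mu$ is of finite type and $N^\mu$ is pro-finite-dimensional unipotent, this is compactly generated by $\on{Av}^{N^\mu,\chi}_!(\CG\otimes\bc)$ for compact $\CG\in\Shv(Y^\mu)$ and $\bc\in\bC$.

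For the lift, I use the decomposition $\CY^\mu\simeq Y^\mu\times N'$ of \secref{sss:Y mu}, choose a finite-dimensional closed subscheme $N'_\alpha\subset N'$, and form a compact object $\CF_\mu\in\Shv(\CY)$ whose $!$-restriction to $\CY^\mu$ is supported on $Y^\mu\times N'_\alpha$ and equal there to $\CG\boxtimes \chi^!(\on{A-Sch})|_{N'_\alpha}$ (a $!$-extension along a closed embedding into a suitable finite-dimensional closed subscheme of $\CY$ preserves compactness). The key technical claim is that for $\alpha$ sufficiently large relative to $\mu$, the partial averages $\on{Av}^{N^\mu\cdot N'_{\alpha''},\chi}_!(\CF_\mu\otimes\bc)$ stabilize for $\alpha''\geq\alpha$, and the stable value is the desired $\on{Av}^{\fL(N),\chi}_!(\CF_\mu\otimes\bc)$, whose $!$-restriction to $\CY^\mu$ recovers $\on{Av}^{N^\mu,\chi}_!(\CG\otimes\bc)$ and which vanishes on all other strata by \propref{p:generated}(a).

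The main obstacle is this stabilization, which is the geometric heart of the theorem and is precisely the input imported from \cite{Ras}, to be reviewed in \secref{s:proof of Rask}. It uses crucially the non-degeneracy of $\chi$: on the extra directions $N'_{\alpha''}/N'_\alpha$ that appear when enlarging the integration subgroup, the restriction of $\chi$ becomes sufficiently non-trivial that the further averaging is, up to a cohomological shift, an isomorphism on objects already equivariant along the smaller subgroup. Once stabilization at a finite stage $\alpha$ is granted, $\on{Av}^{\fL(N),\chi}_!(\CF_\mu\otimes\bc)$ reduces to the finite-dimensional averaging $\on{Av}^{N^\mu\cdot N'_\alpha,\chi}_!$, which manifestly commutes with $-\otimes\bc$ and sends compacts to compacts; hence $\CF_\mu$ is adapted in the sense of \secref{ss:enough}, its average is a compact object of $(\Shv(\CY)\otimes\bC)^{\fL(N),\chi}$, and by the strata-wise analysis above these averaged objects exhaust a generating set.
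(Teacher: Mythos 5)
Your approach is genuinely different from the one in the paper, and it has two real gaps.

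The paper's proof does not work stratum-by-stratum at all. It invokes the "adolescent Iwahori" subgroups $I^j$ of \secref{sss:adolesc}: the category $\Shv(\CY)^{I^j,\chi}$ is compactly generated with $\oblv_{I^j,\chi}$ preserving compacts, \thmref{t:Raskin} says all such objects are adapted, and then one observes directly that the intersection of the kernels of $\Av^{I^j,\chi}_*\circ\oblv_{\fL(N),\chi}$ over all $j$ is zero (because $I^j\cap\fL^+(B^-)$ shrinks to the identity). Nothing in that argument ever restricts to a single stratum $\CY^\mu$ or invokes \propref{p:generated}.

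The first gap in your proposal is the assertion that $\Av^{\fL(N),\chi}_!(\CF_\mu\otimes\bc)$ ``vanishes on all other strata by \propref{p:generated}(a).'' That proposition says that $\Shv(\CY^{\mu'})^{\fL(N),\chi}=0$ for irrelevant $\mu'$; it says nothing about the support of a specific averaged object. In fact averaging over $\fL(N)$ spreads the support of $\CF_\mu$ over the $\fL(N)$-saturation of $Y^\mu\times N'_\alpha$, and the resulting equivariant object will in general have nonzero $!$-restrictions to $\CY^{\mu'}$ for $\mu'\neq\mu$ in the closure. So you cannot expect the averaged object to literally reproduce $(\iota^\mu)_*\Av^{N^\mu,\chi}_!(\CG\otimes\bc)$. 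To salvage generation you would have to pass to the filtration by unions of strata and argue by d\'evissage, which you don't do.

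The second and more serious gap is that your ``key technical claim'' — the stabilization of the partial averages of $\CF_\mu$ — is not the same statement as \thmref{t:Raskin}. Raskin's theorem, as quoted in the paper, asserts that objects in the essential image of $\oblv_{I^j,\chi}$ are adapted. Your $\CF_\mu$ is a $!$-extension of $\CG\boxtimes\chi^!(\on{A\text{-}Sch})|_{N'_\alpha}$ from a locally closed subscheme of a single stratum, and such an object is not $I^j$-equivariant for any $j$ (the $I^j$-orbits are not contained in $\CY^\mu$: $I^j$ contains negative Borel directions that move points off the stratum). So you cannot simply ``import'' the stabilization from \cite{Ras}; you would need to prove a separate adaptedness criterion for your class of objects, which is where the real work lies and which your proposal leaves unaddressed. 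If you want to prove \thmref{t:enough} along stratum-by-stratum lines you must supply that argument yourself; the route taken in the paper sidesteps the issue entirely by building the adapted objects directly out of the $I^j$-equivariant categories, whose adaptedness is exactly what \cite{Ras} establishes.
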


We will prove this theorem in \secref{ss:Rask1}. We will deduce it from the simplest part of S.~Raskin's paper \cite{Ras},
namely, Sect. 2.11 of {\it loc. cit.} (i.e., the case of Theorem 2.7.1(1) of {\it loc.cit.} for $m=\infty$). 

\sssec{}   \label{sss:proof compact}

Note that \thmref{t:enough} (for $\bC=\Vect$) immediately implies \thmref{t:comp gen}.

\sssec{}

The rest of this subsection is devoted to the proof of \thmref{t:Whit and tens}. 

\medskip

First off, \thmref{t:enough} readily implies that the essential image of the functor 
$$F_\bC:\Shv(\CY)^{\fL(N),\chi}\otimes \bC\to (\Shv(\CY)\otimes \bC)^{\fL(N),\chi}$$
generates the target category. Hence, it remains to show that $F_\bC$ is fully faithful.

\medskip

For the latter, it suffices to show that for $\CF\in \Shv(\CY)$ compact and $(\fL(N),\chi)$-adapted, any $\bc\in \bC$
and any $\wt\CF\in \Shv(\CY)^{\fL(N),\chi}\otimes \bC$, the map 
\begin{multline}  \label{e:ff tensor}
\CHom_{\Shv(\CY)^{\fL(N),\chi}\otimes \bC}(\on{Av}^{\fL(N),\chi}_!(\CF)\otimes \bc,\wt\CF)\to  \\
\to \CHom_{(\Shv(\CY)\otimes \bC)^{\fL(N),\chi}}(F_\bC(\on{Av}^{\fL(N),\chi}_!(\CF)\otimes \bc),\wt\CF)\simeq \\
\simeq  \CHom_{(\Shv(\CY)\otimes \bC)^{\fL(N),\chi}}(\on{Av}^{\fL(N),\chi}_!(\CF\otimes \bc),\wt\CF)\simeq
\CHom_{\Shv(\CY)\otimes \bC}(\CF\otimes \bc,\oblv_{\fL(N),\chi}(\wt\CF))
\end{multline} 
is an isomorphism. 

\sssec{}

Consider the following general paradigm:

\medskip

Let $\bD$ be a compactly generated DG category, and $\bd \in \bD^c$. For another DG category $\bC$, consider
the (continuous) functor
$$\CHom_{\bD}(\bd,-):\bD\otimes \bC\to \bC, \quad \bd_1\otimes \bc_1\mapsto \CHom_\bD(\bd,\bd_1)\otimes \bc_1.$$

\medskip

We have:

\begin{lem} \label{l:internal Hom}
For any $\bc\in \bC$ and $\wt\bd\in \bD\otimes \bC$,
we have a canonical isomorphism
$$\CHom_{\bD\otimes \bC}(\bd\otimes \bc,\wt\bd)\simeq \CHom_{\bC}(\bc, \CHom_{\bD}(\bd,\wt\bd)).$$
\end{lem}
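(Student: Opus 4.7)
The plan is to realize the functor $\CHom_\bD(\bd,-):\bD\otimes\bC\to\bC$ from the statement as the right adjoint in an adjunction obtained by tensoring a more elementary adjunction with $\id_\bC$, and then to read off the claim from this adjunction.

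First, since $\bd\in\bD$ is compact, the functor
$$F:=\CHom_\bD(\bd,-):\bD\to\Vect$$
is continuous. It is moreover right adjoint to the functor $L:\Vect\to\bD$ given by $V\mapsto V\otimes\bd$ (this is just the statement that $\CHom_\bD(\bd,-)$ is the internal Hom with respect to the canonical action of $\Vect$ on $\bD$). Both $L$ and $F$ live in $\DGCat_{\on{cont}}$, so by the fact that Lurie tensor product is functorial with respect to continuous adjoint pairs, tensoring with $\id_\bC$ produces a continuous adjoint pair
$$L\otimes\id_\bC:\bC\rightleftarrows \bD\otimes\bC:F\otimes\id_\bC,$$
under the identifications $\Vect\otimes\bC\simeq\bC$ and $\bD\otimes\bC=\bD\otimes\bC$. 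The left adjoint sends $\bc\mapsto\bd\otimes\bc$.

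Next, I need to identify the right adjoint $F\otimes\id_\bC$ with the functor $\CHom_\bD(\bd,-)$ defined in the statement of the lemma. Both are continuous functors $\bD\otimes\bC\to\bC$. On pure tensors $\bd_1\otimes\bc_1$ they both yield $F(\bd_1)\otimes\bc_1=\CHom_\bD(\bd,\bd_1)\otimes\bc_1$. Since $\bD\otimes\bC$ is generated under colimits by pure tensors and both functors are continuous, they agree. (This last step is the only subtle point; everything else is formal manipulation of adjunctions.)

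Finally, applying the adjunction $L\otimes\id_\bC\dashv F\otimes\id_\bC$ to the objects $\bc\in\bC$ and $\wt\bd\in\bD\otimes\bC$ yields
$$\CHom_{\bD\otimes\bC}(\bd\otimes\bc,\wt\bd)=\CHom_{\bD\otimes\bC}((L\otimes\id_\bC)(\bc),\wt\bd)\simeq\CHom_\bC(\bc,(F\otimes\id_\bC)(\wt\bd))=\CHom_\bC(\bc,\CHom_\bD(\bd,\wt\bd)),$$
which is the desired isomorphism, natural in $\bc$ and $\wt\bd$. The main (essentially only) obstacle is the identification of $F\otimes\id_\bC$ with the internal Hom from the statement; once this is in place, the lemma is purely formal.
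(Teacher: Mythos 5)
Your argument is correct, and it takes a genuinely different route from the one-line proof in the paper. The paper proves the lemma by invoking the identification $\bD\otimes\bC\simeq\on{Funct}((\bD^c)^{\on{op}},\bC)$, under which $\wt\bd$ becomes a $\bC$-valued functor on $(\bD^c)^{\on{op}}$, the functor $\CHom_\bD(\bd,-)$ becomes evaluation at $\bd$, and $\bd\otimes\bc$ becomes the "co-representable" object, so the claim reduces to the (enriched) Yoneda lemma. You instead observe that $\CHom_\bD(\bd,-)$ as a functor $\bD\otimes\bC\to\bC$ is precisely $F\otimes\id_\bC$ where $F=\CHom_\bD(\bd,-):\bD\to\Vect$, which is continuous by compactness of $\bd$ and right adjoint to $V\mapsto V\otimes\bd$; since $-\otimes\bC$ is a 2-functor on $\DGCat_{\on{cont}}$ it carries adjunctions to adjunctions, and the lemma is exactly the resulting $(-\otimes\bd\otimes\bc)\dashv(F\otimes\id_\bC)$ adjunction. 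The two approaches buy different things: the paper's is shorter given the functor-category model, but relies on that nontrivial description of the tensor product with a compactly generated category; yours is more elementary, using only that $\DGCat_{\on{cont}}$ is monoidal and that $\bd$ is compact, and as a byproduct cleanly justifies why $\bd_1\otimes\bc_1\mapsto\CHom_\bD(\bd,\bd_1)\otimes\bc_1$ extends to a well-defined continuous functor on $\bD\otimes\bC$ (it \emph{is} $F\otimes\id_\bC$, so the "subtle point" you flag is actually immediate from the universal property of the Lurie tensor product applied to the evident bilinear functor).
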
 

\begin{proof}

Follows by interpreting $\bD\otimes \bC$ as 
$$\on{Funct}((\bD^c)^{\on{op}},\bC).$$

\end{proof}

\sssec{}

We apply \lemref{l:internal Hom} to the two sides in \eqref{e:ff tensor}. We obtain that the left-hand side identifies with
$$\CHom_{\bC}(\bc,\CHom_{\Shv(\CY)^{\fL(N),\chi}}(\on{Av}^{\fL(N),\chi}_!(\CF),\wt\CF)),$$
and the right-hand side with 
$$\CHom_{\bC}(\bc,\CHom_{\Shv(\CY)}(\CF,\oblv_{\fL(N),\chi}(\wt\CF))).$$

\medskip

Hence, it remains to show that the two functors $\Shv(\CY)^{\fL(N),\chi}\otimes \bC\to \bC$, given by
$$\wt\CF\mapsto \CHom_{\Shv(\CY)^{\fL(N),\chi}}(\on{Av}^{\fL(N),\chi}_!(\CF),\wt\CF) \text{ and }
\wt\CF\mapsto \CHom_{\Shv(\CY)}(\CF,\oblv_{\fL(N),\chi}(\wt\CF))$$
are isomorphic.

\medskip

For that it suffices to identify the corresponding functors $\Shv(\CY)^{\fL(N),\chi}\times \bC\rightrightarrows \bC$
that send $\CF_1\times \bc_1$ to 
$$\CHom_{\Shv(\CY)^{\fL(N),\chi}}(\on{Av}^{\fL(N),\chi}_!(\CF),\CF_1)\otimes \bc_1 \text{ and }
\CHom_{\Shv(\CY)}(\CF,\oblv_{\fL(N),\chi}(\CF_1))\otimes \bc_1,$$
respectively.

\medskip

Now the assertion follows from the $(\on{Av}^{\fL(N),\chi}_!,\oblv_{\fL(N),\chi})$-adjunction. 

\ssec{Proof of \thmref{t:enough}}  \label{ss:Rask1}

The proof of \thmref{t:enough}, given below, is based on a geometric construction due to S.~Raskin. 

\sssec{}  \label{sss:adolesc}

For $j\geq 1$ let $\overset{\circ}I{}^j$ denote the subgroup of $\fL^+(G)$ consisting of points that belong to the preimage of 
$$\fL^+(N)_j\subset \fL^+(G)_j$$
under the projection
$$\fL^+(G)\to \fL^+(G)_j:=\fL^+(G)/K_j.$$
(I.e., this is the subgroup consisting of points that belong to $N$ modulo $t^j$.) 

\medskip

Note that for $j=1$, the subgroup $\overset{\circ}I{}^1$ is the unipotent radical of the Iwahori subgroup. By convention, for $j=0$
we set $\overset{\circ}I{}^0=\fL^+(G)$. 

\medskip

Denote 
$$I^j:=\on{Ad}_{-j\check\rho(t)}(\overset{\circ}I{}^j)\subset \fL(G).$$

\sssec{}

Consider the intersection
$$I^j\cap \fL(N).$$

We claim that the character $\chi|_{I^j\cap \fL(N)}$ can be canonically extended to all of $I^j$. Namely, $\chi|_{I^j\cap \fL(N)}$
factors through the projection
$$I^j\cap \fL(N) \overset{\on{Ad}_{j\check\rho(t)}}\longrightarrow \overset{\circ}I{}^j\cap \fL(N)=\fL^+(N)\to  \fL^+(N)_j,$$
and we define the sought-for extension (to be denoted also by $\chi$) 
to be the restriction of the resulting character on $\fL^+(N)_j$ along
$$I^j\overset{\on{Ad}_{j\check\rho(t)}}\longrightarrow \overset{\circ}I{}^j\to  \fL^+(N)_j.$$

\sssec{}

For any $j$ we can consider the category $\Shv(\CY)^{I^j,\chi}$, or more generally 
$$(\Shv(\CY)\otimes \bC)^{I^j,\chi}$$
for a test DG category $\bC$.

\medskip

Note that for $j\geq 1$, the group-scheme $I^j$ is pro-unipotent, and so $(\Shv(\CY)\otimes \bC)^{I^j,\chi}$
is a full subcategory of $\Shv(\CY)\otimes \bC$. 

\medskip

We have the functor
\begin{equation} \label{e:from infty to j}
\Av^{I^j,\chi}_*\circ \oblv_{\fL(N),\chi}:(\Shv(\CY)\otimes \bC)^{\fL(N),\chi}\to (\Shv(\CY)\otimes \bC)^{I^j,\chi}.
\end{equation} 

Note that for $j\geq 1$ we have $I^j=(I^j\cap \fL^+(B^-))\cdot (I^j\cap \fL(N))$, and so the above functor identifies with
$$\Av^{I^j\cap \fL^+(B^-)}_*\circ \oblv_{\fL(N),\chi}.$$

\medskip

The functor $\Av^{I^j,\chi}_*\circ \oblv_{\fL(N),\chi}$ considered above has a partially defined left adjoint given by
\begin{equation} \label{e:from j to infty}
\Av^{\fL(N),\chi}_!\circ \oblv_{I^j,\chi}.
\end{equation} 

\sssec{}

We have the following key result due to S.~Raskin (this is the case of $m=\infty$ in Theorem 2.7.1 in \cite{Ras}, which
is the most elementary part of that paper): 

\begin{thm} \label{t:Raskin} 
Any object in the essential image of $\oblv_{I^j,\chi}$ is $(\fL(N),\chi)$-adapted.
\end{thm}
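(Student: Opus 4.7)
The plan is to construct the left adjoint $\on{Av}^{\fL(N),\chi}_!$ explicitly on objects of the form $\oblv_{I^j,\chi}(\CF')\otimes \bc$ using a pro-finite approximation scheme, and then to verify that the construction is manifestly compatible with tensoring by the auxiliary category $\bC$.

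First I would fix $\CF'\in (\Shv(\CY)\otimes \bC)^{I^j,\chi}$ and $\bc\in\bC$, and set $\CF := \oblv_{I^j,\chi}(\CF')\otimes \bc$. Writing $\fL(N)=\underset{\alpha}{\on{colim}}\, N^\alpha$ as in \eqref{e:loop N as colim}, I choose $\alpha_0$ so that $I^j\cap \fL(N)=\on{Ad}_{-j\check\rho(t)}(\fL^+(N))\subset N^{\alpha_0}$; such $\alpha_0$ exists because both systems $\{N^\alpha\}$ and $\{\on{Ad}_{-\lambda(t)}(\fL^+(N))\}_\lambda$ are cofinal in the filtration of $\fL(N)$ by its closed group-subschemes. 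The point is that $\CF$ is tautologically $(N^{\alpha_0},\chi)$-equivariant, because $(I^j\cap\fL(N))$-equivariance is forced by the $I^j$-equivariance and the compatibility of characters.

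Next, for $\alpha\geq \alpha_0$ I would define $\on{Av}^{N^\alpha,\chi}_!(\CF)$ as follows. Using the pro-finite presentation \eqref{e:arc N as lim}, one reduces to averaging against a finite-dimensional unipotent quotient of $N^\alpha/N^{\alpha_0}$. For a finite-dimensional unipotent group $H$ acting on a category with a character, the partially-defined $!$-averaging coincides with $*$-averaging up to a cohomological shift by $2\dim H$ (and a Tate twist), and is therefore defined on all of the category. This gives a well-defined object $\on{Av}^{N^\alpha,\chi}_!(\CF)\in (\Shv(\CY)\otimes \bC)^{N^\alpha,\chi}$ for each $\alpha\geq \alpha_0$, and I would then set
\[
\on{Av}^{\fL(N),\chi}_!(\CF) := \underset{\alpha\geq \alpha_0}{\on{colim}}\, \on{Av}^{N^\alpha,\chi}_!(\CF).
\]
The adjunction property then follows by passing to the limit in the adjunctions for each $N^\alpha$.

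The hardest step \emph{and} the main obstacle is showing that this colimit is in fact stable, i.e.\ that for $\alpha_0\leq \alpha\leq \alpha'$ the natural map $\on{Av}^{N^\alpha,\chi}_!(\CF)\to \on{Av}^{N^{\alpha'},\chi}_!(\CF)$ is an isomorphism, so that the colimit actually lands in $(\Shv(\CY)\otimes \bC)^{\fL(N),\chi}$. This is where Raskin's geometric input is needed: using a translation by $-\lambda(t)$ in the loop group, together with non-degeneracy of $\chi$ (i.e.\ the fact that $\chi$ is non-trivial on every simple root subgroup in $N^{\alpha'}/N^\alpha$), one shows that the ``extra directions'' in $N^{\alpha'}/N^\alpha$ contribute trivially to the average. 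Equivalently, the averaging along the quotient $N^{\alpha'}/N^\alpha$ of a $(N^\alpha,\chi)$-equivariant object against the \emph{restriction} of $\chi$ is the identity (up to an appropriate shift absorbed into the normalization), because this quotient is a pro-unipotent group on which $\chi$ induces a character that is matched by the equivariance already present.

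Finally, tensor-compatibility with arbitrary $\bC$ is built in from the start: each finite-dimensional averaging step is a pullback/pushforward/convolution with a character sheaf on a scheme of finite type, and therefore commutes with $-\otimes\bC$ by \secref{sss:external product}. Passing to the colimit preserves this, yielding the isomorphism $\on{Av}^{\fL(N),\chi}_!(\CF\otimes \bc)\simeq F_\bC(\on{Av}^{\fL(N),\chi}_!(\CF)\otimes \bc)$. This verifies both bullet points of the definition of $(\fL(N),\chi)$-adapted and completes the proof, modulo the geometric stability statement cited from Sect.~2.11 of \cite{Ras}.
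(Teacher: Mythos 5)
Your proposal does not actually prove the theorem: it reduces it to two assertions, both of which are essentially equivalent to the content of the theorem itself, and neither of which you establish. First, you claim that ``for a finite-dimensional unipotent group $H$ acting on a category with a character, the partially-defined $!$-averaging coincides with $*$-averaging up to a cohomological shift by $2\dim H$ $\ldots$ and is therefore defined on all of the category.'' This is false as a general statement about unipotent groups: the $!$-averaging and $*$-averaging functors are the left and right adjoints of the (fully faithful) forgetful functor, and they agree up to a shift only on objects that extend \emph{cleanly} across the boundary of an appropriate compactification of the action map. (For instance, for $\BG_a$ acting on itself by translation, $\on{Av}_!$ and $\on{Av}_*$ agree on $\delta_0$ but differ by $[-2]$ on the constant sheaf --- there is no uniform shift.) The cleanness is precisely what the non-degeneracy of $\chi$ buys, and it must be proved, not invoked. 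Second, the ``hardest step'' you flag --- the stabilization of the system $\on{Av}^{N^\alpha,\chi}_!(\CF)$ --- is again a restatement of the cleanness phenomenon, and you only gesture at Raskin's input without reproducing any argument.

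More importantly, your reduction throws away the information that makes the cleanness argument go through. You retain from the hypothesis only the $(I^j\cap\fL(N),\chi)$-equivariance of $\CF$, whereas the paper's proof crucially uses the \emph{full} $I^j$-equivariance. Concretely, Proposition~\ref{p:K-equiv} shows that the pullback of $\chi^!(\on{A-Sch})\tboxtimes\CF$ to $\fL(N)\fL^+(G)^j\times\CY$ is, in addition to $(\fL(N),\chi)$-equivariant, also $K^j_j$-equivariant with respect to right multiplication --- and this $K^j_j$-equivariance (which comes from the part of $I^j$ \emph{not} contained in $\fL(N)$) is what allows one to pass to the quotient $\ol{\fL(N)\fL^+(G)^j}/K^j_j$ and run the stabilizer argument of Lemma~\ref{l:character non-trivial} to prove the cleanness statement (Proposition~\ref{p:pure clean}). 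The paper then compactifies the action map via the closure $\ol{\fL(N)\fL^+(G)^j}$ in $\fL(G)$ (so that the compactified action map is ind-proper and its $!$- and $*$-pushforwards agree) and uses cleanness to pass from the open stratum to the compactification. Your proposal never engages with this geometry, so the verification of the first bullet point in the definition of $(\fL(N),\chi)$-adapted is not actually carried out. The tensor-compatibility part of your argument is fine modulo the above, since once cleanness is known the averaging is built from finite-type schematic operations that commute with $-\otimes\bC$.
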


As an immediate corollary, we obtain:

\begin{cor} 
The left adjoint \eqref{e:from j to infty} of \eqref{e:from infty to j} is defined.
\end{cor}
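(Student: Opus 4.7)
The corollary reduces to \thmref{t:Raskin} via the standard principle that the locus on which a partially defined left adjoint exists is closed under colimits. I will apply that principle to the adjoint pair $(\Av^{\fL(N),\chi}_!,\oblv_{\fL(N),\chi})$ after writing an arbitrary object of $(\Shv(\CY)\otimes \bC)^{I^j,\chi}$ as a colimit of tensor products of compact equivariant objects with objects of $\bC$.

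The first step is a general fact: for a continuous functor $R:\bD\to \bE$ between presentable DG categories, the full subcategory of $\bE$ consisting of those $e$ on which the partially defined left adjoint $L$ is defined is closed under colimits, and $L$ commutes with such colimits. Indeed, if $e\simeq \underset{i}{\on{colim}}\, e_i$ with $L(e_i)$ defined, then
\[
\Maps_\bE(e,R(d))\simeq \underset{i}{\on{lim}}\,\Maps_\bE(e_i,R(d))\simeq \underset{i}{\on{lim}}\,\Maps_\bD(L(e_i),d)\simeq \Maps_\bD(\underset{i}{\on{colim}}\, L(e_i),d),
\]
so $L(e)$ is defined and equals $\underset{i}{\on{colim}}\, L(e_i)$. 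I will apply this to $R=\oblv_{\fL(N),\chi}$.

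Next, since $j\geq 1$ the group $I^j$ is pro-unipotent, so $\oblv_{I^j,\chi}$ is a fully faithful continuous functor, and the analog of \thmref{t:Whit and tens} for $I^j$ is a standard (much easier) fact: the functor
\[
F_\bC^{I^j}:\Shv(\CY)^{I^j,\chi}\otimes \bC\to (\Shv(\CY)\otimes \bC)^{I^j,\chi}
\]
is an equivalence. Moreover $\Shv(\CY)^{I^j,\chi}$ is compactly generated (by pro-unipotent equivariance applied to the compactly generated $\Shv(\CY)$). Hence any $\wt\CF\in (\Shv(\CY)\otimes \bC)^{I^j,\chi}$ may be written as a colimit of pure tensors $\CF_0^i\otimes \bc^i$, with $\CF_0^i\in \Shv(\CY)^{I^j,\chi}$ compact and $\bc^i\in \bC$. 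Since $\oblv_{I^j,\chi}$ is continuous, this gives
\[
\oblv_{I^j,\chi}(\wt\CF)\simeq \underset{i}{\on{colim}}\; \oblv_{I^j,\chi}(\CF_0^i)\otimes \bc^i
\]
in $\Shv(\CY)\otimes \bC$.

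Finally, \thmref{t:Raskin} asserts that each $\oblv_{I^j,\chi}(\CF_0^i)\in \Shv(\CY)$ is $(\fL(N),\chi)$-adapted; by the very definition of adaptedness, $\Av^{\fL(N),\chi}_!$ is then defined on each $\oblv_{I^j,\chi}(\CF_0^i)\otimes \bc^i$. Combining with the colimit-closure principle of the first step, $\Av^{\fL(N),\chi}_!$ is defined on $\oblv_{I^j,\chi}(\wt\CF)$, which is exactly the assertion of the corollary. The only substantive input is \thmref{t:Raskin} itself, so there is no genuine obstacle in the corollary; the mildest technical point is the pro-unipotent equivalence $F_\bC^{I^j}$, but this avoids the \emph{ind}-direction that makes \thmref{t:Whit and tens} non-trivial for $\fL(N)$.
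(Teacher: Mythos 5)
Your proof is correct and spells out exactly what the paper leaves implicit when it calls this an "immediate corollary" of \thmref{t:Raskin}: one reduces an arbitrary object of $(\Shv(\CY)\otimes\bC)^{I^j,\chi}$ to a colimit of pure tensors $\CF_0\otimes\bc$ via the pro-unipotent equivalence $\Shv(\CY)^{I^j,\chi}\otimes\bC\simeq(\Shv(\CY)\otimes\bC)^{I^j,\chi}$, invokes adaptedness term-by-term, and closes under colimits. The general principle that the domain of a partially defined left adjoint is closed under colimits, the fact that $\Shv(\CY)^{I^j,\chi}$ is compactly generated, and the retract argument for the pro-unipotent $I^j$ are all correct and are indeed the intended ingredients.
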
 

For completeness, we will sketch the proof of \thmref{t:Raskin} in \secref{s:proof of Rask}.

\sssec{} \label{sss:proof of enough}

Let us now use \thmref{t:Raskin} to prove \thmref{t:enough}. 

\medskip

First off, the category $\Shv(\CY)^{I^j,\chi}$ is compactly generated (e.g., by \cite{DrGa}). 
Moreover, the functor 
$$\oblv_{I^j,\chi}:\Shv(\CY)^{I^j,\chi}\to \Shv(\CY)$$
sends compacts to compacts (being a left adjoint of the continuous functor $\Av^{I^j,\chi}_*$). 

\medskip

Thus, it remains to see that the essential images of the functors \eqref{e:from j to infty} (for all $j$)
generate the category $(\Shv(\CY)\otimes \bC)^{\fL(N),\chi}$. This is equivalent to saying that the
intersection of the kernels of the functors \eqref{e:from infty to j} is zero. 

\medskip

We will take $j\geq 1$. We will show that the intersection of the kernels of the functors
$\Av^{I^j\cap \fL^+(B^-)}_*$ is zero on all of $\Shv(\CY)\otimes \bC$. The latter assertion
is equivalent to the fact that the essential images of the functors
$$\oblv_{I^j\cap \fL^+(B^-)}:(\Shv(\CY)\otimes \bC)^{I^j\cap \fL^+(B^-)}\to \Shv(\CY)\otimes \bC$$
generates $\Shv(\CY)\otimes \bC$. 

\medskip

However, the latter is obvious, as $I^j\cap \fL^+(B^-)$ shrink as $j\to \infty$.

\qed[\thmref{t:enough}]

\section{A dual definition of the local Whittaker category}  \label{s:dual}

In this section we will define another version of the local Whittaker category, by following a 
procedure dual to that used in the definition of $\Whit(\CY)$: instead of invariants we will
use coinvariants. 

\medskip

We will eventually see that the new category, denoted $\Whit(\CY)_{\on{co}}$, is equivalent to
the original $\Whit(\CY)$. But the functor establishing this equivalence will be something non-tautological. 

\ssec{Digression: invariant functors and categorical \emph{coinvariants}}  \label{ss:coinv}

In order to prepare for the dual definition of the local Whittaker category, we will first consider the
finite-dimensional situation. 

\sssec{}

First, let $N'$ be a unipotent group equipped with a character $\chi:N'\to \BG_a$, and an action on a scheme $Y$. 

\medskip

For a DG category $\bC$, we let
\begin{equation} \label{e:inv functors}
\on{Funct}_{\on{cont}}(\Shv(Y),\bC)^{N',\chi}\subset \on{Funct}_{\on{cont}}(\Shv(Y),\bC)
\end{equation}
be the full subcategory that consists of continuous functors $F:\Shv(Y)\to \bC$, for which the natural transformation
$$F\circ \on{Av}^{N',\chi}_*\to F$$
is an isomorphism.

\medskip

The inclusion \eqref{e:inv functors} admits a right adjoint, given by
$$F\mapsto \on{Av}^{N',\chi}_*\circ F.$$

\sssec{}  \label{sss:tensor prod and inv}

Note that using the Verdier self-duality 
$$\Shv(Y)\simeq \Shv(Y)^\vee, \quad \langle \CF,\CF'\rangle:=\Gamma(Y,\CF\sotimes \CF')$$
we can identify 
$$\Shv(Y)\otimes \bC\simeq \on{Funct}_{\on{cont}}(\Shv(Y),\bC), \quad \CF\otimes \bc\mapsto 
(\CF'\mapsto \langle \CF,\CF'\rangle \otimes \bc).$$

\medskip

In terms of this identification, we have
$$(\Shv(Y)\otimes \bC)^{N',-\chi}\simeq 
\on{Funct}_{\on{cont}}(\Shv(Y),\bC)^{N',\chi},$$
where the LHS is understood in the sense of \secref{sss:with coeff}.

\sssec{}

We define the category $\Shv(Y)_{N',\chi}$ to be universal among DG categories $\bC$ equipped with a functor 
$$F:\Shv(Y)\to \bC, \quad F\in \on{Funct}_{\on{cont}}(\Shv(Y),\bC)^{N',\chi}.$$

Denote the resulting universal functor 
$$\Shv(Y)\to \Shv(Y)_{N',\chi}$$
by $p^{N',\chi}$.

\medskip

We claim, however:

\begin{prop}  \label{p:coinv as inv}
There exists a canonical identification of pairs $$(\Shv(Y)_{N',\chi},p^{N',\chi})\simeq (\Shv(Y)^{N',\chi},\on{Av}^{N',\chi}_*).$$ 
\end{prop}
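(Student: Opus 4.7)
The strategy is to realize $\on{Av}^{N',\chi}_*$ as the universal $(N',\chi)$-invariant continuous functor out of $\Shv(Y)$, so that the universal property defining $\Shv(Y)_{N',\chi}$ is satisfied. The essential ingredient I would first establish (or invoke from the general theory) is that the forgetful functor
$$\oblv_{N',\chi}: \Shv(Y)^{N',\chi} \to \Shv(Y)$$
is fully faithful. For connected unipotent $N'$ and trivial character this is the standard fact that $*$-pullback along the classifying map $Y \to Y/N'$ is fully faithful; the twisted case reduces to the untwisted one via the identification $\bC^{H,\CL} \simeq (\bC_{\CL^{-1}})^H$ of \secref{sss:with coeff}. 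Equivalently, the unit $\on{Id}_{\Shv(Y)^{N',\chi}} \to \on{Av}^{N',\chi}_* \circ \oblv_{N',\chi}$ is an isomorphism, and the counit $\oblv_{N',\chi} \circ \on{Av}^{N',\chi}_* \to \on{Id}_{\Shv(Y)}$ exhibits $\on{Av}^{N',\chi}_*$ as a continuous Bousfield localization of $\Shv(Y)$ onto $\Shv(Y)^{N',\chi}$.

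Next I would check the universal property by hand. For a test DG category $\bC$, I claim that the assignments $G \mapsto G \circ \on{Av}^{N',\chi}_*$ and $F \mapsto F \circ \oblv_{N',\chi}$ define mutually inverse equivalences
$$\on{Funct}_{\on{cont}}(\Shv(Y)^{N',\chi},\bC) \;\rightleftarrows\; \on{Funct}_{\on{cont}}(\Shv(Y),\bC)^{N',\chi}.$$
Given a continuous $G:\Shv(Y)^{N',\chi}\to\bC$, the composite $G\circ\on{Av}^{N',\chi}_*$ satisfies the required invariance condition because $\on{Av}^{N',\chi}_*\circ\oblv_{N',\chi}\circ\on{Av}^{N',\chi}_*\simeq \on{Av}^{N',\chi}_*$ by full faithfulness of $\oblv_{N',\chi}$. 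Conversely, for $F$ invariant in the sense of \eqref{e:inv functors}, set $G := F\circ \oblv_{N',\chi}$; then
$$G\circ \on{Av}^{N',\chi}_* \;=\; F\circ \oblv_{N',\chi}\circ \on{Av}^{N',\chi}_* \;\simeq\; F,$$
the last isomorphism being precisely the invariance hypothesis. The round trip $G \mapsto G\circ \on{Av}^{N',\chi}_*\circ \oblv_{N',\chi}\simeq G$ uses full faithfulness again, closing the equivalence.

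By Yoneda, this equivalence of functor categories means that the pair $(\Shv(Y)^{N',\chi},\on{Av}^{N',\chi}_*)$ corepresents the same $2$-functor $\bC \mapsto \on{Funct}_{\on{cont}}(\Shv(Y),\bC)^{N',\chi}$ as $(\Shv(Y)_{N',\chi},p^{N',\chi})$, which yields the desired identification of pairs. The only step that is not purely formal is the full faithfulness of $\oblv_{N',\chi}$ in the twisted setting, and this is the place where one genuinely uses that $N'$ is unipotent together with the twisting recipe of \secref{sss:with coeff}. Alternatively, the entire proposition can be read off as a special case of the general invariants/coinvariants equivalence for finite-dimensional algebraic groups (\thmref{t:coinvariants}), already flagged in the introduction, applied to the category $\bC=\Shv(Y)$.
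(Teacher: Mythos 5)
Your proof is correct and follows essentially the same route as the paper: the key input in both cases is that $\oblv_{N',\chi}$ is fully faithful (so that $\on{Av}^{N',\chi}_*\circ\oblv_{N',\chi}\simeq\on{Id}$), and one checks that $-\circ\on{Av}^{N',\chi}_*$ and $-\circ\oblv_{N',\chi}$ furnish mutually inverse equivalences between $\on{Funct}_{\on{cont}}(\Shv(Y)^{N',\chi},\bC)$ and the invariant subcategory of $\on{Funct}_{\on{cont}}(\Shv(Y),\bC)$, functorially in $\bC$. The paper packages the round-trip verifications more compactly as a matching of idempotent endofunctors on $\on{Funct}_{\on{cont}}(\Shv(Y),\bC)$ (both colocalizations being given by precomposition with $\oblv_{N',\chi}\circ\on{Av}^{N',\chi}_*$), but the content is the same; your closing remark that the statement also falls out of the general finite-dimensional invariants/coinvariants equivalence (\thmref{t:coinvariants}) is valid, though that theorem is considerably stronger (it does not assume unipotence, so $\oblv$ need not be fully faithful) and its proof lies much deeper.
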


\begin{proof}

We need to establish an equivalence
$$\on{Funct}_{\on{cont}}(\Shv(Y),\bC)^{N',\chi}\simeq \on{Funct}_{\on{cont}}(\Shv(Y)^{N',\chi},\bC),$$
such that the forgetful functor \eqref{e:inv functors} corresponds to
\begin{equation} \label{e:inv again}
\on{Funct}_{\on{cont}}(\Shv(Y)^{N',\chi},\bC) \overset{-\circ \on{Av}^{N',\chi}_*}\longrightarrow \on{Funct}_{\on{cont}}(\Shv(Y),\bC),
\end{equation} 
in a way functorial in $\bC$.

\medskip

Note that \eqref{e:inv again} is fully faithful and admits a right adjoint given by restriction along $\oblv_{N',\chi}$.
Hence, it is enough to show that the corresponding two idempotents on 
$\on{Funct}_{\on{cont}}(\Shv(Y),\bC)$ match up. However, they are both given by pre-composition with 
$\Av^{N',\chi}_*$. 

\end{proof} 

\begin{cor}  \label{c:coinv as inv} \hfill

\smallskip

\noindent{\em(a)}
The composite functor
$$\Shv(Y)^{N',\chi}\overset{\oblv_{N',\chi}}\longrightarrow  \Shv(Y)\overset{p^{N',\chi}}\longrightarrow \Shv(Y)_{N',\chi}$$
is an equivalence. 

\smallskip

\noindent{\em(b)} 
The inverse equivalence, precomposed with $p^{N',\chi}$, identifies with $\Av^{N',\chi}_*$. 

\end{cor}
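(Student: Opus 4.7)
The plan is to deduce both statements directly from \propref{p:coinv as inv}, which packages the entire content: it provides a canonical identification of pairs $(\Shv(Y)_{N',\chi},p^{N',\chi})\simeq (\Shv(Y)^{N',\chi},\on{Av}^{N',\chi}_*)$. Under this identification, the composite $p^{N',\chi}\circ \oblv_{N',\chi}$ appearing in (a) becomes, tautologically, $\on{Av}^{N',\chi}_*\circ \oblv_{N',\chi}$, viewed now as an endofunctor of $\Shv(Y)^{N',\chi}$.

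First I would invoke the fact, recorded already in the definition of equivariance for a unipotent group, that $\oblv_{N',\chi}:\Shv(Y)^{N',\chi}\to \Shv(Y)$ is fully faithful. Phrased in terms of the $(\oblv_{N',\chi},\on{Av}^{N',\chi}_*)$-adjunction, this says that the unit is an isomorphism, so $\on{Av}^{N',\chi}_*\circ \oblv_{N',\chi}\simeq \on{Id}_{\Shv(Y)^{N',\chi}}$. Transporting back across the equivalence of pairs supplied by \propref{p:coinv as inv} yields assertion (a).

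Part (b) is then immediate from the construction: by \propref{p:coinv as inv}, the inverse of the equivalence in (a), viewed as a functor $\Shv(Y)_{N',\chi}\to \Shv(Y)^{N',\chi}$, is exactly the one intertwining $p^{N',\chi}$ with $\on{Av}^{N',\chi}_*$. Precomposing it with $p^{N',\chi}:\Shv(Y)\to \Shv(Y)_{N',\chi}$ therefore produces $\on{Av}^{N',\chi}_*$ on the nose. There is no genuine obstacle here; the substantive work has been front-loaded into \propref{p:coinv as inv}, so this corollary really is an unpacking of that proposition together with the standard fully-faithfulness of invariants against a character of a unipotent group.
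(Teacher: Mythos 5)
Your proposal is correct and follows exactly the paper's argument: transport the composite across the identification of pairs from \propref{p:coinv as inv}, so that it becomes $\Av^{N',\chi}_*\circ\oblv_{N',\chi}$, and then use that this endofunctor is isomorphic to the identity because $\oblv_{N',\chi}$ is fully faithful. You also correctly extract part (b) as an immediate reading-off from the same identification, which is how the paper treats it (it leaves (b) implicit in its one-line proof).
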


\begin{proof}
In terms of the identification of \propref{p:coinv as inv}, the functor in question corresponds to the endofunctor
$$\Av^{N',\chi}_*\circ \oblv_{N',\chi}$$
of $\Shv(Y)^{N',\chi}$, which is isomorphic to the identity.
\end{proof} 

\sssec{}  \label{sss:coinv as quot}

The pair $(\Shv(Y)_{N',\chi},p^{N',\chi})$ can be also described as a Verdier quotient. 

\medskip

Namely, it is obtained by taking the quotient 
of $\Shv(Y)$ by the full DG subcategory consisting of annihilated by the functor $\on{Av}^{N',\chi}_*$. 

\ssec{The dual local Whittaker category}

We will now define a dual version of the Whittaker category, to be denoted 
$$\Whit(\CY)_{\on{co}}:=\Shv(\CY)_{\fL(N),\chi}.$$

\sssec{}

For given a DG category $\bC$, we can consider the DG category
$$\on{Funct}_{\on{cont}}(\Shv(\CY),\bC).$$

We define the full subcategory
$$\on{Funct}_{\on{cont}}(\Shv(\CY),\bC)^{\fL(N),\chi}\subset \on{Funct}_{\on{cont}}(\Shv(\CY),\bC),$$
essentially by mimicking the procedure in \secref{ss:inv}: 

\medskip

Namely, for $\fL(N)$ written as in \eqref{e:loop N as colim}, we set
\begin{multline*}
\on{Funct}_{\on{cont}}(\Shv(\CY),\bC)^{\fL(N),\chi}:=
\underset{\alpha}{\on{lim}}\, \on{Funct}_{\on{cont}}(\Shv(\CY),\bC)^{N^\alpha,\chi}\simeq \\
\simeq \underset{\alpha}\cap\, \on{Funct}_{\on{cont}}(\Shv(\CY),\bC)^{N^\alpha,\chi}\subset \on{Funct}_{\on{cont}}(\Shv(\CY),\bC),
\end{multline*}
so we have to make sense of $\on{Funct}_{\on{cont}}(\Shv(\CY),\bC)^{N^\alpha,\chi}\subset \on{Funct}_{\on{cont}}(\Shv(\CY),\bC)$.

\medskip

Using \secref{sss:limits and colimits}, we have
$$\on{Funct}_{\on{cont}}(\Shv(\CY),\bC)=\on{Funct}_{\on{cont}}(\underset{i}{\on{colim}}\, \Shv(Y_i),\bC)\simeq
\underset{i}{\on{lim}}\, \on{Funct}_{\on{cont}}(\Shv(Y_i),\bC),$$
and in terms of this equivalence, we set
$$\on{Funct}_{\on{cont}}(\Shv(\CY),\bC)^{N^\alpha,\chi}=\underset{i}{\on{lim}}\, 
\on{Funct}_{\on{cont}}(\Shv(Y_i),\bC)^{N^\alpha,\chi}
\subset \underset{i}{\on{lim}}\, \on{Funct}_{\on{cont}}(\Shv(Y_i),\bC).$$

Thus, it remains to define
$$\on{Funct}_{\on{cont}}(\Shv(Y_i),\bC)^{N^\alpha,\chi} \subset \on{Funct}_{\on{cont}}(\Shv(Y_i),\bC).$$

We set $$\on{Funct}_{\on{cont}}(\Shv(Y_i),\bC)^{N^\alpha,\chi}:=
\on{Funct}_{\on{cont}}(\Shv(Y_i),\bC)^{N^\alpha_\beta,\chi}\subset \on{Funct}_{\on{cont}}(\Shv(Y_i),\bC),$$
for $N^\alpha$ presented as in \eqref{e:arc N as lim}.

\medskip

This completes the definition of the full subcategry
$$\on{Funct}_{\on{cont}}(\Shv(\CY),\bC)^{\fL(N),\chi}\subset
\on{Funct}_{\on{cont}}(\Shv(\CY),\bC).$$ 

\sssec{}

We are now ready to define $\Shv(\CY)_{\fL(N),\chi}$. Namely, we let it be the universal among DG categories $\bC$ equipped with a functor 
$$F:\Shv(\CY)\to \bC, \quad F\in \on{Funct}_{\on{cont}}(\Shv(\CY),\bC)^{\fL(N),\chi}.$$

\medskip

Denote the resulting universal functor
$$\Shv(\CY)\to \Shv(\CY)_{\fL(N),\chi}$$
by $p^{\fL(N),\chi}$.

\sssec{}

It follows from the definitions that $\Shv(\CY)_{\fL(N),\chi}$ identifies tautologically with the colimit in $\DGCat_{\on{cont}}$ 
$$\underset{\alpha}{\on{colim}}\, \Shv(\CY)_{N^\alpha,\chi},$$
where the colimit is taken in $\DGCat_{\on{cont}}$. 

\sssec{}

Using \secref{sss:coinv as quot}, we can also describe $\Shv(\CY)_{\fL(N),\chi}$ as the quotient of $\Shv(\CY)$
by the full DG subcategory \emph{generated} by objects
$$\{\CF\,|\, \exists \alpha\, \text{ such that } \on{Av}^{N^\alpha,\chi}_*(\CF)=0\}.$$

\ssec{Properties of the dual Whittaker category}

We will now discuss some basic properties of $\Whit(\CY)_{\on{co}}$. We will see that it is really the dual category
of $\Whit(\CY)$ (up to replacing $\chi$ by its inverse). An essential feature of our infinite-dimensional setting
is that the tautological composite functor
$$\Whit(\CY)\to \Shv(\CY)\to \Whit(\CY)_{\on{co}}$$
is identically equal to zero, by stark contrast with the finite-dimensional situation. 

\sssec{}

By \secref{sss:tensor prod and inv}, we obtain:

\begin{lem} \label{l:simple Verdier duality}
For any $\bC$, under the Verdier duality identification
$$\on{Funct}_{\on{cont}}(\Shv(\CY),\bC)\simeq \Shv(\CY)\otimes \bC,$$
the full subcategory
$$\on{Funct}_{\on{cont}}(\Shv(\CY)_{\fL(N),\chi},\bC)\hookrightarrow \on{Funct}_{\on{cont}}(\Shv(\CY),\bC)$$
corresponds to 
$$(\Shv(\CY)\otimes \bC)^{\fL(N),-\chi}\subset \Shv(\CY)\otimes \bC.$$
\end{lem}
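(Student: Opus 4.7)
The plan is to unwind the definitions on both sides of the claimed identification until one is reduced to the finite-dimensional statement already recorded in \secref{sss:tensor prod and inv}.

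First, by the defining universal property of the quotient $\Shv(\CY)_{\fL(N),\chi}$ (as the initial DG category equipped with an $(\fL(N),\chi)$-invariant continuous functor out of $\Shv(\CY)$), the left-hand embedding identifies tautologically with
\[
\on{Funct}_{\on{cont}}(\Shv(\CY),\bC)^{\fL(N),\chi}\hookrightarrow \on{Funct}_{\on{cont}}(\Shv(\CY),\bC).
\]
So the task is to compare this with $(\Shv(\CY)\otimes \bC)^{\fL(N),-\chi}\subset \Shv(\CY)\otimes \bC$ under Verdier self-duality. Both subcategories are defined as limits/intersections: the former along the index $\alpha\in A$ of \eqref{e:loop N as colim}, and then along the presentation \eqref{e:indsch as colim} of $\CY$ as a colimit of schemes $Y_i$; the same double limit describes the right-hand side. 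Thus it will suffice to establish the identification (compatibly with restriction/inclusion maps in $\alpha$ and $i$) at the level of each pair $(Y_i,N^\alpha_\beta)$ with $N^\alpha_\beta$ a finite-dimensional unipotent group acting on the scheme of finite type $Y_i$.

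Next I would assemble the duality ingredients. The self-duality $\Shv(\CY)\simeq \Shv(\CY)^\vee$ from \secref{sss:duality} is induced by the pieces $\Shv(Y_i)\simeq \Shv(Y_i)^\vee$ in a way compatible with the transition functoriality: under this identification, the colimit presentation of $\Shv(\CY)$ in \eqref{e:categ on indsch as colim} (with $*$-pushforwards) is dual to the limit presentation (with $!$-pullbacks), so that
\[
\on{Funct}_{\on{cont}}(\Shv(\CY),\bC)\simeq \underset{i}{\on{lim}}\, \on{Funct}_{\on{cont}}(\Shv(Y_i),\bC)\simeq \underset{i}{\on{lim}}\,\Shv(Y_i)\otimes \bC
\]
matches $\Shv(\CY)\otimes \bC$, as written in the definitions of both sides of the claim. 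Under this matching, the $\alpha$-indexed limit and the $\beta$-indexed presentations of $N^\alpha$ in \eqref{e:arc N as lim} are literally the same on both sides; so the question really reduces to the assertion that under $\on{Funct}_{\on{cont}}(\Shv(Y_i),\bC)\simeq \Shv(Y_i)\otimes \bC$, the full subcategory of $(N^\alpha_\beta,\chi)$-invariant functors corresponds to the $(N^\alpha_\beta,-\chi)$-invariant subcategory of the tensor product.

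This last fact is exactly the content of \secref{sss:tensor prod and inv}: it was noted there that under the Verdier duality pairing $\langle\CF,\CF'\rangle=\Gamma(Y,\CF\sotimes\CF')$, the $(N',\chi)$-invariant functor subcategory on one side matches the $(N',-\chi)$-invariant subcategory on the other, the sign of the character being forced by the duality pairing. I do not expect a serious obstacle here — everything is a careful bookkeeping of how limits, colimits, and Verdier duality interact. The one point demanding attention is to verify that the identifications at each $(i,\alpha,\beta)$-level are compatible with the restriction functors along $(\alpha'\to \alpha'')\in A$, $(\beta'\to \beta'')\in B_{\alpha,i}$, and the closed embeddings $Y_i\to Y_{i'}$; this follows from the base-change compatibility between $*$-pushforward and $!$-averaging for unipotent group actions, together with the fact that under self-duality the functors $\on{ins}_i$ and evaluation at $Y_i$ swap (cf.\ \secref{sss:duality}(i)). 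Once these compatibilities are in place, the two double limits agree as full subcategories of $\Shv(\CY)\otimes \bC$, proving the lemma.
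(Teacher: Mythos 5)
Your proposal is correct and follows essentially the same route the paper intends: the paper introduces the lemma with the phrase "By \secref{sss:tensor prod and inv}, we obtain:" and supplies no further argument, and your proof is precisely the unwinding of that reduction — identifying $\on{Funct}_{\on{cont}}(\Shv(\CY)_{\fL(N),\chi},\bC)$ with $\on{Funct}_{\on{cont}}(\Shv(\CY),\bC)^{\fL(N),\chi}$ via the universal property, reducing along the double limit over $(i,\alpha)$ (with the $\beta$'s handled as in the definitions), and applying the finite-dimensional statement of \secref{sss:tensor prod and inv} at each stage.
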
 

\medskip

Combined with \thmref{t:Whit and tens}:

\begin{cor}  \label{c:dual dualizable} 
The category $\Shv(\CY)_{\fL(N),\chi}$ identifies with the dual of $\Shv(\CY)^{\fL(N),-\chi}$, so that the functor
$$p^{\fL(N),\chi}:\Shv(\CY)\to \Shv(\CY)_{\fL(N),\chi}$$
is the dual of
$$\oblv_{\fL(N),\chi}:\Shv(\CY)^{\fL(N),-\chi}\to \Shv(\CY).$$
\end{cor}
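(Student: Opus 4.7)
The plan is to derive both assertions formally by combining \lemref{l:simple Verdier duality} with \thmref{t:Whit and tens}, together with the Yoneda-style characterization of dualizability in $\DGCat_{\on{cont}}$. For any test DG category $\bC$, I string together a chain of equivalences, natural in $\bC$:
\begin{multline*}
\on{Funct}_{\on{cont}}(\Shv(\CY)_{\fL(N),\chi}, \bC) \simeq \on{Funct}_{\on{cont}}(\Shv(\CY),\bC)^{\fL(N),\chi} \simeq \\
\simeq (\Shv(\CY) \otimes \bC)^{\fL(N),-\chi} \simeq \Shv(\CY)^{\fL(N),-\chi} \otimes \bC.
\end{multline*}
The first equivalence is the universal property defining $\Shv(\CY)_{\fL(N),\chi}$ as coinvariants; the second is \lemref{l:simple Verdier duality}; and the third is \thmref{t:Whit and tens} applied with the character $-\chi$ in place of $\chi$. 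Since $\bC \mapsto \on{Funct}_{\on{cont}}(\Shv(\CY)_{\fL(N),\chi}, \bC)$ is thus corepresented by $\Shv(\CY)^{\fL(N),-\chi}\otimes -$, this exhibits $\Shv(\CY)_{\fL(N),\chi}$ as dualizable with dual $\Shv(\CY)^{\fL(N),-\chi}$, yielding the first assertion.

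For the identification of $p^{\fL(N),\chi}$ with the dual of $\oblv_{\fL(N),-\chi}$, the plan is to trace through the chain of equivalences above with $\bC=\Vect$. A functional $\lambda\in\on{Funct}_{\on{cont}}(\Shv(\CY)_{\fL(N),\chi},\Vect)$ corresponds to an object $\CF'\in\Shv(\CY)^{\fL(N),-\chi}$; unwinding the identifications in \lemref{l:simple Verdier duality} (which goes through the Verdier self-duality $\Shv(\CY)\simeq\Shv(\CY)^\vee$) shows that $\lambda$ is characterized by the identity $\lambda\circ p^{\fL(N),\chi}=\langle -,\oblv_{\fL(N),-\chi}(\CF')\rangle$ as functionals on $\Shv(\CY)$. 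The dual $(p^{\fL(N),\chi})^\vee$ is by definition the map on duals induced by pre-composition with $p^{\fL(N),\chi}$, and the preceding identity precisely identifies this map with $\CF'\mapsto \oblv_{\fL(N),-\chi}(\CF')$, as required.

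The substantive content has really already been packaged into \thmref{t:Whit and tens}: \lemref{l:simple Verdier duality} by itself would only match functionals into $\Vect$, which gives at best a pairing, whereas genuine dualizability in $\DGCat_{\on{cont}}$ requires the corepresentability equivalence for \emph{all} test categories $\bC$ --- this is exactly what \thmref{t:Whit and tens} supplies. Consequently, the only real obstacle lies in \thmref{t:Whit and tens} (proved via Raskin's adapted objects), and the corollary reduces to the bookkeeping exercise sketched above.
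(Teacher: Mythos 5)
Your proof is correct and follows the paper's own route: the paper's one-line proof ("Combined with \thmref{t:Whit and tens}") is exactly the chain of identifications you spell out, with \lemref{l:simple Verdier duality} supplying the coinvariants-to-invariants step and \thmref{t:Whit and tens} supplying the corepresentability needed for genuine dualizability. Your unwinding of the $p^{\fL(N),\chi}$/$\oblv$ duality at $\bC=\Vect$ is likewise the intended bookkeeping, and your observation that \lemref{l:simple Verdier duality} alone would only give a pairing (not dualizability) correctly locates the load-bearing role of \thmref{t:Whit and tens}.
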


Since $\Shv(\CY)^{\fL(N),-\chi}$ is is compactly generated, we further obtain:

\begin{cor}  \label{c:dual compact}  \hfill

\smallskip

\noindent{\em(a)}
The category $\Shv(\CY)_{\fL(N),\chi}$ is compactly generated.

\smallskip

\noindent{\em(b)} Let $\CF\in \Shv(\CY)^c$ be such that the functor 
$\Av^{\fL(N),-\chi}_!$ is defined on $\CF$. Then 
$$p^{\fL(N),\chi}(\BD^{\on{Verdier}}(\CF))\in \Shv(\CY)_{\fL(N),\chi}$$
is compact, and 
$$\CHom_{\Shv(\CY)_{\fL(N),\chi}}(p^{\fL(N),\chi}(\BD^{\on{Verdier}}(\CF)),-)\simeq
\langle \Av^{\fL(N),-\chi}_!,-\rangle,$$
where 
$$\BD^{\on{Verdier}}:(\Shv(\CY)^c)^{\on{op}}\to \Shv(\CY)^c$$
denotes the Verdier duality functor and $\langle -,-\rangle$ denotes the canonical pairing
$$\Shv(\CY)^{\fL(N),-\chi}\otimes \Shv(\CY)_{\fL(N),\chi}\to \Vect.$$

\end{cor}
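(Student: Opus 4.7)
The plan is to deduce both assertions directly from the preceding \corref{c:dual dualizable} together with \thmref{t:comp gen} and the standard duality formalism for compactly generated DG categories.

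For \emph{part (a)}: by \corref{c:dual dualizable} we have an equivalence $\Shv(\CY)_{\fL(N),\chi} \simeq (\Shv(\CY)^{\fL(N),-\chi})^\vee$. Setting $\bC := \Shv(\CY)^{\fL(N),-\chi}$, which is compactly generated by \thmref{t:comp gen}, we get $\bC^\vee \simeq \on{Ind}((\bC^c)^{\on{op}})$, which is manifestly compactly generated; compact generators are the images of objects of $\bC^c$ under the tautological anti-equivalence $(\bC^c)^{\on{op}} \simeq (\bC^\vee)^c$.

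For \emph{part (b)}: I will show that $p^{\fL(N),\chi}(\BD^{\on{Verdier}}(\CF))$ is precisely the compact object of $\bC^\vee$ corresponding, under this anti-equivalence, to the compact object $\Av^{\fL(N),-\chi}_!(\CF) \in \bC^c$. Note that this latter object is indeed compact: $\CF$ is compact and $\oblv_{\fL(N),-\chi}$ is continuous, so its partially defined left adjoint sends compact objects (in its domain) to compact objects. Once the identification is established, the compactness of $p^{\fL(N),\chi}(\BD^{\on{Verdier}}(\CF))$ is immediate, and the displayed $\CHom$ formula follows from the Yoneda-type identity $\CHom_{\bC^\vee}(c^\vee,-) \simeq \langle c,-\rangle$ valid for any $c \in \bC^c$ and its image $c^\vee \in (\bC^\vee)^c$ (viewing an object of $\bC^\vee$ as a continuous functor $\bC \to \Vect$ and evaluating at $c$).

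The identification itself is a direct application of the definition of dual functors. By \corref{c:dual dualizable}, $p^{\fL(N),\chi}$ is dual to $\oblv_{\fL(N),-\chi}$, so for any $c \in \bC$ and $\CG \in \Shv(\CY)$
$$\langle c, p^{\fL(N),\chi}(\CG) \rangle_{\bC \otimes \bC^\vee} = \langle \oblv_{\fL(N),-\chi}(c), \CG \rangle_{\Shv(\CY) \otimes \Shv(\CY)}.$$
Specializing to $\CG = \BD^{\on{Verdier}}(\CF)$ and using that the Verdier self-duality pairing of \secref{sss:duality} satisfies $\langle \CF', \BD^{\on{Verdier}}(\CF)\rangle = \CHom_{\Shv(\CY)}(\CF, \CF')$ for compact $\CF$, we obtain
$$\langle c, p^{\fL(N),\chi}(\BD^{\on{Verdier}}(\CF)) \rangle = \CHom_{\Shv(\CY)}(\CF, \oblv_{\fL(N),-\chi}(c)) = \CHom_\bC(\Av^{\fL(N),-\chi}_!(\CF), c),$$
where the second equality invokes the hypothesis that $\Av^{\fL(N),-\chi}_!$ is defined on $\CF$. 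This is exactly the universal property that pins down the dual compact object, completing the proof. There is no substantive obstacle here: the entire corollary is formal once \corref{c:dual dualizable} and \thmref{t:comp gen} are in hand.
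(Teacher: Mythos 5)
Your argument is correct and is precisely the formal argument the paper implicitly intends: the paper offers no explicit proof of this corollary, stating it immediately after \corref{c:dual dualizable} with the preamble ``Since $\Shv(\CY)^{\fL(N),-\chi}$ is compactly generated, we further obtain,'' and your chain of identifications — passing through $\bC^\vee\simeq\on{Ind}((\bC_c)^{\on{op}})$ for part (a), and combining the duality $\langle c, p^{\fL(N),\chi}(\CG)\rangle = \langle \oblv_{\fL(N),-\chi}(c),\CG\rangle$ with Verdier self-duality and the $(\Av^{\fL(N),-\chi}_!,\oblv_{\fL(N),-\chi})$-adjunction for part (b) — is exactly the expected unwinding. (You also silently correct two small slips in the paper's typesetting: the missing ``$(\CF)$'' on the right-hand side of the displayed $\CHom$ formula, and the subscript $\chi$ versus $-\chi$ on $\oblv$ in \corref{c:dual dualizable}; both corrections are harmless and correct.)
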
 

\sssec{}

In order to develop a ``feel" for what $\Shv(\CY)_{\fL(N),\chi}$ is like, let us describe the corresponding category
$\Shv(\CY^\mu)_{\fL(N),\chi}$, where $\CY^\mu$ is, as in \secref{ss:on stratum}.

\medskip

We have the following counterpart of \lemref{l:reduce to fd} (with the same proof):

\begin{lem}  \label{l:reduce to fd co}
We have a canonical identification 
$$ \Shv(\CY^\mu)_{\fL(N),\chi}\simeq \Shv(Y^\mu)_{N^\mu,\chi}$$
so that the projection functor 
$$\Shv(\CY^\mu) \overset{p^{\fL(N),\chi}}\longrightarrow \Shv(\CY^\mu)_{\fL(N),\chi}$$
goes over to 
$$\Shv(\CY^\mu) \simeq \Shv(Y^\mu\times N')   \overset{\sotimes -\chi^!(\on{A-Sch})}\longrightarrow \Shv(Y^\mu\times N')
\to \Shv(Y^\mu) \overset{p^{N^\mu,\chi}}\longrightarrow
\Shv(Y^\mu)_{N^\mu,\chi},$$
where the third arrow is the functor of *-direct image. 
\end{lem}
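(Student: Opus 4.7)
The plan is to imitate the proof of \lemref{l:reduce to fd} step by step, with colimits of coinvariant categories replacing limits of invariant categories throughout. Using the decomposition $\CY^\mu \simeq Y^\mu \times N'$ together with the filtration $N^\alpha = N^\mu \cdot N'_\alpha$ introduced in \secref{sss:Y mu}, and recalling from \secref{sss:shv on indsch} that $\Shv(Y^\mu \times N')$ identifies with $\underset{\alpha_1}{\on{colim}}\, \Shv(Y^\mu \times N'_{\alpha_1})$ with respect to $*$-pushforwards, I would write
\begin{multline*}
\Shv(\CY^\mu)_{\fL(N),\chi} \simeq \underset{\alpha}{\on{colim}}\, \Shv(Y^\mu \times N')_{N^\mu \cdot N'_\alpha,\chi} \simeq \\
\simeq \underset{\alpha}{\on{colim}}\, \underset{\alpha_1 \geq \alpha}{\on{colim}}\, \Shv(Y^\mu \times N'_{\alpha_1})_{N^\mu \cdot N'_\alpha,\chi} \simeq \underset{\alpha}{\on{colim}}\, \Shv(Y^\mu \times N'_\alpha)_{N^\mu \cdot N'_\alpha,\chi},
\end{multline*}
where the last step uses cofinality of the diagonal $\{\alpha_1 = \alpha\}$, precisely mirroring the corresponding step in \lemref{l:reduce to fd}.

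For each fixed $\alpha$, the pro-unipotent group $N^\mu \cdot N'_\alpha$ acts on the finite-dimensional scheme $Y^\mu \times N'_\alpha$ through a finite-dimensional unipotent quotient, so \corref{c:coinv as inv} applies and identifies $\Shv(Y^\mu \times N'_\alpha)_{N^\mu \cdot N'_\alpha,\chi}$ with the invariants category $\Shv(Y^\mu \times N'_\alpha)^{N^\mu \cdot N'_\alpha,\chi}$. The proof of \lemref{l:reduce to fd} already shows the latter is equivalent to $\Shv(Y^\mu)^{N^\mu,\chi}$, and by a further application of \corref{c:coinv as inv} this is equivalent to $\Shv(Y^\mu)_{N^\mu,\chi}$. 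Thus every transition map in the outer colimit is an equivalence, and the colimit collapses to $\Shv(Y^\mu)_{N^\mu,\chi}$.

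To identify the projection functor, I would trace through this chain of equivalences. At each finite-dimensional stage $Y^\mu \times N'_\alpha$, the universal functor $p^{N^\mu \cdot N'_\alpha,\chi}$, when combined with \corref{c:coinv as inv}(b), computes as $\on{Av}^{N^\mu \cdot N'_\alpha,\chi}_*$. Factoring this averaging into the $N'_\alpha$-direction and the $N^\mu$-direction, and applying the standard Verdier-dual expression for equivariantization against a non-degenerate character on a finite-dimensional unipotent group, yields the composite of $\sotimes$-ing with $-\chi^!(\on{A-Sch})$ in the $N'_\alpha$-direction followed by $*$-direct image along the projection to $Y^\mu$ and then $p^{N^\mu,\chi}$, which is exactly the formula stated.

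The main obstacle I expect is verifying that the identifications arising from \corref{c:coinv as inv} at different values of $\alpha$ are coherent in such a way that the colimit diagram becomes a diagram of equivalences at the level of coinvariant categories, not merely of plain DG categories. An equivalent and arguably cleaner route is to invoke \corref{c:dual dualizable} to deduce the lemma from \lemref{l:reduce to fd} by passing to duals, replacing $\chi$ by $-\chi$; this reduces the task to matching the dual of the forgetful functor of \lemref{l:reduce to fd} with the composite above, and that matching is the core computation in either approach.
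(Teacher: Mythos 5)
Your primary approach is exactly the paper's: the paper literally says the lemma holds ``with the same proof'' as \lemref{l:reduce to fd}, meaning one replaces limits of invariant categories by colimits of coinvariant categories throughout. Your chain of identifications, the cofinality-of-the-diagonal step, and the reduction to the finite-dimensional stage via \corref{c:coinv as inv} all mirror the intended argument correctly, and the identification of the projection functor is the expected dual-shaped computation. On the coherence point you raise: the equivalence $(\Shv(Y)_{N',\chi},p^{N',\chi})\simeq(\Shv(Y)^{N',\chi},\Av^{N',\chi}_*)$ of \propref{p:coinv as inv} is manifestly functorial in the pair (scheme, group), so it intertwines the transition functors on the two sides of the colimit diagram automatically; this is why the paper treats the matter as routine. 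As for your alternative via \corref{c:dual dualizable}: that corollary is stated for all of $\CY$ and its proof goes through \thmref{t:Whit and tens}, which is not directly stated for an individual stratum $\CY^\mu$; one would need to note that the analogous dualizability for $\Shv(\CY^\mu)^{\fL(N),\chi}$ follows from \lemref{l:reduce to fd} itself (it reduces everything to a finite-dimensional equivariant category), so the duality route is valid but involves a small extra check that the direct mirroring avoids.
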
 

\sssec{}

Consider the composite functor
\begin{equation} \label{e:stupid functor}
\Shv(\CY)^{\fL(N),\chi}\overset{\oblv_{\fL(N),\chi}}\longrightarrow 
\Shv(\CY)\overset{p^{\fL(N),\chi}}\longrightarrow  \Shv(\CY)_{\fL(N),\chi}.
\end{equation} 

We note that \emph{contrary} to the finite-dimensional situation described by \corref{c:coinv as inv}(a), we have:

\begin{prop}
The functor \eqref{e:stupid functor} is identically equal to zero.
\end{prop}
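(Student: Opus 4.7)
The plan is to reduce the vanishing to a pointwise calculation on each $\fL(N)$-orbit stratum, where the explicit descriptions in \lemref{l:reduce to fd} and \lemref{l:reduce to fd co} are available.

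First, by \propref{p:generated}(b), the category $\Shv(\CY)^{\fL(N),\chi}$ is generated by the essential images of the functors $(\iota^\mu)_*$ for $\mu$ with $\mu+n\cdot\rho\in \Lambda^+_\BQ$. Since both $\oblv_{\fL(N),\chi}$ and $p^{\fL(N),\chi}$ are continuous, and since $\oblv_{\fL(N),\chi}$ commutes with $(\iota^\mu)_*$ (both being genuinely pushforward at the level of underlying sheaves) and $p^{\fL(N),\chi}$ is also compatible with $(\iota^\mu)_*$ (because $\iota^\mu$ is $\fL(N)$-equivariant, so the projection onto coinvariants passes through pushforward), we may reduce to checking that the corresponding composite
\[
\Shv(\CY^\mu)^{\fL(N),\chi} \overset{\oblv_{\fL(N),\chi}}\longrightarrow \Shv(\CY^\mu) \overset{p^{\fL(N),\chi}}\longrightarrow \Shv(\CY^\mu)_{\fL(N),\chi}
\]
is identically zero for each relevant $\mu$.

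Next, we trace this composite through the identifications of \lemref{l:reduce to fd} and \lemref{l:reduce to fd co}, using the factorization $\CY^\mu \simeq Y^\mu \times N'$. Starting from $\CF_0 \in \Shv(Y^\mu)^{N^\mu,\chi}$, the forgetful functor yields $\CF_0\boxtimes \chi^!(\on{A-Sch})$ on $Y^\mu\times N'$. Applying the three-step description of $p^{\fL(N),\chi}$, we first take $\sotimes$ with $(-\chi)^!(\on{A-Sch})$ along the $N'$-factor to obtain $\CF_0 \boxtimes \bigl(\chi^!(\on{A-Sch})\sotimes (-\chi)^!(\on{A-Sch})\bigr)$, then $*$-pushforward to $Y^\mu$, then apply $p^{N^\mu,\chi}$. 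By the K\"unneth decomposition the answer is $p^{N^\mu,\chi}(\CF_0\otimes W)$, where
\[
W := \pi_*\bigl(\chi^!(\on{A-Sch}) \overset{!}\otimes (-\chi)^!(\on{A-Sch})\bigr), \qquad \pi: N' \to \on{pt}.
\]
Hence everything reduces to proving $W = 0$.

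To compute $W$, use that $\chi:N'\to \BG_a$ is a group homomorphism, so the commuting square relating the diagonals of $N'$ and of $\BG_a$ gives $\chi^!(\on{A-Sch})\sotimes (-\chi)^!(\on{A-Sch}) \simeq \chi^!\bigl(\on{A-Sch}\sotimes \on{A-Sch}^\vee\bigr) \simeq \chi^!(\omega_{\BG_a})$, a shifted-constant (``dualizing-type'') sheaf on $N'$. Now write $N'$ as a filtered colimit of finite-dimensional unipotent subgroups $N'_\beta$; the $!$-restriction of $\chi^!(\omega_{\BG_a})$ to $N'_\beta$ is canonically $\omega_{N'_\beta}\simeq \sfe_{N'_\beta}[2\dim N'_\beta]$, whose global sections live in cohomological degree $-2\dim N'_\beta$. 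As $\beta$ grows, these degrees tend to $-\infty$, and in the inverse-limit description of $\pi_*$ provided by the placid ind-scheme structure of $N'$ this forces $W = 0$. This last cohomological vanishing—making rigorous the infinite-dimensional Borel--Moore-style calculation on the placid ind-scheme $N'$—is the main technical obstacle, but it is exactly the mechanism by which the infinite-dimensional situation diverges from the finite-dimensional one, where $W$ would have been the nonzero invariant forcing $p\circ \oblv$ to be the identity via \corref{c:coinv as inv}.
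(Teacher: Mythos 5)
Your proof is correct and follows essentially the same route as the paper's: reduce to strata via \propref{p:generated}(b), trace the composite through \lemref{l:reduce to fd} and \lemref{l:reduce to fd co} to identify it as tensoring with $\Gamma(N',\omega_{N'})$ (your $W$, possibly off by a uniform cohomological shift from identifying $\on{A-Sch}\sotimes\on{A-Sch}^\vee$ with $\omega_{\BG_a}$, which is harmless), and then observe that this vanishes because the transition maps in $\underset{\alpha}{\on{colim}}\,\Gamma(N'_\alpha,\omega_{N'_\alpha})$ go to ever more negative degrees.
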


\begin{proof}

We have a commutative diagram
$$
\CD
\Shv(\CY^\mu)^{\fL(N),\chi}  @>{\oblv_{\fL(N),\chi}}>>    \Shv(\CY^\mu)  @>{p^{\fL(N),\chi}}>>
\Shv(\CY^\mu)_{\fL(N),\chi}  \\
@V{(\iota^\mu)_*}VV  @V{(\iota^\mu)_*}VV  @VV{(\iota^\mu)_*}V \\
\Shv(\CY)^{\fL(N),\chi}  @>{\oblv_{\fL(N),\chi}}>>    \Shv(\CY)  @>{p^{\fL(N),\chi}}>>
\Shv(\CY)_{\fL(N),\chi}.  \\
\endCD
$$

By \propref{p:generated}(b), it suffices to show that the corresponding functor  
\begin{equation} \label{e:stupid functor lambda}
\Shv(\CY^\mu)^{\fL(N),\chi} \overset{\oblv_{\fL(N),\chi}}\longrightarrow  \Shv(\CY^\mu)
\overset{p^{\fL(N),\chi}}\longrightarrow \Shv(\CY^\mu)_{\fL(N),\chi}
\end{equation}
is zero. 

\medskip

Using Lemmas \ref{l:reduce to fd} and \ref{l:reduce to fd co}, it suffices to show that the functor 
$$\Shv(Y^\mu) \overset{-\boxtimes \chi^!(\on{A-Sch})}\longrightarrow 
\Shv(Y^\mu\times N')  \overset{\sotimes -\chi^!(\on{A-Sch})}\longrightarrow \Shv(Y^\mu\times N')\to
\Shv(Y^\mu)$$
is zero. 

\medskip

However, the latter functor is given by tensoring with
$$\Gamma(N',\omega_{N'})\simeq \underset{\alpha}{\on{colim}}\, \Gamma(N'_\alpha,\omega_{N'_\alpha})\in \Vect,$$
and the latter is zero, as it is infinitely connective. 

\end{proof}

\ssec{The pseudo-identity functor} \label{ss:pseudo-id}

As we have just seen, an analog of \corref{c:coinv as inv}(a) completely fails in our situation: the corresponding composite
functor is identically equal to $0$. 

\medskip

However, we will be able
to salvage \corref{c:coinv as inv}(b). Namely, we will define a (renormalized) analog of the functor 
of *-averaging with respect to $(\fL(N),\chi)$ that would factor through $\Shv(\CY)_{\fL(N),\chi}$ and give rise
to an equivalence $\Shv(\CY)_{\fL(N),\chi}\to \Shv(\CY)^{\fL(N),\chi}$. 

\medskip

The definition of this functor depends on the choice of a lattice $N_0\subset \fL(N)$; a natural such choice is
$N_0=\fL^+(N)$. 

\sssec{} 

Choose a presentation of $\fL(N)$ as in \eqref{e:loop N as colim}. With no restriction of generality,
we can assume that $N_0\subset N^\alpha$ for all $\alpha$. 

\medskip

For each $\alpha$ we consider the endofunctor 
$$\oblv_{N^\alpha,\chi}\circ \Av^{N^\alpha,\chi}_*[2\dim(N^\alpha/N_0)]$$ of $\Shv(\CY)$.
We claim that the assignment
$$\alpha\rightsquigarrow \oblv_{N^\alpha,\chi}\circ \Av^{N^\alpha,\chi}_*[2\dim(N^\alpha/N_0)]$$
lifts to a functor
$$A\to \on{Funct}_{\on{cont}}(\Shv(\CY),\Shv(\CY)),$$
i.e., we have a homotopy-coherent system of natural transformations 
$$\oblv_{N^{\alpha'},\chi}\circ \Av^{N^{\alpha'},\chi}_*[2\dim(N^{\alpha'}/N_0)]\to 
\oblv_{N^{\alpha''},\chi}\circ \Av^{N^{\alpha''},\chi}_*[2\dim(N^{\alpha''}/N_0)]$$
for $N^{\alpha'}\subset N^{\alpha''}$. 

\sssec{}

Namely, in terms of the action of $\Shv(\fL(N))$ on $\Shv(\CY)$
(see \secref{sss:loop group acting}), the functor $$\oblv_{N^\alpha,\chi}\circ \Av^{N^\alpha,\chi}_*$$ is given by convolution with the object
$$\sfe_{N^\alpha}\sotimes \chi^!(\on{A-Sch})\in \Shv(\fL(N^\alpha))\subset \Shv(\fL(N)).$$

Now, we claim that the assignment
$$\alpha\rightsquigarrow \sfe_{N^\alpha}[2\dim(N^\alpha/N_0)]$$
extends to a functor 
$$A\to \Shv(\fL(N)).$$

\medskip

Indeed, the object $\sfe_{N^\alpha}$ is the *-pullback of $\sfe_{N^\alpha/N_0}\in \Shv(\fL(N)/N_0)$ along $\fL(N)\to \fL(N)/N_0$,
while since $N^\alpha/N_0$ is smooth, we have
$$\sfe_{N^\alpha/N_0}\simeq \omega_{N^\alpha/N_0}[-2\dim(N^\alpha/N_0)].$$

Now, the desired functor comes from the functor
$$A\to \Shv(\fL(N)/N_0), \quad \alpha\mapsto \omega_{N^\alpha/N_0}, \quad 
N^{\alpha'}\subset N^{\alpha''} \mapsto (\omega_{N^{\alpha'}/N_0}\to \omega_{N^{\alpha''}/N_0}).$$

\sssec{} \label{sss:ren inv}

We define 
$$\Av^{\fL(N),\chi}_{*,\on{ren}}:=\underset{\alpha\in A}{\on{colim}}\, 
\oblv_{N^\alpha,\chi}\circ \Av^{N^\alpha,\chi}[2\dim(N^\alpha/N_0)].$$

\medskip

We claim that the essential image of $\Av^{\fL(N),\chi}_{*,\on{ren}}$ is contained in the essential image of 
$\oblv_{\fL(N),\chi}$.

\medskip

Indeed, by definition, we need to show that the essential image of $\Av^{\fL(N),\chi}_{*,\on{ren}}$ is contained in the essential image of 
$\oblv_{N_{\alpha'},\chi}$ for every $\alpha'\in A$. However, for every $\CF\in \Shv(\CY)$ and $\alpha'\in A$, 
the objects $\oblv_{N^\alpha,\chi}\circ \Av^{N^\alpha,\chi}[2\dim(N^\alpha/N_0)](\CF)$
belong to the essential image of $\oblv_{N_{\alpha'},\chi}$ for $\alpha\geq \alpha'$. 

\begin{rem}  \label{r:ren av}
One can view $\Av^{\fL(N),\chi}_{*,\on{ren}}$ as a renormalized version of *-averaging with respect to $(\fL(N),\chi)$ in the
following sense:

\medskip  

In the situation of \secref{ss:coinv} (say, for the trivial character), the functor $\Av^{N'}_*$ is given by 
$$\on{act}_*\circ p^*,$$
where
$$\on{act},p:N'\times Y\rightrightarrows Y$$
are the action and the projection maps. Set
$$\Av^{N'}_{*,\on{ren}}:=\on{act}_*\circ p^!.$$

We have:
$$\Av^{N'}_{*,\on{ren}}\simeq \Av^{N'}_*[2\dim(N')].$$

Now, in the situation when $N'$ is a group ind-scheme \emph{of ind-finite type}, the functor $p^*$ makes no sense
(or, rather, defines a pro-object). So we have to use $p^!$, and we get a well-defined functor 
$\Av^{N'}_{*,\on{ren}}$. 

\medskip

When $N'$ is a group ind-scheme not of ind-finite type, such as $\fL(N)$, in order to have
a well-defined $p^!$, we need a choose a lattice $N_0\subset N'$. This leads to the definition of 
$\Av^{\fL(N),\chi}_{*,\on{ren}}$ given above.

\end{rem}

\sssec{}   \label{sss:PsId}

For the same reason as in \secref{sss:ren inv}, we have:
$$\Av^{\fL(N),\chi}_{*,\on{ren}}\in \on{Funct}_{\on{cont}}(\Shv(\CY),\Shv(\CY))^{\fL(N),\chi}.$$

\medskip

Hence, we obtain that the functor $\Av^{\fL(N),\chi}_{*,\on{ren}}$ factors as
$$\Shv(\CY) \overset{p^{\fL(N),\chi}}\longrightarrow \Shv(\CY)_{\fL(N),\chi} \overset{\on{Ps-Id}_{\Whit}}\longrightarrow
\Shv(\CY)^{\fL(N),\chi}.$$
for a uniquely defined functor
$$\on{Ps-Id}_{\Whit}:\Shv(\CY)_{\fL(N),\chi} \to \Shv(\CY)^{\fL(N),\chi}.$$

\sssec{An example}
Consider the functor $\Av^{\fL(N)}_{*,\on{ren}}$ applied to the category $\Shv(\fL(N)/N')$,
where $N'\subset N_0$ is a group-subscheme of finite codimension. We have the canonical
identifications
$$\Vect\simeq \Shv(\fL(N)/N_0)^{\fL(N)}, \quad \sfe\mapsto \omega_{\fL(N)/N_0}$$
and
$$\Shv(\fL(N)/N_0)_{\fL(N)}\simeq \Vect, \quad \CF\mapsto \Gamma(\fL(N)/N_0,\CF).$$
With respect to the above identifications, the functor $\Av^{\fL(N)}_{*,\on{ren}}$, viewed
as an endo-functor of $\Vect$ is the cohomological shift by $[-2(\dim(N_0/N'))]$.

\medskip

By contrast, if we apply the functor $\Av^{\fL(N)}_{*,\on{ren}}$ to $\on{Shv}(\on{pt})\simeq \Vect$, 
we obtain the zero functor.

\sssec{}

We have the following key statement that replaces \corref{c:coinv as inv}(b) in our infinite-dimensional setting:

\begin{thm} \label{t:inv vs coinv}
The functor $\on{Ps-Id}_{\Whit}(\CF)$ is an equivalence.
\end{thm}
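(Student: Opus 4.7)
The plan is to derive \thmref{t:inv vs coinv} formally from the main global-to-local theorem \thmref{t:main}, whose statement and proof occupy the bulk of the paper. The conceptual reason this should work is that the global Whittaker category $\Whit(\BunNbx^{G\on{-level}_{n\cdot x}})$ is built using only \emph{finite-dimensional} equivariance (against a character of a group acting on an ind-algebraic stack of ind-finite type), a regime in which the invariants/coinvariants identification is standard, being supplied by \propref{p:coinv as inv}. Once the local Whittaker category is exhibited in such a finite-dimensional guise, \thmref{t:inv vs coinv} becomes a formal consequence.

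More concretely, I would carry out the following steps. First, parallel to \eqref{e:glob Whit intro}, define a global coinvariants category $\Whit(\BunNbx^{G\on{-level}_{n\cdot x}})_{\on{co}}$ as a Verdier quotient of $\Shv(\BunNbx^{G\on{-level}_{n\cdot x}})$ by the appropriate finite-dimensional $(N,\chi)$-coinvariance, and use \propref{p:coinv as inv} to see that it agrees canonically with $\Whit(\BunNbx^{G\on{-level}_{n\cdot x}})$. Second, dualize \thmref{t:main}: using \corref{c:dual dualizable} (the duality between $\Whit(\CY)$ and $\Whit(\CY)_{\on{co}}$ up to swapping $\chi\leftrightarrow -\chi$) together with an analogous self-duality on the global side, obtain a coinvariants analog of \thmref{t:main} asserting that a natural global-to-local functor $\Whit(\BunNbx^{G\on{-level}_{n\cdot x}})_{\on{co}} \to \Whit(\CY)_{\on{co}}$ is an equivalence. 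Composing the three equivalences
$$\Whit(\CY)_{\on{co}} \simeq \Whit(\BunNbx^{G\on{-level}_{n\cdot x}})_{\on{co}} \simeq \Whit(\BunNbx^{G\on{-level}_{n\cdot x}}) \simeq \Whit(\CY)$$
yields an equivalence between $\Whit(\CY)_{\on{co}}$ and $\Whit(\CY)$.

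The main obstacle will be to identify this abstractly-constructed equivalence with the explicit functor $\on{Ps-Id}_{\Whit}$ of \secref{sss:PsId}. The comparison is delicate because $\on{Ps-Id}_{\Whit}$ is defined via the renormalized averaging $\on{Av}^{\fL(N),\chi}_{*,\on{ren}}$, whose construction involves a choice of lattice $N_0$ and a cohomological shift by $[2\dim(N^\alpha/N_0)]$, whereas the global side inherits an implicit but canonical choice. I would carry out the matching stratum by stratum using the stratification of \secref{ss:on stratum}: via \lemref{l:reduce to fd} and \lemref{l:reduce to fd co}, each $\CY^\mu$ reduces to the finite-dimensional situation of \propref{p:coinv as inv} applied to the pro-unipotent group $N^\mu = \on{Ad}_{t^\mu}(\fL^+(N))$ acting on $Y^\mu$, and one verifies that the renormalization shift in $\on{Av}^{\fL(N),\chi}_{*,\on{ren}}$ precisely accounts for the relative dimensions $\dim(N^\alpha/N_0)$ appearing in the strata-wise finite-dimensional identification. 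Given the strata-wise match, fully faithfulness and essential surjectivity of $\on{Ps-Id}_{\Whit}$ then follow from \propref{p:generated}(b) together with the compact generation provided by \thmref{t:comp gen} and \corref{c:dual compact}.
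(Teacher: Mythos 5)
Your proposal captures the correct high-level strategy, which is indeed what the paper does: use the self-duality of the global Whittaker category (\propref{p:Verdier for glob Whit}), the duality $\Whit(\CY)^\vee\simeq\Whit(\CY)_{\on{co}}$ of \corref{c:dual dualizable}, and the equivalence $\pi^!$ of \thmref{t:main} together with its dual $\pi_{*,\Whit}$, to produce an equivalence $\Whit(\CY)_{\on{co}}\to\Whit(\CY)$ that one then must match with $\on{Ps-Id}_{\Whit}$. Note that the paper does not separately prove a ``coinvariants analog of \thmref{t:main}'' as you suggest --- once $\pi^!$ is an equivalence, $\pi_{*,\Whit}$ is an equivalence automatically, being its dual.

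Two points of your plan need correction, however. First, your justification that the global category is accessible because it is built from ``finite-dimensional equivariance'' is imprecise: the global Whittaker condition is equivariance against the \emph{infinite-dimensional} group ind-scheme $\fL_{\ul{y}}(N)$. The feature that makes the global side tractable and self-dual is not \propref{p:coinv as inv} applied directly, but rather the clean-extension property of compact objects (\corref{c:glob Whit comp gen}(a)), which is what powers the duality of \propref{p:Verdier for glob Whit}. Second, and more seriously, your ``main obstacle'' step --- identifying the abstractly constructed equivalence with $\on{Ps-Id}_{\Whit}$ by a stratum-by-stratum comparison --- is underdetermined as stated. Agreement of two functors on objects supported on each stratum $\CY^\mu$ does not by itself identify the functors; one needs a natural transformation to compare, and you do not indicate where it comes from. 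Moreover, stratum-wise agreement is blind to exactly the gluing data the paper is at pains to control (this is the entire subtlety of \thmref{t:main} itself). The paper instead establishes the identification $\pi^!\circ\pi_{*,\Whit}\simeq\on{Ps-Id}_{\Whit}[-2d]$ (\corref{c:PsId glob}) by a direct global computation: \lemref{l:dual pi} identifies $\pi_{*,\Whit}\circ p^{\fL_x(N),\chi_x}$ with $\Av_{*,\on{glob}}^{\Whit}\circ\pi_*$; \propref{p:pi and pi} computes $\pi^!\circ\pi_*$ as $\Av^{N_{X-x}}_{*,\on{ren}}$ via base change along the fiber square defined by the $N_{X-x}$-action; and \propref{p:averagings} expresses $\Av_{*,\on{glob}}^{\Whit}$ in local terms. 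That base-change computation is the crucial missing ingredient in your plan, and it neatly supplies both the natural transformation and the shift $[-2d]$ that you anticipated but did not determine.
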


\thmref{t:inv vs coinv}, as stated above, is due to S.~Raskin. It had been conjectured by the author
in 2008 and established by him for $n=0$ (unpublished). The case of an arbitrary $n$ and $G=GL_r$ was done by
D.~Beraldo in \cite{Be}. The general case was established by S.~Raskin using a new geometric insight.

\medskip

In this paper we will give an alternative proof of \thmref{t:inv vs coinv}, see \corref{c:inv vs coinv}. 
However, our proof is not altogether disjoint from that of Raskin: we will use the main geometric tool of \cite{Ras}, 
namely the subgroups $I^j$ introduced in \secref{ss:Rask1}.
Yet, we will use only the simplest part of \cite{Ras}, incarnated by \thmref{t:Raskin} (or rather its Ran space version). 

\section{The global Whittaker category}   \label{s:global}

In this section we fix a smooth and complete curve $X$ and a point $x\in X$. We will define a \emph{global} version of the Whittaker
category, using various enhancements of the moduli stack $\Bun_G$ of $G$-bundles on $X$. 
The idea is to mimic the definition of the global Whittaker space in the classical theory of automorphic functions. 

\medskip

We will ultimately prove that the global Whittaker category is equivalent to the local one. The corresponding phenomenon
in the classical theory is that the global Whittaker space splits as the tensor product of local Whittaker spaces. 

\ssec{Drinfeld's compactification}  \label{ss:Drinf comp}

In this subsection we recall the definition of the \emph{Drinfeld compactification}, which is an (ind)-algebraic stack
used in the definition of the global Whittaker category .  

\sssec{}

Let $\BunNbx$ be the version of Drinfeld's compactification
introduced in \cite{Ga3}. Namely, $\BunNbx$ is the prestack that classifies the data of a $G$-bundle $\CP_G$ on $X$ 
equipped with injective maps of coherent sheaves
\begin{equation}  \label{e:Plucker maps}
\kappa^{\clambda}:(\omega^{\frac{1}{2}})^{\langle \clambda,2\rho\rangle}\to \CV^{\clambda}_{\CP_G}(\infty \cdot x), \quad \clambda\in \cLambda^+
\end{equation}
(here $\CV^{\clambda}$ denotes the Weyl module of highest weight $\clambda$), such that the maps $\kappa^{\clambda}$ satisfy
the Pl\"ucker relations, i.e., they define a reduction of $\CP_G$ to $B$ at the generic point of $X$. 

\begin{rem}
When the derived group of $G$ is not simply connected, in addition to the Pl\"ucker relations one imposes another closed condition,
restricting the possible defect of the maps \eqref{e:Plucker maps}, see \cite[Sect. 7]{Sch}. However, for the purposes of defining the global Whittaker
category, the difference is material, as the objects satisfying the Whittaker condition will be supported on the closed substack in question.
\end{rem} 

\sssec{}

For $\mu\in \Lambda$, let
$$(\BunNb)_{\leq\mu\cdot x}\subset\BunNbx$$
be the closed subfunctor where we require that for every $\clambda\in \cLambda^+$, the corresponding map \eqref{e:Plucker maps}
has a pole of order $\leq \langle \mu,\clambda \rangle$, i.e., corresponds to a regular map
\begin{equation}  \label{e:Plucker mu}
(\omega^{\frac{1}{2}})^{\langle \clambda,2\rho\rangle}\to \CV^{\clambda}_{\CP_G}(\langle \mu,\clambda \rangle\cdot x). 
\end{equation}

\medskip

For 
\begin{equation} \label{e:order}
\mu_2\leq \mu_1 \, \Leftrightarrow \mu_2-\mu_1\in \Lambda^{\on{pos}} 
\end{equation} 
we have an inclusion
$$(\BunNb)_{\leq\mu_1\cdot x} \subset (\BunNb)_{\leq\mu_2\cdot x},$$
and
\begin{equation} \label{e:BunNb as union}
(\BunNb)_{\infty\cdot x}\simeq \underset{\mu\in \Lambda}{\on{colim}}\, (\BunNb)_{\leq\mu\cdot x},
\end{equation} 
where $\Lambda^{\on{pos}}$ is understood as a poset with the standard order relation, i.e., \eqref{e:order}. 

\medskip

For each $\mu$, the prestack $(\BunNb)_{\leq\mu\cdot x}$ is an algebraic stack; thus \eqref{e:BunNb as union} shows
that $\BunNbx$ is an ind-algebraic stack.

\begin{rem}
Although the poset $\Lambda$ is \emph{not} filtered, its subset corresponding to
those $\mu$, for which $(\BunNb)_{\leq\mu\cdot x}$ intersects a given connected component of $(\BunNb)_{\infty\cdot x}$, is filtered. 
\end{rem}

\ssec{Stratifications of $\BunNbx$}

In this subsection we review various stratifications of Drinfeld's compactification, which will be used in the analysis 
of the structure of the global Whittaker category. 
 
\sssec{}

We denote by 
$$(\BunNb)_{=\mu\cdot x}\subset (\BunNb)_{\leq\mu\cdot x}$$
the open substack, where we require that for for every $\clambda\in \cLambda^+$, the corresponding map \eqref{e:Plucker maps}
has a pole of order \emph{equal} to $\langle \mu,\clambda \rangle$ at $x$. I.e., the map \eqref{e:Plucker mu} is a bundle map \emph{on a neighborhood}
of $x$. 

\sssec{}

One can further subdivide each $(\BunNb)_{=\mu\cdot x}$ into locally closed substacks, according to the order of vanishing of 
the maps \eqref{e:Plucker maps} away from $x$.

\medskip

Namely, let
$$(\BunNb)_{=\mu\cdot x,\on{good\,elswhr}}\subset (\BunNb)_{=\mu\cdot x}$$
be the open substack where we require that the maps \eqref{e:Plucker maps} do not vanish away from $x$, i.e., 
\eqref{e:Plucker mu} is a bundle map. 

\medskip

For each $\lambda\in \Lambda^{\on{pos}}$, let 
$$(\BunNb)_{=\mu\cdot x,\on{def}=\lambda}\subset (\BunNb)_{=\mu\cdot x}$$
be the locally closed substack where each of the maps \eqref{e:Plucker maps} factors as
$$(\omega^{\frac{1}{2}})^{\langle \clambda,2\rho\rangle}\to 
(\omega^{\frac{1}{2}})^{\langle \clambda,2\rho\rangle}(D)\to \CV^{\clambda}_{\CP_G}(\langle \mu,\clambda \rangle\cdot x),$$
where $D$ is a divisor of degree $\langle \lambda,\check\lambda\rangle$ on $X-x$, and the second map is a bundle map. 

\medskip

We have a well-defined map
$$(\BunNb)_{=\mu\cdot x,\on{def}=\lambda}\to (X-x)^\lambda,$$
where for $\lambda=\Sigma\, n_i\cdot \alpha_i$ (here $\alpha_i$'s are the positive coroots) we have
$$(X-x)^\lambda:=\underset{i}\Pi\, (X-x)^{(n_i)}.$$

\medskip

We have
$$(\BunNb)_{=\mu\cdot x,\on{good\,elswhr}}=(\BunNb)_{=\mu\cdot x,\on{def}=0}$$ and
$$(\BunNb)_{=\mu\cdot x}=\underset{\lambda\in \Lambda^{\on{pos}}}\cup\, (\BunNb)_{=\mu\cdot x,\on{def}=\lambda}.$$

\begin{rem} \label{r:not a substack}
In addition to the locally closed substacks
$$(\BunNb)_{=\mu\cdot x,\on{good\,elswhr}}\subset (\BunNb)_{=\mu\cdot x}$$
for an individual $\mu$, 
can define $(\BunNb)_{\infty\cdot x,\on{good\,elswhr}}$ as a subfunctor of 
$(\BunNb)_{\infty\cdot x}$. 

\medskip

The caveat here is that  
$$(\BunNb)_{\infty\cdot x,\on{good\,elswhr}}\hookrightarrow (\BunNb)_{\infty\cdot x}$$
is \emph{not} a locally closed embedding, and $(\BunNb)_{\infty\cdot x,\on{good\,elswhr}}$ is not
even an algebraic stack. 
\end{rem}

\sssec{}

Note that $(\BunNb)_{=\mu\cdot x}$ is \emph{not} quasi-compact. 

\medskip

Let 
$$(\BunNb)_{=\mu\cdot x,\on{def}\leq \lambda}\subset (\BunNb)_{=\mu\cdot x}$$
be the open substack equal to 
$$\underset{0\leq \lambda'\leq \lambda}\cup\, (\BunNb)_{=\mu\cdot x,\on{def}=\lambda'}.$$

\medskip

Then each $(\BunNb)_{=\mu\cdot x,\on{def}\leq \lambda}$ is quasi-compact. 

\sssec{}

Let $\ul{y}=\{y_1,...,y_m\}$ be a finite collection of points on $X-x$. We define an open subfunctor
$$(\BunNb)_{\infty\cdot x,\on{good\,at\,}\ul{y}}\subset \BunNbx$$
by requiring that the maps  \eqref{e:Plucker maps} do not vanish at the points $y_1,...,y_m$.

\medskip

We will use the notation
$$(\BunNb)_{\leq \mu\cdot x,\on{good\,at\,}\ul{y}}:=(\BunNb)_{\leq\mu\cdot x}\cap 
(\BunNb)_{\infty\cdot x,\on{good\,at\,}\ul{y}},$$
$$(\BunNb)_{=\mu\cdot x,\on{good\,at\,}\ul{y}}:=(\BunNb)_{=\mu\cdot x}\cap 
(\BunNb)_{\infty\cdot x,\on{good\,at\,}\ul{y}},$$
$$(\BunNb)_{=\mu\cdot x,\on{def}=\lambda,\on{good\,at\,}\ul{y}}:=
(\BunNb)_{=\mu\cdot x,\on{def}=\lambda} \cap 
(\BunNb)_{\infty\cdot x,\on{good\,at\,}\ul{y}},$$
etc. 

\medskip

Note that the open subfunctors $(\BunNb)_{\infty\cdot x,\on{good\,at\,}\ul{y}}$ for $\ul{y}$ being singletons $\ul{y}=\{y\}$
cover $(\BunNb)_{\infty\cdot x}$.  We have
$$(\BunNb)_{\infty\cdot x,\on{good\,at\,}\ul{y}}=\underset{i=1,...,m}\cap\, (\BunNb)_{\infty\cdot x,\on{good\,at\,}y_i}.$$

\ssec{Adding a level structure}

In order to find an analog of the Whittaker category on $\CY=\fL(G)/K$ where $K\subsetneq \fL^+(G)$, we will
need to introduce a variant of Drinfeld's compactification that has to do with $G$-level structures at $x$. 

\sssec{}

Let $\Bun_G^{G\on{-level}_{n\cdot x}}$ is the moduli stack of $G$-bundles with structure of level $n$ at $x$.

\medskip

The forgetful map
$$\Bun_G^{G\on{-level}_{n\cdot x}}\to \Bun_G$$
is a $\fL^+_x(G)_n$-torsor. 

\sssec{}

Consider the forgetful map
$$\BunNbx\to \Bun_G,$$
and denote
$$\BunNbx^{G\on{-level}_{n\cdot x}}:=\BunNbx\underset{\Bun_G}\times \Bun_G^{G\on{-level}_{n\cdot x}}.$$

\medskip

We will denote by
$$(\BunNb)_{=\mu\cdot x}^{G\on{-level}_{n\cdot x}}\subset \BunNbx^{G\on{-level}_{n\cdot x}}$$
the corresponding locally closed substack, and similarly for 
$$(\BunNb)_{=\mu\cdot x}^{G\on{-level}_{n\cdot x}},\,\, (\BunNb)_{=\mu\cdot x,\on{good\,elswhr}}^{G\on{-level}_{n\cdot x}},\,\,
(\BunNb)_{=\mu\cdot x,\on{def}=\lambda}^{G\on{-level}_{n\cdot x}},\,\, 
(\BunNb)_{=\mu\cdot x,\on{def}\leq\lambda}^{G\on{-level}_{n\cdot x}},\,\,
(\BunNb)_{\infty\cdot x,\on{good\,at\,}\ul{y}}^{G\on{-level}_{n\cdot x}},$$
etc. 

\sssec{}

Note that for a fixed $\mu\in \Lambda$, we have a well-defined map
\begin{equation}  \label{e:eval x}
(\BunNb)^{G\on{-level}_{n\cdot x}}_{=\mu\cdot x}\to
(\fL^+_x(G)_n/\fL^+_x(N)_n)\overset{\fL^+_x(T)}\times \CP^{\omega^\rho}_T(\mu\cdot x).
\end{equation}

\ssec{Action of the loop groupoid away from the level} 

We will now introduce a key tool needed for the definition of the global Whittaker category: the 
action of the loop group $\fL(N)$ by ``regluing". A feature if this construction is that it takes place
at points of the curve \emph{different} from $x$, which is our point of interest. 

\sssec{}  \label{sss:twist of N}

Given $\ul{y}$ as above, we can consider the usual loop (resp., arc) groups $\fL_{\ul{y}}(N)$, $\fL_{\ul{y}}(B)$ and $\fL_{\ul{y}}(G)$
(resp., $\fL^+_{\ul{y}}(N)$,  $\fL^+_{\ul{y}}(B)$ and $\fL^+_{\ul{y}}(G)$).
However, we will change the notation slightly and will use the above symbols to denote 
certain twists of these objects.

\medskip

Namely, consider the $T$-torsor induced from the line bundle $\omega^{\frac{1}{2}}$ by means of the homomorphism
$2\rho:\BG_m\to T$; denote it $\CP^{\omega^\rho}_T$.  Using a (chosen) splitting $T\to B$, we can consider the $B$- and $G$-torsors
$$\CP^{\omega^\rho}_B:=B\overset{T}\times \omega^\rho \text{ and }
\CP^{\omega^\rho}_G:=G\overset{T}\times \omega^\rho$$
over $X$.  Let $B^{\omega^\rho}$ (resp., $G^{\omega^\rho}$) be the group-scheme of automorphisms of $\CP^{\omega^\rho}_B$ 
(resp., $\CP^{\omega^\rho}_G$). In other words, $B^{\omega^\rho}$ (resp., $G^{\omega^\rho}$) is the inner twist of 
the constant group-scheme with fiber $B$ (resp., $G$) over $X$ by means of $\CP^{\omega^\rho}_B$ (resp., $\CP^{\omega^\rho}_G$). 

\medskip

Let $N^{\omega^\rho}$ be the group-scheme of automorphisms of $\CP^{\omega^\rho}_B$ that
project to the identity automorphism of $\CP^{\omega^\rho}_T$ (in other words, $N^{\omega^\rho}$ is the twist of the constant
group-scheme with fiber $N$ over $X$ by means of the $T$-torsor $\omega^\rho$ using the adjoint action of $T$ on $N$).

\medskip

From now on, we will use the symbol
$\fL^+_{\ul{y}}(N)$ (resp., $\fL_{\ul{y}}(N)$) to denote the group-scheme (resp., group ind-scheme) of 
sections of $N$ over the formal (resp., formal punctured) disc around $\ul{y}$. And similarly for 
$\fL^+_{\ul{y}}(B)$ and $\fL^+_{\ul{y}}(G)$ (resp., $\fL_{\ul{y}}(B)$ and $\fL_{\ul{y}}(G)$). 

\medskip

The above twist is made in order to have a canonical character
$$\chi_{\ul{y}}:\fL_{\ul{y}}(N)\to \BG_a,$$
which is trivial on $\fL^+_{\ul{y}}(N)$.







\sssec{}

Note that a point of $(\BunNb)_{\infty\cdot x,\on{good\,at\,}\ul{y}}$ defines a $B$-torsor on the formal disc around $\ul{y}$, with the 
induced $T$-torsor identified with $\CP^{\omega^\rho}_T$. 

\medskip

Let 
$$(\BunNb)^{N\on{-level}_{\infty\cdot \ul{y}}}_{\infty\cdot x,\on{good\,at\,}\ul{y}}$$
denote the moduli space that classifies the data of a point of $(\BunNb)_{\infty\cdot x,\on{good\,at\,}\ul{y}}$
plus the data of isomorphism of the above $B$-torsor with $\CP^{\omega^\rho}_B$ that induces the identity 
automorphism on $\CP^{\omega^\rho}_T$. 

\medskip

The forgetful map
$$(\BunNb)^{N\on{-level}_{\infty\cdot \ul{y}}}_{\infty\cdot x,\on{good\,at\,}\ul{y}}\to
(\BunNb)_{\infty\cdot x,\on{good\,at\,}\ul{y}}$$
is a $\fL^+_{\ul{y}}(N)$-torsor. 

\medskip



\sssec{}  \label{sss:extend to loop group}

Denote
$$(\BunNb)^{G\on{-level}_{n\cdot x},N\on{-level}_{\infty\cdot \ul{y}}}_{\infty\cdot x,\on{good\,at\,}\ul{y}}:=
(\BunNb)^{N\on{-level}_{\infty\cdot \ul{y}}}_{\infty\cdot x,\on{good\,at\,}\ul{y}} 
\underset{\Bun_G}\times \Bun_G^{G\on{-level}_{n\cdot x}}.$$

\medskip

A crucial piece of structure is that the $\fL^+_{\ul{y}}(N)$-action on 
$(\BunNb)^{G\on{-level}_{n\cdot x},N\on{-level}_{\infty\cdot \ul{y}}}_{\infty\cdot x,\on{good\,at\,}\ul{y}}$
extends to an action of the group ind-scheme $\fL_{\ul{y}}(N)$.

\medskip

In particular, 
\begin{equation} \label{e:groupoid}
\fL^+_{\ul{y}}(N)\backslash \fL_{\ul{y}}(N)\overset{\fL^+_{\ul{y}}(N)}\times 
(\BunNb)^{G\on{-level}_{n\cdot x},N\on{-level}_{\infty\cdot \ul{y}}}_{\infty\cdot x,\on{good\,at\,}\ul{y}}
\end{equation}
has a natural structure of groupoid acting on $(\BunNb)^{G\on{-level}_{n\cdot x}}_{\infty\cdot x,\on{good\,at\,}\ul{y}}$. 

\sssec{}

Recall the map \eqref{e:eval x}. For future use we note the following:

\begin{lem}   \label{l:skinny strata} 
For a fixed $\mu\in \Lambda$ and $\lambda\in \Lambda^{\on{pos}}$, the group ind-scheme $\fL_{\ul{y}}(N)$
acts transitively along the fibers of the map
\begin{multline*}
(\BunNb)^{G\on{-level}_{n\cdot x},N\on{-level}_{\infty\cdot \ul{y}}}_{=\mu\cdot x,\on{def}=\lambda,\on{good\,at\,}\ul{y}}\to 
(\BunNb)^{G\on{-level}_{n\cdot x}}_{=\mu\cdot x,\on{def}=\lambda,\on{good\,at\,}\ul{y}}\overset{\text{\eqref{e:eval x}}}\longrightarrow \\
\to \left((\fL^+_x(G)_n/\fL^+_x(N)_n)\overset{\fL^+_x(T)}\times \CP^{\omega^\rho}_T(\mu\cdot x)\right)
\times (X-\{x\cup \ul{y}\})^\lambda.
\end{multline*}
\end{lem}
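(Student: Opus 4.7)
The plan is to verify transitivity at the level of geometric points by a gluing argument at $\ul{y}$. Take two points $P_1=(\CP_{G,1},\kappa_1,\text{lev}_1,\tau_1)$ and $P_2=(\CP_{G,2},\kappa_2,\text{lev}_2,\tau_2)$ of $(\BunNb)^{G\on{-level}_{n\cdot x},N\on{-level}_{\infty\cdot \ul{y}}}_{=\mu\cdot x,\on{def}=\lambda,\on{good\,at\,}\ul{y}}$ lying over a common point of the target. The first factor of the target pins down the formal behavior of the $G$-bundle, the $B$-reduction, and the level structure on an infinitesimal neighborhood of $x$, while the second factor pins down the defect divisor $D$ on $X-\{x\cup\ul{y}\}$. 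The ``good at $\ul{y}$'' condition ensures that the $B^{\omega^\rho}$-reduction extends to an honest $B^{\omega^\rho}$-bundle on a neighborhood of $\ul{y}$, and the $N$-levels $\tau_i$ trivialize it there, which via the $\omega^\rho$-twist trivializes each $\CP_{G,i}$ on the formal disc at $\ul{y}$.

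I would then invoke the Beauville--Laszlo theorem at $\ul{y}$: each $P_i$ is equivalent to the triple consisting of its restriction to $X-\ul{y}$, its trivialized restriction to the formal disc at $\ul{y}$, and a gluing element in $\fL_{\ul{y}}(G)$. The $\fL_{\ul{y}}(N)$-action is exactly left multiplication on this gluing element. Thus the lemma reduces to two sub-claims: (a)~the restrictions of $P_1$ and $P_2$ to $X-\ul{y}$ are canonically isomorphic; and (b)~once such an isomorphism is fixed, the two gluing elements differ by an element of $\fL_{\ul{y}}(N)\subset \fL_{\ul{y}}(G)$. Claim (b) is immediate because an isomorphism of the data on $X-\ul{y}$ identifies the $B^{\omega^\rho}$-reductions near $\ul{y}$, and the $T^{\omega^\rho}$-part is rigidified to $\omega^\rho$, so only $N^{\omega^\rho}$-freedom remains in the gluing.

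Claim (a) is the substantive content, and where I expect the main obstacle. I would argue that a point of $\BunNbx$ is equivalent data to a $G$-bundle together with the collection of Pl\"ucker maps $\kappa^{\clambda}$ satisfying the Pl\"ucker relations. On $X-\ul{y}$, each $\kappa^{\clambda}$ is an injection whose formal germ at $x$ (with pole of order $\langle \mu,\clambda\rangle$) and whose zero divisor on $X-\{x\cup\ul{y}\}$ are both recorded by the target; combined with the matching $G$-level structure at $x$ and Pl\"ucker compatibility, this determines the Pl\"ucker data up to a unique global isomorphism on $X-\ul{y}$. Executing this rigorously amounts to unpacking the Drinfeld compactification in families and showing that two families of Pl\"ucker data with identical formal behavior at $x$ and identical defect divisor on $X-\{x\cup\ul{y}\}$ are related by a canonical isomorphism of the underlying $G$-bundle on $X-\ul{y}$; this essentially reduces, via a uniformization/strong approximation for the unipotent group $N^{\omega^\rho}$ away from $\ul{y}$, to the elementary statement that a section of a line bundle on the punctured curve $X-\ul{y}$ is determined by its jet at $x$ and its divisor of zeros on $X-\{x\cup\ul{y}\}$.
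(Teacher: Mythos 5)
The paper itself states Lemma~\ref{l:skinny strata} without proof, so I will evaluate your argument on its own merits. Your overall strategy is sound: the Beauville--Laszlo dictionary at $\ul{y}$ identifies the $\fL_{\ul{y}}(N)$-action with left multiplication on the gluing datum, so it suffices to show (a) that the restrictions to $X-\ul{y}$ admit an isomorphism preserving all structure, and (b) that any such isomorphism produces a discrepancy in $\fL_{\ul{y}}(N)$. Claim (b) is indeed formal once the isomorphism in (a) respects the Pl\"ucker data and the $T$-part is rigidified.

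The gap is in your treatment of (a). First, you assert that the ``formal germ at $x$'' of each $\kappa^{\clambda}$ is recorded by the target, but the first factor of the target is $(\fL^+_x(G)_n/\fL^+_x(N)_n)\overset{\fL^+_x(T)}\times \CP^{\omega^\rho}_T(\mu\cdot x)$, which records only the $n$-jet of the level structure, and only modulo the $N$-jet group; the argument must therefore also handle the freedom that the full germs at $x$ need \emph{not} agree. Second, your final reduction to ``a section of a line bundle on $X-\ul{y}$ is determined by its jet at $x$ and its divisor of zeros'' is not the correct elementary input: for finite $n$ such a section is determined at best up to global units of $\CO_{X-\ul{y}}$, and the statement degenerates entirely when $n=0$ (a case the lemma must cover). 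What claim (a) actually needs is of existence rather than uniqueness type, namely: (i) any $N^{\omega^\rho}$-torsor on the affine curve $X-\ul{y}$ is trivial, giving \emph{some} isomorphism of the saturated $B$-reductions over $X-\ul{y}$; and (ii) the restriction map $\Gamma(X-\ul{y},N^{\omega^\rho})\to \fL^+_x(N)_n$ is surjective, so that the chosen isomorphism can be corrected to match the level structures modulo $\fL^+_x(N)_n$, which is all the target constrains. Both (i) and (ii) are $H^1$-vanishing/Riemann--Roch statements for the line bundles filtering $\fn^{\omega^\rho}$; this is exactly the ``strong approximation'' you gesture at, but the elementary statement you then write down is a different, incorrect assertion, and as written the proof does not close.
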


Furthermore, by Riemann-Roch, we have:

\begin{lem}   \label{l:skinny strata bis}
For every integer $k$ there exists a large enough group sub-scheme of $\fL_{\ul{y}}(N)$
such that for $\mu\in \Lambda$ and $\lambda\in \Lambda^{\on{pos}}$ satisfying 
$\langle \mu-\lambda,\check\rho\rangle\leq k$, this subgroup acts transitively along the orbits 
of $\fL_{\ul{y}}(N)$ on $(\BunNb)^{G\on{-level}_{n\cdot x},N\on{-level}_{\infty\cdot \ul{y}}}_{=\mu\cdot x,\on{def}=\lambda,\on{good\,at\,}\ul{y}}$
(i.e., along the fibers of the map in \lemref{l:skinny strata}). 
\end{lem}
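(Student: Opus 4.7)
The plan is to reduce the transitivity statement to a Riemann–Roch codimension estimate that is uniform in $(\mu,\lambda)$ subject to $\langle \mu-\lambda,\check\rho\rangle\leq k$.

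Fix a point $\zeta$ in the stratum $(\BunNb)^{G\on{-level}_{n\cdot x},N\on{-level}_{\infty\cdot \ul{y}}}_{=\mu\cdot x,\on{def}=\lambda,\on{good\,at\,}\ul{y}}$. Unwinding the definition of the groupoid \eqref{e:groupoid}, the stabilizer $\on{Stab}(\zeta)\subset \fL_{\ul{y}}(N)$ is identified with the ind-group of sections of $N^{\omega^\rho}$ on $X-\ul{y}$ with prescribed behavior at $x$ (pole bounded by $\mu$, via the Pl\"ucker data \eqref{e:Plucker mu}) and along the defect divisor $D$ on $X-\{x\cup\ul{y}\}$ of type $\lambda$. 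Using \lemref{l:skinny strata}, transitivity of a subgroup $N_m \subset \fL_{\ul{y}}(N)$ along the fiber through $\zeta$ is equivalent to the equality $N_m\cdot \on{Stab}(\zeta)=\fL_{\ul{y}}(N)$, i.e.\ to surjectivity of $\on{Stab}(\zeta)\to \fL_{\ul{y}}(N)/N_m$. Take $N_m=\on{Ad}_{-m\check\rho(t)}(\fL^+_{\ul{y}}(N))$, as in \eqref{e:loop N as colim}. It suffices to exhibit a uniform $m=m(k)$ for which this surjectivity holds whenever $\langle \mu-\lambda,\check\rho\rangle\leq k$.

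The main step is to filter $N^{\omega^\rho}$ by its lower central series and argue one root space at a time. For each positive root $\check\alpha$ of $G$, the corresponding root-space bundle, after $\omega^\rho$-twist, is $\omega^{\langle\check\alpha,\rho\rangle}$; the $\check\alpha$-component of the stabilizer is the space of sections of $\omega^{\langle\check\alpha,\rho\rangle}$ on $X-\ul{y}$ with pole of order $\leq \langle\mu,\check\alpha\rangle$ at $x$ and with zeros of order dictated by the coefficient of $\check\alpha$ in the defect divisor. Riemann–Roch says that the image of this space inside the depth-$m$ Laurent-tail quotient of $\fL_{\ul{y}}(\omega^{\langle\check\alpha,\rho\rangle})$ at $\ul{y}$ is surjective provided
\begin{equation*}
\deg\!\bigl(\omega^{\langle\check\alpha,\rho\rangle}(\langle\mu,\check\alpha\rangle\cdot x - D_{\check\alpha})\bigr) + m\cdot|\ul{y}| \;\geq\; 2g-1 + (\text{correction depending on }\mu,\lambda,\check\alpha).
\end{equation*}
Summing these inequalities over positive roots, one sees that the full correction is controlled by $\langle\mu-\lambda,\check\rho\rangle$, so the bound $\langle\mu-\lambda,\check\rho\rangle\leq k$ produces a single $m=m(k,g,|\ul{y}|,n)$ that works simultaneously for all $(\mu,\lambda)$ in the range. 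Since $N$ is unipotent, transitivity at the level of each successive abelian quotient propagates (inductively) to transitivity for $N$ itself, which finishes the argument.

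The main obstacle, beyond unwinding the identification of $\on{Stab}(\zeta)$ with global sections on $X-\ul{y}$, is to ensure that the bookkeeping of poles and zeros at $x$ and along the defect is compatible with the filtration by the lower central series — i.e.\ that the non-simple positive roots contribute corrections that remain controlled by the same quantity $\langle\mu-\lambda,\check\rho\rangle$, and that the base change introduced by the level structure at $x$ and the $N$-level at $\ul{y}$ does not affect the estimates. Everything else reduces to a routine Riemann–Roch computation on $X$.
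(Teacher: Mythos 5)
Your overall plan --- identify the stabilizer of a point $\zeta$ with the group of sections over $X-\ul{y}$ of the $\CP_B$-twisted unipotent radical (constrained by the level structure at $x$), filter by the lower central series of $N$ to reduce to line bundles indexed by positive roots $\check\alpha$, and invoke Riemann--Roch --- is precisely what the paper compresses into the phrase ``by Riemann--Roch.'' However, the degree of the relevant line bundle has the wrong sign in your argument, and this reverses the direction of the estimate.

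The root-space line bundle whose global sections build the $\check\alpha$-graded piece of the stabilizer is $\check\alpha(\CP_T)$, where $\CP_T$ is the $T$-torsor underlying the saturated $B$-reduction. From the Pl\"ucker data $\kappa^{\check\lambda}:(\omega^{1/2})^{\langle\check\lambda,2\rho\rangle}\to\CV^{\check\lambda}_{\CP_G}(\langle\mu,\check\lambda\rangle\cdot x)$ with pole exactly $\mu$ at $x$ and defect of type $\lambda$ away from $x$, one reads off $\check\lambda(\CP_T)\simeq\omega^{\langle\check\lambda,\rho\rangle}\bigl(D_{\check\lambda}-\langle\mu,\check\lambda\rangle\cdot x\bigr)$, so that
\begin{equation*}
\deg\,\check\alpha(\CP_T)\;=\;\langle\check\alpha,\rho\rangle(2g-2)\;-\;\langle\mu-\lambda,\check\alpha\rangle.
\end{equation*}
You wrote instead $\omega^{\langle\check\alpha,\rho\rangle}\bigl(\langle\mu,\check\alpha\rangle\cdot x-D_{\check\alpha}\bigr)$, of degree $\langle\check\alpha,\rho\rangle(2g-2)+\langle\mu-\lambda,\check\alpha\rangle$: the signs at both $x$ and the defect are flipped. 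A larger pole $\mu$ at $x$ makes the $B$-reduction more negative and therefore \emph{shrinks} the space of global automorphisms; a larger defect \emph{enlarges} it. One sees this already for $SL_2$, where the line subbundle is $L=\omega^{1/2}(D-\mu x)$ and the stabilizer line bundle is $L^2=\omega(2D-2\mu x)$.

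This error is not cosmetic. Riemann--Roch surjectivity onto the tails at $\ul{y}$ amounts to $H^1\bigl(X,\check\alpha(\CP_T)(m\cdot\ul{y})\bigr)=0$, i.e.\ a \emph{lower} bound on the degree. With your formula this would require $\langle\mu-\lambda,\check\alpha\rangle$ bounded \emph{below}, whereas the lemma's hypothesis $\langle\mu-\lambda,\check\rho\rangle\leq k$ is an \emph{upper} bound and would be unusable; with the corrected sign, the upper bound on $\langle\mu-\lambda,\check\rho\rangle$ is exactly what produces the lower bound on the degree. A secondary gap: ``summing over positive roots'' does not by itself let you pass from a bound on $\langle\mu-\lambda,\check\rho\rangle$ (a weighted sum) to a bound on each individual $\langle\mu-\lambda,\check\alpha\rangle$, which is what the $H^1$-vanishing needs root by root; that step requires a further argument.
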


\ssec{Definition of the global Whittaker category (with an auxiliary point)} 

Our goal is to define a certain full subcategory
$$\Whit(\BunNbx^{G\on{-level}_{n\cdot x}})\subset \Shv(\BunNbx^{G\on{-level}_{n\cdot x}}).$$

\medskip

We will first do so on the locus $(\BunNb)_{\infty\cdot x,\on{good\,at\,}\ul{y}}^{G\on{-level}_{n\cdot x}}$, 
i.e., we will define 
$$\Whit((\BunNb)_{\infty\cdot x,\on{good\,at\,}\ul{y}}^{G\on{-level}_{n\cdot x}})
\subset \Shv((\BunNb)_{\infty\cdot x,\on{good\,at\,}\ul{y}}^{G\on{-level}_{n\cdot x}}).$$

This will be done by imposing an equivariance condition with respect to the action of the groupoid \eqref{e:groupoid}. 

\sssec{}

Note that the operation of *-pullback defines an equivalence
$$\Shv((\BunNb)_{\infty\cdot x,\on{good\,at\,}\ul{y}}^{G\on{-level}_{n\cdot x}})\simeq 
\Shv((\BunNb)^{G\on{-level}_{n\cdot x},N\on{-level}_{\infty\cdot \ul{y}}}_{\infty\cdot x,\on{good\,at\,}\ul{y}})^{\fL^+_{\ul{y}}(N)}
\subset \Shv((\BunNb)^{G\on{-level}_{n\cdot x},N\on{-level}_{\infty\cdot \ul{y}}}_{\infty\cdot x,\on{good\,at\,}\ul{y}}).$$

\medskip

We define $\Whit((\BunNb)_{\infty\cdot x,\on{good\,at\,}\ul{y}}^{G\on{-level}_{n\cdot x}})$ to be the full subcategory of 
$\Shv((\BunNb)_{\infty\cdot x,\on{good\,at\,}\ul{y}}^{G\on{-level}_{n\cdot x}})$ that maps under the above equivalence to 
$$\Shv((\BunNb)^{G\on{-level}_{n\cdot x},N\on{-level}_{\infty\cdot \ul{y}}}_{\infty\cdot x,\on{good\,at\,}\ul{y}})^{\fL_{\ul{y}}(N),\chi_{\ul{y}}}
\subset 
\Shv((\BunNb)^{G\on{-level}_{n\cdot x},N\on{-level}_{\infty\cdot \ul{y}}}_{\infty\cdot x,\on{good\,at\,}\ul{y}})^{\fL^+_{\ul{y}}(N)}.$$

Let us rewrite this definition in terms that only involve algebro-geometric objects locally of finite type. 

\sssec{}

Let us write 
$$\fL_{\ul{y}}(N)\simeq \underset{\alpha\in A}{\on{colim}}\, N^\alpha_{\ul{y}}$$
as in \eqref{e:loop N as colim}.  With no restriction of generality we can assume that
$$\fL^+_{\ul{y}}(N) \subset  N^\alpha_{\ul{y}},\,\,\forall \alpha.$$

\medskip

First off, we have:
$$\Shv((\BunNb)^{G\on{-level}_{n\cdot x},N\on{-level}_{\infty\cdot \ul{y}}}_{\infty\cdot x,\on{good\,at\,}\ul{y}})^{\fL_{\ul{y}}(N),\chi_{\ul{y}}}=
\underset{\alpha}{\on{lim}}\,
\Shv((\BunNb)^{G\on{-level}_{n\cdot x},N\on{-level}_{\infty\cdot \ul{y}}}_{\infty\cdot x,\on{good\,at\,}\ul{y}})^{N^\alpha_{\ul{y}},\chi_{\ul{y}}},$$
where each 
$$\Shv((\BunNb)^{G\on{-level}_{n\cdot x},N\on{-level}_{\infty\cdot \ul{y}}}_{\infty\cdot x,\on{good\,at\,}\ul{y}})^{N^\alpha_{\ul{y}},\chi_{\ul{y}}}$$
is a full subcategory of 
$$\Shv((\BunNb)^{G\on{-level}_{n\cdot x},N\on{-level}_{\infty\cdot \ul{y}}}_{\infty\cdot x,\on{good\,at\,}\ul{y}})^{\fL^+_{\ul{y}}(N)}
\subset \Shv((\BunNb)^{G\on{-level}_{n\cdot x},N\on{-level}_{\infty\cdot \ul{y}}}_{\infty\cdot x,\on{good\,at\,}\ul{y}}).$$

In particular, the above $\underset{\alpha}{\on{lim}}$ amounts to the intersection of these subcategories. We will now describe
$\Shv((\BunNb)^{G\on{-level}_{n\cdot x},N\on{-level}_{\infty\cdot \ul{y}}}_{\infty\cdot x,\on{good\,at\,}\ul{y}})^{N^\alpha_{\ul{y}},\chi_{\ul{y}}}$
as a full subcategory of $\Shv((\BunNb)_{\infty\cdot x,\on{good\,at\,}\ul{y}}^{G\on{-level}_{n\cdot x}})$. 

\sssec{}

For each $\alpha$ let $N^{-\alpha}_{\ul{y}}\subset N^\alpha_{\ul{y}}$ be a normal subgroup of finite codimension contained in 
$\fL^+_{\ul{y}}(N)$. Then the character $\chi_{\ul{y}}|_{N^\alpha_{\ul{y}}}$ factors through
$$N^\alpha_{\ul{y}}\twoheadrightarrow N^\alpha_{\ul{y}}/N^{-\alpha}_{\ul{y}}.$$

\medskip

Consider the ind-algebraic stack
$$N^{-\alpha}_{\ul{y}}\backslash (\BunNb)^{G\on{-level}_{n\cdot x},N\on{-level}_{\infty\cdot \ul{y}}}_{\infty\cdot x,\on{good\,at\,}\ul{y}};$$
the action of $\fL^+_{\ul{y}}(N)/N^{-\alpha}_{\ul{y}}$ on it extends to an action of $N^\alpha_{\ul{y}}/N^{-\alpha}_{\ul{y}}$.

\medskip

Then we have: 
$$\Shv((\BunNb)^{G\on{-level}_{n\cdot x},N\on{-level}_{\infty\cdot \ul{y}}}_{\infty\cdot x,\on{good\,at\,}\ul{y}})^{N^\alpha_{\ul{y}},\chi_{\ul{y}}}\simeq
\Shv(N^{-\alpha}_{\ul{y}}\backslash (\BunNb)^{G\on{-level}_{n\cdot x},N\on{-level}_{\infty\cdot \ul{y}}}_{\infty\cdot x,\on{good\,at\,}\ul{y}})
^{N^\alpha_{\ul{y}}/N^{-\alpha}_{\ul{y}},\chi_{\ul{y}}},$$
where we identify the RHS with a full subcategory of $\Shv((\BunNb)_{\infty\cdot x,\on{good\,at\,}\ul{y}}^{G\on{-level}_{n\cdot x}})$
as follows:

\medskip

We have:
\begin{multline*} 
\Shv(N^{-\alpha}_{\ul{y}}\backslash (\BunNb)^{G\on{-level}_{n\cdot x},N\on{-level}_{\infty\cdot \ul{y}}}_{\infty\cdot x,\on{good\,at\,}\ul{y}})
^{N^\alpha_{\ul{y}}/N^{-\alpha}_{\ul{y}},\chi_{\ul{y}}}\subset \\
\subset \Shv(N^{-\alpha}_{\ul{y}}\backslash (\BunNb)^{G\on{-level}_{n\cdot x},N\on{-level}_{\infty\cdot \ul{y}}}_{\infty\cdot x,\on{good\,at\,}\ul{y}})
^{\fL^+_{\ul{y}}(N)/N^{-\alpha}_{\ul{y}}},
\end{multline*} 
whereas  *-pullback along 
$$N^{-\alpha}_{\ul{y}}\backslash (\BunNb)^{G\on{-level}_{n\cdot x},N\on{-level}_{\infty\cdot \ul{y}}}_{\infty\cdot x,\on{good\,at\,}\ul{y}}\to 
(\BunNb)_{\infty\cdot x,\on{good\,at\,}\ul{y}}^{G\on{-level}_{n\cdot x}}$$
identifies 
$$\Shv((\BunNb)_{\infty\cdot x,\on{good\,at\,}\ul{y}}^{G\on{-level}_{n\cdot x}})\simeq 
\Shv(N^{-\alpha}_{\ul{y}}\backslash (\BunNb)^{G\on{-level}_{n\cdot x},N\on{-level}_{\infty\cdot \ul{y}}}_{\infty\cdot x,\on{good\,at\,}\ul{y}})
^{\fL^+_{\ul{y}}(N)/N^{-\alpha}_{\ul{y}}}.$$



\ssec{Properties of the global Whittaker category (with an auxiliary point)}

In this subsection we will study some basic properties of the category $(\BunNb)_{\infty\cdot x,\on{good\,at\,}\ul{y}}^{G\on{-level}_{n\cdot x}}$;
in particular, its (in)dependence of $\ul{y}$.

\sssec{}

Replacing $(\BunNb)_{\infty\cdot x,\on{good\,at\,}\ul{y}}$ by its (locally) closed substacks
$$(\BunNb)_{=\mu\cdot x,\on{def}=\lambda,\on{good\,at\,}\ul{y}}^{G\on{-level}_{n\cdot x}}\subset 
(\BunNb)_{=\mu\cdot x,\on{good\,at\,}\ul{y}}\subset (\BunNb)_{\leq \mu\cdot x,\on{good\,at\,}\ul{y}}$$
we can similarly define the corresponding full categories
$$\Whit((\BunNb)_{=\mu\cdot x,\on{def}=\lambda,\on{good\,at\,}\ul{y}}^{G\on{-level}_{n\cdot x}})\subset 
\Shv((\BunNb)_{=\mu\cdot x,\on{def}=\lambda,\on{good\,at\,}\ul{y}}^{G\on{-level}_{n\cdot x}});$$
$$\Whit((\BunNb)_{=\mu\cdot x,\on{good\,at\,}\ul{y}})\subset \Shv((\BunNb)_{=\mu\cdot x,\on{good\,at\,}\ul{y}});$$
$$\Whit((\BunNb)_{\leq \mu\cdot x,\on{good\,at\,}\ul{y}})\subset \Shv((\BunNb)_{\leq \mu\cdot x,\on{good\,at\,}\ul{y}}).$$

\medskip

The corresponding !-pullback and *-pushforward functors maps these full subcategories to one another.
Furthermore, since we are dealing with unipotent groups, we have:

\begin{lem} \label{l:Whit via strata y}
An object of $\Shv((\BunNb)_{\infty\cdot x,\on{good\,at\,}\ul{y}}^{G\on{-level}_{n\cdot x}})$ belongs to
$\Whit((\BunNb)_{\infty\cdot x,\on{good\,at\,}\ul{y}}^{G\on{-level}_{n\cdot x}})$ if (and only if) its !-restrictions to
the locally closed substacks $(\BunNb)_{=\mu\cdot x,\on{good\,at\,}\ul{y}}^{G\on{-level}_{n\cdot x}}$
and/or $(\BunNb)_{=\mu\cdot x,\on{def}=\lambda,\on{good\,at\,}\ul{y}}^{G\on{-level}_{n\cdot x}}$
belong to  $\Whit((\BunNb)_{=\mu\cdot x,\on{good\,at\,}\ul{y}}^{G\on{-level}_{n\cdot x}})$
and/or $\Whit((\BunNb)_{=\mu\cdot x,\on{def}=\lambda,\on{good\,at\,}\ul{y}}^{G\on{-level}_{n\cdot x}})$.
\end{lem}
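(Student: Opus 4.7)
The $(\Rightarrow)$ direction is immediate: $!$-pullback commutes with passage to the $N$-level cover $(\BunNb)^{G\on{-level}_{n\cdot x},N\on{-level}_{\infty\cdot \ul{y}}}_{\infty\cdot x,\on{good\,at\,}\ul{y}}$ and preserves equivariance against each finite-dimensional unipotent quotient of $\fL_{\ul y}(N)$. The content of the lemma is the converse, and my plan is to reduce it to the standard stratum-by-stratum criterion for equivariance against a finite-dimensional unipotent group with a character.

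First I would use the definition of $\Whit(-)$ as the intersection $\underset{\alpha}\cap \Shv(-)^{N^\alpha_{\ul y},\chi_{\ul y}}$ to reduce to proving the claim with $\fL_{\ul y}(N)$ replaced by each individual $N^\alpha_{\ul y}$. Fix such an $\alpha$, choose a normal subgroup $N^{-\alpha}_{\ul y}\subset N^\alpha_{\ul y}$ of finite codimension contained in $\fL^+_{\ul y}(N)$ through which $\chi_{\ul y}|_{N^\alpha_{\ul y}}$ factors, and pass to the quotient $N^{-\alpha}_{\ul y}\backslash (\BunNb)^{G\on{-level}_{n\cdot x},N\on{-level}_{\infty\cdot \ul{y}}}_{\infty\cdot x,\on{good\,at\,}\ul{y}}$. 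We are now studying equivariance on this quotient under the finite-dimensional unipotent algebraic group $H^\alpha:=N^\alpha_{\ul y}/N^{-\alpha}_{\ul y}$ against the descended character.

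Next I would verify that the given stratification is $H^\alpha$-stable. The key point is that the locally closed substacks $(\BunNb)^{G\on{-level}_{n\cdot x}}_{=\mu\cdot x}$ and $(\BunNb)^{G\on{-level}_{n\cdot x}}_{=\mu\cdot x,\on{def}=\lambda}$ pull back to $\fL_{\ul y}(N)$-stable loci in the $N$-level cover: the $\fL_{\ul y}(N)$-action modifies the $B$-reduction only near $\ul y$, so it preserves both the pole behavior of the Pl\"ucker maps at $x$ (the invariant $\mu$) and the defect divisor on $X-(x\cup \ul y)$ (the invariant $\lambda$). This is the same geometric input underlying \lemref{l:skinny strata}, and it is the only non-bookkeeping step in the argument.

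Finally I would invoke the standard d\'evissage. Since $H^\alpha$ is unipotent, $\Shv(-)^{H^\alpha,\chi_{\ul y}}$ is a full subcategory of $\Shv(-)$ stable under fiber sequences and under $*$-pushforward along $H^\alpha$-equivariant maps. Applying the recollement triangle $i_*i^!(\CF)\to \CF\to j_*j^*(\CF)$ inductively along the ascending exhaustion by $(\BunNb)_{\leq \mu\cdot x,\on{good\,at\,}\ul{y}}^{G\on{-level}_{n\cdot x}}$, and within each $(\BunNb)_{=\mu\cdot x,\on{good\,at\,}\ul{y}}^{G\on{-level}_{n\cdot x}}$ along its (locally finite) decomposition into pieces of fixed defect $\lambda$, shows that $\CF$ is $(H^\alpha,\chi_{\ul y})$-equivariant if each $(\iota^{\mu,\lambda})^!(\CF)$ is (and likewise with $(\iota^\mu)^!$). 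Since this holds for every $\alpha$, we conclude $\CF\in \Whit((\BunNb)_{\infty\cdot x,\on{good\,at\,}\ul{y}}^{G\on{-level}_{n\cdot x}})$, as desired. The anticipated main obstacle is purely organizational — cleanly matching the presentation of $\fL_{\ul y}(N)$ as a filtered colimit of group-subschemes with the presentation of $\BunNbx^{G\on{-level}_{n\cdot x}}$ as a filtered colimit of its bounded substacks — and is handled by the choice of $N^{-\alpha}_{\ul y}$ described above.
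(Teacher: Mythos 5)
Your argument is correct and is exactly the standard dévissage the paper invokes with the phrase ``since we are dealing with unipotent groups'': reduce to a finite-dimensional unipotent quotient $H^\alpha=N^\alpha_{\ul y}/N^{-\alpha}_{\ul y}$, check that the stratification is $H^\alpha$-stable (which is automatic here because the $\fL_{\ul y}(N)$-action by re-gluing only moves things near $\ul y$, away from where the invariants $\mu$ and $\lambda$ live), and then apply recollement using that $\Shv(-)^{H^\alpha,\chi_{\ul y}}$ is a stable full subcategory and that the averaging kernel commutes with $\iota^!$ and $j^*$ along $H^\alpha$-stable locally closed embeddings. The paper gives no proof, so your write-up simply makes the implicit argument explicit; no gaps.
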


\sssec{}

Next, we have the following stabilization result:

\begin{prop} \label{p:alpha is enough}
For a fixed $\mu\in \Lambda$ there exists a large enough subgroup $N^\alpha_{\ul{y}}\subset \fL_{\ul{y}}(N)$ such that in order to test
that an object of $\Shv((\BunNb)_{\leq \mu\cdot x,\on{good\,at\,}\ul{y}}^{G\on{-level}_{n\cdot x}})$ belongs to 
$\Whit(\Shv((\BunNb)_{\leq \mu\cdot x,\on{good\,at\,}\ul{y}}^{G\on{-level}_{n\cdot x}})$, it sufficient to test that 
its pullback
to $(\BunNb)^{G\on{-level}_{n\cdot x},N\on{-level}_{\infty\cdot \ul{y}}}_{\leq \mu\cdot x,\on{good\,at\,}\ul{y}}$ belongs to
$$\Shv((\BunNb)^{G\on{-level}_{n\cdot x},N\on{-level}_{\infty\cdot \ul{y}}}_{\leq \mu\cdot x,\on{good\,at\,}\ul{y}})^{N^\alpha_{\ul{y}},\chi_{\ul{y}}}.$$
\end{prop}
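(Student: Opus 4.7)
The plan is to reduce to a stratum-by-stratum check using \lemref{l:Whit via strata y}, and then to absorb the ``ind'' direction in $\fL_{\ul{y}}(N)$ using the quasi-compactness provided by the bound $\mu'\leq \mu$ together with \lemref{l:skinny strata bis}.

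First, by \lemref{l:Whit via strata y} (and its evident variant for the subgroups $N^\alpha_{\ul{y}}$), an object of $\Shv((\BunNb)^{G\on{-level}_{n\cdot x}}_{\leq\mu\cdot x,\on{good\,at\,}\ul{y}})$ lies in the Whittaker subcategory (resp., in the $(N^\alpha_{\ul{y}},\chi_{\ul{y}})$-equivariant subcategory of the pullback) if and only if its !-restriction to each locally closed stratum
$$(\BunNb)^{G\on{-level}_{n\cdot x}}_{=\mu'\cdot x,\on{def}=\lambda,\on{good\,at\,}\ul{y}},\qquad \mu'\leq \mu,\ \lambda\in \Lambda^{\on{pos}}$$
satisfies the corresponding condition. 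So the problem is reduced to the stratum level.

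Second, on any such stratum one has the uniform bound
$$\langle \mu'-\lambda,\check\rho\rangle \;\leq\; \langle\mu',\check\rho\rangle \;\leq\; \langle\mu,\check\rho\rangle \;=:\; k,$$
since $\mu-\mu'\in\Lambda^{\on{pos}}$ and $\lambda\in\Lambda^{\on{pos}}$. Apply \lemref{l:skinny strata bis} to this single integer $k$: the output is a finite-type subgroup $N^\alpha_{\ul{y}}\subset \fL_{\ul{y}}(N)$ whose action sweeps out every $\fL_{\ul{y}}(N)$-orbit on every stratum indexed by a pair $(\mu',\lambda)$ with $\mu'\leq \mu$. This is the key uniform choice of $\alpha$.

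Finally, on each such stratum the inclusion of groupoids $N^\alpha_{\ul{y}}\!\rightrightarrows\! \fL_{\ul{y}}(N)$ (acting on the stratum) is, after enlarging $\alpha$ if needed to absorb the stabilizers (which, by the explicit fibration provided by \lemref{l:skinny strata} and a Riemann-Roch count, are bounded in the ind-direction), an equivalence of groupoids on that stratum. Consequently the categories of $(N^\alpha_{\ul{y}},\chi_{\ul{y}})$- and $(\fL_{\ul{y}}(N),\chi_{\ul{y}})$-equivariant sheaves on the stratum coincide, and combining with the first two paragraphs yields the proposition. The main obstacle is this last step: pointwise transitivity of $N^\alpha_{\ul{y}}$ on $\fL_{\ul{y}}(N)$-orbits is not in itself enough to identify the two equivariant categories, and one has to verify that, after enlarging $\alpha$, the stabilizer in $\fL_{\ul{y}}(N)$ of a geometric point of the stratum is contained in $N^\alpha_{\ul{y}}$ --- this is again an application of the Riemann--Roch bound underlying \lemref{l:skinny strata bis}.
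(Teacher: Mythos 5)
Your overall plan matches the paper's: reduce to strata via Lemma~\ref{l:Whit via strata y}, and deduce the existence of a suitable $N^\alpha_{\ul{y}}$ from the Riemann--Roch input of Lemma~\ref{l:skinny strata bis}. You are also right to flag that the paper's one-line reduction (``it suffices to produce $N^\alpha_{\ul{y}}$ acting transitively on every $\fL_{\ul{y}}(N)$-orbit'') is not, taken in isolation, a formal implication: for $H\subset N''$ pro-unipotent groups, transitivity of a subgroup $N'\subset N''$ on $N''/H$ together with $\chi|_{H\cap N'}$ trivial does \emph{not} force $\chi|_{H}$ trivial. (Take $N''=\BG_a^2$, $N'=\BG_a\times 0$, $H=0\times \BG_a$, $\chi(a,b)=a+b$: $N'$ acts transitively on $N''/H$, $\chi|_{H\cap N'}=0$, yet $\chi|_H\neq 0$, so the $(N',\chi)$-equivariant category is $\Vect$ while the $(N'',\chi)$-equivariant one vanishes.) So some further input beyond bare transitivity is indeed needed.

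However, the specific supplement you propose is impossible. You ask that, after enlarging $\alpha$, the stabilizer $H:=\on{Stab}_{\fL_{\ul{y}}(N)}(z)$ of a geometric point be \emph{contained} in $N^\alpha_{\ul{y}}$. But if $H\subset N^\alpha_{\ul{y}}$ then $N^\alpha_{\ul{y}}\cdot H\subset N^\alpha_{\ul{y}}$, which contradicts the transitivity $N^\alpha_{\ul{y}}\cdot H=\fL_{\ul{y}}(N)$ that you have just arranged (since $N^\alpha_{\ul{y}}$ is a proper group subscheme of the group ind-scheme $\fL_{\ul{y}}(N)$). Transitivity in fact \emph{forces} the stabilizer to be unbounded in the ind-direction, so containment can never be arranged. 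The additional fact that actually closes the gap is a \emph{character-stability} statement rather than a containment statement: for $\alpha$ large (depending on $\mu$), triviality of $\chi_{\ul{y}}|_{\on{Stab}_{N^\alpha_{\ul{y}}}(z)}$ is equivalent to triviality of $\chi_{\ul{y}}|_{\on{Stab}_{\fL_{\ul{y}}(N)}(z)}$, uniformly over the $\leq\mu$-locus. Since on each $\fL_{\ul{y}}(N)$-orbit the $(N^\beta_{\ul{y}},\chi_{\ul{y}})$-equivariant category is either $0$ or $\Vect$ and these subcategories are nested, this is precisely what ensures the chain $\Shv(-)^{N^\alpha_{\ul{y}},\chi_{\ul{y}}}\supset\Shv(-)^{N^\beta_{\ul{y}},\chi_{\ul{y}}}\supset\cdots$ stabilizes. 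This character-stability is again a consequence of the same $H^1$-vanishing on the curve that underlies Lemma~\ref{l:skinny strata bis} (via the residue theorem, $\chi_{\ul{y}}$ restricted to the stabilizer factors through a finite jet space at $x$ and the defect points, onto which $\on{Stab}_{N^\alpha_{\ul{y}}}(z)$ already surjects once $H^1$ vanishes) --- so your instinct to invoke Riemann--Roch once more is right, but the mechanism is surjectivity onto jets rather than containment of the stabilizer.
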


\begin{proof}

It suffices to show that there exists $N^\alpha_{\ul{y}}\subset \fL_{\ul{y}}(N)$ which acts transitively on every 
$\fL_{\ul{y}}(N)$-orbit on $(\BunNb)^{G\on{-level}_{n\cdot x},N\on{-level}_{\infty\cdot \ul{y}}}_{\leq \mu\cdot x,\on{good\,at\,}\ul{y}}$.

\medskip

The existence of such $N^\alpha_{\ul{y}}\subset \fL_{\ul{y}}(N)$ follows from \lemref{l:skinny strata bis}. 

\end{proof}

\begin{cor}  \label{c:right adj glob}
The inclusions
$$\Whit((\BunNb)_{\infty\cdot x,\on{good\,at\,}\ul{y}}^{G\on{-level}_{n\cdot x}})\hookrightarrow
\Shv((\BunNb)_{\infty\cdot x,\on{good\,at\,}\ul{y}}^{G\on{-level}_{n\cdot x}}),$$
$$\Whit((\BunNb)_{\leq \mu\cdot x,\on{good\,at\,}\ul{y}}^{G\on{-level}_{n\cdot x}})\hookrightarrow
\Shv((\BunNb)_{\leq \mu\cdot x,\on{good\,at\,}\ul{y}}^{G\on{-level}_{n\cdot x}}),$$
$$\Whit((\BunNb)_{=\mu\cdot x,\on{good\,at\,}\ul{y}}^{G\on{-level}_{n\cdot x}})\hookrightarrow
\Shv((\BunNb)_{=\mu\cdot x,\on{good\,at\,}\ul{y}}^{G\on{-level}_{n\cdot x}})$$
all admit continuous right adjoints. These right adjoints commute with the corresponding !-pullback and
*-pushforward functors. 
\end{cor}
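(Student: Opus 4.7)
The plan is to leverage \propref{p:alpha is enough} to reduce each of the three Whittaker inclusions to a $(H,\chi_{\ul{y}})$-equivariance condition with respect to a \emph{finite-type} unipotent quotient group $H=N^\alpha_{\ul{y}}/N^{-\alpha}_{\ul{y}}$, where the existence of a continuous right adjoint is a standard input (cf.\ \secref{sss:Averaging functors}).

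First I would treat the two bounded cases, $(\BunNb)_{\leq\mu\cdot x,\on{good\,at\,}\ul{y}}^{G\on{-level}_{n\cdot x}}$ and $(\BunNb)_{=\mu\cdot x,\on{good\,at\,}\ul{y}}^{G\on{-level}_{n\cdot x}}$. Fix $\mu$ and choose $\alpha$ large enough so that the conclusion of \propref{p:alpha is enough} applies. Under the identification recorded just before \lemref{l:Whit via strata y}, the Whittaker subcategory coincides with the full subcategory of $(H,\chi_{\ul{y}})$-equivariant objects inside
$$\Shv(N^{-\alpha}_{\ul{y}}\backslash(\BunNb)^{G\on{-level}_{n\cdot x},N\on{-level}_{\infty\cdot \ul{y}}}_{\leq\mu\cdot x,\on{good\,at\,}\ul{y}})^{\fL^+_{\ul{y}}(N)/N^{-\alpha}_{\ul{y}}}\simeq \Shv((\BunNb)_{\leq\mu\cdot x,\on{good\,at\,}\ul{y}}^{G\on{-level}_{n\cdot x}}).$$
Since $H$ is a unipotent algebraic group of finite type, the inclusion of its $\chi_{\ul{y}}$-equivariant subcategory admits the continuous right adjoint $\Av^{H,\chi_{\ul{y}}}_*$, yielding the desired right adjoint. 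The $=\mu\cdot x$ case is handled by the same $\alpha$ (the open substack inside the $\leq\mu\cdot x$ piece).

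For the unbounded case I would present $(\BunNb)_{\infty\cdot x,\on{good\,at\,}\ul{y}}^{G\on{-level}_{n\cdot x}}$ as the filtered colimit of its $\leq\mu\cdot x$ bounded pieces along closed embeddings. By \secref{sss:shv on indsch}, the category $\Shv$ of this colimit is the limit of $\Shv$'s of the bounded pieces under !-pullback; combined with \lemref{l:Whit via strata y}, this gives an identification
$$\Whit((\BunNb)_{\infty\cdot x,\on{good\,at\,}\ul{y}}^{G\on{-level}_{n\cdot x}})\simeq \underset{\mu}{\on{lim}}\,\Whit((\BunNb)_{\leq\mu\cdot x,\on{good\,at\,}\ul{y}}^{G\on{-level}_{n\cdot x}}).$$
Choosing $\alpha$ as a function of $\mu$ using the uniformity afforded by \lemref{l:skinny strata bis}, the right adjoints constructed above commute with the !-pullback transition functors between bounded pieces (being *-averaging operations over a finite-type unipotent group, which commute with base change). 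Hence they assemble into the sought-for continuous right adjoint on the ind-stack.

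The commutation of these right adjoints with the relevant !-pullback and *-pushforward functors between the three stacks is then formal: $\oblv_{\Whit}$ tautologically commutes with *-pushforward along the schematic maps in question (by the strata-wise description of \lemref{l:Whit via strata y}), and passing to right adjoints yields commutation of the averaging functors with the corresponding !-pullbacks. The only nontrivial point requiring care is the uniform stabilization of the averaging operations as $\mu$ and $\alpha$ grow in the ind-structure; but this is precisely what \propref{p:alpha is enough} combined with \lemref{l:skinny strata bis} provides, so no further geometric input is needed. This is the main (and only) obstacle, and it has already been absorbed into the preparatory results.
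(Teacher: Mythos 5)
Your proof takes essentially the same route as the paper: both identify $\Whit((\BunNb)_{\infty\cdot x,\on{good\,at\,}\ul{y}}^{G\on{-level}_{n\cdot x}})$ as the inverse limit over $\mu$ of the bounded Whittaker categories along !-restriction, reduce to a fixed $(\BunNb)_{\leq\mu\cdot x,\on{good\,at\,}\ul{y}}^{G\on{-level}_{n\cdot x}}$ via \propref{p:alpha is enough}, and produce the right adjoint there as $\Av_*^{N^\alpha_{\ul{y}}/N^{-\alpha}_{\ul{y}},\chi_{\ul{y}}}$ for a finite-type unipotent quotient. The only cosmetic difference is that you present the bounded case first and are a bit more explicit about commuting the right adjoints with the transition functors when assembling; the appeal to \lemref{l:Whit via strata y} for the limit presentation of $\Whit(\infty)$ is a slight over-invocation (that lemma concerns locally closed strata rather than the bounded closed pieces), but the limit description follows directly from the definition, so the argument stands.
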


\begin{proof}
We have:
$$\Shv((\BunNb)_{\infty\cdot x,\on{good\,at\,}\ul{y}}^{G\on{-level}_{n\cdot x}})\simeq 
\underset{\mu\in \Lambda}{\on{lim}}\, \Shv((\BunNb)_{\leq \mu\cdot x,\on{good\,at\,}\ul{y}}^{G\on{-level}_{n\cdot x}})$$
(with respect to the !-restriction functors), and 
$$\Whit((\BunNb)_{\infty\cdot x,\on{good\,at\,}\ul{y}}^{G\on{-level}_{n\cdot x}})\simeq 
\underset{\mu\in \Lambda}{\on{lim}}\, \Whit((\BunNb)_{\leq \mu\cdot x,\on{good\,at\,}\ul{y}}^{G\on{-level}_{n\cdot x}}).$$

\medskip

So it is enough to prove the assertion of the proposition for a fixed 
$(\BunNb)_{\leq \mu\cdot x,\on{good\,at\,}\ul{y}}^{G\on{-level}_{n\cdot x}}$ and the substacks
$$(\BunNb)_{=\mu'\cdot x,\on{good\,at\,}\ul{y}}^{G\on{-level}_{n\cdot x}}\subset
(\BunNb)_{\leq \mu'\cdot x,\on{good\,at\,}\ul{y}}^{G\on{-level}_{n\cdot x}}\subset 
(\BunNb)_{\leq \mu\cdot x,\on{good\,at\,}\ul{y}}^{G\on{-level}_{n\cdot x}}$$
for $\mu'\leq \mu$.

\medskip

However, now the assertion follows from \propref{p:alpha is enough}: the required right adjoint is given 
on the corresponding
$$N^{-\alpha}_{\ul{y}}\backslash 
(\BunNb)^{G\on{-level}_{n\cdot x},N\on{-level}_{\infty\cdot \ul{y}}}_{\leq \mu\cdot x,\on{good\,at\,}\ul{y}}$$
by the functor $\Av_*^{N^{\alpha}_{\ul{y}}/N^{-\alpha}_{\ul{y}},\chi_{\ul{y}}}$.

\end{proof}

\sssec{}

Let now $\ul{y}$ be equal to $\ul{y}'\sqcup \ul{y}''$. Note that 
$$(\BunNb)^{G\on{-level}_{n\cdot x}}_{\infty\cdot x,\on{good\,at\,}\ul{y}}=
(\BunNb)^{G\on{-level}_{n\cdot x}}_{\infty\cdot x,\on{good\,at\,}\ul{y}'}\cap
(\BunNb)^{G\on{-level}_{n\cdot x}}_{\infty\cdot x,\on{good\,at\,}\ul{y}''}.$$

\medskip

We claim:

\begin{prop}  \hfill \label{p:add point} 

\smallskip

\noindent{\em(a)}
The restriction functor 
$$\Shv((\BunNb)^{G\on{-level}_{n\cdot x}}_{\infty\cdot x,\on{good\,at\,}\ul{y}'})\to
\Shv((\BunNb)^{G\on{-level}_{n\cdot x}}_{\infty\cdot x,\on{good\,at\,}\ul{y}})$$
sends 
$$\Whit((\BunNb)^{G\on{-level}_{n\cdot x}}_{\infty\cdot x,\on{good\,at\,}\ul{y}'})\to
\Whit((\BunNb)^{G\on{-level}_{n\cdot x}}_{\infty\cdot x,\on{good\,at\,}\ul{y}}).$$

\smallskip

\noindent{\em(b)}
The diagram
$$
\CD
\Shv((\BunNb)^{G\on{-level}_{n\cdot x}}_{\infty\cdot x,\on{good\,at\,}\ul{y}'})   @>>> 
\Shv((\BunNb)^{G\on{-level}_{n\cdot x}}_{\infty\cdot x,\on{good\,at\,}\ul{y}})  \\
@VVV   @VVV   \\
\Whit((\BunNb)^{G\on{-level}_{n\cdot x}}_{\infty\cdot x,\on{good\,at\,}\ul{y}'})   @>>> 
\Whit((\BunNb)^{G\on{-level}_{n\cdot x}}_{\infty\cdot x,\on{good\,at\,}\ul{y}})  
\endCD
$$
where the vertical arrows are the right adjoints to the inclusions, also commutes.
\end{prop}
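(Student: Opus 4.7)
The plan is to verify both parts of the proposition stratum-by-stratum. First I would use \lemref{l:Whit via strata y} to reduce to showing, for each $\mu\in\Lambda$ and $\lambda\in\Lambda^{\on{pos}}$, that the statement holds on the stratum $(\BunNb)^{G\on{-level}_{n\cdot x}}_{=\mu\cdot x,\on{def}=\lambda,\on{good\,at\,}\ul{y}}$. This stratum is the intersection of the $\on{good\,at\,}\ul{y}$ locus with the corresponding stratum on the $\on{good\,at\,}\ul{y}'$ side, because the condition $\on{good\,at\,}\ul{y}$ forces the defect divisor to avoid $\ul{y}''$ and thus to lie in $(X-\{x\cup\ul{y}\})^\lambda$. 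Consequently, by \lemref{l:skinny strata}, the orbit maps for $\fL_{\ul{y}'}(N)$ on the $\on{good\,at\,}\ul{y}'$ stratum and for $\fL_{\ul{y}}(N)$ on the $\on{good\,at\,}\ul{y}$ stratum share the same target on this common locus.

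For part (a), I would then invoke \lemref{l:skinny strata bis} to reduce $(\fL_{\ul{y}}(N),\chi_{\ul{y}})$-equivariance to equivariance under a finite-dimensional unipotent subgroup $H\subset\fL_{\ul{y}}(N)$. Given $\CF$ that is $(\fL_{\ul{y}'}(N),\chi_{\ul{y}'})$-equivariant on the $N\on{-level}_{\infty\cdot\ul{y}'}$ version, its pullback to the $N\on{-level}_{\infty\cdot\ul{y}}$ version is automatically $\fL^+_{\ul{y}''}(N)$-equivariant with trivial character (matching $\chi_{\ul{y}''}|_{\fL^+_{\ul{y}''}(N)}=0$), being a pullback along an $\fL^+_{\ul{y}''}(N)$-torsor. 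The bridging step is to show that any $h\in H$ can be written, after multiplying by a suitable rational section $f\in N^{\omega^\rho}(X-\{x,\ul{y}\})$ regular at $x$, as $h\cdot f = n'\cdot n''$ with $n'\in\fL_{\ul{y}'}(N)$ and $n''\in\fL^+_{\ul{y}''}(N)$. Such an $f$ exists by an inductive strong-approximation argument for the unipotent group $N$, reducing to the $\BG_a$ case via successive extensions along simple root subgroups. The residue formula for $N^{\omega^\rho}$-sections on $X$ then yields $\chi_{\ul{y}'}(f_{\ul{y}'})+\chi_{\ul{y}''}(f_{\ul{y}''})=0$, so $\chi_{\ul{y}}(h)=\chi_{\ul{y}'}(n')$, which is precisely the character along which $\CF$ transforms.

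For part (b), the right adjoints in question are described concretely by \corref{c:right adj glob} as $*$-averaging over the finite-dimensional unipotent quotients $N^\alpha_{\ul{y}}/N^{-\alpha}_{\ul{y}}$, performed on the appropriate torsor quotient of the stratum. These averaging operations commute with $!$-pullback along the open embedding $(\BunNb)^{G\on{-level}_{n\cdot x}}_{\infty\cdot x,\on{good\,at\,}\ul{y}}\hookrightarrow(\BunNb)^{G\on{-level}_{n\cdot x}}_{\infty\cdot x,\on{good\,at\,}\ul{y}'}$ by base change, and by part (a) they land in the Whittaker subcategory. Hence the diagram of right adjoints commutes.

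The principal obstacle is the bridging step of part (a): it requires reconciling the local loop-group actions at $\ul{y}'$ and $\ul{y}$ via a global rational section, with the character match supplied by the residue theorem applied to the $\omega^\rho$-twist of $N$ chosen in \secref{sss:twist of N}. This is the geometric counterpart of the classical fact that the Whittaker character on $N(\BA)$ is trivial on $N(K)$.
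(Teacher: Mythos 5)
Your approach follows the same two key lemmas as the paper (\lemref{l:skinny strata} and \lemref{l:skinny strata bis}), so the strategy is in the right spirit, but both halves of the argument have gaps.

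For part (a), the paper's proof is simply that $\fL_{\ul{y}'}(N)$ acts transitively along the orbits of $\fL_{\ul{y}}(N)$, citing \lemref{l:skinny strata}; the character compatibility on stabilizers then comes for free because stabilizers consist of global sections of $N^{\omega^\rho}$ on $X-\ul{y}$ that are trivial modulo the level structure at $x$, so the residue formula kills $\chi_{\ul{y}}$ on them. Your ``bridging step'' is trying to re-prove this transitivity via strong approximation, and that is reasonable, but there is an imprecision that turns into a real gap for $n\geq 1$: you only require the auxiliary rational section $f$ to be \emph{regular} at $x$. A rational section regular at $x$ but with $f|_x\notin K_n$ does \emph{not} stabilize a point of $(\BunNb)^{G\on{-level}_{n\cdot x},N\on{-level}_{\infty\cdot\ul{y}}}_{\infty\cdot x,\on{good\,at\,}\ul{y}}$---it translates the $G$-level structure at $x$ by $f|_x$, so $\CF$ will in general transform nontrivially under it. The decomposition $h\cdot f=n'\cdot n''$ together with $\chi_{\ul{y}}(f)=0$ then does not by itself imply that $\CF$ is $(h,\chi_{\ul{y}})$-equivariant. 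You need $f$ to lie in the actual stabilizer, i.e.\ $f|_x\in K_n$, and you must say why $\CF$ transforms trivially under a stabilizing element (which is where the residue formula and $K_n\subset\fL^+_x(N)$ enter). Without that, the ``which is precisely the character along which $\CF$ transforms'' step is unjustified.

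For part (b), the gap is more serious. You describe the two right adjoints as $*$-averagings over $N^{\alpha}_{\ul{y}'}/N^{-\alpha}_{\ul{y}'}$ and $N^{\alpha}_{\ul{y}}/N^{-\alpha}_{\ul{y}}$ respectively, and then appeal to ``base change.'' But base change along the open embedding only gives commutation of $j^!$ with averaging over groups that act on \emph{both} sides, i.e.\ over a subgroup of $\fL_{\ul{y}'}(N)$; it does not commute the $\ul{y}'$-averaging past $j^!$ into the $\ul{y}$-averaging, which is over a strictly larger group. The point you are missing---and this is precisely what the paper extracts from \lemref{l:skinny strata bis}---is that on the $\ul{y}$-good locus the projection onto the Whittaker subcategory can already be computed by averaging over a large enough subgroup $N^{\alpha}_{\ul{y}'}\subset\fL_{\ul{y}'}(N)$, because such a subgroup is already transitive along $\fL_{\ul{y}}(N)$-orbits. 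Once both right adjoints are realized as the \emph{same} $\ul{y}'$-averaging, base change does finish the argument. Landing in the Whittaker subcategory (your appeal to part (a)) is not enough by itself: many functors land there without being the right adjoint.
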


\begin{proof}

To prove point (a), it suffices to show that the action of $\fL_{\ul{y}'}(N)$ along the orbits of 
$\fL_{\ul{y}}(N)$ on $(\BunNb)^{G\on{-level}_{n\cdot x},N\on{-level}_{\infty\cdot \ul{y}}}_{\infty\cdot x,\on{good\,at\,}\ul{y}}$
is transitive. However, this follows from \lemref{l:skinny strata}. 

\medskip

To prove point (b), it is enough to do so for the embedding
$$(\BunNb)^{G\on{-level}_{n\cdot x}}_{\leq \mu\cdot x,\on{good\,at\,}\ul{y}}\hookrightarrow
(\BunNb)^{G\on{-level}_{n\cdot x}}_{\leq \mu\cdot x,\on{good\,at\,}\ul{y}'}.$$

\medskip

Now the assertion follows from \lemref{l:skinny strata bis}: the right adjoint in question is given by the functor
$\Av_*^{N^{\alpha}_{\ul{y}'},\chi_{\ul{y}'}}$ for a large enough subgroup 
$$N^{\alpha}_{\ul{y}'}\subset \fL_{\ul{y}'}(N).$$

\end{proof}

\sssec{}

The above discussion was \emph{not} specific to the fact that we were dealing with a non-degenerate character 
$\chi_{\ul{y}}$; in particular it equally applies to the case when the character is trivial. 

\medskip

However, the following assertion \emph{is} specific to the non-degenerate case:

\begin{lem} \label{l:good strata y} \hfill

\smallskip

\noindent{\em(a)}
Any object of $\Whit((\BunNb)^{G\on{-level}_{n\cdot x}}_{=\mu\cdot x,\on{good\,at\,}\ul{y}})$ supported outside of 
$$(\BunNb)^{G\on{-level}_{n\cdot x}}_{=\mu\cdot x,\on{good\,elswhr}}
\subset (\BunNb)^{G\on{-level}_{n\cdot x}}_{=\mu\cdot x,\on{good\,at\,}\ul{y}}$$
is zero.

\smallskip

\noindent{\em(b)}
The category $\Whit((\BunNb)^{G\on{-level}_{n\cdot x}}_{=\mu\cdot x,\on{good\,at\,}\ul{y}})$ is zero
unless $\mu+n\cdot \rho\in \Lambda^+_\BQ$.

\end{lem}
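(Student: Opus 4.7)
The plan is to mimic the proof of Proposition \ref{p:generated}(a) in the global setting: in each case, I will exhibit a nontrivial subgroup of $\fL_{\ul{y}}(N)$ that stabilizes the relevant points (after passage to the $N$-level cover) and on which $\chi_{\ul{y}}$ is nontrivial. Any $(\fL_{\ul{y}}(N),\chi_{\ul{y}})$-equivariant sheaf will then be forced to vanish, because $\chi_{\ul{y}}$ would have to restrict to the trivial character on this stabilizer.

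The underlying principle is as follows. The stabilizer in $\fL_{\ul{y}}(N)$ of a point of the $N$-level cover is, modulo commutators, the group of rational sections of $N^{\omega^\rho}$ on $X$ which are regular outside $\ul{y}$ together with the defect divisor (if any), and which satisfy the level-structure constraint at $x$. Via the $\omega^\rho$-twist, each simple-root line of $N^{\omega^\rho}$ is canonically identified with $\omega$, and $\chi_{\ul{y}}$ restricts on each such line to the residue map at $\ul{y}$. By the residue theorem on $X$, the sum of residues of a rational section of $\omega$ over all its poles is zero, so nontrivial $\chi_{\ul{y}}$-values on the stabilizer are produced precisely when ``extra room'' is available for poles somewhere else.

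For part (a), Lemma \ref{l:Whit via strata y} reduces the problem to showing vanishing on each substack $(\BunNb)^{G\on{-level}_{n\cdot x}}_{=\mu\cdot x,\on{def}=\lambda,\on{good\,at\,}\ul{y}}$ with $\lambda\neq 0$. Write $\lambda=\sum n_i\alpha_i$, fix $i_0$ with $n_{i_0}>0$, and let $D_{i_0}\subset X-\{x\cup\ul{y}\}$ be the associated defect divisor of degree $n_{i_0}$. Because the Pl\"ucker map $\kappa^{\check\alpha_{i_0}}$ vanishes to the prescribed orders along $D_{i_0}$ on this stratum, rational sections of $\omega$ with poles bounded by $D_{i_0}$ (and meeting the level-$n$ vanishing at $x$) embed into $\fL_{\ul{y}}(N)$ by Taylor expansion at $\ul{y}$ as a subgroup of the stabilizer of any chosen point of the $N$-level cover. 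Since $D_{i_0}\neq 0$, a Riemann--Roch count produces such a section with nonzero residue at some point of $D_{i_0}$; the residue theorem then forces a nonzero residue at $\ul{y}$, yielding the desired element.

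For part (b), assume $\mu+n\rho\notin\Lambda^+_\BQ$, so some simple root $\check\alpha_{i_0}$ violates the requisite positivity. Running the same analysis with $\lambda=0$, the extra room for sections of the $\alpha_{i_0}$-direction now comes from the pole order at $x$ that $\mu$ permits in excess of the level-$n$ vanishing required by the level structure. The failure of positivity is precisely what guarantees that this space of sections contains an element with nontrivial residue at $x$; the residue theorem then supplies a nontrivial residue at $\ul{y}$, producing a stabilizer element on which $\chi_{\ul{y}}$ is nontrivial. The main technical obstacle is the precise identification of the stabilizer with the described $H^0$-space: this rests on Lemma \ref{l:skinny strata} (transitivity of the $\fL_{\ul{y}}(N)$-action along the fibers) together with the geometric interpretation of the condition ``$=\mu\cdot x$'' as controlling the pole orders of the Pl\"ucker maps at $x$.
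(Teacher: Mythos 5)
The paper itself does not prove this lemma; it simply cites \cite[Lemma 6.2.4]{FGV}. Your proposal therefore supplies an argument the paper omits, and the strategy you take---exhibiting a stabilizer element in $\fL_{\ul{y}}(N)$ on which $\chi_{\ul{y}}$ is nontrivial, via the residue theorem on $X$ and a Riemann--Roch count, in the spirit of the proof of \propref{p:generated}(a)---is indeed the FGV argument. The core insight is correctly identified: $\chi_{\ul{y}}$ is the sum of residues at $\ul{y}$, and by the residue theorem a nonzero value is forced whenever the stabilizer contains a meromorphic section of $\omega$ with a simple pole at some auxiliary point (a defect point for part (a), the marked point $x$ for part (b)) and whose only other poles lie in $\ul{y}$.

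There is, however, a gap in part (a) concerning the choice of $i_0$. You pick $i_0$ with $n_{i_0}>0$ and claim that the defect divisor gives room for a pole of the $\alpha_{i_0}$-root-direction stabilizer section at a point of $D_{i_0}$. But the pole allowance at a defect point $z_0$ in the $\alpha_{i_0}$-direction equals $\langle\lambda_0,\check\alpha_{i_0}\rangle$ where $\lambda_0 = \sum_i m_i(z_0)\,\alpha_i$ is the \emph{local} defect at $z_0$ (the twist between the $\gamma$-trivialization and the $E$-trivialization is $\on{Ad}_{s^{\lambda_0}}$, which scales the $\alpha_{i_0}$-component by $s^{\langle\lambda_0,\check\alpha_{i_0}\rangle}$); this is \emph{not} simply $m_{i_0}(z_0)$, because the off-diagonal Cartan integers $\langle\alpha_j,\check\alpha_{i_0}\rangle$ for $j\neq i_0$ are nonpositive. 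Concretely, for $G=SL_3$ and local defect $\lambda_0=\alpha_1+2\alpha_2$ at a single point $z_0$, choosing $i_0=1$ (so $n_1>0$) yields allowance $\langle\lambda_0,\check\alpha_1\rangle = 2-2 = 0$: no pole is permitted and your argument produces nothing. The correct choice is $i_0$ with $\langle\lambda_0,\check\alpha_{i_0}\rangle>0$, which always exists because a nonzero element of $\Lambda^{\on{pos}}$ cannot be anti-dominant; here $i_0=2$ gives allowance $3$. So the argument must fix $z_0$ with $\lambda_0(z_0)\neq 0$ first and then select $i_0$ accordingly, rather than selecting $i_0$ from $\lambda=\sum n_i\alpha_i$ alone. (A smaller slip: the Pl\"ucker maps are indexed by dominant weights, so the relevant vanishing datum at $D_{i_0}$ is that of $\kappa^{\check\omega_{i_0}}$, not of $\kappa^{\check\alpha_{i_0}}$.) Part (b), where the role of $z_0$ is played by $x$ and the allowance $-\langle\mu,\check\alpha_{i_0}\rangle-n$ is strictly positive for the chosen $i_0$ by hypothesis, does not suffer from this issue.
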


For the proof see \cite[Lemma 6.2.4]{FGV}.

\ssec{Definition of the global Whittaker category}

In this section we will finally define the sought-after category $\Whit(\BunNbx^{G\on{-level}_{n\cdot x}})$. I.e.,
we will show how to get rid of the auxiliary point(s) $\ul{y}$.  

\sssec{}

We define
$$\Whit(\BunNbx^{G\on{-level}_{n\cdot x}})\subset \Shv(\BunNbx^{G\on{-level}_{n\cdot x}})$$
to be the full subcategory that consists of objects such that their restriction to
$$(\BunNb)^{G\on{-level}_{n\cdot x}}_{\on{good\,at\,}\ul{y}}\subset \BunNbx^{G\on{-level}_{n\cdot x}}$$
belongs to
$$\Whit((\BunNb)^{G\on{-level}_{n\cdot x}}_{\on{good\,at\,}\ul{y}})\subset
\Shv((\BunNb)^{G\on{-level}_{n\cdot x}}_{\on{good\,at\,}\ul{y}})$$
for any finite non-empty collection of points $\ul{y}$.

\medskip

By \propref{p:add point}, it is enough to check this condition for $\ul{y}$ being singletons $\{y\}$. Note also that every quasi-compact 
algebraic substack of $\BunNbx^{G\on{-level}_{n\cdot x}}$ is contained in a union of 
$(\BunNb)^{G\on{-level}_{n\cdot x}}_{\on{good\,at\,}y})$ for finitely many points $y$.

\sssec{}

We define the full subcategories
$$\Whit((\BunNb)^{G\on{-level}_{n\cdot x}}_{\leq \mu\cdot x})\subset \Shv((\BunNb)^{G\on{-level}_{n\cdot x}}_{\leq \mu\cdot x}),$$
$$\Whit((\BunNb)^{G\on{-level}_{n\cdot x}}_{=\mu\cdot x})\subset \Shv((\BunNb)^{G\on{-level}_{n\cdot x}}_{=\mu\cdot x}),$$
$$\Whit((\BunNb)^{G\on{-level}_{n\cdot x}}_{=\mu\cdot x,\on{def}=\lambda})\subset 
\Shv((\BunNb)^{G\on{-level}_{n\cdot x}}_{=\mu\cdot x,\on{def}=\lambda})$$
by the same principle.

\medskip

The corresponding !-pullback and *-pushforward functors map these full subcategories to one another.
From \lemref{l:Whit via strata y} we obtain:

\begin{cor} \label{c:Whit via strata}
An object of $\Shv((\BunNb)_{\infty\cdot x}^{G\on{-level}_{n\cdot x}})$ belongs to
$\Whit((\BunNb)_{\infty\cdot x}^{G\on{-level}_{n\cdot x}})$ if (and only if) its !-restrictions to
the locally-closed substacks $(\BunNb)_{=\mu\cdot x}^{G\on{-level}_{n\cdot x}}$
(resp., $(\BunNb)_{=\mu\cdot x,\on{def}=\lambda}^{G\on{-level}_{n\cdot x}}$)
belong to $\Whit((\BunNb)_{=\mu\cdot x}^{G\on{-level}_{n\cdot x}})$
(resp., $\Whit((\BunNb)_{=\mu\cdot x,\on{def}=\lambda}^{G\on{-level}_{n\cdot x}})$).
\end{cor}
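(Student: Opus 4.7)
The plan is to combine \lemref{l:Whit via strata y} with the observation that the Whittaker condition on each of the (ind-)algebraic stacks in question is, by construction, local with respect to the open cover by the subsets $(\BunNb)^{G\on{-level}_{n\cdot x}}_{\infty\cdot x,\on{good\,at\,}y}$ as $y$ ranges over points of $X-\{x\}$. This is precisely how $\Whit(\BunNbx^{G\on{-level}_{n\cdot x}})$ is defined, and the same definitional principle applies to the stratum-wise categories $\Whit((\BunNb)^{G\on{-level}_{n\cdot x}}_{=\mu\cdot x})$ and $\Whit((\BunNb)^{G\on{-level}_{n\cdot x}}_{=\mu\cdot x,\on{def}=\lambda})$. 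Moreover, by \propref{p:add point}(a), it suffices to test the Whittaker condition for $\ul{y}$ a singleton $\{y\}$.

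For the forward implication, suppose $\CF\in \Whit(\BunNbx^{G\on{-level}_{n\cdot x}})$. Then by definition, for every $y\in X-\{x\}$, the restriction of $\CF$ to $(\BunNb)^{G\on{-level}_{n\cdot x}}_{\infty\cdot x,\on{good\,at\,}y}$ lies in the corresponding Whittaker subcategory. By \lemref{l:Whit via strata y}, the !-restriction to each locally closed $(\BunNb)^{G\on{-level}_{n\cdot x}}_{=\mu\cdot x,\on{good\,at\,}y}$ (resp.,\ $(\BunNb)^{G\on{-level}_{n\cdot x}}_{=\mu\cdot x,\on{def}=\lambda,\on{good\,at\,}y}$) then lies in $\Whit$ of that stratum. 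Since the opens $(\BunNb)^{G\on{-level}_{n\cdot x}}_{=\mu\cdot x,\on{good\,at\,}y}$ cover $(\BunNb)^{G\on{-level}_{n\cdot x}}_{=\mu\cdot x}$ as $y$ varies, and the Whittaker condition on the stratum is defined by the very same $y$-local criterion, we conclude that the !-restriction of $\CF$ to $(\BunNb)^{G\on{-level}_{n\cdot x}}_{=\mu\cdot x}$ lies in $\Whit$, and similarly for the defect-refined strata.

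For the converse, assume the !-restrictions of $\CF$ to all locally closed strata lie in the respective Whittaker subcategories. Fix $y\in X-\{x\}$. By the $y$-local definition of $\Whit$ on the stratum, the further restriction to $(\BunNb)^{G\on{-level}_{n\cdot x}}_{=\mu\cdot x,\on{good\,at\,}y}$ (and likewise for the defect-refined stratum) lies in the corresponding Whittaker subcategory. Now \lemref{l:Whit via strata y}, applied to our fixed singleton $\ul{y}=\{y\}$, yields that the restriction of $\CF$ to $(\BunNb)^{G\on{-level}_{n\cdot x}}_{\infty\cdot x,\on{good\,at\,}y}$ lies in $\Whit((\BunNb)^{G\on{-level}_{n\cdot x}}_{\infty\cdot x,\on{good\,at\,}y})$. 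As this holds for every $y$, $\CF$ belongs to $\Whit(\BunNbx^{G\on{-level}_{n\cdot x}})$ by definition.

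The argument is formal once \lemref{l:Whit via strata y} is in hand; the only point meriting care is that the ``local on opens $\on{good\,at\,}y$'' definition of the Whittaker condition on a fixed stratum is genuinely compatible with the stratum-wise criterion extracted from the lemma. But this is immediate from the fact that the equivariance condition imposed via the groupoid \eqref{e:groupoid} behaves well under open restriction, so there is no substantive obstacle to the passage.
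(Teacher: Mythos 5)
Your proof is correct and follows the same route as the paper, which simply cites \lemref{l:Whit via strata y}: you unwind the $\ul{y}$-local definitions of the global Whittaker subcategories and apply the lemma for each singleton $\ul{y}=\{y\}$, observing that the conditions indexed by $(\mu,y)$ in the definition of $\Whit(\BunNbx^{G\on{-level}_{n\cdot x}})$ coincide with the stratum-wise conditions. The only small imprecision is that in the forward direction the claim that the restriction of $\CF$ to $(\BunNb)^{G\on{-level}_{n\cdot x}}_{\infty\cdot x,\on{good\,at\,}y}$ lies in $\Whit$ already invokes \lemref{l:Whit via strata y} once rather than holding ``by definition'', since the definition is stated via the locally closed substacks $(\BunNb)^{G\on{-level}_{n\cdot x}}_{=\mu\cdot x,\on{good\,at\,}y}$; this does not affect the validity of the argument.
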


From \propref{p:add point}(b) and \corref{c:right adj glob}, we obtain: 

\begin{cor}  
The inclusions
$$\Whit((\BunNb)_{\infty\cdot x}^{G\on{-level}_{n\cdot x}})\hookrightarrow
\Shv((\BunNb)_{\infty\cdot x}^{G\on{-level}_{n\cdot x}}),$$
$$\Whit((\BunNb)_{\leq \mu\cdot x}^{G\on{-level}_{n\cdot x}})\hookrightarrow
\Shv((\BunNb)_{\leq \mu\cdot x}^{G\on{-level}_{n\cdot x}}),$$
$$\Whit((\BunNb)_{=\mu\cdot x}^{G\on{-level}_{n\cdot x}})\hookrightarrow
\Shv((\BunNb)_{=\mu\cdot x}^{G\on{-level}_{n\cdot x}})$$
all admit continuous right adjoints. These right adjoints commute with the corresponding !-pullback and
*-pushforward functors. 
\end{cor}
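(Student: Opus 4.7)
The plan is to glue the right adjoints already constructed on the ``good at $\ul{y}$'' opens by \corref{c:right adj glob}, using the compatibility under adding points supplied by \propref{p:add point}(b).

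First, I would handle the $\leq \mu\cdot x$ case for a fixed $\mu \in \Lambda$. Restricting to any quasi-compact open substack, we may choose a finite set of points $\{y_1,\ldots,y_k\}\subset X-x$ such that $U_i := (\BunNb)^{G\on{-level}_{n\cdot x}}_{\leq \mu\cdot x,\on{good\,at\,}y_i}$ form an open cover; finite intersections of the $U_i$ are of the form $(\BunNb)^{G\on{-level}_{n\cdot x}}_{\leq \mu\cdot x,\on{good\,at\,}\ul{y}'}$ for subsets $\ul{y}' \subseteq \{y_1,\ldots,y_k\}$. On each such intersection \corref{c:right adj glob} provides a continuous right adjoint to the Whittaker inclusion, and \propref{p:add point}(b) states precisely that these right adjoints commute with the restriction maps of the \v{C}ech diagram. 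Both $\Shv$ and $\Whit$ satisfy Zariski descent along this cover---for $\Whit$ this is immediate from the definition of the global Whittaker category as the subcategory cut out by the Whittaker condition on each ``good at $y$'' open. Taking limits over the \v{C}ech cosimplicial diagram assembles the local right adjoints into a continuous right adjoint to
$$\Whit((\BunNb)^{G\on{-level}_{n\cdot x}}_{\leq \mu\cdot x}) \hookrightarrow \Shv((\BunNb)^{G\on{-level}_{n\cdot x}}_{\leq \mu\cdot x}).$$
The identical argument, applied to the stratum, produces the right adjoint for $(\BunNb)^{G\on{-level}_{n\cdot x}}_{=\mu\cdot x}$.

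To pass to $(\BunNb)^{G\on{-level}_{n\cdot x}}_{\infty\cdot x}$, invoke \eqref{e:BunNb as union}: on each connected component the relevant sub-poset of $\Lambda$ is filtered, so both $\Shv$ and $\Whit$ of $(\BunNb)^{G\on{-level}_{n\cdot x}}_{\infty\cdot x}$ are limits under $!$-restriction of the corresponding categories for $(\BunNb)^{G\on{-level}_{n\cdot x}}_{\leq \mu\cdot x}$. The right adjoints just constructed commute with $!$-restriction (the commutation in \corref{c:right adj glob} is preserved through the gluing), and therefore assemble into a continuous right adjoint at the $\infty\cdot x$ level. Commutation of the resulting right adjoints with $!$-pullback and $*$-pushforward between the various locally closed embeddings is inherited from the local picture: each local right adjoint is explicitly realized by $*$-averaging $\Av_*^{N^\alpha_{\ul{y}'}/N^{-\alpha}_{\ul{y}'},\chi_{\ul{y}'}}$ with respect to a finite-dimensional unipotent group action, whose base change properties against the stratum embeddings are standard and visibly preserved by the \v{C}ech limit.

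The only delicate point---and what I expect requires the most care---is the \v{C}ech descent step: one must verify that the locally defined right adjoints glue to a \emph{genuine} right adjoint rather than merely a compatible family of endofunctors. This follows from presentability of the categories involved together with Zariski descent for $\Shv$ on algebraic stacks, combined with the definitional identification of $\Whit(\BunNbx^{G\on{-level}_{n\cdot x}})$ as the full subcategory whose members satisfy the Whittaker condition on each member of any ``good at $\ul{y}$'' cover.
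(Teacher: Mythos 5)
Your proposal is correct and follows essentially the same route as the paper: the paper proves this corollary by a one-line citation of \propref{p:add point}(b) together with \corref{c:right adj glob}, which is exactly the gluing strategy you elaborate (with the \v{C}ech descent step and the passage from $\leq\mu\cdot x$ to $\infty\cdot x$ being the details the paper leaves implicit).
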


We will denote the right adjoint(s) appearing in the above corollary by $\Av_{*,\on{glob}}^{\Whit}$. 

\sssec{}

The above assertions are \emph{not} specific to the fact that we were dealing with a non-degenerate character.
In the non-degenerate case, from \lemref{l:good strata y} we obtain:

\begin{cor} \label{c:good strata} \hfill

\smallskip

\noindent{\em(a)} The restriction functor
$\Whit((\BunNb)^{G\on{-level}_{n\cdot x}}_{\infty\cdot x})\to \Whit((\BunNb)^{G\on{-level}_{n\cdot x}}_{\infty\cdot x,\on{good\,at\,}\ul{y}})$
is an equivalence for any $\ul{y}$.

\smallskip

\noindent{\em(b)}
The category $\Whit((\BunNb)^{G\on{-level}_{n\cdot x}}_{=\mu\cdot x})$ is zero
unless $\mu+n\cdot \rho\in \Lambda^+_\BQ$.

\smallskip

\noindent{\em(c)} The restriction functor
$$\Whit((\BunNb)^{G\on{-level}_{n\cdot x}}_{=\mu\cdot x})\to \Whit((\BunNb)_{=\mu\cdot x,\on{good\,elswhr}})$$
is an equivalence. 

\smallskip

\noindent{\em(d)} An object of $\Shv((\BunNb)_{\infty\cdot x}^{G\on{-level}_{n\cdot x}})$ belongs to
$\Whit((\BunNb)_{\infty\cdot x}^{G\on{-level}_{n\cdot x}})$ if (and only if) for every $\mu$:

\smallskip

\noindent{\em(i)} Its !-restriction to 
$(\BunNb)^{G\on{-level}_{n\cdot x}}_{=\mu\cdot x}-
(\BunNb)^{G\on{-level}_{n\cdot x}}_{=\mu\cdot x,\on{good\,elswhr}}$
is zero;

\smallskip

\noindent{\em(ii)} Its !-restriction to $(\BunNb)^{G\on{-level}_{n\cdot x}}_{=\mu\cdot x,\on{good\,elswhr}}$
belongs to $\Whit((\BunNb)^{G\on{-level}_{n\cdot x}}_{=\mu\cdot x,\on{good\,elswhr}})$. 

\end{cor}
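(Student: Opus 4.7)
The plan is to deduce each part of the corollary from \lemref{l:good strata y} (the auxiliary-point version) by exploiting the definition of $\Whit$ on the full stacks as compatible collections of $\Whit$ objects on the ``good at $\ul{y}$'' opens. I proceed in the order (b), (c), (a), (d).

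For (b), fix $\mu$ with $\mu+n\cdot\rho\notin \Lambda^+_\BQ$ and $\CF\in \Whit((\BunNb)^{G\on{-level}_{n\cdot x}}_{=\mu\cdot x})$. By definition, the !-restriction of $\CF$ to each $(\BunNb)^{G\on{-level}_{n\cdot x}}_{=\mu\cdot x,\on{good\,at\,}\ul{y}}$ lies in $\Whit$ and is therefore zero by \lemref{l:good strata y}(b). As $\ul{y}$ ranges over singletons $\{y\}$ with $y\in X-\{x\}$, these opens cover $(\BunNb)^{G\on{-level}_{n\cdot x}}_{=\mu\cdot x}$, so $\CF=0$.

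For (c), let $j:(\BunNb)^{G\on{-level}_{n\cdot x}}_{=\mu\cdot x,\on{good\,elswhr}}\hookrightarrow (\BunNb)^{G\on{-level}_{n\cdot x}}_{=\mu\cdot x}$ be the open embedding; its complement is the union over $\lambda>0$ of the strata $(\BunNb)^{G\on{-level}_{n\cdot x}}_{=\mu\cdot x,\on{def}=\lambda}$. For full faithfulness of $j^!$ on $\Whit$, I must show the !-restriction of any $\CF\in \Whit((\BunNb)^{G\on{-level}_{n\cdot x}}_{=\mu\cdot x})$ to each such stratum vanishes: given a geometric point $\xi$ of $(\BunNb)^{G\on{-level}_{n\cdot x}}_{=\mu\cdot x,\on{def}=\lambda}$ with $\lambda>0$, I pick $y\in X-\{x\}$ away from the support of the defect divisor at $\xi$, so that $\xi\in (\BunNb)^{G\on{-level}_{n\cdot x}}_{=\mu\cdot x,\on{good\,at\,}y}$; by \lemref{l:good strata y}(a), the !-restriction of $\CF$ there is supported on the ``good elsewhere'' locus, which does not contain $\xi$. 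For essential surjectivity, given $\CF_0\in \Whit((\BunNb)^{G\on{-level}_{n\cdot x}}_{=\mu\cdot x,\on{good\,elswhr}})$, set $\CF:=\Av^{\Whit}_{*,\on{glob}}(j_*\CF_0)$, where $\Av^{\Whit}_{*,\on{glob}}$ is the right adjoint from \corref{c:right adj glob}: by its commutation with $j^!$ and the identity $j^!\circ j_*=\on{Id}$, we have $j^!\CF\simeq \CF_0$.

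For (a), full faithfulness of restriction along $(\BunNb)^{G\on{-level}_{n\cdot x}}_{\infty\cdot x,\on{good\,at\,}\ul{y}}\hookrightarrow (\BunNb)^{G\on{-level}_{n\cdot x}}_{\infty\cdot x}$ follows stratum-by-stratum via \corref{c:Whit via strata}: on each $=\mu\cdot x$, a $\Whit$ object is supported on ``good elsewhere'' by (c), and ``good elsewhere'' is contained in ``good at $\ul{y}$''. Essential surjectivity uses the same $\Av^{\Whit}_{*,\on{glob}}\circ j_*$ construction as in (c). Finally, (d) is a formal combination: \corref{c:Whit via strata} reduces $\Whit$-ness on the full stack to $\Whit$-ness on each stratum $=\mu\cdot x$; for $\mu+n\cdot\rho\notin \Lambda^+_\BQ$ this condition is empty by (b) (so condition (i) is automatic on such strata), while for the remaining $\mu$ the equivalence of (c) translates $\Whit$-ness on $=\mu\cdot x$ into the combination of (i) (vanishing of the !-restriction to the positive-defect strata) and (ii) ($\Whit$-ness on the good-elsewhere part).

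The main difficulty is making the essential-surjectivity step rigorous in the presence of ind-stacks: one needs $j_*\CF_0$ to be defined as an object of $\Shv$ on a possibly non-quasi-compact open of an ind-algebraic stack. This is handled by first working on the closed substacks $(\BunNb)^{G\on{-level}_{n\cdot x}}_{\leq\mu\cdot x}$ (where $j$ is an open embedding of ordinary algebraic stacks, and $j_*$ is well-behaved), and then assembling via the colimit presentation of $\BunNbx^{G\on{-level}_{n\cdot x}}$, invoking the compatibility of $\Av^{\Whit}_{*,\on{glob}}$ with both !-pullback and *-pushforward asserted in \corref{c:right adj glob}.
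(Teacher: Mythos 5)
Your proposal is correct, and it is essentially the argument the paper has in mind (the paper simply asserts the corollary as a consequence of \lemref{l:good strata y} without writing out the deduction).  The covering argument for (b), the support/cleanness analysis for (c), and the stratum-wise reduction via \corref{c:Whit via strata} for (a) and (d) are all sound; the only places that warrant care are exactly the two you flag (passing from the ``good at $\ul{y}$'' opens to the full stratum, and the meaning of $j_*$ on a non-quasi-compact stack), and your resolution via the closed substacks $(\BunNb)^{G\on{-level}_{n\cdot x}}_{\leq\mu\cdot x}$ is the right one.

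One simplification worth noting: for essential surjectivity in (c) (and likewise in (a)) you do not need $\Av^{\Whit}_{*,\on{glob}}\circ j_*$ and the commutation statement from the corollary on right adjoints.  For the open embedding $j$ of the ``good elsewhere'' locus, the !-extension $j_!\CF_0$ already lies in $\Whit$: by the stratum criterion of \corref{c:Whit via strata} (in the finer $\on{def}=\lambda$ form), one only has to check the !-restrictions to the strata, and these are $\CF_0\in\Whit$ on $\on{def}=0$ and $0$ on $\on{def}=\lambda$, $\lambda>0$.  Since $j^!j_!\simeq\on{Id}$, this gives a right inverse to restriction directly.  Your phrasing in (d) that ``condition (i) is automatic on such strata'' is slightly off — (i) is not automatic, but it is \emph{implied} by membership in $\Whit$ together with (b) — though the intended argument is clear and the logic of the equivalence in (d) is correct once stated via the $\on{def}=\lambda$ refinement, for which $\Whit$ on a positive-defect stratum vanishes by \lemref{l:good strata y}(a) applied on a covering family of ``good at $y$'' opens.
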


\begin{rem}
In \thmref{t:loc pullback mu}(b) we will give an explicit local description of the categories 
$\Whit((\BunNb)^{G\on{-level}_{n\cdot x}}_{=\mu\cdot x,\on{good\,elswhr}})$.
\end{rem}

\sssec{}

From \corref{c:good strata} we obtain:

\begin{cor}  \hfill  \label{c:glob Whit comp gen}

\smallskip

\noindent{\em(a)} For a given $\mu$, 
every object of $\Whit((\BunNb)^{G\on{-level}_{n\cdot x}}_{\leq \mu\cdot x})$ 
is a clean extension from a quasi-compact substack.

\smallskip

\noindent{\em(b)} The category $\Whit((\BunNb)^{G\on{-level}_{n\cdot x}}_{\infty\cdot x})$ 
is compactly generated; the forgetful functor
$$\Whit((\BunNb)^{G\on{-level}_{n\cdot x}}_{\infty\cdot x})\to 
\Shv((\BunNb)^{G\on{-level}_{n\cdot x}}_{\infty\cdot x})$$ sends compacts to compacts.

\end{cor}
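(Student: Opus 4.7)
The strategy is to exploit the strata-wise support structure of Whittaker objects provided by Corollary \ref{c:good strata}, and to reduce (b) to (a).

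For part (a), fix $\mathcal{F} \in \Whit((\BunNb)^{G\on{-level}_{n\cdot x}}_{\leq \mu\cdot x})$. By Corollary \ref{c:good strata}(d), $\mathcal{F}$ is determined by its $!$-restrictions to the strata $(\BunNb)^{G\on{-level}_{n\cdot x}}_{=\mu'\cdot x}$ with $\mu'\leq \mu$; by part (b), these vanish whenever $\mu'+n\rho\notin \Lambda^+_\BQ$. The set
$$I_\mu := \{\mu'\in \Lambda : \mu'\leq \mu,\ \mu'+n\rho\in \Lambda^+_\BQ\}$$
is finite, since the dominance constraint bounds $\mu'$ from below while $\leq \mu$ bounds it from above. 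By part (c) of the same corollary, the restriction on each such relevant stratum is further supported on the quasi-compact locus $(\BunNb)^{G\on{-level}_{n\cdot x}}_{=\mu'\cdot x,\on{good\,elswhr}}$. So setting
$$V := \underset{\mu'\in I_\mu}\bigsqcup\, (\BunNb)^{G\on{-level}_{n\cdot x}}_{=\mu'\cdot x,\on{good\,elswhr}},$$
the substack $V$ is quasi-compact and locally closed, and the $!$-restriction of $\mathcal{F}$ to its complement vanishes, giving $j_!j^!\mathcal{F}\overset{\sim}\to \mathcal{F}$ where $j:V\hookrightarrow (\BunNb)^{G\on{-level}_{n\cdot x}}_{\leq \mu\cdot x}$.

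The remaining ingredient for (a) is the \emph{cleanness} assertion $j_!\simeq j_*$ on $j^!\mathcal{F}$. This will be derived from the non-degeneracy of $\chi$ by the same mechanism as in the proof of \propref{p:generated}: on every stratum of the complement of $V$ inside $\overline{V}$, either the dominance condition fails, or one lands in a def${}=\lambda$ locus with $\lambda\neq 0$. After passing to the auxiliary cover $(\BunNb)^{G\on{-level}_{n\cdot x},N\on{-level}_{\infty\cdot \ul{y}}}_{\infty\cdot x,\on{good\,at\,}\ul{y}}$ and using \lemref{l:skinny strata}, one exhibits in each such case a one-parameter subgroup inside $\fL_{\ul{y}}(N)$ which acts trivially on the stratum but on which $\chi_{\ul{y}}$ is non-trivial; this forces the $*$-restriction to vanish alongside the $!$-restriction, as the Artin--Schreier sheaf has vanishing $!$- and $*$-averages against the trivial action.

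For part (b), invoke the presentation
$$\Whit((\BunNb)^{G\on{-level}_{n\cdot x}}_{\infty\cdot x}) \simeq \underset{\mu\in \Lambda}{\on{colim}}\, \Whit((\BunNb)^{G\on{-level}_{n\cdot x}}_{\leq \mu\cdot x})$$
along $*$-pushforwards (the dual of the limit presentation; here we use that the inclusions admit continuous right adjoints from \corref{c:right adj glob} and its global analog). It therefore suffices to show each term is compactly generated and that its compact objects map to compact objects in $\Shv$. By (a), every object is a clean extension from the quasi-compact $V$, so the category is generated by clean extensions of compact objects on $V$; these are compact in $\Whit$ because $j_!$ is left adjoint to the continuous functor $j^!$, and they remain compact after the forgetful functor into $\Shv((\BunNb)^{G\on{-level}_{n\cdot x}}_{\leq \mu\cdot x})$ precisely because of cleanness: $j_! = j_*$ for $j$ the inclusion of a quasi-compact locally closed substack preserves compactness.

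The main obstacle is the cleanness statement. The $!$-vanishing on irrelevant strata is immediate from Corollary \ref{c:good strata}, but the matching $*$-vanishing is not built into the definition of the Whittaker category and requires genuine input from the non-degeneracy of $\chi$ via the one-parameter subgroup argument sketched above. Once cleanness is in hand, both assertions of the corollary follow from compactness properties of $j_!$ for quasi-compact locally closed embeddings by formal arguments.
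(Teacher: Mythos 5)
Your overall strategy matches the paper's: finiteness of the set $I_\mu$ of relevant $\mu'$ drives (a), and quasi-compactness of $V$ plus clean extension drives (b). Two issues are worth flagging.

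For (a), there is a logical slip: vanishing of the $!$-restriction of $\mathcal{F}$ to the complement of $V$ yields $\mathcal{F}\overset{\sim}\to j_*j^!\mathcal{F}$, not $j_!j^!\mathcal{F}\overset{\sim}\to\mathcal{F}$; the latter needs the matching $*$-vanishing. You do address cleanness afterwards, but note that it is already available ``for free'': both $!$- and $*$-restriction along $N$-equivariant locally closed embeddings preserve the Whittaker equivariance condition, so the $!$- and $*$-restrictions of any $\mathcal{F}\in\Whit$ to a complementary stratum both land in the Whittaker category of that stratum, which vanishes by Lemma~\ref{l:good strata y} (dominance failure, or $\on{def}\neq 0$). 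Your re-derivation via one-parameter subgroups is thus redundant --- and the ``same mechanism as Proposition~\ref{p:generated}'' phrasing is imprecise, since the $\on{def}\neq 0$ case uses subgroups of $\fL_{\ul y}(N)$ at points \emph{away from} $x$, following FGV, not a root subgroup at $x$.

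For (b), there is a genuine gap: you assert that $\Whit((\BunNb)^{G\on{-level}_{n\cdot x}}_{\leq\mu\cdot x})$ is generated by clean extensions of ``compact objects on $V$,'' but this tacitly assumes that $\Whit(V)$ is itself compactly generated, which you do not establish. This is not formal: the paper supplies it by combining (a) with Proposition~\ref{p:alpha is enough} (which lets one test the Whittaker condition against a single finite-dimensional unipotent quotient $N^\alpha_{\ul y}/N^{-\alpha}_{\ul y}$), identifying the category with twisted sheaves on a quasi-compact algebraic stack, and then citing \cite{DrGa}. Your observation that clean extension along a quasi-compact locally closed $j$ preserves compactness (because $j_!=j_*$ factors through an open $!$-extension and a closed $*$-pushforward, each compactness-preserving) is correct and handles the ``compacts go to compacts'' part; but the compact generation of $\Whit(V)$ itself must still be supplied by the above argument.
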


\begin{proof}

Point (a) follows from the fact that for a given $\mu$ the set of $\mu'\in \Lambda$ that satisfy
$$\mu' \in \mu-\Lambda \text{ and } \mu'+n\cdot \rho\in \Lambda^+$$
is finite.

\medskip

To prove point (b), it suffices to show that for a fixed $\mu$, the category
$\Whit((\BunNb)^{G\on{-level}_{n\cdot x}}_{\leq \mu\cdot x,\on{good\,at\,}\ul{y}})$ 
for some/any non-empty $\ul{y}$, is compactly generated. However, by point (a),
the latter is equivalent to the category of (twisted) sheaves on a quasi-compact algebraic stack,
and the assertion follows from \cite{DrGa}.

\end{proof}

\ssec{Duality for the global Whittaker category}

In this subsection we will show that the global Whittaker category (unlike its local counterpart) is
more or less tautologically self-dual, up to replacing $\chi$ by its inverse. 

\sssec{}

As was mentioned above, the algebraic stacks $(\BunNb)^{G\on{-level}_{n\cdot x}}_{\leq \mu\cdot x}$ are not quasi-compact. Hence,
the functor
$$\Gamma((\BunNb)^{G\on{-level}_{n\cdot x}}_{\leq \mu\cdot x},-):\Shv((\BunNb)^{G\on{-level}_{n\cdot x}}_{\leq \mu\cdot x})\to \Vect$$
is not continuous, and we \emph{do not} have a Verdier duality identification of $\Shv((\BunNb)^{G\on{-level}_{n\cdot x}}_{\leq \mu\cdot x})$ 
with its dual.

\medskip

However, if $\CF\in (\BunNb)^{G\on{-level}_{n\cdot x}}_{\leq \mu\cdot x}$ is a *-extension from a quasi-compact substack, the functor 
$$\Gamma((\BunNb)^{G\on{-level}_{n\cdot x}}_{\leq \mu\cdot x},\CF\sotimes -):
\Shv((\BunNb)^{G\on{-level}_{n\cdot x}}_{\leq \mu\cdot x})\to \Vect$$
is continuous.

\medskip

In particular, it follows from \corref{c:glob Whit comp gen} that for 
$\CF\in \Whit((\BunNb)^{G\on{-level}_{n\cdot x}}_{\infty\cdot x})$, the functor 
$$\Gamma((\BunNb)^{G\on{-level}_{n\cdot x}}_{\infty\cdot x},\CF\sotimes -):\Shv((\BunNb)^{G\on{-level}_{n\cdot x}}_{\infty\cdot x})\to \Vect$$
is continuous.

\sssec{}

We claim:

\begin{prop}  \label{p:Verdier for glob Whit}
The category $\Whit((\BunNb)^{G\on{-level}_{n\cdot x}}_{\infty\cdot x})$ is canonically dual to 
a similar category defined using the opposite character; this duality is uniquely defined by the property that 
for $\CF\in \Whit((\BunNb)^{G\on{-level}_{n\cdot x}}_{\infty\cdot x})$ and $\CF'\in 
\Shv((\BunNb)^{G\on{-level}_{n\cdot x}}_{\infty\cdot x})$ we have a functorial isomorphism
\begin{equation} \label{e:Av duality}
\langle \CF,\Av_{*,\on{glob}}^{\Whit}(\CF')\rangle=
\Gamma((\BunNb)^{G\on{-level}_{n\cdot x}}_{\infty\cdot x},\CF\sotimes \CF').
\end{equation} 
\end{prop}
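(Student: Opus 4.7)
The plan is to exploit \corref{c:glob Whit comp gen}, which establishes that $\Whit((\BunNb)^{G\on{-level}_{n\cdot x}}_{\infty\cdot x})$ is compactly generated and, crucially, that each compact object is a clean $*$-extension from a quasi-compact algebraic substack of some $(\BunNb)^{G\on{-level}_{n\cdot x}}_{\leq\mu\cdot x}$. This cleanness is what makes the pairing $\CF,\CG\mapsto\Gamma((\BunNb)^{G\on{-level}_{n\cdot x}}_{\infty\cdot x},\CF\sotimes\CG)$ well-defined and continuous whenever one of the arguments is a compact object of $\Whit$, even though the ambient ind-stack is not quasi-compact.

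First I would construct the duality on compact objects. Let $\Whit^-$ denote the analogous global Whittaker category built with the opposite character $-\chi$. For a compact $\CF\in\Whit$, cleanly extended from some quasi-compact $\CZ$, the Verdier dual $\BD_{\CZ}(\CF|_{\CZ})$ is a compact object of $\Shv(\CZ)$ whose $*$-extension to the ambient ind-stack is again clean. The key point is that Verdier duality on $\CZ$ interchanges $(\fL_{\ul y}(N),\chi_{\ul y})$-equivariance with $(\fL_{\ul y}(N),-\chi_{\ul y})$-equivariance---a standard compatibility of Verdier duality with equivariance against a character local system---so this Verdier dual naturally lies in $\Whit^-$. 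Ind-completing the resulting contravariant equivalence $(\Whit^c)^{\on{op}}\simeq(\Whit^-)^c$ yields the canonical equivalence $\Whit^\vee\simeq\Whit^-$, and the induced pairing sends $(\CF,\CG)\in\Whit\times\Whit^-$ to $\Gamma((\BunNb)^{G\on{-level}_{n\cdot x}}_{\infty\cdot x},\CF\sotimes\oblv(\CG))$, which is well-defined by cleanness and perfect by construction.

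Next I would verify the characterization \eqref{e:Av duality}. Reading $\Av^{\Whit}_{*,\on{glob}}$ in \eqref{e:Av duality} as the right adjoint to $\Whit^-\hookrightarrow\Shv$ (as forced by the typing of the duality pairing), my definition of $\langle-,-\rangle$ above gives
$$\langle\CF,\Av^{\Whit}_{*,\on{glob}}(\CF')\rangle=\Gamma(\CF\sotimes\oblv\,\Av^{\Whit}_{*,\on{glob}}(\CF')).$$
So \eqref{e:Av duality} reduces to the orthogonality statement that $\Gamma(\CF\sotimes\CG)=0$ whenever $\CF\in\Whit$ is compact and $\CG\in\Shv$ satisfies $\Av^{\Whit}_{*,\on{glob}}(\CG)=0$. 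This is immediate from the Verdier-duality construction: $\Gamma(\CF\sotimes\CG)$ computes $\Maps_{\Shv}(\oblv(\BD\CF),\CG)$, and by the $(\oblv,\Av^{\Whit}_{*,\on{glob}})$-adjunction this equals $\Maps_{\Whit^-}(\BD\CF,\Av^{\Whit}_{*,\on{glob}}(\CG))=0$. Uniqueness is then automatic: every $\CG\in\Whit^-$ equals $\Av^{\Whit}_{*,\on{glob}}(\oblv(\CG))$ via the unit, so \eqref{e:Av duality} pins down $\langle\CF,-\rangle$ on all of $\Whit^-$ for compact $\CF$, which generate.

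The main obstacle I anticipate is the character-flipping property in the specific setup here: namely, that if $\CF\in\Shv(\CZ)$ on a quasi-compact $\CZ\subset(\BunNb)^{G\on{-level}_{n\cdot x}}_{\leq\mu\cdot x,\on{good\,at\,}\ul y}$ is $(\fL_{\ul y}(N),\chi_{\ul y})$-equivariant in the sense of the definition of $\Whit$, then $\BD_{\CZ}(\CF)$ is equivariant with the opposite character. Using \propref{p:alpha is enough} this reduces to the statement for the action of the finite-dimensional unipotent quotient $N^\alpha_{\ul y}/N^{-\alpha}_{\ul y}$ on $N^{-\alpha}_{\ul y}\backslash\CZ$, where it becomes a routine compatibility of Verdier duality with equivariance against the Artin--Schreier local system.
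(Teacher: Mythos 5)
Your proposal is correct and follows essentially the same approach as the paper's proof: define the contravariant equivalence on compacts via Verdier duality using the cleanness from Corollary~\ref{c:glob Whit comp gen}(a), note that $\BD^{\on{Verdier}}$ swaps the character, and then verify \eqref{e:Av duality} by combining $\Gamma(\CF\sotimes-)\simeq\CHom_{\Shv}(\BD^{\on{Verdier}}\CF,-)$ with the $(\oblv,\Av_{*,\on{glob}}^{\Whit})$-adjunction. Your recasting of the last step as an orthogonality statement is just a reorganization of the paper's one-line chain of isomorphisms, not a different argument.
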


\begin{proof}

It suffices to define a contravariant equivalence between the corresponding subcategories of compact objects.

\medskip

Every compact object $\CF\in \Whit((\BunNb)^{G\on{-level}_{n\cdot x}}_{\infty\cdot x})$ is supported on 
some $(\BunNb)^{G\on{-level}_{n\cdot x}}_{\leq \mu \cdot x}$, and by \corref{c:glob Whit comp gen}(a)
is a clean extension from some quasi-compact open. Hence, $\BD^{\on{Verdier}}(\CF)$ is a compact object
in $\Shv((\BunNb)^{G\on{-level}_{n\cdot x}}_{\leq \mu \cdot x})$ and belongs to 
$\Whit((\BunNb)^{G\on{-level}_{n\cdot x}}_{\leq \mu \cdot x})$ with the opposite character.

\medskip

For $\CF'\in \Shv((\BunNb)^{G\on{-level}_{n\cdot x}}_{\infty\cdot x})$ we have a canonical isomorphism
\begin{multline*}
\Gamma((\BunNb)^{G\on{-level}_{n\cdot x}}_{\infty\cdot x},\CF\sotimes \CF')\simeq
\CHom_{\Shv((\BunNb)^{G\on{-level}_{n\cdot x}}_{\infty\cdot x})}(\BD^{\on{Verdier}}(\CF),\CF')\simeq \\
\simeq \CHom_{\Whit((\BunNb)^{G\on{-level}_{n\cdot x}}_{\infty\cdot x})}(\BD^{\on{Verdier}}(\CF),\Av_{*,\on{glob}}^{\Whit}(\CF'))=:
\langle \CF,\Av_{*,\on{glob}}^{\Whit}(\CF')\rangle,
\end{multline*}
as required. 

\end{proof}

\section{The local vs global comparison}  \label{s:global-to-local}

In this section we will compare the local and global definitions of the Whittaker category. Our main result,
\thmref{t:main}, will say that they are equivalent. 

\ssec{The local-to-global map}

In this subsection we will introduce a map between geometries that will eventually let us compare the local
and the global definitions of the Whittaker category. 

\sssec{}

We will now introduce the twisted versions the group ind-scheme $\fL(N)$ and the ind-scheme 
$\CY=\fL(G)/K_n$ that it acts on.

\medskip

Instead of $\fL(N)$, we will use the group ind-scheme $\fL_x(N)$, defined following the recipe in \secref{sss:twist of N}. We let
$\chi_x$ denote the canonical character on $\fL_x(N)$. 

\medskip

For $\CY$ we will keep the same notation, but we will mean the moduli space of triples
$$(\CP_G,\gamma,\epsilon),$$
where $\CP_G$ is a $G$-bundle on the formal disc around $x$, $\gamma$ is an identification of $\CP_G$ with 
$\CP^{\omega^\rho}_G$ on the formal punctured disc, and $\epsilon$ is a structure of level $n$ at $x$ on $\CP_G$. 

\medskip

The group of automorphisms of $\CP^{\omega^\rho}_G$ on the formal punctured disc, 
acts on $\CY$; in particular we have a $\fL_x(N)$ -action on $\CY$.

\sssec{}  \label{sss:map pi}

Recall that according to the Beauville-Laszlo theorem, the data of $(\CP_G,\gamma)$ in the definition of $\CY$
can be reinterpreted by letting $\CP_G$ be a $G$-bundle over $X$ and $\gamma$ an identification of $\CP_G$ with 
$\CP^{\omega^\rho}_G$ on $X-x$. 

\medskip

The $G$-bundle $\CP^{\omega^\rho}_G$ comes equipped with a tautological Pl\"ucker data \eqref{e:Plucker maps}.
From here we obtain a map
\begin{equation} \label{e:map pi}
\pi:\CY\to \BunNbx^{G\on{-level}_{n\cdot x}}.
\end{equation} 

\sssec{}

Our first goal is to prove:

\begin{thm}  \label{t:loc pullback} \hfill

\smallskip

\noindent{\em(a)}
The functor $\pi^!$ sends $\Whit((\BunNb)^{G\on{-level}_{n\cdot x}}_{\infty\cdot x})$ to $\Whit(\CY)$.

\smallskip

\noindent{\em(b)}
Vice versa, if $\CF\in \Shv((\BunNb)^{G\on{-level}_{n\cdot x}}_{\infty\cdot x})$ is such that $\pi^!(\CF)\in \Whit(\CY)$,
and its !-restriction to the locally closed subsets
$$(\BunNb)^{G\on{-level}_{n\cdot x}}_{=\mu\cdot x}-
(\BunNb)^{G\on{-level}_{n\cdot x}}_{=\mu\cdot x,\on{good\,elswhr}}$$
vanishes (for all $\mu$), then $\CF\in \Whit((\BunNb)^{G\on{-level}_{n\cdot x}}_{\infty\cdot x})$.

\end{thm}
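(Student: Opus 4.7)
The plan is to reduce both statements to a stratum-wise analysis. The key preliminary observation is that a point of $\CY$ carries a trivialization of its $G$-bundle on $X\setminus\{x\}$, which forces the Pl\"ucker data of its image under $\pi$ to be a bundle map away from $x$. Hence $\pi$ restricts, for each $\mu\in\Lambda$, to a morphism
$$\pi^\mu:\CY^\mu\longrightarrow (\BunNb)^{G\on{-level}_{n\cdot x}}_{=\mu\cdot x,\on{good\,elswhr}}.$$
By \propref{p:generated}(c) on the local side and by \corref{c:Whit via strata} combined with \corref{c:good strata}(a), (c) on the global side, both parts can be tested after !-restriction to the strata, and on the global side the only strata supporting non-zero Whittaker objects are the good-elsewhere ones.

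For part (a), the task reduces to showing that $(\pi^\mu)^!$ carries $\Whit((\BunNb)^{G\on{-level}_{n\cdot x}}_{=\mu\cdot x,\on{good\,elswhr}})$ into $\Whit(\CY^\mu)$. The strategy is to identify both categories with equivariant sheaves on a common finite-dimensional model. On the local side, \lemref{l:reduce to fd} gives $\Whit(\CY^\mu)\simeq\Shv(Y^\mu)^{N^\mu,\chi}$ for $N^\mu=\on{Ad}_{t^\mu}(\fL^+(N))$. On the global side, choosing an auxiliary point $y\neq x$ and invoking \propref{p:alpha is enough} together with \lemref{l:skinny strata bis}, the $\fL_y(N)$-equivariance condition factors through a finite-dimensional quotient whose Whittaker character descends from $\chi_y$. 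The matching via $\pi^\mu$ then boils down to the fact that both $\chi$ and $\chi_y$ are induced by the same canonical character on the abelianization $\fL(N/[N,N])$ specialized at the two places, and the map $\pi^\mu$ intertwines the respective actions modulo the subgroups that act trivially on both sides.

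For part (b), by \corref{c:good strata}(d), the vanishing hypothesis on irrelevant strata reduces the problem to showing that the !-restriction of $\CF$ to each relevant stratum $(\BunNb)^{G\on{-level}_{n\cdot x}}_{=\mu\cdot x,\on{good\,elswhr}}$ lies in the global Whittaker subcategory. Under the identification developed in part (a), this restriction is Whittaker if and only if its further pullback along $\pi^\mu$ is locally Whittaker — the identification is in fact an \emph{equivalence} of finite-dimensional equivariant categories, because $\pi^\mu$ is essentially a quotient by a group that acts freely in the relevant directions. But this latter condition is precisely the hypothesis $\pi^!(\CF)\in\Whit(\CY)$ restricted to $\CY^\mu$, completing the argument.

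The main obstacle is the compatibility of equivariance structures in part (a): the local condition imposes equivariance under $\fL_x(N)$ at $x$, whereas the global condition imposes equivariance under $\fL_y(N)$ at an auxiliary point $y\neq x$. Establishing that these become ``the same'' condition on the common finite-dimensional model of the stratum is essentially a bookkeeping exercise, but a delicate one: one must track how the two characters and the two loop-group actions restrict compatibly to the truncated subgroups produced by \propref{p:alpha is enough} and \lemref{l:skinny strata bis}, and verify that the Pl\"ucker data on $\CY^\mu$ encodes exactly the character $\chi$ transported from $x$ to $y$ by the global $B$-reduction.
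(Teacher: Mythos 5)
Your reduction of both parts to the stratum-wise statement is exactly what the paper does (via \propref{p:generated}(c) on the local side and \corref{c:good strata}(d) on the global side), and the observation that $\pi$ factors through $(\BunNb)^{G\on{-level}_{n\cdot x}}_{=\mu\cdot x,\on{good\,elswhr}}$ is correct. The strategy of identifying both stratum-wise Whittaker categories with equivariant sheaves on a common model is also the right shape.

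However, the core of the argument — what you call a ``delicate bookkeeping exercise'' — is precisely what is missing, and it is not bookkeeping. You invoke \lemref{l:reduce to fd} to pass from $\Whit(\CY^\mu)$ to $\Shv(Y^\mu)^{N^\mu,\chi}$, and on the global side you invoke \propref{p:alpha is enough} and \lemref{l:skinny strata bis} to truncate the $\fL_y(N)$-action. But these give two different finite-dimensional pictures, one attached to $x$ and one attached to $y$, and there is no direct map between them. The mechanism the paper uses to identify the two sides runs through a third group: the group ind-scheme $N_{X-(x,y)}$ of rational sections of $N^{\omega^\rho}$ over $X-(x,y)$. The descent identification \propref{p:out}(a) realizes the global stratum as a quotient of $\CY^\mu$ by $N_{X-x}$, but the Whittaker condition there is \emph{not} obviously the same as $(\fL_x(N),\chi_x)$-equivariance upstairs. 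Two further inputs are needed: (i) the sum-of-residues formula $\chi_x|_{N_{X-(x,y)}}=-\chi_y|_{N_{X-(x,y)}}$, which converts the $\chi_y$-equivariance at the auxiliary point into $\chi_x$-equivariance along the rational subgroup; and (ii) the density argument of \propref{p:skinny equiv}, which shows that $(N_{X-(x,y)},\chi_x)$-equivariance on $\CY^\mu$ is equivalent to full $(\fL_x(N),\chi_x)$-equivariance. Your appeal to ``both characters being induced by the same character on the abelianization'' is not a substitute for (i), which compares two restrictions of globally defined objects via a residue theorem; and nothing in your sketch supplies (ii), which is a nontrivial statement about strong approximation/density of rational sections in the formal completion. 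Without these, the claim in your part (b) that the stratum-wise identification ``is in fact an equivalence of finite-dimensional equivariant categories'' is unjustified: it is true, but proving it is the theorem.
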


\ssec{A strata-wise equivalence}

In this subsection we will show that the map $\pi$ of \eqref{e:map pi} defines a \emph{strata-wise} equivalence between
the local and the global Whittaker categories. 

\medskip

The discussion in this subsection applies equally well to the situation with the trivial character. 

\sssec{}

Let $\CY^\mu$ be as in \secref{sss:strata Y}. Note that for a given $\mu\in \Lambda$, the map $\pi$ restricts to a map
$$\pi_\mu:\CY^\mu\to \Whit((\BunNb)^{G\on{-level}_{n\cdot x}}_{=\mu\cdot x,\on{good\,elswhr}}).$$

\medskip

We will deduce \thmref{t:loc pullback} from the following more precise assertion:

\begin{thm}  \label{t:loc pullback mu} \hfill

\smallskip

\noindent{\em(a)} For every $\mu$, the functor $\pi_\mu^!$ sends 
$\Whit((\BunNb)^{G\on{-level}_{n\cdot x}}_{=\mu\cdot x,\on{good\,elswhr}})$ to $\Whit(\CY^\mu)$.

\smallskip

\noindent{\em(b)} The resulting functor 
$\Whit((\BunNb)^{G\on{-level}_{n\cdot x}}_{=\mu\cdot x,\on{good\,elswhr}})\to \Whit(\CY^\mu)$
is an equivalence.

\smallskip

\noindent{\em(c)} If $\CF\in \Shv((\BunNb)^{G\on{-level}_{n\cdot x}}_{=\mu\cdot x,\on{good\,elswhr}})$ is
such that $\pi_\mu^!(\CF)\in \Whit(\CY^\mu)$, then 
$$\CF\in \Whit((\BunNb)^{G\on{-level}_{n\cdot x}}_{=\mu\cdot x,\on{good\,elswhr}}).$$

\end{thm}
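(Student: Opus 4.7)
The key geometric observation I will exploit is that $\pi_\mu$ realizes $(\BunNb)^{G\on{-level}_{n\cdot x}}_{=\mu\cdot x,\on{good\,elswhr}}$ as the quotient of $\CY^\mu$ by the global group ind-scheme $\sN := \Gamma(X - x, N^{\omega^\rho})$, embedded in $\fL_x(N)$ by restriction to the formal punctured disc at $x$. Indeed, on this stratum the $B$-reduction over $X - x$ is canonically identified with $\CP^{\omega^\rho}_B$, so the fiber of $\pi_\mu$ classifies automorphisms of $\CP^{\omega^\rho}_G|_{X-x}$ preserving both this $B$-reduction and its $T$-quotient---exactly the group $\sN$. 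The crucial input is that $\chi_x|_\sN = 0$ by the residue theorem: the character factors through $N/[N,N]$, whose $\omega^\rho$-twist is $\omega^{\oplus\CI}$, and a 1-form regular on $X-x$ has residue at $x$ equal to the negative of the sum of residues elsewhere---all zero. Thus $(\fL_x(N),\chi_x)$-equivariance is automatically consistent with $\sN$-invariance.

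For part (a), I will introduce an auxiliary point $y$ and pass to the $N$-level-at-$y$ cover, to which $\CY^\mu$ lifts canonically using the $N$-trivialization at $y$ inherited from $\CP^{\omega^\rho}_B$. The global Whittaker condition is by definition $(\fL_y(N),\chi_y)$-equivariance on this lift. Restricted to the global subgroup $\Gamma(X - \{x,y\}, N^{\omega^\rho})$---which embeds simultaneously in $\fL_x(N)$ and $\fL_y(N)$ by expansion at the two points---the local condition on $\CY^\mu$ and the global Whittaker condition pair against the \emph{same} character on the common subgroup, by the residue identity $\chi_x + \chi_y = 0$. Combined with the $\sN$-invariance of $\pi_\mu^!$, this will show that $\pi_\mu^!$ sends Whittaker objects to Whittaker objects.

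For part (b), I will invoke \lemref{l:reduce to fd} to identify $\Whit(\CY^\mu) \simeq \Shv(Y^\mu)^{N^\mu,\chi}$, where $Y^\mu$ is the fiber of $\CY^\mu \to S^\mu$ over $t^\mu$ and $N^\mu = \on{Ad}_{t^\mu}(\fL_x^+(N))$. A parallel reduction on the global side, using the evaluation map \eqref{e:eval x} together with Lemmas \ref{l:skinny strata} and \ref{l:skinny strata bis} (which provide a sufficiently large subgroup of $\fL_y(N)$ acting transitively along the fibers to a common finite-dimensional base $Y^\mu_{\on{aux}}\simeq Y^\mu$), expresses $\Whit((\BunNb)^{G\on{-level}_{n\cdot x}}_{=\mu\cdot x,\on{good\,elswhr}})$ as sheaves on the same $Y^\mu$ with equivariance against the same group $N^\mu$ and character $\chi$ (again invoking the residue identification of characters). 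The functor $\pi_\mu^!$ then intertwines the two presentations in an evident way, yielding the equivalence. Part (c) is a formal consequence: $\pi_\mu^!$ is fully faithful on $\Shv$ (checked strata-by-strata using auxiliary points, where the relevant quotient by $\sN$ becomes, restricted to a quasi-compact substack, a quotient by a pro-unipotent group), so combined with the essential surjectivity from (b), any $\CF$ with $\pi_\mu^!(\CF) \in \Whit(\CY^\mu)$ must coincide with the unique Whittaker preimage of $\pi_\mu^!(\CF)$ and hence lie in $\Whit$ itself.

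The main obstacle I anticipate is the bookkeeping in (a) and (b): matching the colimit presentations $\fL_x(N) = \underset{\alpha}{\on{colim}}\, N^\alpha_x$ and $\fL_y(N) = \underset{\beta}{\on{colim}}\, N^\beta_y$ so that the pointwise residue identity $\chi_x + \chi_y = 0$ on $\Gamma(X - \{x,y\}, N^{\omega^\rho})$ upgrades to a genuine categorical identification of equivariance conditions (and not merely a relation at the level of points). Once this is in place, the rest of the argument is bootstrapping through the finite-dimensional models already furnished by \lemref{l:reduce to fd} and \lemref{l:skinny strata}.
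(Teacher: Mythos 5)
Your proposal is correct and follows essentially the same strategy as the paper's own proof: realize the stratum $(\BunNb)^{G\text{-level}_{n\cdot x}}_{=\mu\cdot x,\on{good\,elswhr}}$ as the quotient of $\CY^\mu$ by $N_{X-x}$ (so $\pi_\mu^!$ is fully faithful with image $\Shv(\CY^\mu)^{N_{X-x}}$, cf.\ \propref{p:out}(a)), introduce an auxiliary point $y$ with the residue identity $\chi_x|_{N_{X-(x,y)}}=-\chi_y|_{N_{X-(x,y)}}$ to match the Whittaker conditions, and then run a ``density of $N_{X-(x,y)}$ in $\fL_x(N)$'' argument via \lemref{l:reduce to fd} and the finite-dimensional models $Y^\mu$, which is exactly \propref{p:skinny equiv}. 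The bookkeeping you worry about is precisely what the paper handles in \lemref{l:out bis} and the equivalence of conditions (i)--(iv), so there is no gap.
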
 

The implication \thmref{t:loc pullback mu}(a) $\Rightarrow$ \thmref{t:loc pullback}(a) follows from
\propref{p:generated}(c). The implication \thmref{t:loc pullback mu}(c) $\Rightarrow$ \thmref{t:loc pullback}(b)
follows from \corref{c:good strata}(d). 

\medskip

The rest of this subsection is devoted to the proof of \thmref{t:loc pullback mu}.

\sssec{}

Choose a point $y\in X$ different from $x$. Let $N_{X-x}$ (resp., $N_{X-(x,y)}$) denote the group ind-scheme of sections of 
$N^{\omega^\rho}$ over $X-x$ (resp., $X-(x,y)$). 


\medskip

Restriction to the formal punctured discs around $x$ and $y$ defines embeddings
$$N_{X-x}\hookrightarrow \fL_x(N)$$
and
$$\fL_y(N) \hookleftarrow N_{X-(x,y)}\hookrightarrow \fL_x(N).$$

By the sum of residues formula, we have
$$\chi_x|_{N_{X-(x,y)}}=-\chi_y|_{N_{X-(x,y)}}.$$

\medskip

Note that the map $\pi$ extends to a map
$$\CY\hookrightarrow \fL_y(N)/\fL^+_y(N)\times \CY \overset{\pi_{y}} \longrightarrow
(\BunNb)^{G\on{-level}_{n\cdot x}}_{\infty\cdot x,\on{good\,elswhr}}.$$

Moreover, the above map $\pi_y$ lifts to a map 
$$\pi^{\on{level}}_y:
\fL_y(N) \times \CY \to (\BunNb)^{G\on{-level}_{n\cdot x},N\on{-level}_{\infty\cdot y}}_{\infty\cdot x,\on{good\,elswhr}},$$
which is equivariant with respect to the $\fL_y(N)$-actions: we consider the $\fL_y(N)$-action by \emph{right} multiplication on 
the $\fL_y(N)$-factor in $\fL_y(N) \times \CY$ and the $\fL_y(N)$-action on 
$(\BunNb)^{G\on{-level}_{n\cdot x},N\on{-level}_{\infty\cdot y}}_{\infty\cdot x,\on{good\,elswhr}}$ from \secref{sss:extend to loop group}.

\medskip

Denote by $\pi_{y,\mu}$ and $\pi_{y,\mu}^{\on{level}}$ the corresponding maps
$$\fL_y(N)/\fL^+_y(N)\times \CY^\mu \to
(\BunNb)^{G\on{-level}_{n\cdot x}}_{=\mu\cdot x,\on{good\,elswhr}} \text{ and }
\fL_y(N) \times \CY^\mu \to (\BunNb)^{G\on{-level}_{n\cdot x},N\on{-level}_{\infty\cdot y}}_{=\mu\cdot x,\on{good\,elswhr}}.$$

\medskip

We have:

\begin{prop} \label{p:out} \hfill

\smallskip

\noindent{\em(a)} Pullback along $\pi_\mu$ defines an equivalence 
$$\Shv((\BunNb)^{G\on{-level}_{n\cdot x}}_{=\mu\cdot x,\on{good\,elswhr}}) \simeq \Shv(\CY^\mu)^{N_{X-x}}$$

\smallskip

\noindent{\em(b)} 
Pullback along $\pi_{y,\mu}$ defines an equivalence
$$\Shv((\BunNb)^{G\on{-level}_{n\cdot x}}_{=\mu\cdot x,\on{good\,elswhr}}) \simeq \Shv(\fL_y(N)/\fL^+_y(N)\times \CY^\mu)^{N_{X-(x,y)}}.$$

\smallskip

\noindent{\em(c)} 
Pullback along $\pi_{y,\mu}^{\on{level}}$ defines an equivalence
$$\Shv((\BunNb)^{G\on{-level}_{n\cdot x},N\on{-level}_{\infty\cdot y}}_{=\mu\cdot x,\on{good\,elswhr}})\simeq 
\Shv(\fL_y(N) \times \CY^\mu)^{N_{X-(x,y)}}.$$
\end{prop}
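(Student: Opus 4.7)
The plan is to prove all three parts uniformly by exhibiting each map as a torsor for the relevant group ind-scheme, and then invoking descent. The torsor statements are Beauville-Laszlo bookkeeping; the descent step reduces, via a presentation of the ind-group as a colimit of finite-type group-schemes along the lines of \eqref{e:loop N as colim}, to the classical torsor descent statement, and matches the limit definition of equivariance from \eqref{e:intersect alpha}.

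For part (a), by Beauville-Laszlo a point of $\CY^\mu$ is a triple $(\CP_G, \gamma_{X-x}, \epsilon)$ consisting of a $G$-bundle on $X$, an identification $\gamma_{X-x}\colon \CP_G|_{X-x} \simeq \CP^{\omega^\rho}_G|_{X-x}$ whose restriction to the formal punctured disc at $x$ lies over $S^\mu \subset \Gr_{G,x}$, and a level-$n$ structure $\epsilon$ at $x$. The map $\pi_\mu$ sends this to $(\CP_G, \{\kappa^{\clambda}\}, \epsilon)$, where the Pl\"ucker sections $\kappa^{\clambda}$ are obtained from the tautological Pl\"ucker data on $\CP^{\omega^\rho}_G$ via $\gamma_{X-x}^{-1}$. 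Two trivializations $\gamma_{X-x}$ inducing the same Pl\"ucker data on the same $\CP_G$ differ precisely by an automorphism of $\CP^{\omega^\rho}_G|_{X-x}$ that preserves $\CP^{\omega^\rho}_B$ and induces the identity on $\CP^{\omega^\rho}_T$, i.e., by an element of $N_{X-x}$. Conversely, any $N_{X-x}$-modification preserves the $\fL_x(N)$-orbit in $\Gr_{G,x}$, hence stays in $\CY^\mu$. Thus $\pi_\mu$ is an $N_{X-x}$-torsor. Part (c) is identical, except that preserving the $N$-level structure at $y$ forces the ambient automorphism to restrict to the identity on the formal disc around $y$, cutting $N_{X-x}$ down to $N_{X-(x,y)}$. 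Part (b) then follows from (c) by quotienting both source and target by the $\fL^+_y(N)$-action, since the forgetful map $(\BunNb)^{G\on{-level}_{n\cdot x},N\on{-level}_{\infty\cdot y}}_{=\mu\cdot x,\on{good\,elswhr}} \to (\BunNb)^{G\on{-level}_{n\cdot x}}_{=\mu\cdot x,\on{good\,elswhr}}$ is an $\fL^+_y(N)$-torsor.

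Given the torsor structures, the equivalences follow by descent. Writing $N_{X-x} = \underset{\alpha}{\on{colim}}\, N^\alpha_{X-x}$ as a filtered colimit of finite-type group-schemes, and using that on any quasi-compact substack of the target each $N^\alpha_{X-x}$-action factors through a finite-dimensional unipotent quotient (as in \eqref{e:arc N as lim}), classical descent for torsors under algebraic groups yields an equivalence between $\Shv$ of the target and $N^\alpha_{X-x}$-equivariant sheaves on the source. Reassembling these equivalences across the presentation, following the limit procedure of \eqref{e:intersect alpha} with trivial character, produces $\Shv(\CY^\mu)^{N_{X-x}}$, and analogously for parts (b), (c).

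The main obstacle is purely organizational: one must check that the presentation of $N_{X-x}$-equivariance as the limit \eqref{e:intersect alpha} is compatible with the torsor quotients. Conceptually there is no difficulty --- the good-elsewhere condition ensures all relevant reductions extend to $X-x$, so the gluing is entirely bundle-theoretic, and no input from Raskin's adapted-object machinery is required here.
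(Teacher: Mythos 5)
Your torsor-plus-descent strategy matches the paper's (identify the target as the prestack quotient of $\CY^\mu$ by $N_{X-x}$, then descend), but you skip the one non-formal point: the surjectivity of $\pi_\mu$. You verify that two trivializations over $X-x$ inducing the same Pl\"ucker data differ by an element of $N_{X-x}$ and that the $N_{X-x}$-action preserves $\CY^\mu$, and then assert ``thus $\pi_\mu$ is an $N_{X-x}$-torsor''; but that conclusion also requires that every $S$-point of $(\BunNb)^{G\on{-level}_{n\cdot x}}_{=\mu\cdot x,\on{good\,elswhr}}$ has a preimage in $\CY^\mu$. Over an affine $S$, such a point gives (via the good-elsewhere condition) a genuine $B^{\omega^\rho}$-reduction $\CP_B$ of $\CP_G$ over $S\times(X-x)$ with $T$-quotient $\CP^{\omega^\rho}_T$, and a lift to $\CY^\mu$ exists exactly when the $N^{\omega^\rho}$-torsor measuring the difference between $\CP_B$ and $\CP^{\omega^\rho}_B$ over $S\times(X-x)$ is globally trivial. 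The fact that this always holds --- any $N^{\omega^\rho}$-bundle on $S\times(X-x)$ admits a trivialization, since $S\times(X-x)$ is affine, $N$ is unipotent, and one reduces along the central filtration of $N$ to quasi-coherent $H^1$-vanishing on affines --- is the entire content of the paper's proof and is precisely what makes the target the \emph{naive} prestack quotient of $\CY^\mu$ by $N_{X-x}$ rather than some sheafified one.

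Calling the torsor statements ``Beauville--Laszlo bookkeeping'' conflates two things: Beauville--Laszlo identifies $\CY^\mu$ with a moduli of pairs on $X$, but it does not trivialize unipotent bundles on the affine open $X-x$. Once you supply the surjectivity, the descent/reassembly step you describe (presenting $N_{X-x}$ as a filtered union of finite-type group-subschemes and matching the limit definition of equivariance) is fine --- the paper leaves it equally implicit --- and parts (b), (c) follow as you say.
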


\begin{proof}

For point (a), we claim that the map
$$\CY^\mu \to (\BunNb)^{G\on{-level}_{n\cdot x}}_{=\mu\cdot x,\on{good\,elswhr}}$$
identifies $(\BunNb)^{G\on{-level}_{n\cdot x}}_{=\mu\cdot x,\on{good\,elswhr}}$ with the prestack quotient of 
$\CY^\mu$ with respect to the action of $N_{X-x}$.

\medskip

Indeed, this is just the statement that
any $N^{\omega^\rho}$-bundle on $\{\text{affine test scheme}\}\times (X-x)$ admits a trivialization.

\medskip

Points (b) and (c) are proved similarly. 

\end{proof}

\begin{rem}
The same proof shows that the functor $\pi^!$ defines an equivalence
$$\Shv((\BunNb)^{G\on{-level}_{n\cdot x}}_{\infty\cdot x,\on{good\,elswhr}}) \simeq \Shv(\CY)^{N_{X-x}},$$
where 
$$(\BunNb)^{G\on{-level}_{n\cdot x}}_{\infty\cdot x,\on{good\,elswhr}}:=
(\BunNb)_{\infty\cdot x,\on{good\,elswhr}}\underset{\Bun_G}\times \Bun_G^{G\on{-level}_{n\cdot x}},$$
and where $(\BunNb)_{\infty\cdot x,\on{good\,elswhr}}$ is as Remark \ref{r:not a substack}. 

\medskip

This statement is not as useful for us because $(\BunNb)^{G\on{-level}_{n\cdot x}}_{\infty\cdot x,\on{good\,elswhr}}$
is not an algebraic stack, so we cannot say much about the category of sheaves on it. 

\end{rem}

Let us also observe: 

\begin{lem} \label{l:out bis} \hfill

\smallskip

\noindent{\em(a)} With respect to the equivalence of of \propref{p:out}(c),
objects of $\Shv((\BunNb)^{G\on{-level}_{n\cdot x},N\on{-level}_{\infty\cdot y}}_{=\mu\cdot x,\on{good\,elswhr}})$
that are $(\fL_y(N),\chi_y)$-equivariant correspond to objects of $\Shv(\fL_y(N)\times \CY^\mu)$ that are $N_{X-(x,y)}$-equivariant 
with respect to the diagonal action \emph{and} that are $(\fL_y(N),\chi_y)$-equivariant on the $\fL_y(N)$-factor by \emph{right}
multiplication.

\smallskip

\noindent{\em(a')} Same as {\em(a)}, but we replace $\chi_y$ by $-\chi_y$ and instead of right multiplication we consider
left multiplication. 

\smallskip

\noindent{\em(b)} With respect to the equivalence of 
\propref{p:out}(b), objects of $\Shv((\BunNb)^{G\on{-level}_{n\cdot x}}_{=\mu\cdot x,\on{good\,elswhr}})$
that belong to $\Whit((\BunNb)^{G\on{-level}_{n\cdot x}}_{=\mu\cdot x,\on{good\,elswhr}})$ correspond to objects of 
$\Shv(\fL_y(N)/\fL^+_y(N)\times \CY^\mu)$ that are $N_{X-(x,y)}$-equivariant with respect to the diagonal action \emph{and} 
that are $(\fL_y(N),-\chi_y)$-equivariant on the $\fL_y(N)/\fL^+_y(N)$-factor. 

\end{lem}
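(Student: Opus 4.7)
All three parts follow by unravelling definitions, using Proposition~\ref{p:out} together with the $\fL_y(N)$-equivariance of $\pi^{\on{level}}_{y,\mu}$. For (a), \propref{p:out}(c) gives an equivalence
$$(\pi^{\on{level}}_{y,\mu})^!: \Shv((\BunNb)^{G\on{-level}_{n\cdot x},N\on{-level}_{\infty\cdot y}}_{=\mu\cdot x,\on{good\,elswhr}}) \simeq \Shv(\fL_y(N)\times \CY^\mu)^{N_{X-(x,y)}},$$
and by construction $\pi^{\on{level}}_{y,\mu}$ intertwines the re-gluing $\fL_y(N)$-action on the target with right multiplication on the first factor of the source. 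Hence the $(\fL_y(N),\chi_y)$-equivariance condition transports directly to right-multiplication $(\fL_y(N),\chi_y)$-equivariance on the $\fL_y(N)$-factor, on top of the diagonal $N_{X-(x,y)}$-equivariance built into \propref{p:out}(c).

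\medskip

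Part (a') is obtained by re-expressing the $(\fL_y(N),\chi_y)$-equivariance via the opposite (left-multiplication) action on the first factor of $\fL_y(N) \times \CY^\mu$. The conversion from right to left multiplication is implemented by the inversion $g \mapsto g^{-1}$ on $\fL_y(N)$, under which the Artin-Schreier sheaf attached to $\chi_y$ pulls back to the one attached to $-\chi_y$; after the compensating adjustment of the diagonal $N_{X-(x,y)}$-action, one arrives at the stated condition.

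\medskip

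Part (b) is the main payoff, and is where the sign $-\chi_y$ of the conclusion is produced. Consider the factorization
$$\fL_y(N)\times \CY^\mu \twoheadrightarrow \fL_y(N)/\fL^+_y(N)\times \CY^\mu \xrightarrow{\pi_{y,\mu}}
(\BunNb)^{G\on{-level}_{n\cdot x}}_{=\mu\cdot x,\on{good\,elswhr}},$$
whose composite recovers $\pi^{\on{level}}_{y,\mu}$ post-composed with the $\fL^+_y(N)$-torsor projection from the $N$-level cover. By definition, a sheaf on the right-hand target lies in $\Whit$ iff its pullback to $(\BunNb)^{G\on{-level}_{n\cdot x},N\on{-level}_{\infty\cdot y}}_{=\mu\cdot x,\on{good\,elswhr}}$ is $(\fL_y(N),\chi_y)$-equivariant for the re-gluing action. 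By (a'), this is the same as its further pullback to $\fL_y(N) \times \CY^\mu$ being diagonally $N_{X-(x,y)}$-equivariant and left-$(\fL_y(N), -\chi_y)$-equivariant on the first factor. Since $-\chi_y|_{\fL^+_y(N)} = 0$, the $\fL^+_y(N)$-component of this equivariance is trivial and therefore descends along the quotient, yielding the stated condition on $\fL_y(N)/\fL^+_y(N)\times \CY^\mu$. The entire argument is formal once \propref{p:out} and the equivariance of $\pi^{\on{level}}_{y,\mu}$ are accepted as inputs; the only real work, and also the main obstacle, is tracking the left/right and sign conventions consistently through the factorization.
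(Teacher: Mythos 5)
The paper states \lemref{l:out bis} without proof, treating it as a direct unwinding of definitions; your proposal supplies an argument of the expected shape. Parts (a) and (b) are fine: (a) is the statement that the equivalence of \propref{p:out}(c) intertwines the re-gluing $\fL_y(N)$-action on the target with right translation on the $\fL_y(N)$-factor of the source, which is exactly what the construction of $\pi^{\on{level}}_{y,\mu}$ says; and (b) does follow from (a') by passing along the $\fL^+_y(N)$-torsor
$$\fL_y(N)\times\CY^\mu\twoheadrightarrow\fL_y(N)/\fL^+_y(N)\times\CY^\mu,$$
using that left translation descends to the quotient while right translation does not, and that $\chi_y|_{\fL^+_y(N)}=0$.

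The argument for (a') as you have written it, however, has a gap. You implement the passage from right to left translation by the inversion $\iota:g\mapsto g^{-1}$, noting (correctly) that $\iota^!\chi_y^!(\on{A\text{-}Sch})\simeq(-\chi_y)^!(\on{A\text{-}Sch})$. But $\iota\times\on{id}$ does not preserve the diagonal $N_{X-(x,y)}$-action on $\fL_y(N)\times\CY^\mu$: if the diagonal acts by left translation on the $\fL_y(N)$-factor (which it must, since it has to commute with the right-translation re-gluing action), then after conjugating by $\iota\times\on{id}$ the diagonal acts by right translation on the $\fL_y(N)$-factor. Your phrase ``after the compensating adjustment of the diagonal $N_{X-(x,y)}$-action'' hides exactly the step that would need to be supplied: there is no automorphism of $\fL_y(N)\times\CY^\mu$ that undoes this change while fixing the rest of the data, since $\fL_y(N)$ does not act on $\CY^\mu$. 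Moreover, when (b) is derived from (a'), the diagonal appearing in (a') must be the same one as in (a) (it is the one that descends to the quotient $\fL_y(N)/\fL^+_y(N)$), so one cannot absorb the discrepancy by redefining it.

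The correct mechanism is not inversion but conjugation-invariance of the character, and it makes no use of the $N_{X-(x,y)}$-equivariance at all. Left and right translation on $\fL_y(N)$ differ by the inner action: writing $c:\fL_y(N)\times(\fL_y(N)\times\CY^\mu)\to\fL_y(N)\times(\fL_y(N)\times\CY^\mu)$ for $(h,(g,z))\mapsto(g^{-1}hg,(g,z))$, one has $\rho\circ c=\lambda$ where $\rho$ (resp. $\lambda$) is the right (resp. left) translation action map. Since $\chi_y$ factors through $\fL_y(N/[N,N])$, the object $\chi_y^!(\on{A\text{-}Sch})\boxtimes\CF$ is fixed by $c^!$, and therefore $\rho$-equivariance with character $\chi_y$ is equivalent to $\lambda$-equivariance with the opposite character. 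This replaces the inversion step cleanly, commutes with the untouched diagonal $N_{X-(x,y)}$-equivariance, and makes the rest of your argument for (b) go through.
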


\sssec{}

We are now ready to prove \thmref{t:loc pullback mu}. 

\medskip

Let us observe that for an object 
$$\CF\in \Shv(\fL_y(N)/\fL^+_y(N)\times \CY^\mu)$$ that is 
$N_{X-(x,y)}$-equivariant wth respect to the \emph{diagonal} action of $N_{X-(x,y)}$ on $\fL_y(N)/\fL^+_y(N)\times \CY^\mu$,
the following extra conditions are equivalent:

\smallskip

\noindent(i) $\CF$ is $(N_{X-(x,y)},\chi_x)$-equivariant with respect to the action on the $\CY^\mu$-factor;

\smallskip

\noindent(ii) $\CF$ is $(N_{X-(x,y)},-\chi_y)$-equivariant with respect to the action on the $\fL_y(N)/\fL^+_y(N)$-factor;

\smallskip

\noindent(iii)  Both conditions (i) and (ii);

\smallskip

\noindent(iv) The restriction of $\CF$ to $1\times \CY^\mu$ is $(N_{X-(x,y)},\chi_x)$-equivariant.

\medskip

Moreover, restriction as in point (iv) defines an equivalence from the category spanned by such objects to
$$\Shv(\CY^\mu)^{N_{X-(x,y)},\chi_x}.$$

\medskip

Hence, using \lemref{l:out bis}(b), it remains to prove the next assertion: 

\begin{prop}   \label{p:skinny equiv} \hfill

\smallskip{\em(a)} 
The forgetful functor 
$$\Shv(\CY^\mu)^{\fL_x(N),\chi_x}\to \Shv(\CY^\mu)^{N_{X-(x,y)},\chi_x}$$
is an equivalence. 

\smallskip{\em(b)}
For any prestack $Z$, the forgetful functor 
$$\Shv(\fL_y(N)/\fL^+_y(N)\times Z)^{\fL_y(N),-\chi_y}\to \Shv(\fL_y(N)/\fL^+_y(N)\times Z)^{N_{X-(x,y)},-\chi_y}$$
is an equivalence.

\end{prop}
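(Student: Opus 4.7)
The plan is to reduce both parts of the proposition to a strong-approximation/Riemann-Roch statement for the pro-unipotent group $N^{\omega^\rho}$ on the curve $X$, and I will treat part (a) in detail (part (b) being entirely analogous, with the roles of $x$ and $y$ exchanged). The forgetful functor is the inclusion of full subcategories, so essential surjectivity is what needs proof: given $\CF\in \Shv(\CY^\mu)$ that is $(N_{X-(x,y)},\chi_x)$-equivariant, I want to show it is $(\fL_x(N),\chi_x)$-equivariant. By the colimit definition of equivariance \eqref{e:intersect alpha}, it suffices to verify $(N^\alpha_x,\chi_x)$-equivariance for each member $N^\alpha_x$ of a cofinal filtered system $\fL_x(N)=\underset{\alpha}{\on{colim}}\,N^\alpha_x$.

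Fix $\alpha$ and a closed subscheme $Z\subset\CY^\mu$ stable under $N^\alpha_x$, chosen large enough that it is also stable under some $N'_\beta\subset N_{X-(x,y)}$ in the colimit presentation of $N_{X-(x,y)}$. The $N^\alpha_x$-action on $Z$ factors through a finite-dimensional unipotent quotient $H$, whose kernel contains a congruence subgroup $K\subset\fL^+_x(N)$. Since $\chi_x$ is trivial on $\fL^+_x(N)\supset K$, it descends through $\fL_x(N)/K$ to $H$. The key point is a strong-approximation statement: for $\beta$ sufficiently large, the image of $N'_\beta$ in $\fL_x(N)/K$ contains the image of $N^\alpha_x$. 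This is a Riemann--Roch assertion for $N^{\omega^\rho}$ on the affine curve $X-y$: any formal datum at $x$ of bounded depth can be realized by a rational section of $N^{\omega^\rho}$ regular on $X-\{x,y\}$, since poles at $y$ are unrestricted. The abelian case is immediate from Riemann--Roch for a line bundle of sufficiently negative degree on $X$; the general case follows by induction on the lower central series filtration of $N$.

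Given this surjectivity, the $(N'_\beta,\chi_x)$-equivariance of $\CF|_Z$ descends to $(H,\chi_x|_H)$-equivariance (because the action, the subgroup $N'_\beta$, and the character all factor through the common finite-dimensional quotient $H$), which is precisely $(N^\alpha_x,\chi_x)$-equivariance. Part (b) is entirely parallel: one invokes strong approximation on $X-x$ to show that $N_{X-(x,y)}$ surjects onto every finite-dimensional quotient of $\fL_y(N)$ controlling the action on $\fL_y(N)/\fL^+_y(N)\times Z$, and the compatibility of characters is built in via the sum-of-residues identity $\chi_x|_{N_{X-(x,y)}}=-\chi_y|_{N_{X-(x,y)}}$ that was used to set up the whole configuration.

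The main obstacle will be packaging the approximation cleanly so that the action on $Z$ and the character both factor through the \emph{same} finite-dimensional quotient $H$, reached by both $N^\alpha_x$ and $N'_\beta$. This requires a careful choice of $Z$ simultaneously stable under matched members of the two colimits, which is provided by the finiteness and transitivity inputs of \lemref{l:skinny strata} and \lemref{l:skinny strata bis}; once these are in hand, the remainder is routine bookkeeping.
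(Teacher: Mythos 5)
Your argument is sound and rests on the same essential input as the paper's --- the density (strong approximation) of $N_{X-(x,y)}$ inside $\fL_x(N)$, which you usefully make explicit as a Riemann--Roch statement on $X-y$ combined with lower-central-series induction on $N$ --- but organizes the reduction to finite type differently. The paper first uses transitivity of $N_{X-(x,y)}$ on $S^\mu$ to invoke a version of \lemref{l:reduce to fd} and restrict everything to the finite-type scheme $Y^\mu$, after which the remaining claim is the single group-theoretic surjectivity of $N_{X-(x,y)}\cap N^\mu$ onto $N^\mu/N''$. You bypass $Y^\mu$ and check $(N^\alpha_x,\chi_x)$-equivariance directly on finite-type closed subschemes $Z\subset\CY^\mu$ by comparing images in a finite-dimensional quotient of $N^\gamma_x$. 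Both routes work; yours avoids the explicit $\CY^\mu\simeq Y^\mu\times N'$ split at the cost of the quotient bookkeeping you flag as the main obstacle. On that: replace the phrase ``$\chi_x$ descends through $\fL_x(N)/K$ to $H$'' by the observation that one should \emph{choose} the finite quotient $H$ of $N^\gamma_x$ (as in \secref{ss:inv}) so that both the $Z$-action and $\chi_x|_{N^\gamma_x}$ factor through it, and then compare the images of $N^\alpha_x$ and $N'_\beta$ inside this genuine group ($\fL_x(N)/K$ is only a coset space since $K$ is not normal in $\fL_x(N)$). And the appeal to \lemref{l:skinny strata} and \lemref{l:skinny strata bis} is misplaced --- those lemmas concern the $\fL_{\ul{y}}(N)$-action on the Drinfeld compactification, not the local $\CY^\mu$; what you actually need is that $N^\gamma_x$-stable finite-type closed subschemes are cofinal in $\CY^\mu$ and that the $N^\gamma_x$-action on each factors through a finite quotient, which is built into the definitions of \secref{ss:inv}.
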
 

\begin{proof}[Proof of \propref{p:skinny equiv}]

We will prove point (a), as point (b) is similar. 
The idea of the proof is that $N_{X-(x,y)}$ is ``dense" in $\fL_x(N)$. Here is how we spell this out:

\medskip

Let $Y^\mu$ be as in \secref{sss:Y mu}. Since the action of $N_{X-(x,y)}$ on $S^\mu$ is transitive
(this is one incarnation of the density of $N_{X-(x,y)}$ in $\fL_x(N)$), in \lemref{l:reduce to fd}, we obtain that restriction 
along $Y^\mu\hookrightarrow \CY^\mu$ defines
an equivalence
$$\Shv(\CY^\mu)^{N_{X-(x,y)},\chi_x}\to \Shv(Y^\mu)^{N_{X-(x,y)}\cap N^\mu,\chi_x}.$$

Hence, it remains to see that the restriction functor
$$\Shv(Y^\mu)^{N^\mu,\chi_x}\to \Shv(Y^\mu)^{N_{X-(x,y)}\cap N^\mu,\chi_x}$$
is an equivalence.

\medskip

To show this we note that we can find a normal group sub-scheme $N''\subset N^\mu$ of finite codimension such that
its action on $Y^\mu$ is trivial. Hence, it suffices to show that the functor
$$\Shv(Y^\mu)^{N^\mu/N',\chi_x}\to \Shv(Y^\mu)^{N_{X-(x,y)}\cap N^\mu/N_{X-(x,y)}\cap N',\chi_x}$$
is an equivalence.  

\medskip

However, for any $N'$ of finite codimension in $N^\mu$, the map
$$N_{X-(x,y)}\cap N^\mu/N_{X-(x,y)}\cap N'\to N^\mu/N'$$
is an isomorphism (again, by the density of $N_{X-(x,y)}$ in $\fL_x(N)$). 

\end{proof}

For future use, we note that the above argument also proves the following:

\begin{lem} \label{l:out bis bis}
Under the equivalence of 
of \propref{p:out}(a), objects of $\Shv((\BunNb)^{G\on{-level}_{n\cdot x}}_{=\mu\cdot x,\on{good\,elswhr}})$
that belong to $\Whit((\BunNb)^{G\on{-level}_{n\cdot x}}_{=\mu\cdot x,\on{good\,elswhr}})$ correspond to objects of 
$\Shv(\CY^\mu)$, for which $N_{X-x}$-equivariance extends to $(\fL_x(N),\chi_x)$-equivariance. 
\end{lem}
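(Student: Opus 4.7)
The plan is to assemble \lemref{l:out bis}(b) with the chain of equivalences from the proof of \propref{p:skinny equiv}(a), by introducing an auxiliary point $y\in X-x$. Given $\CF_{\on{glob}}$ on the source, set $\CF:=\pi_\mu^!(\CF_{\on{glob}})\in\Shv(\CY^\mu)^{N_{X-x}}$ (already $N_{X-x}$-equivariant by \propref{p:out}(a)), and let $\wt\CF$ denote its pullback along $\pi_{y,\mu}$ to $\fL_y(N)/\fL^+_y(N)\times\CY^\mu$. Both $\CF$ and $\wt\CF$ are faithful avatars of $\CF_{\on{glob}}$ under the equivalences of \propref{p:out}, and $\wt\CF$ automatically carries a diagonal $N_{X-(x,y)}$-equivariance, since $N_{X-(x,y)}\subset N_{X-x}$ and $(\BunNb)^{G\on{-level}_{n\cdot x}}_{=\mu\cdot x,\on{good\,elswhr}}$ is the stack quotient of $\CY^\mu$ by $N_{X-x}$.

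By \lemref{l:out bis}(b), $\CF_{\on{glob}}$ is Whittaker iff, in addition, $\wt\CF$ is $(\fL_y(N),-\chi_y)$-equivariant on the $\fL_y(N)/\fL^+_y(N)$-factor. First I would invoke the chain of equivalences (i)$\Leftrightarrow$(ii)$\Leftrightarrow$(iii)$\Leftrightarrow$(iv) exhibited in the proof of \propref{p:skinny equiv}: for an object with a diagonal $N_{X-(x,y)}$-equivariance, the $(\fL_y(N),-\chi_y)$-equivariance on the first factor is equivalent to the restriction $\wt\CF|_{\{1\}\times\CY^\mu}=\CF$ being $(N_{X-(x,y)},\chi_x)$-equivariant.

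Next, I will apply \propref{p:skinny equiv}(a): the forgetful functor $\Shv(\CY^\mu)^{\fL_x(N),\chi_x}\to\Shv(\CY^\mu)^{N_{X-(x,y)},\chi_x}$ is an equivalence, so the datum of a $(N_{X-(x,y)},\chi_x)$-equivariance on $\CF$ coincides with that of a $(\fL_x(N),\chi_x)$-equivariance. By the sum-of-residues formula on $X$, applied to a global differential with unique pole at $x$, one has $\chi_x|_{N_{X-x}}=0$; hence any $(\fL_x(N),\chi_x)$-equivariance on $\CF$ restricts on $N_{X-x}\subset\fL_x(N)$ to the original (trivial-character) $N_{X-x}$-equivariance. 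This is precisely the sense in which the former extends the latter, yielding both directions of the claim. I expect no deep obstacle — the content is a careful bookkeeping of compatibility between the several equivariance structures — though one should verify along the way that the auxiliary choice of $y$ drops out, which is automatic since both characterizations are manifestly independent of $y$.
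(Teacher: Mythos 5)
Your argument reproduces, in expanded form, exactly what the paper means by ``the above argument also proves the following'': reduce via $\pi_{y,\mu}$ and \lemref{l:out bis}(b) to a condition on $\wt\CF$, translate it to condition~(iv) on $\CF=\wt\CF|_{\{1\}\times\CY^\mu}$ via the (i)--(iv) equivalences, upgrade from $N_{X-(x,y)}$ to $\fL_x(N)$ by \propref{p:skinny equiv}(a), and use $\chi_x|_{N_{X-x}}=0$ to see this refines the given $N_{X-x}$-equivariance. One small imprecision: passing from $(\fL_y(N),-\chi_y)$-equivariance on the first factor to condition~(ii), i.e.\ $(N_{X-(x,y)},-\chi_y)$-equivariance, is not part of the (i)--(iv) chain (those conditions all involve $N_{X-(x,y)}$ only) but is precisely \propref{p:skinny equiv}(b), which you should cite explicitly; you have also mislocated the (i)--(iv) equivalences, which appear in the running text just before the statement of \propref{p:skinny equiv}, not inside its proof.
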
 

\ssec{Local-to-global functor and duality}

Above we have considered the functor $\pi^!$ that maps the global Whittaker category to $\Whit(\CY)$. In this
subsection we will define a functor that maps $\Whit(\CY)_{\on{co}}$ to the global version. 

\sssec{}

Recall that according to \corref{c:dual dualizable} the dual of $\Whit(\CY)$ is the category $\Whit(\CY)_{\on{co}}$
(defined using the opposite character). Similarly, according to \propref{p:Verdier for glob Whit}, the category dual to 
$\Whit((\BunNb)^{G\on{-level}_{n\cdot x}}_{\infty\cdot x})$ is again $\Whit((\BunNb)^{G\on{-level}_{n\cdot x}}_{\infty\cdot x})$
(defined using the opposite character). 

\medskip

Let us describe the resulting functor
\begin{equation} \label{e:dual functor}
\pi_{*,\Whit}:\Whit(\CY)_{\on{co}}:=\Shv(\CY)_{\fL_x(N),\chi_x}\to \Whit((\BunNb)^{G\on{-level}_{n\cdot x}}_{\infty\cdot x})
\end{equation} 
dual to 
$$\pi^!: \Whit((\BunNb)^{G\on{-level}_{n\cdot x}}_{\infty\cdot x})\to \Shv(\CY)^{\fL_x(N),\chi_x}.$$

\sssec{}

Note that the morphism $\pi$ is ind-schematic, so the functor 
$$\pi_*:\Shv(\CY)\to \Shv((\BunNb)^{G\on{-level}_{n\cdot x}}_{\infty\cdot x})$$
is well-defined.

\medskip

We claim:

\begin{lem} \label{l:dual pi}
The composite
$$\Shv(\CY) \overset{p^{\fL_x(N),\chi_x}} \longrightarrow 
\Whit(\CY)_{\on{co}} \overset{\pi_{*,\Whit}}\longrightarrow \Whit((\BunNb)^{G\on{-level}_{n\cdot x}}_{\infty\cdot x})$$
identifies canonically with the functor $\Av_{*,\on{glob}}^{\Whit}\circ \pi_*$.
\end{lem}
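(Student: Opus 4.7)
The plan is to verify the identity of functors by testing against objects $\CF \in \Whit((\BunNb)^{G\on{-level}_{n\cdot x}}_{\infty\cdot x})$ (with the opposite character, so compactness ensures the needed continuity by \corref{c:glob Whit comp gen}(a)) via the canonical duality pairings on both the local and the global Whittaker categories, and reducing the comparison to the usual projection formula along the ind-schematic map $\pi$.

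More precisely, for $\CF' \in \Shv(\CY)$ and such $\CF$, I would compute the two sides of the desired identity after pairing with $\CF$. On the ``global'' side, by the characterizing formula \eqref{e:Av duality} of \propref{p:Verdier for glob Whit} applied to $\pi_*(\CF') \in \Shv((\BunNb)^{G\on{-level}_{n\cdot x}}_{\infty\cdot x})$, one has
$$\langle \CF,\,\Av_{*,\on{glob}}^{\Whit}(\pi_*(\CF'))\rangle \;=\; \Gamma\bigl((\BunNb)^{G\on{-level}_{n\cdot x}}_{\infty\cdot x},\, \CF \sotimes \pi_*(\CF')\bigr).$$
On the ``local'' side, since $\pi_{*,\Whit}$ is defined as the functor dual to $\pi^!$ under the dualities of \corref{c:dual dualizable} and \propref{p:Verdier for glob Whit}, one has
$$\langle \CF,\,\pi_{*,\Whit}(p^{\fL_x(N),\chi_x}(\CF'))\rangle \;=\; \langle \pi^!(\CF),\, p^{\fL_x(N),\chi_x}(\CF')\rangle,$$
and \corref{c:dual dualizable} identifies $p^{\fL_x(N),\chi_x}$ as dual to the forgetful functor $\oblv_{\fL_x(N),\chi_x}$; so this last pairing equals
$$\langle \oblv_{\fL_x(N),\chi_x}(\pi^!(\CF)),\,\CF'\rangle_{\Shv(\CY)} \;=\; \Gamma(\CY,\, \pi^!(\CF) \sotimes \CF')$$
via the Verdier self-duality of $\Shv(\CY)$ recalled in \secref{sss:duality}.

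The two expressions are then matched by the projection formula: for the ind-schematic morphism $\pi$ one has a canonical isomorphism $\pi_*(\pi^!(\CF) \sotimes \CF') \simeq \CF \sotimes \pi_*(\CF')$, and applying $\Gamma((\BunNb)^{G\on{-level}_{n\cdot x}}_{\infty\cdot x},-)$ yields the required equality. Since this holds functorially in compact $\CF$ (which generate the duality pairing), the two functors $\Shv(\CY) \to \Whit((\BunNb)^{G\on{-level}_{n\cdot x}}_{\infty\cdot x})$ are canonically isomorphic.

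The main obstacle I foresee is purely bookkeeping: keeping straight the sign of the character on each side of each duality and ensuring that the continuity issue (the non-continuity of $\Gamma$ on $\Shv((\BunNb)^{G\on{-level}_{n\cdot x}}_{\infty\cdot x})$) is resolved by working with compact $\CF$ that are, by \corref{c:glob Whit comp gen}(a), clean extensions from quasi-compact substacks. Once these are handled, the core of the argument reduces to the standard projection formula for the ind-schematic morphism $\pi$ together with the universal-property characterizations of $p^{\fL_x(N),\chi_x}$ and $\Av_{*,\on{glob}}^{\Whit}$.
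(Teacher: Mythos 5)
Your proof is correct and is essentially the same argument as the paper's: both sides are characterized via duality pairings against objects of the global Whittaker category with the opposite character, the two resulting expressions $\Gamma(\CY,\pi^!(\CF)\sotimes\CF')$ and $\Gamma((\BunNb)^{G\on{-level}_{n\cdot x}}_{\infty\cdot x},\CF\sotimes\pi_*(\CF'))$ are matched by the projection formula for the ind-schematic map $\pi$, and continuity is ensured by restricting to compact objects (you via cleanness of compacts in $\Whit$, the paper via the fact that $\pi_*$ of a compact object of $\Shv(\CY)$ is $*$-extended from a quasi-compact substack — two equivalent devices).
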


(We remind that the functor $\Av_{*,\on{glob}}^{\Whit}$ that appears in the lemma is the right adjoint to the embedding
$\Whit((\BunNb)^{G\on{-level}_{n\cdot x}}_{\infty\cdot x})\hookrightarrow \Shv((\BunNb)^{G\on{-level}_{n\cdot x}}_{\infty\cdot x})$.) 

\begin{proof}

The assertion follows from \eqref{e:Av duality} and the fact that for any $\CF\in \Shv(\CY)^c$, the object 
$$\pi_*(\CF)\in \Shv((\BunNb)^{G\on{-level}_{n\cdot x}}_{\infty\cdot x})$$ is *-extended from a quasi-compact
substack so that for any $\CF'\in \Shv((\BunNb)^{G\on{-level}_{n\cdot x}}_{\infty\cdot x})$ we have
$$\Gamma((\BunNb)^{G\on{-level}_{n\cdot x}}_{\infty\cdot x},\pi_*(\CF)\sotimes \CF')\simeq
\Gamma(\CY,\CF\sotimes \CF').$$

\end{proof}

\begin{rem}
In \secref{p:averagings}(b) we will describe the functor $\Av_{*,\on{glob}}^{\Whit}\circ \pi_*$ in terms local at $x$.
\end{rem}

\ssec{Comparing the averaging procedures}

In this subsection we will see that the composite functor
$$\Whit(\CY)_{\on{co}}\to \Whit((\BunNb)^{G\on{-level}_{n\cdot x}}_{\infty\cdot x})\to \Whit(\CY)$$
essentially coincides with the functor $\on{Ps-Id}_{\Whit}$ of \secref{sss:PsId}. 

\sssec{}

Recall that $N_{X-x}$ denotes the group ind-scheme of sections of of $N^{\omega^\rho}$ over $X-x$. Note, however,
that the image of $N_{X-x}\hookrightarrow \fL_x(N)$ is no longer dense. 
Let $N'\subset \fL_x(N)$ be a large enough group subscheme so that $N'\cdot N_{X-x}=\fL_x(N)$.

\medskip

We claim:

\begin{prop}  \hfill  \label{p:averagings}

\smallskip

\noindent{\em(a)} $\pi^!\circ \Av_{*,\on{glob}}^{\Whit}: \Shv((\BunNb)^{G\on{-level}_{n\cdot x}}_{\infty\cdot x})\to
\Whit(\CY)$ identifies canonically with $\Av^{N',\chi_x}_*\circ \pi^!$.

\smallskip

\noindent{\em(b)} The functor $\Av_{*,\on{glob}}^{\Whit}\circ \pi_*$ identifies with $\pi_*\circ \Av^{N',\chi_x}_*$. 
\end{prop}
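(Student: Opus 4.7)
The plan is to reduce both statements to a single strata-wise computation on the locally closed substacks $(\BunNb)_{=\mu\cdot x,\on{good\,elswhr}}^{G\on{-level}_{n\cdot x}}$, via \propref{p:out} and \lemref{l:out bis bis}. By the corollary following \corref{c:Whit via strata}, the functor $\Av_{*,\on{glob}}^{\Whit}$ commutes with !-pullback and *-pushforward along the locally closed embeddings of strata, and the same commutation holds for $\Av^{N',\chi_x}_*$ against the embeddings $\CY^\mu \hookrightarrow \CY$. Moreover, by \propref{p:generated}(c) and \corref{c:Whit via strata}, Whittaker objects on both sides are detected stratum-wise. Hence it suffices to verify (a) and (b) after restriction to each $(\BunNb)_{=\mu\cdot x,\on{good\,elswhr}}^{G\on{-level}_{n\cdot x}}$ and correspondingly to $\CY^\mu$.

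\textbf{Key identification on a stratum.} Fix $\mu$ and recall that by \propref{p:out}(a), pullback along $\pi_\mu$ identifies $\Shv((\BunNb)_{=\mu\cdot x,\on{good\,elswhr}}^{G\on{-level}_{n\cdot x}})$ with $\Shv(\CY^\mu)^{N_{X-x}}$, and by \lemref{l:out bis bis} the Whittaker subcategory corresponds to $\Shv(\CY^\mu)^{\fL_x(N),\chi_x}$. The crucial observation is that $\chi_x$ vanishes on $N_{X-x}$: given any element of $N_{X-x}$, it belongs to $\fL^+_y(N) \subset N_{X-(x,y)}$ for any $y \neq x$ lying outside its support of nontriviality, so $\chi_y$ kills it; the sum-of-residues formula $\chi_x|_{N_{X-(x,y)}} = -\chi_y|_{N_{X-(x,y)}}$ then forces $\chi_x$ to vanish on it as well. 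Consequently, given the decomposition $\fL_x(N) = N' \cdot N_{X-x}$, for an object already equivariant under $N_{X-x}$ (with the trivial character), imposing full $(\fL_x(N),\chi_x)$-equivariance reduces to imposing only $(N',\chi_x)$-equivariance, the compatibility on $N' \cap N_{X-x}$ being automatic from the vanishing just established.

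\textbf{Conclusion for (a).} The right adjoint to the fully faithful inclusion $\Shv(\CY^\mu)^{\fL_x(N),\chi_x} \hookrightarrow \Shv(\CY^\mu)^{N_{X-x}}$ is thus computed by $\Av^{N',\chi_x}_*$ (one chooses the finite-dimensional averaging to land automatically in the $N_{X-x}$-equivariant subcategory, which is possible by enlarging $N'$ to be normalized by a cofinal system of finite-type subgroups of $N_{X-x}$, using \lemref{l:skinny strata bis}). Transporting via the equivalence $\pi_\mu^!$, the right adjoint to $\Whit \hookrightarrow \Shv$ on the global stratum goes over to $\Av^{N',\chi_x}_*$ on $\Shv(\CY^\mu)^{N_{X-x}}$. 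Since $\pi^!$ of any object automatically lies in the $N_{X-x}$-equivariant subcategory, this yields (a) on each stratum, and hence globally.

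\textbf{Part (b) and main obstacle.} For part (b), one approach is to deduce it from (a) via duality: $\pi_*$ is dual to $\pi^!$ under the canonical self-dualities of \secref{sss:duality} and \propref{p:Verdier for glob Whit}, while $\Av^{N',\chi_x}_*$ and $\Av_{*,\on{glob}}^{\Whit}$ are, up to a cohomological shift, self-dual under the swap $\chi \leftrightarrow -\chi$. Applying these duality statements to (a) for the opposite character delivers (b). Alternatively, one can argue directly: using that $\pi_\mu$ realizes $(\BunNb)_{=\mu\cdot x,\on{good\,elswhr}}^{G\on{-level}_{n\cdot x}}$ as the $N_{X-x}$-quotient of $\CY^\mu$, for $\CF \in \Shv(\CY^\mu)$ the object $\pi_{\mu,*}\circ \Av^{N',\chi_x}_*(\CF)$ already lies in the Whittaker subcategory by the analysis of paragraph two, and the universal property of $\Av_{*,\on{glob}}^{\Whit}$ as a right adjoint, combined with \lemref{l:dual pi} identifying $\Av_{*,\on{glob}}^{\Whit}\circ \pi_*$ with $\pi_{*,\Whit} \circ p^{\fL_x(N),\chi_x}$, pins it down to be $\Av_{*,\on{glob}}^{\Whit}(\pi_{\mu,*}(\CF))$. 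The main obstacle is organizational: ensuring that the choice of $N' \subset \fL_x(N)$ can be made uniformly across the strata in a way compatible with the stabilization of \lemref{l:skinny strata bis}, and carefully tracking that the various adjunctions and commutations are continuous so that the strata-wise identity glues to a global one.
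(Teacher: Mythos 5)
Your proposal takes essentially the same route as the paper: reduce both parts to the strata $(\BunNb)^{G\text{-level}_{n\cdot x}}_{=\mu\cdot x,\on{good\,elswhr}}$, and then invoke \lemref{l:out bis bis}, which under the equivalence of \propref{p:out}(a) identifies the global Whittaker subcategory with the $N_{X-x}$-equivariant objects on which equivariance extends to $(\fL_x(N),\chi_x)$-equivariance. Given the decomposition $N'\cdot N_{X-x}=\fL_x(N)$ and your (correct, and clarifying) observation via the sum-of-residues formula that $\chi_x|_{N_{X-x}}$ is trivial, the right adjoint to this inclusion is computed by $\Av^{N',\chi_x}_*$, which is exactly the content of (a). The paper leaves all of these details implicit and simply says both assertions follow from \lemref{l:out bis bis}; you are filling the same argument in. Two small caveats. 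First, for part (b) the paper uses the same strata-wise argument, now applied to $(\pi_\mu)_*$ rather than $\pi_\mu^!$; your primary suggestion of deducing (b) from (a) by Verdier duality is not quite a drop-in, since $\pi_*$ is not literally the dual of $\pi^!$ (the dual of $\pi^!$ restricted to the Whittaker subcategory is $\pi_{*,\Whit}$ landing in $\Whit(\CY)_{\on{co}}$, cf.\ \eqref{e:dual functor}), so that argument would need to pass through $\Whit(\CY)_{\on{co}}$ with some care; your alternative direct argument is essentially what the paper does. Second, your appeal to \lemref{l:skinny strata bis} to justify the good choice of $N'$ is not quite the right reference --- that lemma governs $\fL_{\ul y}(N)$-orbits on the Drinfeld compactification rather than the interplay of $N'$ with $N_{X-x}$ inside $\fL_x(N)$ --- but the point being made (that the choice of $N'$ in \secref{sss:PsId}, with $N'\cdot N_{X-x}=\fL_x(N)$, suffices) is correct.
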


\begin{proof}

For point (a), it suffices to prove the corresponding assertion for the functor
$$\pi_\mu^!:\Shv((\BunNb)^{G\on{-level}_{n\cdot x}}_{=\mu\cdot x,\on{good\,elswhr}})\to \Whit(\CY^\mu)$$
for all $\mu\in \Lambda$.

\medskip

For point (b), it suffices to prove the corresponding assertion for the functor
$$(\pi_\mu)_*:\Whit(\CY^\mu)\to \Shv((\BunNb)^{G\on{-level}_{n\cdot x}}_{=\mu\cdot x,\on{good\,elswhr}})$$
for all $\mu\in \Lambda$.

\medskip

Now both assertions follow from the equivalence of \lemref{l:out bis bis}.

\end{proof}

\sssec{}

Consider the action of the group ind-scheme $N_{X-x}$ on $\CY$, and consider the corresponding functor
$$\Av^{N_{X-x}}_{*,\on{ren}}:=\on{act}_*\circ p^!:\Shv(\CY)\to \Shv(\CY),$$
see Remark \ref{r:ren av}.

\medskip

We claim:

\begin{prop}  \label{p:pi and pi}
The composite $\pi^!\circ \pi_*:\Shv(\CY)\to \Shv(\CY)$ identifies with the above functor $\Av^{N_{X-x}}_{*,\on{ren}}$.
\end{prop}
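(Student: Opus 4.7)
The plan is to identify the composite $\pi^!\circ\pi_*$ via base change along a Cartesian square. By the very definition in Remark~\ref{r:ren av}, the target functor $\Av^{N_{X-x}}_{*,\on{ren}}$ equals $\on{act}_*\circ p^!$, where $\on{act},p:N_{X-x}\times\CY\rightrightarrows\CY$ are the action and projection maps for the $N_{X-x}$-action on $\CY$. The statement therefore reduces to constructing a natural isomorphism $\pi^!\circ\pi_*\simeq\on{act}_*\circ p^!$.

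The key geometric input I will establish is the identification
$$\CY\underset{\BunNbx^{G\on{-level}_{n\cdot x}}}\times\CY\simeq N_{X-x}\times\CY,$$
compatibly with the two projections, so that the two maps $\CY\underset{\BunNbx^{G\on{-level}_{n\cdot x}}}\times\CY\rightrightarrows\CY$ correspond to $\on{act}$ and $p$. This will use the Beauville--Laszlo description of $\CY$ as the moduli of triples $(\CP_G,\gamma,\epsilon)$ with $\CP_G$ a $G$-bundle on $X$, $\gamma$ an isomorphism $\CP_G|_{X-x}\simeq\CP^{\omega^\rho}_G|_{X-x}$, and $\epsilon$ a level-$n$ structure at $x$. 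Two such points have the same image in $\BunNbx^{G\on{-level}_{n\cdot x}}$ exactly when an isomorphism of global $G$-bundles respecting the Pl\"ucker maps and the level structure exists; the restriction of such an isomorphism to $X-x$ is an automorphism of $\CP^{\omega^\rho}_G$ preserving the canonical $B$-reduction and the $T$-reduction $\omega^\rho$, i.e., an element of $N_{X-x}$, and this element then forces the restriction near $x$ (automatically compatible with the level structure, which is determined by the formal-disc data). Conversely, any $n\in N_{X-x}$ produces such an isomorphism via its image in $\fL_x(N)\subset\fL_x(G)$ acting on $\CY$; this action is free, since a non-trivial element of $N_{X-x}$ stabilizing a point would have to extend to a global section, forcing it to lie in $N(X)=\{1\}$. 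Hence the fiber product is an $N_{X-x}$-torsor over $\CY$ in the claimed sense.

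With this identification in hand, base change for the Cartesian square
$$
\CD
N_{X-x}\times\CY @>{p}>> \CY \\
@V{\on{act}}VV @VV{\pi}V \\
\CY @>{\pi}>> \BunNbx^{G\on{-level}_{n\cdot x}}
\endCD
$$
yields the desired equivalence $\pi^!\circ\pi_*\simeq\on{act}_*\circ p^!$. The principal technical obstacle is justifying this base change: $\pi$ is ind-schematic but not quasi-compact, and its fibers $N_{X-x}$ are infinite-dimensional, so that a naive formulation in terms of $p^*$ would be ill-defined and must be replaced by $p^!$ --- this is precisely the renormalization built into the definition of $\Av^{N_{X-x}}_{*,\on{ren}}$ in Remark~\ref{r:ren av}. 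To address this I will present $\CY$ as a filtered colimit $\underset{i}{\on{colim}}\,Y_i$ of its $\fL^+_x(G)$-stable closed subschemes, each of which maps into a quasi-compact substack of $\BunNbx^{G\on{-level}_{n\cdot x}}$ (a finite union of stacks of the form $(\BunNb)^{G\on{-level}_{n\cdot x}}_{\leq\mu\cdot x}$); on each such piece base change can be verified by standard means, and the resulting isomorphisms assemble via the functoriality of the colimit into the global statement.
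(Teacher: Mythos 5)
Your approach is the same as the paper's, which notes the isomorphism $N_{X-x}\times\CY\simeq\CY\underset{\BunNbx^{G\on{-level}_{n\cdot x}}}\times\CY$ induced by the $N_{X-x}$-action on $\CY$ and invokes base change; your added detail on the geometry of the fiber product and on reducing the ind-schematic base change to finite-type pieces is correct and merely fills in what the paper leaves implicit. One bookkeeping point: for your Cartesian square as drawn (with $p$ on top and $\on{act}$ on the left), base change yields $\pi^!\circ\pi_*\simeq p_*\circ\on{act}^!$ rather than $\on{act}_*\circ p^!$ --- the two agree via the inversion involution $(n,y)\mapsto(n^{-1},n\cdot y)$ of $N_{X-x}\times\CY$, which exchanges $p$ and $\on{act}$, so the conclusion stands, but the roles of the two projections should be swapped to read off the stated formula directly.
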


\begin{proof}

Follows by base change from the fact that the action of $N_{X-x}$ on $\CY$ defines an isomorphism
$$N_{X-x}\times \CY\simeq \CY\underset{(\BunNb)^{G\on{-level}_{n\cdot x}}}\times \CY.$$

\end{proof}

As a consequence, we obtain: 

\begin{cor} \label{c:PsId glob}
The functor $$\pi^!\circ \pi_{*,\Whit}:\Whit(\CY)_{\on{co}}\to \Whit(\CY)$$ identifies canonically with 
$\on{Ps-Id}_{\Whit}[-2d]$, where $d=\dim(N_0\backslash \fL_x(N)/N_{X-x})$.
\end{cor}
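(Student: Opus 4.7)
The plan is to chain the preceding lemma and two propositions to reduce the statement to an identity of averaging endofunctors of $\Shv(\CY)$, and then prove that identity by a convolution calculation on the loop group.

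First, I would compose Lemma \ref{l:dual pi} on the left with $\pi^!$, apply Proposition \ref{p:averagings}(a) to swap $\pi^!$ past $\Av_{*,\on{glob}}^{\Whit}$, and apply Proposition \ref{p:pi and pi} to rewrite $\pi^!\circ\pi_*$, obtaining
\[
\pi^!\circ\pi_{*,\Whit}\circ p^{\fL_x(N),\chi_x}\;\simeq\;\Av^{N',\chi_x}_*\circ \Av^{N_{X-x}}_{*,\on{ren}}.
\]
On the other hand, by the construction in \secref{sss:PsId}, we have $\on{Ps-Id}_{\Whit}\circ p^{\fL_x(N),\chi_x}\simeq\Av^{\fL_x(N),\chi_x}_{*,\on{ren}}$. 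Since $p^{\fL_x(N),\chi_x}$ is the Verdier quotient functor onto $\Whit(\CY)_{\on{co}}$, continuous functors out of this target are determined by their precompositions with it. Hence the corollary is equivalent to the identity of endofunctors of $\Shv(\CY)$
\begin{equation}\label{e:PsId key identity}
\Av^{N',\chi_x}_*\circ \Av^{N_{X-x}}_{*,\on{ren}}\;\simeq\;\Av^{\fL_x(N),\chi_x}_{*,\on{ren}}[-2d].
\end{equation}

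Next I would prove \eqref{e:PsId key identity} by a convolution calculation. Both sides are realized by left convolution with objects of $\Shv(\fL_x(N))$ for the $\fL_x(N)$-action on $\CY$. A crucial observation is the sum-of-residues identity $\chi_x|_{N_{X-x}}\equiv 0$, which implies that $\Av^{N_{X-x}}_{*,\on{ren}}=\on{act}_*\circ p^!$ is convolution with the dualizing sheaf $\omega_{N_{X-x}}$, pushed forward by zero from $N_{X-x}$ to $\fL_x(N)$, while $\Av^{N',\chi_x}_*$ is convolution with $\sfe_{N'}\sotimes \chi_x^!(\on{A-Sch})|_{N'}$. Their composite is convolution with the pushforward along the multiplication map $m:N'\times N_{X-x}\to\fL_x(N)$, which is surjective by the hypothesis $N'\cdot N_{X-x}=\fL_x(N)$. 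To match with the right-hand side, I would unwind $\Av^{\fL_x(N),\chi_x}_{*,\on{ren}}=\underset{\alpha}{\on{colim}}\,\oblv_{N^\alpha,\chi_x}\circ\Av^{N^\alpha,\chi_x}_*[2\dim(N^\alpha/N_0)]$ and choose $\alpha$ large so that $N'\subset N^\alpha$; base change for the finite-dimensional multiplication $N^\alpha\times N_{X-x}\to\fL_x(N)$, whose fibers are $N^\alpha\cap N_{X-x}$, then reduces the claim to the numerical identity
\[
\dim(N^\alpha/N_0)-\dim\bigl((N^\alpha\cap N_{X-x})/(N_0\cap N_{X-x})\bigr)=d,
\]
which holds since $N^\alpha/(N_0\cdot(N^\alpha\cap N_{X-x}))\simeq\fL_x(N)/(N_0\cdot N_{X-x})$ has dimension $d$ by definition of $d$.

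The main obstacle will be the careful handling of the ind-pro structure of $\fL_x(N)$ and $N_{X-x}$: both $\Av^{N_{X-x}}_{*,\on{ren}}$ and $\Av^{\fL_x(N),\chi_x}_{*,\on{ren}}$ are renormalized averaging functors involving infinite-dimensional contributions that must cancel correctly. These cancellations happen precisely because the double coset $N_0\backslash\fL_x(N)/N_{X-x}$ is a finite-dimensional smooth stack of dimension $d$ (a Riemann--Roch-type statement for $\fn^{\omega^\rho}$ on $X-x$), so the net cohomological shift is finite and, crucially, independent of the auxiliary intermediate subgroup $N^\alpha$ used in the calculation.
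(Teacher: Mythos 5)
Your proposal follows the same path as the paper's proof. You combine Lemma~\ref{l:dual pi}, Proposition~\ref{p:averagings} and Proposition~\ref{p:pi and pi} to reduce, after precomposing with the quotient functor $p^{\fL_x(N),\chi_x}$, to the identity $\Av^{N',\chi_x}_*\circ\Av^{N_{X-x}}_{*,\on{ren}}\simeq\Av^{\fL_x(N),\chi_x}_{*,\on{ren}}[-2d]$, exactly where the paper says ``unwinding the definitions, it is easy to see''; you then supply a convolution-plus-dimension-count argument for that step. (You appeal to Proposition~\ref{p:averagings}(a) whereas the paper cites (b); both give the same composite after applying $\pi^!\circ\pi_*$, so this is immaterial.) Your observation that continuous functors out of $\Whit(\CY)_{\on{co}}$ are determined by precomposition with $p^{\fL_x(N),\chi_x}$ is the right way to reduce to endofunctors of $\Shv(\CY)$, and the use of the sum-of-residues identity $\chi_x|_{N_{X-x}}=0$ is the correct reason the $N_{X-x}$-average carries no character twist.

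One small caution on the final numerical step: $N_0\cdot N_{X-x}$ and $N_0\cdot(N^\alpha\cap N_{X-x})$ need not be subgroups, so the expression $\fL_x(N)/(N_0\cdot N_{X-x})$ should be read as the corresponding homogeneous space (or the relevant locus of the double coset stack $N_0\backslash\fL_x(N)/N_{X-x}$), not a literal group quotient; the identification $N^\alpha/(N_0\cdot(N^\alpha\cap N_{X-x}))\simeq\fL_x(N)/(N_0\cdot N_{X-x})$ then holds at the level of these homogeneous spaces precisely because $N^\alpha\cdot N_{X-x}=\fL_x(N)$ and $N_0\subset N^\alpha$, and the dimension count
$$\dim(N^\alpha/N_0)-\dim\bigl((N^\alpha\cap N_{X-x})/(N_0\cap N_{X-x})\bigr)=d$$
goes through unchanged. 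With this phrasing tightened, your argument fills in the step the paper leaves implicit.
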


\begin{proof}

By \lemref{l:dual pi}, we need to show that the functor
$$\pi^!\circ \Av_{*,\on{glob}}^{\Whit}\circ \pi_*$$
identifies with $\Av_{*,\on{ren}}^{\fL_x(N),\chi_x}$. 

\medskip

Combining Propositions \ref{p:averagings}(b) and \ref{p:pi and pi}, we obtain that the functor $\pi^!\circ \Av_{*,\on{glob}}^{\Whit}\circ \pi_*$
is given by
$$\Av^{N',\chi_x}_*\circ \Av_{*,\on{ren}}^{N_{X-x}},$$
where $N'$ is as in \propref{p:averagings}. 

\medskip

However, unwinding the definitions, it is easy to see that $\Av^{N',\chi_x}_*\circ \Av_{*,\on{ren}}^{N_{X-x}}$ 
identifies canonically with 
$$\Av_{*,\on{ren}}^{\fL_x(N),\chi_x}[2d]=:\on{Ps-Id}_{\Whit}[-2d].$$

\end{proof} 

\ssec{Statement of the local-to-global equivalence}

In this subsection we finally state the local-to-global comparison theorem. 

\sssec{}

We are now ready to state the main result of this paper:

\begin{thm}  \label{t:main}
The functor 
$$\pi^!:\Whit((\BunNb)^{G\on{-level}_{n\cdot x}}_{\infty\cdot x})\to \Whit(\CY)$$
is an equivalence.
\end{thm}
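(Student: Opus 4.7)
The plan is to reduce \thmref{t:main} to fully-faithfulness of $\pi^!$, and then establish fully-faithfulness via a Ran space argument that the introduction already outlines. First I would observe that by combining \thmref{t:loc pullback mu}(b) with \corref{c:Whit via strata} on the global side and \propref{p:generated} on the local side, the functor $\pi^!$ induces an equivalence on each stratum. Since the target $\Whit(\CY)$ is generated by the essential images of the $(\iota^\mu)_*$ for $\mu+n\cdot\rho\in\Lambda^+_\BQ$, and the corresponding global extensions generate $\Whit(\BunNbx^{G\on{-level}_{n\cdot x}})$ by the global analog, essential surjectivity onto a generating family is automatic once the strata-wise statement is in hand. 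The entire content of \thmref{t:main} thus reduces to the fully-faithfulness of $\pi^!$.

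The heart of the proof is the commutative square
$$
\CD
\Whit(\BunNbx^{G\on{-level}_{n\cdot x}})   @>{a}>> \Whit(\CY_{\Ran}) \\
@V{\pi^!}VV   @VV{b}V \\
\Whit(\CY)  @>{c}>> \Whit(\CY\times \Ran(X)_x),
\endCD
$$
where the Ran space is taken over $X-x$ (or $X$ with $x$ marked), $\CY_{\Ran}$ is the natural Ran version of $\CY$ compatible with the action of $\fL_x(N)$, and $b,c$ are the respective pullbacks. The strategy is to show: (i) the horizontal arrows $a$ and $c$ are fully faithful; (ii) the right vertical arrow $b$ is an equivalence. Once both are established, for any $\CF_1,\CF_2$ in the top-left one has
$$\Hom(\CF_1,\CF_2)\simeq \Hom(a\CF_1,a\CF_2)\simeq \Hom(ba\CF_1,ba\CF_2)\simeq\Hom(c\pi^!\CF_1,c\pi^!\CF_2)\simeq \Hom(\pi^!\CF_1,\pi^!\CF_2),$$
yielding fully-faithfulness of $\pi^!$.

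For (i), the fully-faithfulness of $a$ and $c$ is a standard contractibility-type statement: the Ran space $\Ran(X)_x$ is homologically contractible, and in the presence of the Whittaker equivariance condition the pullback functor from a fixed version to a Ran version admits a left inverse given by $!$-pushforward along the projection. Combined with base-change and the usual Beilinson--Drinfeld ``chiral descent'' argument, this yields fully-faithfulness. For (ii) one must exhibit a rich enough supply of $(\fL_x(N),\chi_x)$-adapted objects over $\CY\times\Ran(X)_x$ so that the factorizable gluing of strata on the two sides matches. This is achieved by running the Raskin-style construction of \secref{ss:Rask1} (the family of subgroups $I^j$ twisted by $-j\check\rho(t)$) in the varying-point setting, where the extra freedom afforded by the Ran space makes the $\Av^{I^j,\chi}_*\circ \oblv_{\fL(N),\chi}$ functors jointly conservative and hence the inclusion into the Ran version essentially surjective onto generators. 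The strata-wise matching of \thmref{t:loc pullback mu}(b), which extends without change to the Ran versions, then upgrades this to an equivalence.

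The main obstacle will be step (ii): producing the adapted objects on $\CY_{\Ran}$ and verifying that the factorizable structure is compatible with the $\fL_x(N)$-equivariance condition in such a way that the comparison functor becomes an equivalence rather than merely a fully-faithful embedding. Once this is in place, combining fully-faithfulness of $\pi^!$ with the strata-wise equivalence proves \thmref{t:main}, and via \corref{c:PsId glob} also implies \thmref{t:inv vs coinv} with the correct cohomological shift absorbed into the normalization of $\on{Ps-Id}_{\Whit}$.
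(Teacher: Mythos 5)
Your proposal follows the paper's own strategy: reduce to fully-faithfulness of $\pi^!$ via the strata-wise equivalence of \thmref{t:loc pullback mu}, then factor $\pi^!$ through the Ran version $\Whit(\CY_\Ran)$ and use fully-faithfulness of the pullback to $\Whit(\CY_\Ran)$ (\thmref{t:contr}) together with the equivalence $\Whit(\CY_\Ran)\simeq\Whit(\Ran(X)_x\times\CY)$ (\thmref{t:unit}). The paper's main text deduces fully-faithfulness of $\pi^!$ from these two facts by a retract argument (using the section $\CY\to\Ran(X)_x\times\CY$ given by the distinguished point), whereas your chain of isomorphisms invokes in addition the fully-faithfulness of $\Whit(\CY)\to\Whit(\Ran(X)_x\times\CY)$ via homological contractibility of $\Ran(X)_x$; this is precisely the alternative argument given in the remark after the paper's proof, so the two are interchangeable, and your account of what goes into \thmref{t:unit} (the Ran version of Raskin's adapted objects, \thmref{t:Raskin I}, fed into \thmref{t:averaging} and the stratification over $X^I$) is accurate.
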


The proof will be given in \secref{ss:proof of main}.

\sssec{}  \label{sss:ff enough}

Some remarks are in order. Note that for every $\mu$ we have a commutative diagram
$$
\CD
\Whit(\CY^\mu) @>>> \Whit(\CY) \\
@AAA   @AAA   \\
\Whit((\BunNb)^{G\on{-level}_{n\cdot x}}_{=\mu\cdot x})  @>>>  \Whit((\BunNb)^{G\on{-level}_{n\cdot x}}_{\infty\cdot x}),
\endCD
$$ 
where the horizontal arrows are *-direct image functors. Since the left vertical arrows are equivalences for all $\mu$ 
(by \thmref{t:loc pullback mu}(b) and \corref{c:good strata}(c)), we see that the functor in the theorem is a ``stratum-wise equivalence". 
So the challenge of the theorem is to show that these strata glue in the same way in the source and the target.

\medskip

Given \propref{p:generated}(b), we obtain that \thmref{t:main} is equivalent to the statement that the functor 
$$\pi^!:\Shv((\BunNb)^{G\on{-level}_{n\cdot x}}_{\infty\cdot x})\to \Shv(\CY)$$
is fully faithful, when restricted to 
$$\Whit((\BunNb)^{G\on{-level}_{n\cdot x}}_{\infty\cdot x})\subset \Shv((\BunNb)^{G\on{-level}_{n\cdot x}}_{\infty\cdot x}).$$

\sssec{}

From \thmref{t:main} and \lemref{l:dual pi}, we obtain:

\begin{cor}
The functor 
$$\pi_{*,\Whit}:\Whit(\CY)_{\on{co}}\to \Whit((\BunNb)^{G\on{-level}_{n\cdot x}}_{\infty\cdot x})$$
is an equivalence.
\end{cor}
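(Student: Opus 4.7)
The plan is to obtain this corollary as a formal consequence of \thmref{t:main} by dualizing. No new geometric input is needed; all the substance is already contained in the equivalence $\pi^!: \Whit((\BunNb)^{G\on{-level}_{n\cdot x}}_{\infty\cdot x})\to \Whit(\CY)$.

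First I would assemble the two duality statements already established. By \corref{c:dual dualizable}, the category $\Whit(\CY)_{\on{co}}=\Shv(\CY)_{\fL_x(N),\chi_x}$ is canonically dual to $\Shv(\CY)^{\fL_x(N),-\chi_x}$, i.e., to the local Whittaker category formed with the opposite character. By \propref{p:Verdier for glob Whit}, the global Whittaker category $\Whit((\BunNb)^{G\on{-level}_{n\cdot x}}_{\infty\cdot x})$ is canonically self-dual, again up to replacing $\chi$ by $-\chi$. The functor $\pi_{*,\Whit}$ was defined in \eqref{e:dual functor} precisely as the dual, under these identifications, of the pullback functor $\pi^!$ for the category with the opposite character.

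Next, I would invoke \thmref{t:main} for the opposite character $-\chi$ (which is equally non-degenerate, so the theorem applies without modification). This tells us that $\pi^!:\Whit((\BunNb)^{G\on{-level}_{n\cdot x}}_{\infty\cdot x})\to \Whit(\CY)$ is an equivalence for either choice of character. Since the operation of passing to the dual functor in $\on{DGCat}_{\on{cont}}$ sends equivalences to equivalences, the dual functor $\pi_{*,\Whit}$ is automatically an equivalence.

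There is no real obstacle to overcome here; the only care required is bookkeeping of characters and ensuring that the functor produced as the dual of $\pi^!$ really is the functor $\pi_{*,\Whit}$ under consideration. As a sanity check, one could verify \lemref{l:dual pi}, namely that the composite $\pi_{*,\Whit}\circ p^{\fL_x(N),\chi_x}$ agrees with $\Av_{*,\on{glob}}^{\Whit}\circ \pi_*$; this compatibility is what justifies calling the dual functor a ``$\pi_*$'' operation at the level of Whittaker categories, but it is not logically required for the equivalence assertion itself.
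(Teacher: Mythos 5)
Your proposal is correct and follows essentially the same route as the paper: since $\pi_{*,\Whit}$ is \emph{defined} in \eqref{e:dual functor} as the dual of $\pi^!$ under the duality identifications of \corref{c:dual dualizable} and \propref{p:Verdier for glob Whit}, and dualizing in $\on{DGCat}_{\on{cont}}$ preserves equivalences, the corollary is immediate from \thmref{t:main} (applied with the opposite, still non-degenerate, character). Your observation that \lemref{l:dual pi} is a sanity check identifying $\pi_{*,\Whit}\circ p^{\fL_x(N),\chi_x}$ with $\Av_{*,\on{glob}}^{\Whit}\circ\pi_*$, rather than a logically necessary ingredient, is accurate — the paper cites it alongside \thmref{t:main} mainly because it provides the concrete description of $\pi_{*,\Whit}$ that makes the statement meaningful.
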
 

Finally, combining with \corref{c:PsId glob} we obtain:

\begin{cor}  \label{c:inv vs coinv}
The functor $\on{Ps-Id}_{\Whit}$ is an equivalence.
\end{cor}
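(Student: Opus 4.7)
The plan is to derive this as an essentially immediate consequence of the three preceding results: \thmref{t:main}, the corollary that $\pi_{*,\Whit}$ is an equivalence, and \corref{c:PsId glob} identifying the composite $\pi^!\circ\pi_{*,\Whit}$ with $\on{Ps-Id}_{\Whit}[-2d]$. So there is no real new work to do; the task is just bookkeeping.

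More precisely, first I would invoke \corref{c:PsId glob} to write
\[
\pi^!\circ \pi_{*,\Whit} \;\simeq\; \on{Ps-Id}_{\Whit}[-2d]
\]
as functors $\Whit(\CY)_{\on{co}}\to \Whit(\CY)$, where $d=\dim(N_0\backslash \fL_x(N)/N_{X-x})$. Next, I would observe that $\pi^!$ is an equivalence by \thmref{t:main}, and that $\pi_{*,\Whit}$ is an equivalence by the corollary immediately preceding (which is obtained by dualizing \thmref{t:main} using \corref{c:dual dualizable} on the local side and \propref{p:Verdier for glob Whit} on the global side). Composing two equivalences gives an equivalence, so the left-hand side above is an equivalence.

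Finally, since cohomological shift by $[-2d]$ is manifestly an equivalence on any stable $\infty$-category, we conclude that $\on{Ps-Id}_{\Whit}$ itself is an equivalence, which is the desired statement. There is no real obstacle; all the work has been absorbed into \thmref{t:main} and the duality identification of its dual statement. The one thing worth double-checking in the write-up is the compatibility of the duality identifications used to pass from the ``$\pi^!$ is an equivalence'' assertion to the ``$\pi_{*,\Whit}$ is an equivalence'' assertion, namely that the functor denoted $\pi_{*,\Whit}$ in \lemref{l:dual pi} is indeed the dual of $\pi^!$ with respect to the self-dualities of \corref{c:dual dualizable} and \propref{p:Verdier for glob Whit}; this is exactly what was used to state the preceding corollary, so it can be quoted directly.
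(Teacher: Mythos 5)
Your proposal is correct and is essentially identical to the paper's own argument: the paper deduces that $\pi_{*,\Whit}$ is an equivalence from \thmref{t:main} (via the duality identification set up in \lemref{l:dual pi}), then combines with \corref{c:PsId glob} to conclude. The compatibility point you flag at the end is exactly what \lemref{l:dual pi} records, so your bookkeeping matches the paper's.
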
 

Thus, we obtain a proof of \thmref{t:inv vs coinv}. 

\section{Ran version and the proof of the main theorem}  \label{s:Ran}

The proof of \thmref{t:main} is based on considering the \emph{Ran space} version of the local
Whittaker category. 

\medskip

We will show that the the pullback functor from the global version
to the Ran version is fully faithful (this will be a geometric assertion not related to the
specifics of the Whittaker situation). Then we will show that the original local Whittaker
category (at one point of the curve) is equivalent to the Ran version. 

\ssec{Ran geometry}

In this subsection we recall the definition of the Ran space and various geometric objects associated with it. 

\sssec{}

Recall that the Ran space of $X$, denoted $\Ran(X)$, is the functor that associates to an affine test scheme $S$
the \emph{set} of finite non-empty subsets $\CI\subset \Hom(S,X)$. 

\medskip

Explicitly, 
$$\Ran(X)\simeq \underset{I}{\on{colim}}\, X^I,$$
where the colimit is taken in $\on{PreStk}$, and the index category is opposite to that of finite non-empty subsets and surjective maps;
to a surjection $\phi:I_1\twoheadrightarrow I_2$ we associate the correspondind diagonal map
$$\Delta_\phi:X^{I_2}\to X^{I_1}.$$

\sssec{}  \label{sss:marked}

We will consider a version of $\Ran(X)$ with a marked point, denoted $\Ran(X)_x$. By definition, $\Ran(X)_x$ associates 
to an affine test scheme $S$ the set of finite non-empty subsets $\CI\subset \Hom(S,X)$ with a distinguished element
corresponding to the map
$$S\to \on{pt}\overset{x}\to X.$$

\medskip

Explicitly, 
$$\Ran(X)_x\simeq \underset{I}{\on{colim}}\, (X^I\underset{X}\times \{x\}),$$
where the colimit is taken in $\on{PreStk}$, and the index category is opposite to that of finite non-empty subsets 
equipped with a distinguished element and surjective maps that preserve distinguished elements.

\medskip

Note that we have a map $\Ran(X)\to \Ran(X)_x$, given by adding the distinguished element.
 
\sssec{}

Let $\Gr_{G,\Ran}$ denote the (following slightly twisted version of the) Ran Grassmannian of $G$:

\medskip

By definition, $\Gr_{G,\Ran}$ 
attaches to an affine test scheme $S$ the set of triples $(\CI,\CP_G,\gamma)$, where:

\begin{itemize}

\item $\CI$ is a finite non-empty subset of $\Hom(S,X)$;

\item $\CP_G$ is a $G$-bundle on $S\times X$; 

\item $\gamma$ is an identification of $\CP_G$ with the pullback of $\CP_G^{\omega^\rho}$
over the open subset of $S\times X$ equal to the complement of the union of graphs of the maps $S\to X$ that comprise $\CI$. 

\end{itemize} 

\sssec{}

Let $S^0_\Ran\subset \ol{S}{}^0_\Ran$ be the (locally) closed subfunctors of $\Gr_\Ran$ defined by the following
conditions:

\medskip

For $\ol{S}{}^0_\Ran$, we require that the composite \emph{meromorphic}  maps
$$(\omega^{\frac{1}{2}})^{\langle \clambda,2\rho\rangle}\to \CV^{\clambda}_{\CP^{\omega^\rho}_G} \overset{\gamma}\to \CV^{\clambda}_{\CP_G}$$
be \emph{regular} on $S\times X$ for all $\check\lambda\in \check\Lambda^+$. 

\medskip

For $S^0_\Ran$ we require that these maps be an injective bundle maps. 

\sssec{}

We will now introduce a Ran version of the space $\CY$, denoted $\CY_{\Ran_x} $. By definition $\CY_{\Ran_x} $ 
attaches to an affine test scheme $S$ the set of quadruples $(\CI,\CP_G,\gamma,\epsilon)$, where: 

\begin{itemize}

\item $\CI$ is a finite non-empty subset of $\Hom(S,X)$ with a distinguished element
corresponding to the map $S\to \on{pt}\overset{x}\to X$;

\item $\CP_G$ is a $G$-bundle on $S\times X$; 

\item $\gamma$ is an identification of $\CP_G$ with the pullback of $\CP_G^{\omega^\rho}$
over the open subset of $S\times X$ equal to the complement of the union of graphs of the maps $S\to X$ that comprise $\CI$,
subject to the condition that the composite \emph{meromorphic}  maps
\begin{equation} \label{e:new Plucker}
(\omega^{\frac{1}{2}})^{\langle \clambda,2\rho\rangle}\to \CV^{\clambda}_{\CP^{\omega^\rho}_G} \overset{\gamma}\to \CV^{\clambda}_{\CP_G}
\end{equation}
be \emph{regular} on $S\times (X-x)$ for all $\check\lambda\in \check\Lambda^+$. 

\item $\epsilon$ is a structure of level $n$ on $\CP_G$ along $S\times \{x\}$.

\end{itemize} 

\sssec{}

As in \secref{sss:map pi}, we have a naturally defined map
$$\pi_\Ran:\CY_{\Ran_x}  \to \BunNbx^{G\on{-level}_{n\cdot x}}.$$

We have the following basic geometric assertion:

\begin{thm} \label{t:contr}
The pullback functor
$$\pi_\Ran^!:\Shv(\BunNbx^{G\on{-level}_{n\cdot x}})\to \Shv(\CY_{\Ran_x} )$$
is fully faithful. 
\end{thm}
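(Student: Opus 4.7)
The plan is to deduce Theorem \ref{t:contr} from the Beilinson-Drinfeld contractibility of the Ran space. Fully faithfulness of $\pi_\Ran^!$ is equivalent to the unit $\mathrm{id}\to (\pi_\Ran)_*\circ \pi_\Ran^!$ being an isomorphism of endofunctors of $\Shv(\BunNbx^{G\on{-level}_{n\cdot x}})$, where $(\pi_\Ran)_*$ is the right adjoint to $\pi_\Ran^!$. Using the projection formula in the $!$-pullback/$*$-pushforward formalism together with the identity $\pi_\Ran^!\omega_{\BunNbx^{G\on{-level}_{n\cdot x}}}\simeq \omega_{\CY_\Ran}$, the desired statement reduces to the single claim
$$(\pi_\Ran)_*\omega_{\CY_\Ran}\simeq \omega_{\BunNbx^{G\on{-level}_{n\cdot x}}},$$
i.e.\ that each geometric fiber of $\pi_\Ran$ is cohomologically trivial with respect to the dualizing complex.

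Next, I would analyze those geometric fibers. A geometric point of $\BunNbx^{G\on{-level}_{n\cdot x}}$ is a triple $(\CP_G,\{\kappa^{\check\lambda}\},\epsilon)$, and its fiber under $\pi_\Ran$ parameterizes pairs $(\CI,\gamma)$ with $x\in \CI$ and $\gamma$ a trivialization of $\CP_G$ over $X\setminus \CI$ that intertwines the tautological Plücker maps on $\CP^{\omega^\rho}_G$ with the given $\{\kappa^{\check\lambda}\}$. Projection to the $\CI$-coordinate defines a map from this fiber to the subfunctor of $\Ran(X)_x$ consisting of finite sets that contain the (finite) degeneracy divisor of $\{\kappa^{\check\lambda}\}$ on $X\setminus\{x\}$. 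This subfunctor is cofinal in $\Ran(X)_x$, and hence inherits the BD contractibility of $\Ran(X)_x$ by the standard colimit argument. For $\CI$ fixed, the fiber of this second projection is a torsor under the ind-affine pro-unipotent group $N^{\omega^\rho}(X\setminus \CI)$, and is therefore cohomologically trivial.

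The argument is then assembled by iterated pushforward: writing the fiber of $\pi_\Ran$ as $q$ composed with $p$ (torsor map followed by the map to $\Ran(X)_x^{\text{large}}$), the triviality of the ind-affine torsor gives $q_*\omega\simeq \omega_{\Ran(X)_x^{\text{large}}}$, and then BD contractibility gives $p_*$ of this sheaf equal to $\sfe$ (up to the appropriate dualizing identification). Hence the total fiber is contractible, and by base change the desired identity $(\pi_\Ran)_*\omega_{\CY_\Ran}\simeq \omega_{\BunNbx^{G\on{-level}_{n\cdot x}}}$ holds, giving fully faithfulness of $\pi_\Ran^!$.

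The main obstacle I anticipate is the infinite-type bookkeeping: neither $\BunNbx^{G\on{-level}_{n\cdot x}}$ nor $\CY_\Ran$ is of finite type, and $\pi_\Ran$ is very much not quasi-compact. One must therefore verify that $\pi_\Ran^!$ genuinely admits a continuous right adjoint in the setting of $\Shv$ on infinite-type prestacks, that projection formula and base change apply to $\pi_\Ran$, and that the cofinal ``$\CI$ large enough'' subsystem in the fiber behaves correctly at the categorical level. These checks, though technical, are by now standard in the Gaitsgory-Raskin formalism; the essential geometric content is the BD contractibility of $\Ran(X)_x$ together with the observation that ind-affine unipotent torsors are cohomologically trivial.
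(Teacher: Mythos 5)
Your strategy matches the one the paper delegates to \cite[Theorem 3.4.4]{Ga2}: reduce fully faithfulness to homological contractibility of the fibers of $\pi_\Ran$, present a geometric fiber as a torsor under the ind-affine unipotent group $N^{\omega^\rho}(X\setminus\CI)$ over a cofinal marked-Ran-type subfunctor of $\Ran(X)_x$, and conclude from homological contractibility of the marked Ran space together with the cohomological triviality of ind-affine unipotent torsors. This is exactly the argument carried out in \emph{loc.~cit.}

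One correction to make the first reduction airtight: since $\pi_\Ran$ is pseudo-ind-proper, the functor $(\pi_\Ran)_*$ is the \emph{left} adjoint of $\pi_\Ran^!$ (this is the convention for $f_*$ along proper $f$ in the preliminaries of the paper, extended to ind-proper and then to pseudo-ind-proper maps by passing to colimits over the Ran terms), not its right adjoint. Fully faithfulness of $\pi_\Ran^!$ is therefore the statement that the \emph{counit} $(\pi_\Ran)_*\circ\pi_\Ran^!\to\on{Id}$ is an isomorphism. It is for this direction of the adjunction that the projection formula $(\pi_\Ran)_*(\CF\sotimes\pi_\Ran^!\CG)\simeq(\pi_\Ran)_*(\CF)\sotimes\CG$ is available; applying it with $\CF=\omega_{\CY_\Ran}$ collapses the counit to the single isomorphism $(\pi_\Ran)_*\omega_{\CY_\Ran}\overset{\sim}\to\omega_{\BunNbx^{G\on{-level}_{n\cdot x}}}$, which is precisely the fiberwise contractibility you then verify. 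If instead one took the genuine right adjoint of $\pi_\Ran^!$, which exists abstractly but is only lax $\Shv(\BunNbx^{G\on{-level}_{n\cdot x}})$-linear, the projection formula step would not be available and the reduction to the $\omega$-computation would not go through. With this fix, the remainder of your outline is the intended proof.
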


The proof repeats verbatim the proof of \cite[Theorem 3.4.4]{Ga2}. 

\ssec{The Ran version of the Whittaker category}

In this subsection we state the key result, \thmref{t:unit}, which says that the Ran version of the
Whittaker category is (essentially) equivalent to the local one (at point point $x\in X$). 

\sssec{}

The definition of the Whittaker category $\Whit(\CY)$ has a Ran version, denoted 
$$\Whit(\CY_{\Ran_x} ):=\Shv(\CY_{\Ran_x} )^{\fL_\Ran(N),\chi_\Ran}.$$

We refer the reader to \cite[Sect. 1.2]{Ga2}, where the definition is spelled out for the trivial character; the case of
the non-degenerate character is no different. See also \secref{ss:proof of unit}, below. 

\medskip

As in \thmref{t:loc pullback} one shows:

\begin{prop} \label{p:Ran pullback}
The functor $\pi_\Ran^!$ sends $\Whit((\BunNb)^{G\on{-level}_{n\cdot x}}_{\infty\cdot x})$ to $\Whit(\CY_{\Ran_x} )$.
\end{prop}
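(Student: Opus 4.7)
My plan is to mimic the proof of \thmref{t:loc pullback}(a), replacing a single auxiliary point $y$ by the Ran-space parametrization of auxiliary finite subsets. First, I would unpack what $(\fL_\Ran(N),\chi_\Ran)$-equivariance of an object of $\Shv(\CY_\Ran)$ means concretely: writing $\Ran(X)_x \simeq \on{colim}_I (X^I \underset{X}\times \{x\})$ as in \secref{sss:marked}, it amounts to requiring, for each pointed finite set $I$, that the restriction to the $I$-indexed layer of $\CY_\Ran$ is equivariant against the $X^I$-family version of the character-twisted loop group, in a way compatible with the structure maps $\Delta_\phi$.

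Second, I would work on a fixed layer. Over the open sub-prestack $U_I \subset X^I$ where the non-marked components are disjoint from $x$, giving an $S$-family $\ul{y}\colon S \to U_I$ and an $S$-point of the corresponding layer of $\CY_\Ran$ is, by Beauville--Laszlo, the same datum as a point of the Drinfeld compactification relative to the $(x,\ul{y})$-family, together with the level structure at $x$. More precisely, via the equivalence of \propref{p:out}(c) in relative form, the corresponding piece of $\CY_\Ran$ identifies with the quotient of $(\BunNb)^{G\on{-level}_{n\cdot x},N\on{-level}_{\infty\cdot \ul{y}}}_{\infty\cdot x,\on{good\,at\,}\ul{y}}$ (in families over $U_I$) by the $X^I$-family arc group $\fL^+_{\ul{y}}(N)$, with the $\fL_x(N)/K_n$ factor encoding the remaining data at $x$. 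Under this identification, the restriction of the action of $\fL_\Ran(N)$ to this layer identifies with the combined action of the ``outer'' loop group $\fL_{\ul{y}}(N)$ from \secref{sss:extend to loop group} and of $\fL_x(N)$ at the marked point, while $\chi_\Ran$ restricts to $\chi_{\ul{y}}+\chi_x$.

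Third, I would deduce the desired equivariance on the stratum: for $\CF \in \Whit(\BunNbx^{G\on{-level}_{n\cdot x}})$, the very definition of the global Whittaker category (applied to any nonempty $\ul{y}$) gives $(\fL_{\ul{y}}(N),\chi_{\ul{y}})$-equivariance of the $N$-enhanced pullback, while the Whittaker/equivariance data at $x$ supplies the $\fL_x(N)$-part exactly as in \thmref{t:loc pullback}. The sign compatibility needed to conclude matches via the sum-of-residues formula recalled after \secref{sss:extend to loop group}.

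The main obstacle is therefore not the pointwise check but the \emph{coherence} of these stratum-wise equivariances across the colimit presentation of $\Ran(X)_x$, i.e., their compatibility with the diagonal maps $\Delta_\phi\colon X^{I_2}\to X^{I_1}$ attached to surjections $I_1\twoheadrightarrow I_2$, together with the corresponding factorization data on $\fL_\Ran(N)$ and $\chi_\Ran$. This amounts to a direct bookkeeping argument matching the structure of $\fL_\Ran(N)$ under diagonal embeddings with the corresponding identifications of the $\ul{y}$-local pictures from \propref{p:add point}; it is formally parallel to the verification in \cite[Sect.~1.2]{Ga2} for the trivial character, the only new input being that the characters $\chi_{\ul{y}}$ are already assembled into a single $\chi_\Ran$ by construction, so no additional sign check is needed.
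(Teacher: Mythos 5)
Your strategy matches what the paper intends (the paper itself proves this only by reference, writing "As in \thmref{t:loc pullback} one shows"): reduce to each layer $X^I$, restrict to the open locus where the auxiliary points are disjoint from $x$, carry out the single-point argument of \thmref{t:loc pullback mu} there, and handle the diagonals via compatibility with the maps $\Delta_\phi$. Two of your steps, however, need to be tightened.

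\textbf{The geometric identification in your second step is backwards.} You assert that the $\ul{y}$-layer of $\CY_\Ran$ is the \emph{quotient} of $(\BunNb)^{G\on{-level}_{n\cdot x},N\on{-level}_{\infty\cdot \ul{y}}}_{\infty\cdot x,\on{good\,at\,}\ul{y}}$ by $\fL^+_{\ul{y}}(N)$. But that quotient is just $(\BunNb)^{G\on{-level}_{n\cdot x}}_{\infty\cdot x,\on{good\,at\,}\ul{y}}$, which is the \emph{target} of $\pi_\Ran$, not the source. The correct picture (over the locus $U_I$ where $\ul{y}\cap\{x\}=\emptyset$) is that Beauville--Laszlo decomposes the layer as
$$\CY_\Ran|_{\ul{y}}\,\simeq\,\CY\times \ol{S}{}^0_{\ul{y}},$$
with $\CY=\fL_x(G)/K_n$ carrying the level data at $x$ and $\ol{S}{}^0_{\ul{y}}\subset\Gr_{\ul{y}}$ recording the local data at $\ul{y}$ subject to Plücker-regularity. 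It is the \emph{Drinfeld} compactification that is (stratum by stratum) the quotient of this by $N_{X-(x,\ul{y})}$; this is exactly the relative form of \propref{p:out}(a)/(b), not (c). Getting the direction of the quotient right matters because it governs which equivariance structures you can descend versus pull back.

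\textbf{The $\fL_x(N)$-equivariance over $U_I$ is not automatic.} Your third step says this part comes "exactly as in \thmref{t:loc pullback}," which is the right reference, but it glosses over the actual content: $\fL_x(N)$ does \emph{not} act on the target $(\BunNb)^{G\on{-level}_{n\cdot x}}_{\infty\cdot x}$, so the equivariance of $\pi_\Ran^!(\CF)$ against $(\fL_x(N),\chi_x)$ has to come from the density argument of \propref{p:skinny equiv}, run with the group $N_{X-(x,y,\ul{y})}$ (an extra auxiliary point $y$ on top of $\ul{y}$) dense in $\fL_x(N)$, combined with the global Whittaker property via \lemref{l:out bis bis}. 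This is the nontrivial pointwise ingredient; your final paragraph inverts the difficulty by declaring the pointwise check easy and the coherence the "main obstacle," when in fact the coherence across $\Delta_\phi$ is the routine part (as you correctly note, it parallels \cite[Sect.~1.2]{Ga2}), while the layerwise check is where the real argument lives.

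With these two corrections the proof goes through and coincides with the paper's intended argument.
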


Combined with \thmref{t:contr}, we obtain: 

\begin{cor}
The functor
$$\pi_\Ran^!:\Whit((\BunNb)^{G\on{-level}_{n\cdot x}}_{\infty\cdot x})\to \Whit(\CY_{\Ran_x} )$$
is fully faithful.
\end{cor}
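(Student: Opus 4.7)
The plan is to deduce this corollary as an immediate formal consequence of its two stated inputs, \propref{p:Ran pullback} and \thmref{t:contr}. By construction, $\Whit((\BunNb)^{G\on{-level}_{n\cdot x}}_{\infty\cdot x})$ and $\Whit(\CY_\Ran)$ are \emph{full} subcategories of the ambient sheaf categories $\Shv((\BunNb)^{G\on{-level}_{n\cdot x}}_{\infty\cdot x})$ and $\Shv(\CY_\Ran)$, respectively. \propref{p:Ran pullback} tells me that the unrestricted pullback functor $\pi_\Ran^!$ carries the Whittaker subcategory of the source into the Whittaker subcategory of the target, so the restricted functor in the Corollary is well-defined as a functor between these full subcategories.

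With this in hand, the proof is a one-line inheritance argument. By \thmref{t:contr}, the unrestricted functor
$$\pi_\Ran^!: \Shv(\BunNbx^{G\on{-level}_{n\cdot x}})\to \Shv(\CY_\Ran)$$
is fully faithful, meaning it induces isomorphisms on all $\CHom$-complexes. For any pair of objects $\CF_1,\CF_2$ in $\Whit((\BunNb)^{G\on{-level}_{n\cdot x}}_{\infty\cdot x})$, the mapping complex $\CHom(\CF_1,\CF_2)$ computed in the Whittaker subcategory agrees tautologically with that computed in $\Shv((\BunNb)^{G\on{-level}_{n\cdot x}}_{\infty\cdot x})$, since the embedding is fully faithful; the analogous statement holds for the target. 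Hence the map on $\CHom$-complexes induced by the restricted $\pi_\Ran^!$ is identified with the map induced by the unrestricted one, which is an isomorphism by \thmref{t:contr}.

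I do not expect any obstacle at this step; the substantive content of the Corollary is entirely carried by its two inputs. The geometric heart of the argument sits in \thmref{t:contr}, which the excerpt reduces to the contractibility-type assertion proved as in \cite[Theorem 3.4.4]{Ga2}, while the preservation of Whittaker equivariance in \propref{p:Ran pullback} mirrors \thmref{t:loc pullback}(a) applied in the Ran-space setting. Once both are granted, the Corollary is purely categorical.
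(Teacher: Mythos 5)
Your argument is correct and is exactly the paper's: the paper states the corollary as an immediate consequence of \thmref{t:contr} (full faithfulness of the ambient $\pi_\Ran^!$) and \propref{p:Ran pullback} (preservation of the full Whittaker subcategories), with no further proof given. The one-line inheritance of full faithfulness along full subcategories is precisely the intended reasoning.
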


\sssec{}

Note now that we have a naturally defined map (in fact, a closed embedding)
$$\on{unit}_\Ran:\Ran(X)_x\times \CY\to \CY_{\Ran_x} .$$

Observe also that the definition of $\Whit(\CY)\subset \Shv(\CY)$ has a variant 
$$\Whit(\CZ\times \CY)\subset \Shv(\CZ\times \CY)$$
for an arbitrary prestack $\CY$, where we use the action of $\fL_x(N)$ on $\CZ\times \CY$ coming from the $\CY$-factor.

\medskip

In \secref{ss:proof of unit} we will prove:

\begin{thm} \label{t:unit}
The functor
$$\on{unit}_\Ran^!:\Shv(\CY_{\Ran_x} )\to \Shv(\Ran(X)_x\times \CY)$$
defines an equivalence 
$$\Whit(\CY_{\Ran_x} )\simeq \Whit(\Ran(X)_x\times \CY).$$
\end{thm}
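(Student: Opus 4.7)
I would prove the equivalence in three stages: well-definedness of $\on{unit}_\Ran^!$ on Whittaker subcategories, construction of a candidate inverse via renormalized averaging, and verification of the two composition identities via a Ran-space version of Raskin's adapted-objects technique.

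\emph{Stage 1 (Well-definedness).} The closed embedding $\on{unit}_\Ran$ is $\fL_x(N)$-equivariant in the obvious way: on the source $\fL_x(N)$ acts on the $\CY$-factor, while on the target it acts via the natural inclusion $\fL_x(N)\hookrightarrow \fL_\Ran(N)$ at the marked point. The character $\chi_\Ran$ restricts to $\chi_x$ on this subgroup. Hence $\on{unit}_\Ran^!$ sends $(\fL_\Ran(N),\chi_\Ran)$-equivariant sheaves to $(\fL_x(N),\chi_x)$-equivariant ones, which is precisely the defining condition for $\Whit(\Ran(X)_x\times\CY)$.

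\emph{Stage 2 (Candidate inverse).} I would define a functor
$$\Psi:\Whit(\Ran(X)_x\times\CY)\to\Whit(\CY_\Ran)$$
as the composite of the closed-embedding pushforward $(\on{unit}_\Ran)_*$ with a renormalized *-averaging against the ``extra'' loop directions at the points $y\in\CI-\{x\}$, modeled directly on the pseudo-identity functor of \secref{ss:pseudo-id}. Along each geometric stratum of $\CY_\Ran$ indexed by a point $\CI$ of $\Ran(X)_x$, this is a filtered colimit of cohomologically shifted $\Av_*^{N^\alpha,\chi}$-functors for the subgroups $N^\alpha\subset \fL_{\CI-\{x\}}(N)$, with the shifts chosen using a reference lattice as in \secref{sss:ren inv}.

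\emph{Stage 3 (Composition identities).} The identity $\on{unit}_\Ran^!\circ\Psi\simeq\on{id}$ reduces by base change to the observation that after !-pullback along the closed embedding $\on{unit}_\Ran$, the extra-direction action collapses, and the renormalized averaging degenerates to the identity by a dimensional bookkeeping computation paralleling the one in \corref{c:PsId glob}. The harder identity $\Psi\circ \on{unit}_\Ran^!\simeq\on{id}$ is equivalent to essential surjectivity of $\on{unit}_\Ran^!$ on Whittaker categories.

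\emph{Main obstacle and its resolution.} The essential surjectivity is the heart of the argument, and I would prove it by a Ran-space version of the formalism of \secref{ss:Rask1}. Introduce subgroups $I^j_\Ran\subset \fL_\Ran(G)$ generalizing the $I^j$ of \secref{sss:adolesc} to the Ran setting (at each point of $\CI$ one takes the conjugate of the $j$-th ``Iwahori-type'' subgroup by $-j\check\rho(t)$), and establish the Ran analogue of \thmref{t:Raskin}: every object of $\Shv(\CY_\Ran)$ in the essential image of $\oblv_{I^j_\Ran,\chi_\Ran}$ is $(\fL_\Ran(N),\chi_\Ran)$-adapted. The crucial geometric observation is that such objects are supported on the locus where, at every $y\in\CI-\{x\}$, the bundle modification lies in $\fL_y^+(G)$ and is therefore trivial; equivalently, such objects lie in the essential image of $(\on{unit}_\Ran)_*$ applied to the analogous equivariant subcategory of $\Shv(\Ran(X)_x\times\CY)$. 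A formal adjunction argument parallel to \secref{sss:proof of enough}, shrinking the $I^j_\Ran$ as $j\to\infty$, then shows that such adapted objects generate $\Whit(\CY_\Ran)$, which combined with Stage 2 yields the equivalence. I expect the main technical obstacle to be the Ran-space adaptation of \thmref{t:Raskin}, as the relevant geometric input from \cite{Ras} must be extended to allow the equivariance point to vary over the Ran space; however, the paper's assurance in \secref{ss:Ran} that ``we will essentially use Raskin's recipe again'' indicates that this is a direct generalization of the fixed-point argument reviewed in \secref{s:proof of Rask}.
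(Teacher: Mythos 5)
Your Stage 3 has a gap and the difficulty assessment is inverted relative to what the paper reports. First, the ``crucial geometric observation'' --- that $I^j_\Ran$-equivariant adapted objects are automatically supported on the trivial-modification locus at the auxiliary points, hence lie in the image of $(\on{unit}_\Ran)_*$ --- does not hold: equivariance of a sheaf under the paper's analogous subgroup $\fL^+_I(G)^j$ does not constrain its support to the closed substack $\Ran(X)_x\times\CY$, and the paper makes no such claim. Moreover, even if such objects did generate $\Whit(\CY_\Ran)$, generation is not sufficient to promote a one-sided inverse into a two-sided one without further structural input. Second, the paper's remark immediately following \thmref{t:averaging} observes that $\Av_{*,\on{ren}}^{\fL_I(N),\chi_I}\circ(\on{unit}_I)_!$ provides a \emph{left inverse} of $\on{unit}_I^!$ --- i.e.\ $\Psi\circ\on{unit}_I^!\simeq\on{id}$, which you label the ``harder identity'' --- and what the paper flags as not a priori clear is precisely whether this left inverse coincides with the left adjoint, which is the step you dismiss as ``dimensional bookkeeping.''

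The paper's actual proof has two ingredients your outline omits. After the reduction to a fixed finite set with marked point (\thmref{t:unit I}), it proves (i) \propref{p:unit beta}: a strata-wise equivalence of $\on{unit}_\fP^!$ over each collision stratum $X^\fP\subset X^I$, proved via geometric density on the open locus of distinct points (\lemref{l:restr to open}, \lemref{l:open to unit}); this yields \emph{conservativity} of $\on{unit}_I^!$. It then proves (ii) \thmref{t:averaging}: the left adjoint $(\on{unit}_I^!)^L$ exists and is \emph{$\Shv(X^I)$-linear} with respect to $\sotimes$. The $\Shv(X^I)$-linearity is exactly what allows the adjunction unit $\CF\to\on{unit}_I^!\circ(\on{unit}_I^!)^L(\CF)$ to be checked stratum by stratum, whereupon (i) applies; conservativity plus an invertible unit then gives the equivalence. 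The Ran version of Raskin's theorem (\thmref{t:Raskin I}) enters as the engine behind (ii), not as a direct generation-by-adapted-objects argument, and nothing in your sketch supplies the $\Shv(X^I)$-linearity or the stratification mechanism that makes the gluing work.
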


\ssec{Proof of the main theorem}  \label{ss:proof of main}

In this subsection we will deduce \thmref{t:main} from \thmref{t:unit}. 

\sssec{}

As was explained in \secref{sss:ff enough}, it suffices to show that the functor
$$\pi^!:\Shv((\BunNb)^{G\on{-level}_{n\cdot x}}_{\infty\cdot x})\to \Shv(\CY)$$
is fully faithful, when restricted to 
$$\Whit((\BunNb)^{G\on{-level}_{n\cdot x}}_{\infty\cdot x})\subset \Shv((\BunNb)^{G\on{-level}_{n\cdot x}}_{\infty\cdot x}).$$

\sssec{}

Note that the map $\pi:\CY\to (\BunNb)^{G\on{-level}_{n\cdot x}}_{\infty\cdot x}$ equals the composition
$$\CY\to \Ran(X)_x\times \CY \overset{\on{unit}_\Ran}\longrightarrow \CY_{\Ran_x} \overset{\pi_\Ran}\longrightarrow 
(\BunNb)^{G\on{-level}_{n\cdot x}}_{\infty\cdot x},$$
where the first arrow corresponds to the tautological map 
$$\on{pt}\overset{\{x\}}\to \Ran(X)_x.$$

\medskip

Hence, the functor $\pi^!$, restricted to $\Whit((\BunNb)^{G\on{-level}_{n\cdot x}}_{\infty\cdot x})$ is the composition
\begin{equation} \label{e:factor pi}
\Whit((\BunNb)^{G\on{-level}_{n\cdot x}}_{\infty\cdot x}) \overset{\pi_\Ran^!}\longrightarrow \Whit(\CY_{\Ran_x} ) 
\overset{\on{unit}_\Ran}\longrightarrow \Whit(\Ran(X)_x\times \CY)  \to \Whit(\CY).
\end{equation}

\medskip

According to \thmref{t:contr}, the first arrow in \eqref{e:factor pi} is fully faithful, and the second arrow is an equivalence by
\thmref{t:unit}. Hence, the functor
$$(\pi_\Ran \circ \on{unit}_\Ran)^!:
\Whit((\BunNb)^{G\on{-level}_{n\cdot x}}_{\infty\cdot x})\to \Whit(\Ran(X)_x\times \CY)$$
is fully faithful.

\sssec{}

Note now that the map
$$\pi_\Ran \circ \on{unit}_\Ran:\Ran(X)_x\times \CY\to (\BunNb)^{G\on{-level}_{n\cdot x}}_{\infty\cdot x}$$
factors as
$$\Ran(X)_x\times \CY\to \CY \overset{\pi}\longrightarrow (\BunNb)^{G\on{-level}_{n\cdot x}}_{\infty\cdot x},$$
where the first map is the projection on the $\CY$-factor.

\medskip

Hence, the functor $\pi_\Ran^!$ is a \emph{retract} of the functor $(\pi_\Ran \circ \on{unit}_\Ran)^!$. In particular,
for $\CF_1,\CF_2\in \Whit((\BunNb)^{G\on{-level}_{n\cdot x}}_{\infty\cdot x})\to \Whit(\Ran(X)_x\times \CY)$, the map
$$\CHom_{\Whit((\BunNb)^{G\on{-level}_{n\cdot x}}_{\infty\cdot x})}(\CF_1,\CF_2) \to
\CHom_{\Whit(\CY)}(\pi^!(\CF_1),\pi^!(\CF_2))$$
is a \emph{retract} of the map
$$\CHom_{\Whit((\BunNb)^{G\on{-level}_{n\cdot x}}_{\infty\cdot x})}(\CF_1,\CF_2) \to
\CHom_{\Whit(\Ran(X)_x\times \CY)}((\pi_\Ran\circ \on{unit}_\Ran)^!(\CF_1),(\pi_\Ran\circ \on{unit}_\Ran)^!(\CF_2)).$$

\medskip

Hence, since the latter is an is isomorphism, so is the former. 

\qed[\thmref{t:main}]

\begin{rem}
Instead of using the retract argument, we could have finished the proof differently, using the fact that the marked Ran space
$\Ran(X)_x$ is contractible. Indeed, the latter implies that the functor
$$\Shv(\CY)\to \Shv(\Ran(X)_x\times \CY)$$
is fully faithful. 

\medskip

Hence, the fact that the composite 
$$\Whit((\BunNb)^{G\on{-level}_{n\cdot x}}_{\infty\cdot x}) \overset{\pi^!}\longrightarrow \Whit(\CY)\to 
\Whit(\Ran(X)_x\times \CY)$$
is fully faithful implies that the first arrow is fully faithful.
\end{rem} 

\ssec{Unital structure}  \label{ss:proof of unit}

The goal of this subsection is to supply a crucial ingredient that will be used in the proof of 
\thmref{t:unit}. It will amount to a \emph{unital structure} on $\Whit(\CY_{\Ran_x} )$ in the world of factorization
categories and modules over them. 

\sssec{}

For $I$ an object in category of finite sets with a marked point (see \secref{sss:marked}), denote
$$\CY_I:=X^I\underset{\Ran(X)_x}\times \CY_{\Ran(X)_x}.$$

We have
$$\CY_{\Ran_x} \simeq \underset{I}{\on{colim}}\, \CY_I,$$
and hence
\begin{equation} \label{e:Y Ran}
\Shv(\CY_{\Ran_x} )\simeq \underset{I}{\on{lim}}\, \Shv(\CY_I).
\end{equation}

\medskip

We have the corresponding full subcategories
$$\Whit(\CY_I)\subset \Shv(\CY_I).$$
and under the equivalence \eqref{e:Y Ran}, the full subcategory 
$$\underset{I}{\on{lim}}\, \Whit(\CY_I)\subset \underset{I}{\on{lim}}\, \Shv(\CY_I)$$
corresponds to $\Whit(\CY_{\Ran_x} )\subset \Shv(\CY_{\Ran_x} )$.

\medskip

Let $\on{unit}_I$ denote the corresponding map
$$X^I\times  \CY\to \CY_I.$$

To prove \thmref{t:unit}, it suffices to prove its version for every $I$ individually:

\begin{thm} \label{t:unit I}
For every $I$, the functor $\on{unit}_I^!$ induces an equivalence
$$\Whit(\CY_I)\to \Whit(X^I\times \CY).$$
\end{thm}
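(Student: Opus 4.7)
The plan is to show that every object of $\Whit(\CY_I)$ is a clean !-extension from the closed substack $X^I\times\CY\subset\CY_I$, and that under this identification the Whittaker structure at $x$ exhausts the content of the equivariance conditions, so that $\on{unit}_I^!$ becomes an equivalence onto $\Whit(X^I\times\CY)$.

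First I would check, parallel to \propref{p:Ran pullback}, that $\on{unit}_I^!$ sends $\Whit(\CY_I)$ to $\Whit(X^I\times\CY)$; this is a formal compatibility of equivariance at $x$ along a closed embedding. For the converse I would stratify $X^I$ by surjections $\phi\colon I\twoheadrightarrow J$ respecting the marked element, giving strata $\overset{\circ}{X}{}^J\subset X^I$ that record which coordinates collide with each other or with $x$. Over each such stratum the factorization property of the Ran Grassmannian provides a (locally Zariski) identification
\[
\CY_I|_{\overset{\circ}{X}{}^J}\;\simeq\;\overset{\circ}{X}{}^J\;\times\;\CY\;\times\;\prod_{j\in J\setminus\{*\}}\Gr_{G,y_j},
\]
and the action of $\fL_{\underline{y}\cup\{x\}}(N)$ decomposes as a product of the $\fL_x(N)$-action on $\CY$ and the $\fL_{y_j}(N)$-actions on each $\Gr_{G,y_j}$. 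Under this identification the embedding $\on{unit}_I$ corresponds to inserting the unit $1\in\Gr_{G,y_j}$ in every auxiliary factor.

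The key technical step is a Whittaker support lemma analogous to \lemref{l:good strata y} and \propref{p:generated}(a): at each $y_j\ne x$, the Plücker-regularity condition on $S\times(X\setminus\{x\})$ built into $\CY_\Ran$ restricts the support on $\Gr_{G,y_j}$ to the closure $\overline{S}{}^0$ of the zero orbit ($-\mu\in\Lambda^{\on{pos}}$), while $(\fL_{y_j}(N),\chi_{y_j})$-equivariance restricts to orbits with $\mu\in\Lambda^+_\BQ$ (since there is no level at $y_j$, so the role of $n$ vanishes). The intersection of these two cones is only $\mu=0$, hence the auxiliary factor is supported at --- and, by a cleanness argument in the spirit of \corref{c:glob Whit comp gen}(a), cleanly extended from --- the unit section. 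Since $\Whit(\Gr_{G,y_j})$ is then generated by a single ``vacuum'' object at $1$, the $\fL_{y_j}(N)$-equivariance becomes formally transparent on the unit section, and only the $\fL_x(N)$-Whittaker condition on the $\CY$-factor survives. Assembling these vanishings across the stratification of $X^I$ yields the claimed equivalence.

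The main obstacle will be the coherent bookkeeping: organizing the factorization decompositions compatibly across the incidence stratification of $X^I$, and verifying that ``Whittaker at $y_j$ is the vacuum'' propagates through the Plücker-regularity condition that couples the extra points to each other and to $\CY$. This is closely modelled on the strata-wise analysis of $\BunNbx$ in Sections~\ref{s:global}--\ref{s:global-to-local}, and should pose no new conceptual difficulty beyond the Ran-space translation.
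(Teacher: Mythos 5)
Your strata-wise analysis is on target and essentially reproduces the paper's \propref{p:unit beta}: the collision stratification of $X^I$, the factorization over each stratum, and the support argument (Pl\"ucker regularity forcing the auxiliary orbits to lie in $\ol{S}{}^0$, Whittaker-equivariance forcing dominance, so only $S^0$ survives) are exactly what Lemmas \ref{l:restr to open} and \ref{l:open to unit} establish. But there are two problems, one of framing and one of substance.

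First, the framing as ``clean !-extension from the closed substack $X^I\times\CY$'' is not correct, even stratum by stratum. The auxiliary factor of a Whittaker object is spread out over the whole (infinite-dimensional) orbit $S^0\subset \Gr_{G,y_j}$, not concentrated on the unit point: a $(\fL_{y_j}(N),\chi_{y_j})$-equivariant sheaf on $S^0$ is analogous to the Artin--Schreier sheaf on $\BG_a$, which is equivariant but certainly not supported at the origin. Restriction to the unit section is an equivalence of Whittaker categories, but this is an equivalence of equivariant categories along a transitive orbit (as in \lemref{l:reduce to fd}), not an adjoint extension/restriction pair; in particular the inverse functor is not $(\on{unit}_I)_*$ but involves a genuine averaging $\Av^{\fL_I(N),\chi_I}_!$. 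So the objects of $\Whit(\CY_I)$ are \emph{not} $*$-extended from $X^I\times\CY$, and ``clean extension'' in the sense of \corref{c:glob Whit comp gen}(a) (an open-closed phenomenon in the defect direction) does not transfer to the closed embedding $\on{unit}_I$.

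Second, and more seriously, ``assembling these vanishings across the stratification'' is precisely where the nontrivial work lives, and nothing in your sketch supplies it. Stratum-wise equivalences give \emph{conservativity} of $\on{unit}_I^!$, but not fully-faithfulness: subquotients matching on strata does not by itself control how they glue. The paper's deduction of \thmref{t:unit I} from \propref{p:unit beta} hinges on \thmref{t:averaging} --- the existence of a \emph{left adjoint} to $\on{unit}_I^!$ that is moreover linear over $\Shv(X^I)$ under $\sotimes$; it is this $\Shv(X^I)$-linearity that lets one test the unit of the adjunction stratum by stratum and conclude. Proving \thmref{t:averaging} in turn requires the Ran version of Raskin's adolescent-Whittaker theorem, \thmref{t:Raskin I}, i.e.\ a supply of $(\fL_I(N),\chi_I)$-adapted objects coming from the subgroups $\fL^+_I(G)^j$. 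This is the main geometric input of the section, not ``coherent bookkeeping.'' Your proposal, as written, does not identify where an adjoint (or any alternative gluing mechanism) comes from, so the final step is a genuine gap.
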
 

The assertion of \thmref{t:unit I} is Zariski-local in $X$, so from now on we will assume that $X$ is affine.
(This is done for notational convenience, when manipulating the two versions of the parameterized formal disc, 
$\wh\cD_{x_I}$ and $\cD_{x_I}$ below.)

\sssec{} 

For an affine test-scheme $S$ and an $S$-point $x_I$ of $X^I$ (i.e., an $I$-tuple of maps $S\to X$),
let $\wh\cD_{x_I}$ be the formal completion of $S\times X$ along the union of the graphs of maps that comprise $x_I$.
We consider $\wh\cD_{x_I}$ as an ind-scheme. 

\medskip

The assumption that $X$ be affine implies that $\wh\cD_{x_I}$ is 
\emph{ind-affine}; in particular, it gives rise to a well-defined ind-object in the category of affine schemes. Let
$\cD_{x_I}$ denote the colimit of $\wh\cD_{x_I}$, taken in the category of affine schemes. I.e., if
$$\wh\cD_{x_I}=\underset{\alpha}{\on{colim}}\, \Spec(A_\alpha),$$
where the colimit is taken in $\on{PreStk}$, then $\cD_{x_I}=\Spec(A)$, where
$$A:=\underset{\alpha}{\on{lim}}\, A_\alpha,$$
where the limit is taken in the category of commutative algebras.

\medskip

The points comprising $x_I$ give rise to an $I$-tuple of maps $S\to \cD_{x_I}$. 
Let $\ocD_{x_I}$ be the (affine) open subscheme of $\cD_{x_I}$ obtained by removing the union of the graphs of the above maps
$S\to \cD_{x_I}$. We will also consider the larger (affine) open subscheme 
$$\cD'_{x_I}:=\cD_{x_I}-(S\times \{x\}).$$
That is, instead of all maps that comprise $x_I$, we only take the distinguished constant map $S\to \on{pt}\overset{x}\to X$. 

\medskip

When instead of $x_I$ we just use the distinguished map to $x$, we obtain the corresponding versions of
the formal (resp., formal punctured) disc around $x$:
$$S\times \wh\cD_x,\,\, S\wh\times \cD_x:=\on{colim}(S\times \wh\cD_x) \text { and }
S\wh\times \ocD_x:=S\wh\times \cD_x-(S\times \{x\}).$$

We have the naturally defined maps
$$S\times \wh\cD_x\to \wh\cD_{x_I},\,\, 
S\wh\times \cD_x\to \cD_{x_I} \text{ and } S\wh\times \ocD_x\to \cD'_{x_I}.$$

\sssec{} \label{sss:rel loop groups}

For every $I$, let $\fL^+_I(N)$ (resp., $\fL^+_I(N)'\subset \fL_I(N)$) denote the following group-schemes 
(resp., group ind-schemes) over $X^I$:

\medskip

\begin{itemize} 

\item 
A lift of $x_I$ to a map $S\to \fL^+_I(N)$ is a map $\cD_{x_I}\to N^{\omega^\rho}$ (or, equivalently, a 
map $\wh\cD_{x_I}\to N^{\omega^\rho}$), compatible with a projection to $X$. 

\item 
A lift of $x_I$ to a map $S\to \fL_I(N)$ is a map $\ocD_{x_I}\to N^{\omega^\rho}$, compatible with a projection to $X$. 

\item A lift of $x_I$ to a map $S\to \fL^+_I(N)'$ is a map 
$\cD'_{x_I}\to N^{\omega^\rho}$, compatible with a projection to $X$.

\end{itemize} 

\medskip

We have the closed embeddings
$$\fL^+_I(N)\subset \fL^+_I(N)'\subset \fL_I(N),$$
and the projections
$$\fL^+_I(N)\to X^I\times \fL^+_x(N) \text{ and } \fL^+_I(N)'\to X^I\times \fL_x(N).$$

\begin{rem}
Let a $k$-point $x_I$ be given by a collection of $I$ distinct points $y_i$ of $X$, and the distinguished point $x$.
Then the fibers of $\fL^+_I(N)$, $\fL^+_I(N)'$ and $\fL_I(N)$ over such $x_I$ are given, respectively, by
$$\underset{i}\Pi\, \fL^+_{y_i}(N)\times \fL^+_{x}(N),\,\, 
\underset{i}\Pi\, \fL^+_{y_i}(N)\times \fL_x(N) \text{ and } \underset{i}\Pi\, \fL_{y_i}(N)\times \fL_x(N).$$
\end{rem}

\sssec{}

By definition
$$\Whit(\CY_I)=\Shv(\CY_I)^{\fL_I(N),\chi_I}.$$

We note that the map $\on{unit}_I$ is compatible with the action of $\fL^+_I(N)'$, where $\fL^+_I(N)'$ acts on $X^I\times \CY$
via the projection 
\begin{equation} \label{e:project on disting}
\fL^+_I(N)'\to X^I\times \fL_x(N)
\end{equation} 
and the $\fL_x(N)$-action on $\CY$.

\medskip

Hence, the functor $\on{unit}_I^!$ gives rise to a functor
$$\Whit(\CY_I)=\Shv(\CY_I)^{\fL_I(N),\chi_I}\to \Shv(X^I\times \CY)^{\fL^+_I(N)',\chi_I},$$
while the functor
$$\Whit(X^I\times \CY)=\Shv(X^I\times \CY)^{\fL_x(N),\chi_x}\to \Shv(X^I\times \CY)^{\fL^+_I(N)',\chi_I}$$
is an equivalence, since the kernel of \eqref{e:project on disting} is pro-unipotent. 

\medskip

This shows that $\on{unit}_I^!$ gives rise to a well-defined functor
$$\Whit(\CY_I)\to \Whit(X^I\times \CY).$$

\sssec{}

We now claim:

\begin{thm} \label{t:averaging}
The functor 
$$\on{unit}_I^!:\Whit(\CY_I)\to \Whit(X^I\times \CY)$$
admits a left adjoint.  
Moreover, this left adjoint respects the actions of $\Shv(X^I)$ on the two sides, given by
the operation of !-pullback and $\sotimes$. 
\end{thm}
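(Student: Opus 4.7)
The plan is to construct the left adjoint as an induction functor, with the existence of an appropriate $!$-averaging being the main technical input. First, I reformulate the two categories as equivariant categories over $X^I$. Since the kernel of the projection $\fL^+_I(N)' \twoheadrightarrow X^I \times \fL_x(N)$ is pro-unipotent, one has
$$\Whit(X^I\times \CY) \simeq \Shv(X^I\times \CY)^{\fL^+_I(N)',\, \chi_I},$$
while $\Whit(\CY_I) = \Shv(\CY_I)^{\fL_I(N),\, \chi_I}$ by definition. The map $\on{unit}_I$ is a closed embedding and is $\fL^+_I(N)'$-equivariant, so under these identifications the functor $\on{unit}_I^!$ factors as the composite of the forgetful functor along $\fL^+_I(N)' \hookrightarrow \fL_I(N)$ followed by $!$-pullback along the closed embedding $\on{unit}_I$.

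\medskip

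I define the candidate left adjoint by the formula
$$(\on{unit}_I)_!^{\Whit} := \Av_!^{\fL_I(N),\, \chi_I} \circ (\on{unit}_I)_*,$$
where $(\on{unit}_I)_*$ is the $*$-pushforward along the closed embedding (producing an $\fL^+_I(N)'$-equivariant object of $\Shv(\CY_I)$) and $\Av_!^{\fL_I(N),\chi_I}$ is the partially-defined left adjoint to $\oblv_{\fL_I(N),\chi_I}$. The hard part of the proof, and where the Whittaker condition enters essentially, is to show that $\Av_!^{\fL_I(N),\chi_I}$ is defined on the essential image of $(\on{unit}_I)_*$. This is a Ran-family analog of the adaptedness result \thmref{t:Raskin}: the non-degeneracy of $\chi_I$ together with the Whittaker-support constraint (a Ran version of \corref{c:good strata}(a), ensuring that objects of $\Whit(\CY_I)$ are supported on the ``good at non-$x$ points'' locus of $\CY_I$) makes the averaging clean. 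Concretely, on each bounded substack of $\CY_I$ one invokes \lemref{l:skinny strata bis} to conclude that a finite-dimensional quotient of $\fL_I(N)/\fL^+_I(N)'$ acts transitively along the fibers of the action map; this reduces the averaging to one over a finite-dimensional unipotent group against a non-trivial character, which is clean (see \corref{c:coinv as inv}) and hence well-defined, and produces a functor taking values in $\Whit(\CY_I)$. The adjunction relation $\Hom(L(\CF),\CG) \simeq \Hom(\CF, \on{unit}_I^!(\CG))$ is then a formal consequence of the $(\on{unit}_I)_* \dashv \on{unit}_I^!$ adjunction combined with the partial $(\Av_!, \oblv)$-adjunction.

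\medskip

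For the compatibility with the $\Shv(X^I)$-actions, I observe that every ingredient in the construction is a morphism over $X^I$: the closed embedding $\on{unit}_I$, the action and projection maps used in defining the averaging, and the character sheaf on the relative group-scheme $\fL_I(N)/\fL^+_I(N)'$ (which itself lives over $X^I$). Consequently, iterated applications of the projection formula for $!$-pullback from $X^I$ and $\sotimes$ yield the required commutation
$$(\on{unit}_I)_!^{\Whit}\bigl(p_I^!(\CG)\sotimes \CF\bigr) \;\simeq\; p_I^!(\CG)\sotimes (\on{unit}_I)_!^{\Whit}(\CF)$$
for $\CG \in \Shv(X^I)$ and $\CF \in \Whit(X^I\times \CY)$, where $p_I$ denotes the projection to $X^I$. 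The only substantive obstacle is therefore the Ran-family version of adaptedness, which is the Ran analog of the simplest part of \cite{Ras} (the case $m=\infty$ of Theorem 2.7.1(1) of \emph{loc.cit.}) and should be established by the same methods as in \secref{s:proof of Rask}.
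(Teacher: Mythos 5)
Your candidate formula for the left adjoint, $\Av^{\fL_I(N),\chi_I}_!\circ(\on{unit}_I)_*$, is the same as the paper's, and your final sentence correctly identifies the substantive input as a Ran-family version of the cleanness result (\thmref{t:Raskin}, i.e.\ the $m=\infty$ case of \cite{Ras}, Theorem 2.7.1(1)). So the overall shape of the argument is right. But the middle paragraph, which purports to sketch \emph{why} the Ran-family adaptedness holds, does not constitute a valid argument, and this is exactly the part carrying the weight.

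Two of the citations are misdirected. \lemref{l:skinny strata bis} is a statement about $\fL_{\ul y}(N)$ (the loop group at auxiliary points $\ul y$) acting on strata of the global Drinfeld compactification $(\BunNb)^{G\on{-level}_{n\cdot x},N\on{-level}_{\infty\cdot\ul y}}_{=\mu\cdot x,\on{def}=\lambda,\on{good\,at\,}\ul y}$; it does not say anything about $\fL_I(N)$ acting on the Ran-enlarged local space $\CY_I$. And \corref{c:coinv as inv} is an invariants-vs-coinvariants statement for a finite-dimensional unipotent group; it is not a cleanness statement for $!$-averaging against a nontrivial character, and invoking it here does not give what you need. More importantly, even if one could boil the problem down to a finite-dimensional unipotent group $N'$ acting with a nontrivial character, this does \emph{not} make $\Av^{N'}_!$ ``clean'' in any useful sense: the whole point of \thmref{t:Raskin} is that the relevant $!$-averaging over the group ind-scheme $\fL(N)$ does not reduce formally to the finite-dimensional case; one needs the geometry of the compactification $\ol{\fL(N)\fL^+(G)^j}$ inside $\fL(G)$ and the argument in \secref{s:proof of Rask} showing that the $*$- and $!$-extensions from the open cell agree. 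Nothing in your sketch touches this.

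The paper's actual route, which you should internalize, is more structured: it first shows (by the device used in \secref{sss:proof of enough}) that $\Whit(X^I\times\CY)$ is generated by objects of the form $\Av^{\fL_x(N),\chi_x}_!\circ\oblv_{I^j,\chi_x}$, and that these generating functors already commute with $!$-tensoring by $\Shv(X^I)$ (this uses the proof of \thmref{t:Raskin}, not just its statement). It then reduces the problem to checking that $\Av^{\fL_I(N),\chi_I}_!$ is defined, and $\Shv(X^I)$-linear, on the image of $(\on{unit}_I)_!\circ\oblv_{I^j,\chi_x}$. This is then precisely the content of \thmref{t:Raskin I}, a Ran-parameterized version of the cleanness theorem formulated for $\fL^+_I(G)^j$-equivariant objects of $\Shv(\CY_I^{\on{big}})$, and proved by the same method as \thmref{t:Raskin} in \secref{s:proof of Rask}. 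You should replace the middle paragraph of your argument with this reduction to $I^j$-equivariant generators and the explicit invocation of \thmref{t:Raskin I}; without it, the claim that $\Av^{\fL_I(N),\chi_I}_!$ is defined on the whole image of $(\on{unit}_I)_*$ is unsubstantiated. Finally, note that the $\Shv(X^I)$-linearity is not handled by a separate projection-formula argument in the paper: it is packaged into \thmref{t:Raskin I} together with the well-definedness, which is the cleaner way to organize the proof.
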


The proof of \thmref{t:averaging} will be given in \secref{ss:proof of average}. 

\ssec{Proof of \thmref{t:unit I}}  

In this subsection we will show how \thmref{t:averaging} implies \thmref{t:unit I}. 

\sssec{}

Consider the stratification of $X^I$ according to the pattern of collision of points (including the distinguished point $x$). 
The strata are enumerated by equivalence relations on $I$ (partitions of $I$ as a disjoint union of subsets). 
For each partition $\fP$, let $X^\fP$ denote the corresponding locally closed subset of $X^I$. Denote
$$\CY_\fP:=X^\fP\underset{X^I}\times \CY_{\Ran_x} .$$

Consider the corresponding categories
$$\Whit(\CY_\fP)\subset \Shv(\CY_\fP) \text{ and } \Whit(X^\fP\times \CY)\subset \Shv(X^\fP\times \CY).$$

The map $\on{unit}_I$ induces a map
$$\on{unit}_\fP:X^\fP\times \CY\to \CY_\fP.$$

We will prove:

\begin{prop} \label{p:unit beta}
The functor
$$\on{unit}_\fP^!:\Whit(\CY_\fP)\to \Whit(X^\fP\times \CY)$$
is an equivalence for every $\fP$. 
\end{prop}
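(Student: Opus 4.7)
The plan is to exploit the factorization structure of $\CY_\fP$ along the stratum $X^\fP$, reducing the claim to the local Whittaker analysis of \secref{s:loc Whit} in its level-free incarnation at non-distinguished points.

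First I would identify the geometry of $\CY_\fP$. On the stratum $X^\fP$, the points of the $I$-tuple collide into clusters according to $\fP$: one cluster sits at the distinguished point $x$, while the remaining clusters move in $X-\{x\}$ as $r$ distinct points $y_1,\dots,y_r$. The factorization structure of $\CY_\Ran$, combined with the regularity condition imposed in its definition (namely, that the Pl\"ucker maps \eqref{e:new Plucker} be regular on $S\times(X-\{x\})$), yields a relative decomposition
$$\CY_\fP \;\cong\; (X^\fP\times \CY)\underset{X^\fP}\times \prod_i \ol{S}{}^0_{y_i},$$
where $\ol{S}{}^0_{y_i}$ denotes the relative (over $X^\fP$) version of the closure of the unit $\fL_{y_i}(N)$-orbit in $\Gr_{G,y_i}$. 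Correspondingly, $\fL_I(N)|_{X^\fP}$ decomposes as $\fL_x(N)\times\prod_i \fL_{y_i}(N)$ (taken relatively), and the character $\chi_I$ splits into $\chi_x$ at $x$ and $\chi_{y_i}$ at each $y_i$. Under this decomposition the map $\on{unit}_\fP$ corresponds to the closed embedding of the tautological section $1\in \ol{S}{}^0_{y_i}$ at each non-$x$ cluster.

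The main step is to analyze, for a single non-distinguished point $y$, the Whittaker category $\Shv(\ol{S}{}^0_y)^{\fL_y(N),\chi_y}$. Since no level structure appears at $y$, we are in the $n=0$ case of \propref{p:generated}: among the $\fL_y(N)$-orbits comprising $\ol{S}{}^0$, the only one on which the Whittaker category can be non-zero is $S^0$ itself, corresponding to $\mu=0\in\Lambda^+_\BQ$. An application of \lemref{l:reduce to fd} identifies the corresponding category with $\Vect$ via $!$-pullback to the point $1\in S^0$, and the local analog of \lemref{l:good strata y}(a) (together with the cleanness established in \corref{c:good strata}(c)) shows that $!$-extension from this point to $\ol{S}{}^0$ yields an equivalence of Whittaker categories. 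Running this stratum-wise at each $y_i$, relatively over $X^\fP$, and combining with the factorization decomposition above, produces the desired equivalence $\on{unit}_\fP^!:\Whit(\CY_\fP)\to \Whit(X^\fP\times \CY)$.

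The left adjoint to $\on{unit}_\fP^!$, obtained by restriction of the $\Shv(X^I)$-linear left adjoint provided by \thmref{t:averaging}, offers a convenient framework: the unit and counit of the resulting adjunction are shown to be isomorphisms by reducing to the cleanness statement at each $y_i$, which holds because $\chi_{y_i}$ is non-degenerate and no level structure is imposed at $y_i$. The principal obstacle will be making the factorization decomposition of $\CY_\fP$ precise in the presence of the level structure at $x$ (which does not interact with the non-$x$ clusters) and verifying cleanness of the Whittaker averaging in families over $X^\fP$; these are standard factorization-theoretic manipulations, but require careful bookkeeping of the twist by $\omega^\rho$ and of the character sheaves.
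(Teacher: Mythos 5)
Your proposal matches the paper's proof in essence: the paper likewise factors $\CY_\fP$ (and $\fL_\fP(N)$) over $X^\fP\simeq (X-x)^k-\on{Diag}$ into an $x$-piece and a relative $\ol{S}{}^0_\Ran$-piece, then establishes the equivalence via two lemmas that exactly parallel your two steps --- restriction from $\ol{S}{}^0_\Ran$ to the open orbit $S^0_\Ran$ (because the Whittaker category on the complement vanishes, as in the $n=0$ case of \propref{p:generated}(a)), and then restriction to the unit section (since $S^0_\Ran\simeq \fL_\Ran(N)/\fL^+_\Ran(N)$ and $\chi$ is trivial on $\fL^+_\Ran(N)$). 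The closing remarks about the left adjoint from \thmref{t:averaging} are not needed for \propref{p:unit beta} itself (they belong to the deduction of \thmref{t:unit I} from it), but they do no harm.
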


\sssec{}

Let us deduce \thmref{t:unit I} from \propref{p:unit beta}, combined with \thmref{t:averaging}. First off, \propref{p:unit beta}
implies that the functor $\on{unit}_I^!$ is conservative. Hence, it remains to show that the unit of the adjunction
$$\CF\to \on{unit}_I^! \circ (\on{unit}_I^!)^L(\CF), \quad \CF\in \Whit(X^I\times \CY)$$
is an isomorphism. 
 
 \medskip

Let $\iota_\fP$ denote the locally closed embedding $X^\fP\to X^I$. It suffices to show that
each of the maps
$$(\iota_\fP)_*\circ \iota_\fP^!(\CF) \to (\iota_\fP)_*\circ \iota_\fP^!\circ 
\on{unit}_I^! \circ (\on{unit}_I^!)^L(\CF)$$
is an isomorphism.

\medskip

We have a commutative diagram
$$
\CD
(\iota_\fP)_*\circ \iota_\fP^!(\CF)  @>>>  (\iota_\fP)_*\circ (\iota_\fP)^!\circ  \on{unit}_I^! \circ (\on{unit}_I^!)^L(\CF) \\ 
@A{\on{Id}}AA    @AAA   \\
(\iota_\fP)_*\circ \iota_\fP^!(\CF)  @>>>   (\iota_\fP)_*\circ \on{unit}_\fP^! \circ (\on{unit}_\fP^!)^L\circ (\iota_\fP)^!(\CF). 
\endCD
$$

The bottom horizontal arrow in this diagram is an isomorphism by \propref{p:unit beta}. Hence, it remains to show that the
right vertical arrow is an isomorphism. 

\medskip

We have
$$\iota_\fP^!\circ  \on{unit}_I^! \simeq \on{unit}_\fP^! \circ \iota_\fP^!
\text{ and }
(\iota_\fP)_*\circ  \on{unit}_\fP^! \simeq \on{unit}_I^! \circ (\iota_\fP)_*.$$

So it suffices to show that the map 
$$(\on{unit}_\fP^!)^L\circ (\iota_\fP)^!(\CF)\to (\iota_\fP)^! \circ (\on{unit}_I^!)^L(\CF)$$
is an isomorphism in $\Whit(\CY_\fP)$.

\medskip

Since $\iota_\fP$ is a locally closed embedding, the functor $(\iota_\fP)_*$ is fully faithful. Hence, 
for any $\CF'\in \Whit(\CY_\fP)$ we have
\begin{multline*}
\CHom((\on{unit}_\fP^!)^L\circ (\iota_\fP)^!(\CF),\CF')\simeq \\
\simeq \CHom((\iota_\fP)^!(\CF),\on{unit}_\fP^!(\CF'))
\simeq  \CHom((\iota_\fP)_*\circ (\iota_\fP)^!(\CF),(\iota_\fP)_*\circ  \on{unit}_\fP^!(\CF'))\simeq \\
\simeq\CHom((\iota_\fP)_*\circ (\iota_\fP)^!(\CF),\on{unit}_I^!\circ (\iota_\fP)_*(\CF'))\simeq
\CHom((\iota_\fP)_*(\omega_{X^\fP})\sotimes \CF,\on{unit}_I^!\circ (\iota_\fP)_*(\CF'))\simeq \\
\simeq \CHom((\on{unit}_I^!)^L((\iota_\fP)_*(\omega_{X^\fP})\sotimes \CF),(\iota_\fP)_*(\CF'))\simeq
\CHom((\iota_\fP)_*(\omega_{X^\fP})\sotimes (\on{unit}_I^!)^L(\CF),(\iota_\fP)_*(\CF'))\simeq \\
\simeq \CHom((\iota_\fP)_*\circ (\iota_\fP)^!\circ (\on{unit}_I^!)^L(\CF),(\iota_\fP)_*(\CF'))
\simeq \CHom((\iota_\fP)^!\circ (\on{unit}_I^!)^L(\CF),\CF'),
\end{multline*}
 as desired, where the only non-trivial isomorphism is that on the fourth line, and it takes place due to
 the fact the functor $(\on{unit}_I^!)^L$ commutes with !-tensor products with objects of $\Shv(X^I)$
 (by \thmref{t:averaging}).
 
\qed[\thmref{t:unit I}]

\sssec{}

The rest of this subsection is devoted to the proof of \propref{p:unit beta}. Let $k$ be the number of elements
in the partition $\fP$, not counting the element containing the distinguished point. Then $X^\fP\simeq (X-x)^k-\on{Diag}$,
where $\on{Diag}\subset (X-x)^k$ is the diagonal advisor. 

\medskip

We have
$$\CY_\fP\simeq \left(((X-x)^k-\on{Diag})\underset{\Ran(X)}\times \ol{S}{}^0_\Ran\right)\times \CY$$

\medskip

Note also that we have a canonical isomorphism 
$$\fL_\fP(N):=X^\fP\underset{X^I}\times \fL_I(N)\simeq 
\left(((X-x)^k-\on{Diag})\underset{\Ran(X)}\times \fL_\Ran(N)\right)\times \fL_x(N).$$

Consider the the open subset 
$$\left(((X-x)^k-\on{Diag})\underset{\Ran(X)}\times S^0_\Ran\right)\times \CY\subset 
\left(((X-x)^k-\on{Diag})\underset{\Ran(X)}\times \ol{S}{}^0_\Ran\right)\times \CY.$$

The assertion of \propref{p:unit beta} follows from the combination of the next two lemmas:

\begin{lem} \label{l:restr to open}
Restriction defines an equialence
\begin{multline*}
\Shv\left(\left(((X-x)^k-\on{Diag})\underset{\Ran(X)}\times \ol{S}{}^0_\Ran\right)\times \CY\right)^{\fL_\fP(N),\chi_\Ran}\to \\
\to \Shv\left(\left(((X-x)^k-\on{Diag})\underset{\Ran(X)}\times S^0_\Ran\right)\times \CY\right)^{\fL_\fP(N),\chi_\Ran}.
\end{multline*}
\end{lem}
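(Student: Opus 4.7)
The plan is to reduce the lemma to the vanishing of the Whittaker category on the closed complement. Denote by
\[
Z := \left(((X-x)^k-\on{Diag})\underset{\Ran(X)}\times (\ol{S}{}^0_\Ran - S^0_\Ran)\right)\times \CY,
\]
and let $j$ (resp.\ $i$) denote the open (resp.\ closed complementary) embedding implicit in the statement. Once I establish $\Shv(Z)^{\fL_\fP(N),\chi_\Ran}=0$, the standard recollement argument gives the equivalence: any $\CF$ in the source of the restriction functor has $i^!\CF=0$ (it lies in a vanishing Whittaker subcategory), hence is the $!$-extension of $j^!\CF$, and $j^!$ becomes an equivalence with its $!$-extension as inverse.

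To prove the vanishing, I would first stratify $\ol{S}{}^0_\Ran - S^0_\Ran$ according to the defect pattern of the P\"ucker maps. For a geometric point, the meromorphic maps $(\omega^{1/2})^{\langle \clambda,2\rho\rangle}\to \CV^{\clambda}_{\CP_G}$ defining $\ol{S}{}^0_\Ran$ are regular on all of $S\times X$, but need not be bundle maps; the locus $S^0_\Ran$ is exactly where they are bundle maps. The failure is measured by an effective divisor on $X$ valued in $\Lambda^{\on{pos}}$ (recording the orders of zero of the maps associated with the simple fundamental weights). This gives a stratification indexed by $\lambda\in\Lambda^{\on{pos}}$ together with an effective $\lambda$-divisor on $X-x$, and $Z$ corresponds to the non-zero strata.

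The key step is the Whittaker-cleanness argument, mirroring the proof of \propref{p:generated}(a) and \lemref{l:good strata y}: on a stratum whose defect divisor has a non-zero component $n_i\cdot [y]$ at a simple coroot $\alpha_i$ and a point $y\in X-x$ (necessarily disjoint from $x$ and from the $k$-tuple in $(X-x)^k-\on{Diag}$ except possibly at allowed coincidences), the one-parameter subgroup of $\fL_\fP(N)$ obtained by inserting the principal part $t_y^{-1}$ into the $\alpha_i$-th root subgroup of $N^{\omega^\rho}$ near $y$ acts trivially along the stratum (because the P\"ucker map it would perturb vanishes to order $\geq n_i$ at $y$), while the character $\chi_\Ran$ restricts to a non-trivial additive character on it (its residue at $y$ is non-zero). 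Exactly as in \propref{p:generated}(a), a trivial $\BG_a$-action combined with non-trivial $\chi|_{\BG_a}$ forces $\Shv(-)^{\BG_a,\chi}=0$, so the Whittaker category on each non-open stratum vanishes. A \lemref{l:Whit via strata y}-style glueing then gives $\Shv(Z)^{\fL_\fP(N),\chi_\Ran}=0$.

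The main obstacle is constructing the required root subgroup as an honest closed subgroup of $\fL_\fP(N)$ varying algebraically over the stratum --- both in the position $y$ of the defect and in the $(X-x)^k-\on{Diag}$ parameter. This is a routine factorization argument in the spirit of \secref{sss:rel loop groups}: the factorization structure of $\fL_\Ran(N)$ isolates a direct summand $\fL_y(N)$ relative to the base, inside which the $\alpha_i$-th root subgroup at the given order is a well-defined sub-group-scheme; the residue pairing defining $\chi_\Ran$ is a morphism of relative group ind-schemes, so both the triviality of the action on the stratum and the non-triviality of the character hold in families. This reduces the global verification to the fiberwise statement, which is exactly the argument in \propref{p:generated}(a).
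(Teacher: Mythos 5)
Your approach matches the paper's exactly: the paper's one-line proof reduces to the vanishing of the Whittaker category on $\left(((X-x)^k-\on{Diag})\underset{\Ran(X)}\times (\ol{S}{}^0_\Ran-S^0_\Ran)\right)\times \CY$ and deduces it by the trivial-action/non-trivial-character mechanism of \propref{p:generated}(a), which is precisely what you spell out. One small slip worth flagging: from $i^!\CF=0$ one gets that $\CF$ is the $*$-extension, not the $!$-extension, of $j^!\CF$ (the triangle $i_*i^!\to \on{Id}\to j_*j^!$ is the relevant one); since $i^*$ also preserves the Whittaker condition, cleanness does hold and both extensions agree, but the direct route is via $j_*$.
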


\begin{lem}  \label{l:open to unit}
Restriction along $\on{unit}_\fP$ defines an equivalence
\begin{multline*}
\Whit(\CY_\fP)=
\Shv\left(\left(((X-x)^k-\on{Diag})\underset{\Ran(X)}\times S^0_\Ran\right)\times \CY\right)^{\fL_\fP(N),\chi_\Ran}\simeq \\
\simeq \Shv\left(((X-x)^k-\on{Diag})\times \CY\right)^{\fL_x(N),\chi_x}= \Whit(X^\fP\times \CY).
\end{multline*}
\end{lem}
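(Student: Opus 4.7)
The plan is to realize $((X-x)^k-\on{Diag})\underset{\Ran(X)}{\times} S^0_\Ran$ as a relative homogeneous space for a group indscheme over $X^\fP$, and to reduce to the standard fact that if a group $G$ acts transitively on $G/K$ with $K$ pro-unipotent and a character $\chi:G\to \BG_a$ trivial on $K$, then $!$-restriction to the basepoint induces an equivalence $\Shv(G/K)^{G,\chi}\simeq \Vect$.

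First, I would use the factorization property of $S^0_\Ran$ to identify
$$((X-x)^k-\on{Diag})\underset{\Ran(X)}{\times} S^0_\Ran \;\simeq\; \fL^{\on{off}}_\fP(N)/\fL^{+,\on{off}}_\fP(N)$$
as a quotient over $X^\fP$, where $\fL^{\on{off}}_\fP(N)$ (resp.\ $\fL^{+,\on{off}}_\fP(N)$) denotes the ``off-$x$ part'' of $\fL_\fP(N)$ (resp.\ $\fL^+_\fP(N)$), in the sense of \secref{sss:rel loop groups}; their fibers over a point $(y_1,\dots,y_k)\in X^\fP$ are $\prod_i \fL_{y_i}(N)$ and $\prod_i \fL^+_{y_i}(N)$ respectively, and the quotient fiber is $\prod_i S^0_{y_i}$. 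Under this identification, the map $\on{unit}_\fP$ becomes $\on{id}_\CY\times u$, where $u:X^\fP\to \fL^{\on{off}}_\fP(N)/\fL^{+,\on{off}}_\fP(N)$ is the unit section. Moreover $\fL_\fP(N)\simeq \fL^{\on{off}}_\fP(N)\times_{X^\fP}(X^\fP\times \fL_x(N))$, and $\chi_\Ran$ decomposes accordingly as $\chi^{\on{off}}+\chi_x$, with $\chi^{\on{off}}$ trivial on $\fL^{+,\on{off}}_\fP(N)$ (since each $\chi_{y_i}$ is trivial on $\fL^+_{y_i}(N)$).

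Since the $(\fL_x(N),\chi_x)$-action is entirely along $\CY$ and independent of the $\fL^{\on{off}}_\fP(N)$-factor, it passes through the restriction functor. Thus the lemma reduces to showing that $!$-pullback along $u$ induces an equivalence
$$\Shv\bigl(\fL^{\on{off}}_\fP(N)/\fL^{+,\on{off}}_\fP(N)\bigr)^{\fL^{\on{off}}_\fP(N),\chi^{\on{off}}}\;\simeq\; \Shv(X^\fP),$$
relative over $X^\fP$. I would prove this by descent along the $\fL^{+,\on{off}}_\fP(N)$-bundle $\fL^{\on{off}}_\fP(N)\to \fL^{\on{off}}_\fP(N)/\fL^{+,\on{off}}_\fP(N)$: the left-hand side becomes the category of sheaves on $\fL^{\on{off}}_\fP(N)$ that are $(\fL^{\on{off}}_\fP(N),\chi^{\on{off}})$-equivariant for left translation \emph{and} $\fL^{+,\on{off}}_\fP(N)$-equivariant for right translation. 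Since the left action of $\fL^{\on{off}}_\fP(N)$ on itself is simply transitive relative to $X^\fP$, $!$-restriction to the unit section identifies the left-equivariant category with $\Shv(X^\fP)$; under this identification the right $\fL^{+,\on{off}}_\fP(N)$-translation action becomes trivial, precisely because $\chi^{\on{off}}$ is trivial on $\fL^{+,\on{off}}_\fP(N)$, and equivariance with respect to a pro-unipotent group acting trivially is automatic.

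The main obstacle is carrying out this descent/spreading argument in the setting of group indschemes of infinite type over the parameter space $X^\fP$, where the categorical equivariance is defined via a filtered inverse limit as in \secref{ss:inv}. This is handled by presenting $\fL^{\on{off}}_\fP(N)$ as a filtered colimit of group subschemes (as in \secref{sss:N alpha}) stable under $\fL^{+,\on{off}}_\fP(N)$, and $\fL^{+,\on{off}}_\fP(N)$ in turn as a filtered limit of unipotent finite-type quotients. At each finite stage the argument reduces to the classical statement for the action of a (relative) affine algebraic group on its quotient by a unipotent subgroup on which the character vanishes; compatibility between the stages follows from the factorization structure of the semi-infinite orbits, which makes the presentations compatible with the relative quotient $\fL^{\on{off}}_\fP(N)/\fL^{+,\on{off}}_\fP(N)$.
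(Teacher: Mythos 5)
Your proposal is correct and follows essentially the same route as the paper: the paper's one-line proof invokes the identification of $\fL_\fP(N)'$ with $\bigl(((X-x)^k-\on{Diag})\underset{\Ran(X)}{\times} \fL^+_\Ran(N)\bigr)\times\fL_x(N)$, which is exactly your stabilizer $\fL^{+,\on{off}}_\fP(N)\times_{X^\fP}\fL_x(N)$ of the base point of the relative homogeneous space $\fL^{\on{off}}_\fP(N)/\fL^{+,\on{off}}_\fP(N)$, combined with the pro-unipotence of $\fL^{+,\on{off}}_\fP(N)$ and the triviality of $\chi^{\on{off}}$ on it. You have merely unwound the standard stabilizer-reduction that the paper leaves implicit.
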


\begin{proof}[Proof of \lemref{l:restr to open}]

We claim that the category
$$\Shv\left(\left(((X-x)^k-\on{Diag})\underset{\Ran(X)}\times (\ol{S}{}^0_\Ran-S^0_\Ran)\right)\times \CY\right)^{\fL_\fP(N),\chi_\Ran}$$
is zero. This follows in the same way as \propref{p:generated}(a).

\end{proof}

\begin{proof}[Proof of \lemref{l:open to unit}]

Follows from the fact that $\fL_\fP(N)'\subset \fL_\fP(N)$ identifies with
$$\left(((X-x)^k-\on{Diag})\underset{\Ran(X)}\times \fL^+_\Ran(N))\right)\times \fL_x(N)\subset
\left(((X-x)^k-\on{Diag})\underset{\Ran(X)}\times \fL_\Ran(N))\right)\times \fL_x(N).$$

\end{proof}

\ssec{Proof of \thmref{t:averaging}}  \label{ss:proof of average}

We are going to show that \thmref{t:averaging} follows from a Ran version of \thmref{t:Raskin}. 

\sssec{}

The desired left adjoint is given as the composition of 
\begin{equation} \label{e:stage one}
\Whit(X^I\times \CY)\hookrightarrow \Shv(X^I\times \CY)\overset{(\on{unit}_I)_!}\hookrightarrow \Shv(\CY_I),
\end{equation}
and the partially defined functor $\Av^{\fL_I(N),\chi_I}_!$.

\medskip

We need to show that $\Av^{\fL_I(N),\chi_I}_!$ is defined on the essential image of \eqref{e:stage one},
and commutes with !-tensoring by pullback of objects of $\Shv(X^I)$.

\begin{rem}
Note that \thmref{t:averaging} formally follows from \thmref{t:unit I}. 

\medskip

Let us also note that we can use an appropriately
defined functor $\Av^{\fL_I(N),\chi_I}_{*,\on{ren}}$ to construct a left inverse of the functor $\on{unit}_I^!$; the functor
$\Av^{\fL_I(N),\chi_I}_{*,\on{ren}}$ commutes with !-tensoring by objects of $\Shv(X^I)$ by construction. 
What is not a priori
clear is that $\Av^{\fL_I(N),\chi_I}_{*,\on{ren}}\circ (\on{unit}_I)_!$ is the left adjoint adjoint of $\on{unit}_I^!$. However, once we know 
\thmref{t:unit I}, we will obtain an isomorphism
$$\Av^{\fL_I(N),\chi_I}_{*,\on{ren}}\circ (\on{unit}_I)_!\simeq \Av^{\fL_I(N),\chi_I}_!\circ (\on{unit}_I)_!.$$

\end{rem}

\sssec{}

For $j\in \BZ^{\geq 0}$, let $I^j\subset \fL_x(G)$ be the subgroup defined in \secref{sss:adolesc}. As in \secref{sss:proof of enough},
the !-averaging functor 
$$\Av^{\fL_x(N),\chi_x}_!:\Shv(X^I\times \CY)\to \Whit(X^I\times \CY)$$
is defined on the essential image of 
$$\oblv_{I^j,\chi_x}:\Shv(X^I\times \CY)^{I^j,\chi_x}\to \Shv(X^I\times \CY),$$
and the essential images of the functors $\Av^{\fL_x(N),\chi_x}_!\circ \oblv_{I^j,\chi_x}$ generate
$\Whit(X^I\times \CY)$.  

\medskip

Moreover, the proof of \thmref{t:Raskin} shows that the functor $\Av^{\fL_x(N),\chi_x}_!\circ \oblv_{I^j,\chi_x}$ 
commutes with !-tensoring by pullback of objects of $\Shv(X^I)$. Hence, it suffices to check that for every $j$, the functor
$\Av^{\fL_I(N),\chi_I}_!$ is defined on the essential image of
\begin{equation} \label{e:forget j I}
(\on{unit}_I)_!\circ \oblv_{I^j,\chi_x}: \Shv(X^I\times \CY)^{I^j,\chi_x}\to \Shv(\CY_I),
\end{equation}
and commutes with !-tensoring by pullback of objects of $\Shv(X^I)$. 

\sssec{}

Let $G^{\omega^\rho}$ be the group-scheme over $X$ corresponding to automorphisms of the $G$-bundle $\CP^{\omega^\rho}_G$. 
I.e., $G^{\omega^\rho}$ is the twist of the constant group-scheme with fiber $G$ by the $G$-torsor $\CP^{\omega^\rho}_G$ using 
the adjoint action.

\medskip

Let $\fL^+_I(G)$ (resp., $\fL^+_I(G)'\subset \fL_I(G)$) be the group-scheme (resp., group ind-scheme) over $X^I$,
defined in the same way as in \secref{sss:rel loop groups} with $N^{\omega^\rho}$ replaced by $G^{\omega^\rho}$.

\medskip

We have the projection $\fL^+_I(G)'\to \fL_x(G)$, and let
$\fL^+_I(G)^j$ be the group subscheme of $\fL^+_I(G)'$ equal to the preimage of $I^j\subset \fL_x(G)$.

\begin{rem}
For a $k$-point $x_I$ given by a collection of $I$ distinct points $y_i$ of $X$, and the distinguished point $x$, 
the fiber of $\fL^+_I(G)^j$ over such $x_I$ is given by
$$\underset{i}\Pi\, \fL^+_{y_i}(G)\times I^j.$$
\end{rem}

\sssec{}

Let $\CY_I^{\on{big}}$ be the following ind-scheme over $X^I$: it classifies quadruples $(x_I,\CP_G,\gamma,\epsilon)$, where: 

\begin{itemize}

\item $x_I$ is a point of $X^I$: 

\item $\CP_G$ is a $G$-bundle on $X$; 

\item $\gamma$ is an identification of $\CP_G$ with $\CP_G^{\omega^\rho}$ over the complement of $x_I$; 

\item $\epsilon$ is a structure of level $n$ on $\CP_G$ at $x$.

\end{itemize} 

In other words, the difference between $\CY_I^{\on{big}}$ and $\CY_I$ is that we no longer require that the maps
\eqref{e:new Plucker} be regular away from $x_I$. The ind-scheme $\CY_I^{\on{big}}$ is acted on by $\fL_I(G)$.

\medskip

We have a closed embedding
$$\CY_I\hookrightarrow \CY_I^{\on{big}}.$$ 

The action of $\fL^+_I(G)'$ preserves the image of the composition
$$X^I\times \CY \overset{\on{unit}_I}\hookrightarrow \CY_I \to \CY_I^{\on{big}},$$ 
and the resulting action of $\fL^+_I(G)'$ on $X^I\times \CY$ factors through the projection $\fL^+_I(G)'\to \fL_x(G)$,
and the $\fL_x(G)$-action on $X^I\times \CY$ via the $\CY$-factor. 

\medskip

Hence, the essential image of the functor \eqref{e:forget j I}, composed with $\Shv(\CY_I)\hookrightarrow \Shv(\CY_I^{\on{big}})$, 
factors as 
$$\Shv(X^I\times \CY)^{I^j,\chi_x}\to \Shv(\CY^{\on{big}}_I)^{\fL^+_I(G)^j,\chi_x} \to \Shv(\CY_I^{\on{big}}),$$
where the second arrow is the forgetful functor.

\sssec{}

We define the full subcategory
$$\Whit(\CY_I^{\on{big}}) \subset \Shv(\CY_I^{\on{big}})$$
by the same procedure as for $\CY_I$.

\medskip

We now have the following extension of \thmref{t:Raskin}:

\begin{thm}  \label{t:Raskin I}
The partially defined functor 
$$\Av^{\fL_I(N),\chi_I}_!:\Shv(\CY_I^{\on{big}})\to \Whit(\CY_I^{\on{big}}),$$
left adjoint to the forgetful functor $\Whit(\CY_I^{\on{big}})\to \Shv(\CY_I^{\on{big}})$, 
is defined on the essential image of the forgetful functor 
$$\Shv(\CY^{\on{big}}_I)^{\fL^+_I(G)^j,\chi_x} \to \Shv(\CY_I^{\on{big}}),$$
and commutes with !-tensoring by pullback of objects of $\Shv(X^I)$. 
\end{thm}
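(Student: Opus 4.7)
The plan is to reduce \thmref{t:Raskin I} to a Ran-space family version of \thmref{t:Raskin} by following Raskin's argument (Sect.~2.11 of \cite{Ras}) relative to the base $X^I$. The original theorem \thmref{t:Raskin} works because (i) the character $\chi_x$ extends canonically from $I^j\cap \fL_x(N)$ to all of $I^j$, and (ii) this extension allows one to compute the partially defined functor $\Av^{\fL_x(N),\chi_x}_!$ on $I^j$-equivariant objects explicitly, as a stabilizing colimit of $(N^\alpha,\chi_x)$-averagings over a filtered family of finite-codimensional subgroups $N^\alpha\subset \fL_x(N)$. My proposal is to carry out exactly the same construction fiberwise over $X^I$.

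First, I would define the family analog of the character extension: the character $\chi_I$ on $\fL_I(N)$ extends from $\fL^+_I(G)^j\cap \fL_I(N)$ to all of $\fL^+_I(G)^j$, by the exact same construction used to define $\chi_I$ in the first place (compatibility of residues across the base $X^I$). This relies on the fact that the subgroup $\fL^+_I(G)^j$ is set up so that its fiber over the distinguished point $x$ is precisely $I^j$, while over the other points of $x_I$ it is the full $\fL^+(G)$, on which $\chi$ is trivial. Second, I would present the group ind-scheme $\fL_I(N)$ over $X^I$ as a filtered colimit $\underset{\alpha}{\on{colim}}\, \fL_I(N)^\alpha$ of $X^I$-flat group-subschemes containing the arc group $\fL^+_I(N)$, in the same factorizable manner as in \secref{sss:rel loop groups}. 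The sought-after left adjoint on the essential image of $\oblv_{\fL^+_I(G)^j,\chi_x}$ is then given by the (appropriately shifted) colimit
\[
\Av^{\fL_I(N),\chi_I}_!\circ \oblv_{\fL^+_I(G)^j,\chi_x} \;\simeq\; \underset{\alpha}{\on{colim}}\; \Av^{\fL_I(N)^\alpha,\chi_I}_![2d_\alpha]\circ \oblv_{\fL^+_I(G)^j,\chi_x},
\]
where $d_\alpha$ is the relative dimension of $\fL_I(N)^\alpha$ over a chosen lattice. The key point, exactly as in the one-point case, is that the transition maps in this colimit become isomorphisms once $\fL_I(N)^\alpha$ is large enough relative to $\fL^+_I(G)^j$: the extra variables that appear are "eaten" by the extension of $\chi_I$ to $\fL^+_I(G)^j$, so each transition map is a !-averaging over a subquotient of $\fL_I(N)^{\alpha''}/\fL_I(N)^{\alpha'}$ that is (after conjugation by $\fL^+_I(G)^j$) contained in the kernel of $\chi_I$ and hence acts trivially up to a shift.

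Compatibility with !-tensoring by pullbacks from $\Shv(X^I)$ is then automatic: every ingredient above—the subgroups $\fL_I(N)^\alpha$ and $\fL^+_I(G)^j$, the character $\chi_I$, the relative dimensions $d_\alpha$, and the action on $\CY_I^{\on{big}}$—is defined in the category of $X^I$-schemes, and the !-averaging functor $\Av^{\fL_I(N)^\alpha,\chi_I}_!$ with respect to an $X^I$-flat group-subscheme of finite relative type manifestly commutes with !-tensoring by objects pulled back from $X^I$. Passing to the colimit over $\alpha$ preserves this commutation.

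The main obstacle will be verifying the stabilization property of the colimit relative to $X^I$, i.e., showing that for each $j$ and each $\alpha'$, there exists $\alpha''\geq \alpha'$ such that the transition map is an isomorphism \emph{uniformly} over $X^I$. In the one-point case (proof of \thmref{t:Raskin}), this is reduced to the analysis of the residual action of $\fL^+_I(G)^j\cap \fL_I(N)$ and the factorization $I^j = (I^j\cap \fL^+(B^-))\cdot(I^j\cap \fL(N))$ used in \secref{sss:adolesc}. The Ran-family version requires the analogous factorization of $\fL^+_I(G)^j$ as a product of an $X^I$-flat "opposite" part and the positive part of $\fL_I(N)$, plus a Riemann--Roch-type estimate (in the spirit of \lemref{l:skinny strata bis}) ensuring that the bound $\alpha''(\alpha')$ can be chosen uniformly in $x_I$. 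Granting these two geometric inputs, which are straightforward factorizable variants of the objects already constructed in the main text, the argument of Raskin applies verbatim and yields the theorem.
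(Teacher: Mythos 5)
Your proposal misremembers how \thmref{t:Raskin} is actually proved, and this propagates into a genuine gap in your adaptation. You describe the one-point proof as computing $\Av^{\fL_x(N),\chi_x}_!$ on $I^j$-equivariant objects as ``a stabilizing colimit of $(N^\alpha,\chi_x)$-averagings,'' and your whole Ran-family argument is organized around transporting that stabilization to the base $X^I$. But that is not the argument of \secref{s:proof of Rask} (which the paper says to adapt). The actual proof of \thmref{t:Raskin} goes by \emph{compactifying the action map}: one embeds $\fL(N)\fL^+(G)^j\overset{\fL^+(G)^j}\times\CY$ as an open subscheme $j$ of $\ol{\fL(N)\fL^+(G)^j}\overset{\fL^+(G)^j}\times\CY$, and observes that the compactified map $\ol{\on{act}}$ is ind-proper (since $\fL(G)/\fL^+(G)^j$ is an affine Grassmannian). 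Ind-properness makes $\ol{\on{act}}_*$ a continuous left adjoint to $\ol{\on{act}}^!$, so the whole question is reduced to a \emph{cleanness} statement for the sheaf $\chi^!(\on{A\text{-}Sch})\tboxtimes\CF$ along $j$ (\propref{p:clean}). Cleanness is in turn proved by exploiting the additional $K_j^j$-equivariance of the pulled-back object (\propref{p:K-equiv}) together with the stabilizer analysis of \lemref{l:character non-trivial}, which shows that the boundary strata carry no $(\fL(N),\chi)$-equivariant objects. None of this machinery appears in your proposal.

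The stabilization claim you rely on is exactly the unjustified step. You assert that once $\fL_I(N)^\alpha$ is large enough relative to $\fL^+_I(G)^j$, the transition maps in the colimit
$$\underset{\alpha}{\on{colim}}\,\Av^{\fL_I(N)^\alpha,\chi_I}_![2d_\alpha]\circ\oblv_{\fL^+_I(G)^j,\chi_x}$$
are isomorphisms because the extra coordinates are ``eaten by the extension of $\chi_I$.'' That argument does not hold as written: the extra piece $\fL_I(N)^{\alpha''}/\fL_I(N)^{\alpha'}$ is a quotient of a subgroup of $\fL_I(N)$, $\fL^+_I(G)^j$ is not a subgroup of $\fL_I(N)$, and ``conjugation by $\fL^+_I(G)^j$'' does not move that quotient into the kernel of $\chi_I$ in any way that would trivialize the averaging. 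Moreover, a shift-adjusted colimit of $\Av_*$'s is precisely the definition of $\Av_{*,\on{ren}}$ (\secref{sss:ren inv}); claiming $\Av_!\circ\oblv_{\fL^+_I(G)^j,\chi_x}$ is computed by such a colimit is essentially asserting a special case of the pseudo-identity equivalence $\Av_!\simeq\Av_{*,\on{ren}}$ on the relevant generators. That is a hard theorem (\thmref{t:inv vs coinv}, which in this paper is \emph{deduced} from the adaptedness results, not assumed), so taking stabilization for granted here is circular in spirit. The correct route is: form the Ran/relative version $\ol{\fL_I(N)\fL^+_I(G)^j}\overset{\fL^+_I(G)^j}\times\CY_I^{\on{big}}$ inside $\fL_I(G)\overset{\fL^+_I(G)^j}\times\CY_I^{\on{big}}$, use ind-properness of the Beilinson--Drinfeld Grassmannian $\fL_I(G)/\fL^+_I(G)^j$ over $X^I$, establish the relative version of $K$-equivariance (\propref{p:K-equiv}) and of the stabilizer non-triviality (\lemref{l:character non-trivial}) fiberwise over $X^I$, and then cleanness plus the compatibility with $\Shv(X^I)$-linearity (which \emph{is} automatic from the construction, as you correctly observe) give the theorem. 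Your $\Shv(X^I)$-linearity remark and the character-extension observation are fine; what's missing is the actual engine of the proof.
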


In particular, \thmref{t:Raskin I} implies that $\Av^{\fL_I(N),\chi_I}_!$ is defined on the essential image of
$(\on{unit}_I)_!\circ \oblv_{I^j,\chi_x}$ and commutes with !-tensoring by pullback of objects of $\Shv(X^I)$. 

\medskip

A proof of \thmref{t:Raskin I} is a straightforward adaptation of the proof of \thmref{t:Raskin}, given in \secref{s:proof of Rask}.

\section{Generalizations}  \label{s:gen}

\ssec{Full level structure}  \label{ss:full level}

In this subsection we will show that, when considering the Whittaker category, 
one can replace $\CY:=\fL(G)/K$ by the loop group $\fL(G)$ itself. 

\sssec{}

Consider the ind-scheme $\fL(G)$, as acted on by itself on the left. Consider
the category $\Shv(\fL(G))$, see \secref{sss:sheaves on loop group}.

\medskip

Note that for any group subscheme $N'\subset \fL(N)$, the functor
$$\Av_*^{N',\chi}:\Shv(\fL(G))\to \Shv(\fL(G))$$
makes sense. 

\medskip

Hence, we can define 
$$\Whit(\fL(G))\subset \Shv(\fL(G))$$
as full subcategory consisting of objects $\CF$, for which the map
$$\Av_*^{N',\chi}(\CF)\to \CF$$
is an isomorphism for any $N'$. 

\sssec{}

Here is a more explicit description of the subcategory. By \secref{sss:sheaves on ind-pro}, 
\begin{equation} \label{e:shv on loop}
\Shv(\fL(G))\simeq \underset{n}{\on{lim}}\, \Shv(\fL(G)/K_n),
\end{equation}
where the transition functors 
\begin{equation} \label{e:* pushforward}
\Shv(\fL(G)/K_{n''})\to \Shv(\fL(G)/K_{n'}), \quad n''\geq n'
\end{equation}
are given by the operation of *-direct image. 

\medskip

In terms of this identification, we have
\begin{equation} \label{e:* Whit as limit}
\Whit(\fL(G))=\underset{n}{\on{lim}}\, \Whit(\fL(G)/K_n)\subset \underset{n}{\on{lim}}\, \Shv(\fL(G)/K_n).
\end{equation} 

\sssec{}

Note that for a DG category $\bC$, the full subcategory
$$\on{Funct}_{\on{cont}}(\Shv(\fL(G)),\bC)^{\fL(N),\chi}\subset \on{Funct}_{\on{cont}}(\Shv(\fL(G)),\bC)$$
makes sense. 

\medskip

Hence, we can also define 
$$\Whit(\fL(G))_{\on{co}}:=\Shv(\fL(G))_{\fL(N),\chi}.$$

\sssec{}

Here is a more explicit description of $\Whit(\fL(G))_{\on{co}}$. Recall that according to \secref{sss:limits and colimits},
in addition to the realization of $\Shv(\fL(G))$ given by \eqref{e:shv on loop}, we also have an identification
\begin{equation} \label{e:shv on loop colim}
\Shv(\fL(G))\simeq \underset{n}{\on{colim}}\, \Shv(\fL(G)/K_n),
\end{equation}
where the transition functors 
\begin{equation} \label{e:* pullback}
\Shv(\fL(G)/K_{n'})\to \Shv(\fL(G)/K_{n''}), \quad n''\geq n'
\end{equation} 
are given by the operation of *-pullback.

\medskip

It follows that we have a canonical equivalence:
\begin{equation} \label{e:Whit as colim}
\Whit(\fL(G))_{\on{co}}\simeq \underset{n}{\on{colim}}\, \Whit(\fL(G)/K_n)_{\on{co}},
\end{equation}
where the transition functors are induced by \eqref{e:* pullback}.

\sssec{}

Note now that the functor $\on{Ps-Id}$ (see \secref{ss:pseudo-id}) makes sense also as a functor
$$\Whit(\fL(G))_{\on{co}}\to \Whit(\fL(G)).$$

We claim:

\begin{thm}  \label{t:inv vs coinv inf}
The functor $\on{Ps-Id}:\Whit(\fL(G))_{\on{co}}\to \Whit(\fL(G))$ is an equivalence.
\end{thm}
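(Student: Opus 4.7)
The plan is to deduce \thmref{t:inv vs coinv inf} from \corref{c:inv vs coinv} by passing to the limit over the congruence subgroups $K_n$.

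First, I would arrange both sides of $\on{Ps-Id}$ as compatible limits over $n$ taken with respect to the $*$-pushforward transitions. For $\Whit(\fL(G))$ this is \eqref{e:* Whit as limit}. For $\Whit(\fL(G))_{\on{co}}$, the colimit presentation \eqref{e:Whit as colim} (via $*$-pullback) can be rewritten via the adjunction principle of \secref{sss:limits and colimits}: for each $n''\geq n'$ the projection $p_{n',n''}:\fL(G)/K_{n''}\to \fL(G)/K_{n'}$ is a principal $K_{n'}/K_{n''}$-bundle, so $(p_{n',n''})^*$ has the continuous right adjoint $(p_{n',n''})_*$. Moreover, $(p_{n',n''})_*$ descends to the coinvariant category, since by base change (and the left/right separation of the relevant actions) it commutes with each $\Av^{N^\alpha,\chi}_*$ and hence preserves the subcategory by which one quotients to form $\Shv(-)_{\fL(N),\chi}$. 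Thus
$$\Whit(\fL(G))_{\on{co}}\simeq \underset{n}{\on{lim}}\,\Whit(\fL(G)/K_n)_{\on{co}}$$
with transition functors $(p_{n',n''})_*$.

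Next, I would verify that the individual equivalences
$$\on{Ps-Id}_n:\Whit(\fL(G)/K_n)_{\on{co}}\to \Whit(\fL(G)/K_n)$$
of \corref{c:inv vs coinv}, all built relative to the common lattice $N_0=\fL^+(N)$, assemble into a morphism of the two limit systems above. The required compatibility
$$(p_{n',n''})_*\circ \on{Ps-Id}_{n''}\simeq \on{Ps-Id}_{n'}\circ (p_{n',n''})_*$$
reduces, via the construction of $\on{Ps-Id}$ in \secref{sss:PsId}, to checking that $(p_{n',n''})_*$ commutes with each operator $\oblv_{N^\alpha,\chi}\circ \Av^{N^\alpha,\chi}_*[2\dim(N^\alpha/N_0)]$ and with the filtered colimit over $\alpha$. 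The former is immediate because each such operator is convolution against a sheaf on $\fL(N)$ acting on the \emph{left}, whereas $p_{n',n''}$ is a quotient by a subgroup acting on the \emph{right}; the latter holds because $(p_{n',n''})_*$ is continuous. The dimension shifts $[2\dim(N^\alpha/N_0)]$ cause no difficulty, as they depend only on $\alpha$ (via the fixed lattice $N_0$) and not on $n$.

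Finally, I would take the limit over $n$ of the equivalences $\on{Ps-Id}_n$ to obtain an equivalence between the two limit presentations, and identify it with the intrinsically defined $\on{Ps-Id}$ of the theorem by applying the same convolution-commutation argument directly on $\Shv(\fL(G))$ (cf.\ the realization of $\Shv(\fL(G))$ discussed in the preceding subsection). The main point of the proof is therefore genuinely formal once \corref{c:inv vs coinv} is in hand; the only obstacle is the careful bookkeeping needed to verify that all transition functors, averaging operators, and cohomological shifts interact coherently across the different values of $n$, and this is automatic from the fact that the left action of $\fL(N)$ and the right action of $K_n$ on $\fL(G)$ commute.
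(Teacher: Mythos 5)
Your proof is essentially the same as the paper's: both realize $\Whit(\fL(G))_{\on{co}}$ as a limit over $n$ with $*$-pushforward transitions (via the adjoint-swap of \secref{sss:limits and colimits}), compare this term-by-term against the limit presentation \eqref{e:* Whit as limit} of $\Whit(\fL(G))$, and reduce to \thmref{t:inv vs coinv} at each finite level $n$. You spell out more carefully the compatibility of $\on{Ps-Id}_n$ with the transition functors (via the commutation of the left $\fL(N)$-action with the right $K_n$-action and the independence of the shifts $[2\dim(N^\alpha/N_0)]$ from $n$) — this is correct and is left implicit in the paper's shorter treatment.
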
 

\begin{proof}

Note that the *-pushforward functors \eqref{e:* pushforward} induce functors
$$\Whit(\fL(G)/K_{n''})_{\on{co}}\to \Whit(\fL(G)/K_{n'})_{\on{co}},$$
which provide right adjoints to the *-pullback functors 
$$\Whit(\fL(G)/K_{n'})_{\on{co}}\to \Whit(\fL(G)/K_{n''})_{\on{co}}.$$

Hence, by \secref{sss:limits and colimits}, we can realize $\Whit(\fL(G))_{\on{co}}$
as 
$$\underset{n}{\on{lim}}\, \Whit(\fL(G)/K_n)_{\on{co}},$$
with the transition functors induced by \eqref{e:* pushforward}.

\medskip

In terms of this identification and \eqref{e:* Whit as limit}, the fuctor $\on{Ps-Id}$
is given by the family of functors
$$\on{Ps-Id}: \Whit(\fL(G)/K_n)_{\on{co}}\to \Whit(\fL(G)/K_n),$$
while the latter are isomorphisms by \thmref{t:inv vs coinv}. 

\end{proof}

\sssec{}

By the same token, we can consider the prestack $(\BunNb)^{G\on{-level}_{\infty\cdot x}}_{\infty\cdot x}$
(where we equip our $G$-bundle with a full level structure at $x$),
the category $\Shv((\BunNb)^{G\on{-level}_{\infty\cdot x}}_{\infty\cdot x})$ and its full subcategory
$$\Whit((\BunNb)^{G\on{-level}_{\infty\cdot x}}_{\infty\cdot x})
\subset \Shv((\BunNb)^{G\on{-level}_{\infty\cdot x}}_{\infty\cdot x}).$$

\medskip

Passing to the limit in \thmref{t:main}, we obtain:

\begin{thm}
The !-pullback functor along $\fL_x(G)\to (\BunNb)^{G\on{-level}_{\infty\cdot x}}_{\infty\cdot x}$ defines an equivalence
$$\Whit((\BunNb)^{G\on{-level}_{\infty\cdot x}}_{\infty\cdot x})\to \Whit(\fL_x(G)).$$
\end{thm}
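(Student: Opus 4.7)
The plan is to pass to the limit over $n$ from the finite-level main theorem \thmref{t:main}, using the exact same mechanism as in the proof of \thmref{t:inv vs coinv inf} given above.

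As prestacks, we have compatible presentations
$$\fL_x(G) \simeq \underset{n}{\on{lim}}\, \fL_x(G)/K_n, \qquad (\BunNb)^{G\on{-level}_{\infty\cdot x}}_{\infty\cdot x} \simeq \underset{n}{\on{lim}}\, (\BunNb)^{G\on{-level}_{n\cdot x}}_{\infty\cdot x},$$
compatible with the projections $\pi_n$; the transition morphisms are torsors for the finite-dimensional vector groups $K_n/K_{n+1}$, in particular smooth schematic morphisms of finite type. Applying \eqref{e:shv on loop} to the left-hand side and its direct analogue (with the same proof) to the right-hand side, both $\Shv(\fL_x(G))$ and $\Shv((\BunNb)^{G\on{-level}_{\infty\cdot x}}_{\infty\cdot x})$ are realized as limits of the corresponding categories at finite level, with transition functors given by $*$-pushforward along these smooth morphisms.

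Since $\Whit(-) \subset \Shv(-)$ is a full subcategory cut out by equivariance against $(\fL_x(N), \chi_x)$, and since $*$-pushforward along a smooth morphism with connected fibers preserves this equivariance (being a cohomological shift of the right adjoint to the equivariance-preserving $!$-pullback), we obtain
$$\Whit(\fL_x(G)) \simeq \underset{n}{\on{lim}}\, \Whit(\fL_x(G)/K_n), \qquad \Whit((\BunNb)^{G\on{-level}_{\infty\cdot x}}_{\infty\cdot x}) \simeq \underset{n}{\on{lim}}\, \Whit((\BunNb)^{G\on{-level}_{n\cdot x}}_{\infty\cdot x});$$
the first of these is already recorded as \eqref{e:* Whit as limit}. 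Moreover, for each $n$ the projections $\pi_n$ and the transition morphisms fit into a Cartesian square of prestacks, so base change yields commutativity of the resulting squares of Whittaker categories relating $\pi_n^!$ and $\pi_{n+1}^!$ via the $*$-pushforward transition functors.

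The functor appearing in the theorem is therefore $\underset{n}{\on{lim}}\, \pi_n^!$, and \thmref{t:main} tells us that every $\pi_n^!$ is an equivalence, whence the limit is an equivalence as well. The main (and only nontrivial) point is verifying the limit presentation on the global side, which is a direct adaptation of \eqref{e:shv on loop}; one may either appeal to the definition of $\Shv$ on prestacks in terms of a limit, or use the interplay of limits and colimits recalled in \secref{sss:limits and colimits} together with the fact that the transition morphisms are smooth and of finite type. Beyond this bookkeeping, the argument is entirely formal.
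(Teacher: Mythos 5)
Your proposal is correct and fills in exactly what the paper means by its one-line "passing to the limit in \thmref{t:main}": realize both sides as limits over $n$ with $*$-pushforward transition functors (the left-hand side being \eqref{e:* Whit as limit}, the right-hand side its direct analogue), observe the Cartesian squares and invoke base change to make $\{\pi_n^!\}_n$ into a compatible family, and conclude since each $\pi_n^!$ is an equivalence. One small imprecision: the justification that $*$-pushforward along $K_n/K_{n+1}$-torsors preserves the Whittaker condition is better phrased not via a cohomological shift of $!$-pullback, but simply via $\fL_x(N)$-equivariance of the transition maps, which makes $(q_n)_*$ a morphism of $\Shv(\fL_x(N))$-module categories and hence compatible with passing to $(\fL_x(N),\chi_x)$-equivariant objects. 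This is a cosmetic fix; the argument goes through as you intend, and is the same argument the paper has in mind.
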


\ssec{Multi-point version}

The local Whittaker category we have defined is attached to the formal disc $\wh\cD_x$ for some point $x$ on a curve $X$. 
We will now show how to generalize this by considering a \emph{parameterized multi-disc} $\wh\cD_{x_I}$, which lives
over $X^I$. 

\sssec{}  \label{sss:multi}

Fix a finite set $I$ and a map 
$$n_I:I\to \BZ^{\geq 0}, \quad i\mapsto n_i.$$

Let $\CY$ denote the following ind-scheme over $X^I$. For an affine test-scheme $S$, an $S$-point of $\CY$ is a datum of
$(x_I,\CP_G,\gamma,\epsilon)$, where:

\begin{itemize}  

\item $x_I$ is an $S$-point of $X^I$ (i.e., an $I$-tuple of $S$-points $x_i$ of $X$);

\item $\CP_G$ is a $G$-bundle on $\cD_{x_I}$ (equivalently, on $\wh\cD_{x_I}$);

\item $\gamma$ is an identification between $\CP_G$ and $\CP_G^{\omega^\rho}$ over $\ocD_{x_I}$;

\item $\epsilon$ is a trivialization of the restriction of $\CP_G$ to the subscheme $\Sigma\, n_i\cdot \on{Graph}_{x_i}\subset \wh\cD_{x_I}$
(we view each $\on{Graph}_{x_i}$ as a Cartier divisor on $S\times X$). 

\end{itemize} 

\sssec{}

The above ind-scheme $\CY$ is acted on by $\fL_I(G)$ and, in particular, by $\fL_I(N)$. Proceeding as in the single-point case, we 
can introduce the corresponding categories
$$\Whit(\CY):=\Shv(\CY)^{\fL_I(N),\chi_I} \text{ and } \Whit(\CY)_{\on{co}}:=\Shv(\CY)_{\fL_I(N),\chi_I},$$
and the functor
\begin{equation} \label{e:Ps multi}
\on{Ps-Id}:\Whit(\CY)_{\on{co}}\to \Whit(\CY).
\end{equation} 

\medskip

We also have a relative (over $X^I$) version of the ind-algebraic stack $(\BunNb)^{G\on{-level}_{n \cdot x}}_{\infty\cdot x}$,
denoted
$$(\BunNb)^{G\on{-level}_{n_I\cdot x_I}}_{\infty\cdot x},$$
and the corresponding full subcategory
$$\Whit((\BunNb)^{G\on{-level}_{n_I\cdot x_I}}_{\infty\cdot x})\subset \Shv((\BunNb)^{G\on{-level}_{n_I\cdot x_I}}_{\infty\cdot x}).$$

\medskip

A straightforward generalization of \thmref{t:main} gives:

\begin{thm}
Pullback along $\CY\to (\BunNb)^{G\on{-level}_{n_I\cdot x_I}}_{\infty\cdot x}$ defines an equivalence
$$\Whit((\BunNb)^{G\on{-level}_{n_I\cdot x_I}}_{\infty\cdot x}))\to \Whit(\CY).$$
\end{thm}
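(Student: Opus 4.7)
The plan is to adapt the proof of \thmref{t:main} to the relative setting over $X^I$, following the Ran-space strategy of \secref{s:Ran} essentially verbatim. First, by a standard strata-wise analysis, the functor is a stratum-wise equivalence: one mimics \thmref{t:loc pullback mu} by replacing the single point $x$ with the parameterized $I$-tuple $x_I$, and repeats the ``density'' argument of \propref{p:skinny equiv} (using that the sections of $N^{\omega^\rho}$ over $X-x_I$ are dense in $\fL_I(N)$ in the appropriate fiberwise sense). Hence, as in \secref{sss:ff enough}, the content of the theorem is reduced to the assertion that the pullback functor is fully faithful on $\Whit((\BunNb)^{G\on{-level}_{n_I\cdot x_I}}_{\infty\cdot x})$.

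To establish fully faithfulness, I would introduce a relative Ran version $\CY_{\Ran,I}$ of $\CY$, parameterizing, over a test scheme $S$, a point $x_I \in X^I(S)$ together with a finite subset $\CI \subset \Hom(S,X)$ containing the maps comprising $x_I$ as distinguished elements, plus a $G$-bundle on $S\times X$ with an identification away from $\CI$ and level structure along the graphs of $x_I$. Then factor the map $\CY \to (\BunNb)^{G\on{-level}_{n_I\cdot x_I}}_{\infty\cdot x}$ as
$$\CY \to \Ran(X)_{x_I} \times_{X^I} \CY \overset{\on{unit}_{\Ran,I}}\longrightarrow \CY_{\Ran,I} \overset{\pi_{\Ran,I}}\longrightarrow (\BunNb)^{G\on{-level}_{n_I\cdot x_I}}_{\infty\cdot x},$$
where $\Ran(X)_{x_I}$ is the relative (over $X^I$) Ran space with the $I$ distinguished elements. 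The contractibility argument proving \thmref{t:contr} works relatively over $X^I$ with no change and gives that $\pi_{\Ran,I}^!$ is fully faithful, and the relative Ran space $\Ran(X)_{x_I}$ is contractible over $X^I$, so the remark at the end of \secref{ss:proof of main} provides the retraction needed to conclude fully faithfulness.

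The main obstacle will be the analog of \thmref{t:unit} in the relative setting, i.e., proving that
$$\on{unit}_{\Ran,I}^!:\Whit(\CY_{\Ran,I})\to \Whit(\Ran(X)_{x_I}\times_{X^I}\CY)$$
is an equivalence. The argument proceeds exactly as in \secref{ss:proof of unit}--\secref{ss:proof of average}: one stratifies the relative Ran space by collision patterns (now relative to $x_I$), reduces to constructing a left adjoint to $\on{unit}_{\Ran,I}^!$ that respects the $\Shv(X^{I\sqcup J})$-action (for $J$ the additional Ran variables), and this in turn is controlled by a relative version of Raskin's theorem, \thmref{t:Raskin I}. The key technical point is that the subgroups $I^j$ and $\fL^+_I(G)^j$ and their adaptedness properties (as in \secref{ss:Rask1}) admit a straightforward $X^I$-parameterized analog, since their construction in \cite{Ras} is local at each point and carries over to a family.

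Once the relative unital theorem is in place, the proof of \thmref{t:unit I} adapts directly: stratify $X^{I\sqcup J}$ by collision patterns with respect to $x_I$, use the analog of \propref{p:unit beta} at each stratum (which reduces to the non-degeneracy of $\chi_{\Ran}$ outside the positive loci $S^0_{\Ran}$), and combine with the commutation of the left adjoint with $\Shv$-pullback-and-tensor to deduce the global statement. Finally, the retraction trick is purely formal and requires no modification. So the entire apparatus of \secref{s:Ran} carries over, and the only substantive work is verifying that Raskin's geometric input in \secref{s:proof of Rask} is genuinely local enough to parameterize over $X^I$---which, as noted by the author after \thmref{t:Raskin I}, is indeed the case.
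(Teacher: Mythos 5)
Your proposal is correct and matches the approach the paper intends: the paper states the theorem as ``a straightforward generalization of \thmref{t:main}'' without spelling out details, and your plan of relativizing the Ran-space machinery of \secref{s:Ran} over $X^I$ (the relative marked Ran space $\Ran(X)_{x_I}$, the relative contractibility statement, the relative unital theorem via the $X^I$-parameterized version of Raskin's adaptedness result, and the retraction trick) is precisely the generalization being invoked. The only small terminological point worth keeping straight is that the set labelled $I$ in \secref{s:Ran} (the Ran-space parameter) plays the role of what you call $I\sqcup J$ here, so \thmref{t:Raskin I} as stated in the paper already supplies the needed commutation with !-tensoring by $\Shv(X^{I\sqcup J})$ after relabelling.
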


From here, as in the case of a single point, we obtain:

\begin{thm}
The functor \eqref{e:Ps multi} is an equivalence.
\end{thm}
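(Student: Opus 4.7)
The plan is to deduce this theorem from the preceding multi-point local-to-global equivalence by repeating verbatim the chain of arguments that yielded \corref{c:inv vs coinv} from \thmref{t:main} in the single-point setting.

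First, I would promote the stated equivalence
$$\pi^!:\Whit((\BunNb)^{G\on{-level}_{n_I\cdot x_I}}_{\infty\cdot x})\to \Whit(\CY)$$
to an equivalence on the ``coinvariants" side. Namely, in the multi-point setting one has the evident analog of \propref{p:Verdier for glob Whit} identifying $\Whit((\BunNb)^{G\on{-level}_{n_I\cdot x_I}}_{\infty\cdot x})$ with its own Verdier dual (with the character flipped), and the analog of \corref{c:dual dualizable} identifying $\Whit(\CY)_{\on{co}}$ as the dual of $\Whit(\CY)$ (again with the character flipped). Passing to duals in the equivalence $\pi^!$ then yields an equivalence
$$\pi_{*,\Whit}:\Whit(\CY)_{\on{co}}\to \Whit((\BunNb)^{G\on{-level}_{n_I\cdot x_I}}_{\infty\cdot x}),$$
defined exactly as in \lemref{l:dual pi}.

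Next, I would establish the multi-point analog of \corref{c:PsId glob}, i.e. an identification
$$\pi^!\circ \pi_{*,\Whit}\simeq \on{Ps-Id}[-2d]$$
for an appropriate integer $d$ (which is now the relative dimension over $X^I$ of the double quotient $N_0\backslash \fL_I(N)/N_{(X-x_I)/X^I}$, where $N_{(X-x_I)/X^I}$ denotes the group ind-scheme over $X^I$ whose fiber over a point $x_I$ is sections of $N^{\omega^\rho}$ on the complement of $x_I$). This is obtained by running through the multi-point analogs of Propositions \ref{p:averagings} and \ref{p:pi and pi}: the former is a strata-wise computation that identifies $\pi^!\circ \Av_{*,\on{glob}}^{\Whit}$ with the *-averaging with respect to a sufficiently large subgroup of $\fL_I(N)$, while the latter is a base-change identification
$$N_{(X-x_I)/X^I}\underset{X^I}\times \CY \simeq \CY\underset{(\BunNb)^{G\on{-level}_{n_I\cdot x_I}}_{\infty\cdot x}}\times \CY,$$
together with the assertion that the composition of these averagings is, by construction, $\Av^{\fL_I(N),\chi_I}_{*,\on{ren}}$ up to the relative shift $[2d]$.

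Combining these two steps: since $\pi_{*,\Whit}$ is an equivalence and $\pi^!$ is an equivalence, their composite is an equivalence; hence $\on{Ps-Id}[-2d]$ is an equivalence, and so $\on{Ps-Id}$ itself is. The only step that requires some care, rather than routine transcription, is the multi-point base-change identification feeding \propref{p:pi and pi}: one must check that the action of $N_{(X-x_I)/X^I}$ on $\CY$ defines an isomorphism onto the relevant fiber product uniformly over $X^I$, and that the relative dimension $d$ is locally constant (or at least that the resulting shift is a locally constant cohomological shift, hence invertible). Once these geometric inputs are verified fiberwise using the argument of \secref{ss:pseudo-id}, the rest of the argument proceeds formally as in the single-point case.
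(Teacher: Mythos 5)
Your proposal is correct and matches the paper's intended route exactly: the paper derives this theorem from the multi-point local-to-global equivalence by the words ``as in the case of a single point,'' i.e.\ by transcribing the chain of arguments from \propref{p:Verdier for glob Whit}, \corref{c:dual dualizable}, \lemref{l:dual pi}, \propref{p:averagings}, \propref{p:pi and pi}, and \corref{c:PsId glob} to the relative setting over $X^I$. You have simply unpacked what the paper leaves implicit, and your identification of the base-change step over $X^I$ as the one point requiring genuine (if routine) verification is accurate.
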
 

\sssec{}  \label{sss:inf level mult}

As in \secref{ss:full level}, the above constructions and assertions can be generalized to the case when the
function $n^I$ is allowed take value $\infty$ on some elements of $I$. 

\medskip

The resulting geometric objects
are inverse limits of the corresponding objects for finite values of $n_I$. 

\medskip

The resulting Whittaker
categories can be realized both as limits and as colimits of one corresponding to finite values of $n_I$. 

\ssec{``Abstract" Whittaker categories}

In this subsection we will study various versions of the Whittaker model of an \emph{abstract} category $\bC$, 
equipped with an action of $\fL(G)$. 

\sssec{}

Let $\bC$ be a category acted on by $\fL(G)$; see \secref{sss:action on categ}. For any group-subscheme
$N'\subset \fL(N)$, we can consider the functor
$$\Av^{N',\chi}_*:\bC\to \bC.$$

Hence, as in \secref{ss:full level}, we can consider the full category
$$\Whit(\bC):=\bC^{\fL(N),\chi}\simeq \underset{\alpha}{\on{lim}}\, \bC^{N^\alpha,\chi}=
\underset{\alpha}\cap\, \bC^{N^\alpha,\chi}\subset \bC,$$
where $N^\alpha$ are as on \eqref{e:loop N as colim}. 

\medskip

In addition, we can consider
$$\Whit(\bC)_{\on{co}}:=\bC_{\fL(N),\chi}\simeq \underset{\alpha}{\on{colim}}\, \bC_{N^\alpha,\chi}\simeq 
\underset{\alpha}{\on{colim}}\, \bC^{N^\alpha,\chi},$$
where the last colimit is formed using the transition functors
$$\Av^{N^{\alpha''}/N^{\alpha'},\chi}_*:\bC^{N^{\alpha'},\chi}\to \bC^{N^{\alpha''},\chi}, \quad N^{\alpha'}\subset N^{\alpha''}.$$

\medskip

In addition, we have a well-defined functor
$$\on{Ps-Id}:\Whit(\bC)_{\on{co}}\to \Whit(\bC).$$

\sssec{}

We will prove:

\begin{thm}   \label{t:abstract} \hfill

\smallskip

\noindent{\em(a)} For any $\bC$, equipped with an action of $\fL(G)$, 
 the functor $\on{Ps-Id}:\Whit(\bC)_{\on{co}}\to \Whit(\bC)$ is an equivalence. 

\smallskip

\noindent{\em(b)} The functor 
$$\bC\mapsto \Whit(\bC), \quad \fL(G)\mmod \to \on{DGCat}_{\on{cont}}$$
commutes with limits and colimits, and for an abstract DG category $\bC_0$, the naturally defined functor
$$\Whit(\bC)\otimes \bC_0\to \Whit(\bC\otimes \bC_0)$$
is an equivalence. 

\smallskip

\noindent{\em(c)} If $\bC$ is dualizable, then so is $\Whit(\bC)$ and we have a canonical equivalence 
$$\Whit(\bC)^\vee\simeq  \Whit(\bC^\vee).$$

\end{thm}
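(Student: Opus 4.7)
The strategy is to reduce all three parts to \thmref{t:inv vs coinv inf}, which settles the concrete case $\bC = \Shv(\fL(G))$, via a bar-resolution argument. Let $\bA := \Shv(\fL(G))$, regarded as a monoidal DG category; for any $\bC \in \fL(G)\mmod$, the canonical bar resolution $\bA^{\otimes \bullet+1} \otimes \bC \to \bC$ (with $\bA$ acting on the leftmost factor) presents $\bC$ as a geometric realization in $\fL(G)\mmod$ whose levels are the \emph{free} $\fL(G)$-modules $\bA \otimes \bC_n$ with $\bC_n := \bA^{\otimes n} \otimes \bC$. After forgetting the $\fL(G)$-action, the resolution admits the standard extra degeneracies and is split augmented in $\DGCat_{\on{cont}}$.

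\textbf{Free modules.} For $\bC = \bA \otimes \bC_0$, the factorizable/infinite-level extension of \thmref{t:Whit and tens} (proved by repeating the argument of \secref{ss:enough} in the setup of \secref{ss:full level}) gives $\Whit(\bA \otimes \bC_0) \simeq \Whit(\bA) \otimes \bC_0$. The parallel statement for $\Whit(-)_{\on{co}}$ is tautological, as coinvariants are a colimit and the Lurie tensor product commutes with colimits. Hence $\on{Ps-Id}_{\bA \otimes \bC_0}$ identifies with $\on{Ps-Id}_\bA \otimes \on{Id}_{\bC_0}$, and is an equivalence by \thmref{t:inv vs coinv inf}. Next, I would note that $\Whit(-)_{\on{co}}: \fL(G)\mmod \to \DGCat_{\on{cont}}$ commutes with colimits: using $\Whit(\bC)_{\on{co}} = \underset{\alpha}{\on{colim}}\, \bC_{N^\alpha,\chi}$, each $\bC \mapsto \bC_{N^\alpha,\chi}$ commutes with colimits (coinvariant half of \secref{sss:coinvariants and limits}, extended to pro-unipotent $N^\alpha$), so $\Whit(-)_{\on{co}}$ does too.

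\textbf{General $\bC$.} Applying $\Whit(-)_{\on{co}}$ to the bar resolution and invoking the free-module step yields
\[
\Whit(\bC)_{\on{co}} \;\simeq\; \bigl| \Whit(\bA^{\otimes \bullet+1} \otimes \bC)_{\on{co}} \bigr| \;\simeq\; \bigl| \Whit(\bA^{\otimes \bullet+1} \otimes \bC) \bigr|,
\]
and the bar augmentation delivers a canonical map to $\Whit(\bC)$ that one checks coincides with $\on{Ps-Id}_\bC$. \emph{The main obstacle} is verifying that this comparison map is an equivalence, i.e., that $\Whit$ preserves this specific realization. Since $\Whit$ is defined as a limit over $\alpha$ of $(-)^{N^\alpha,\chi}$, it does not commute with colimits in general. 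The rescue is the splitness of the bar resolution in $\DGCat_{\on{cont}}$, which makes the geometric realization absolute; combined with the fact that $\Whit(\bC) \subset \bC$ is a full subcategory cut out by object-wise equivariance conditions, and that each inclusion $\bC^{N^\alpha,\chi} \hookrightarrow \bC$ admits a continuous right adjoint $\Av^{N^\alpha,\chi}_*$, this lets one match the two sides after a careful analysis. This compatibility is the technical crux of the proof.

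\textbf{Parts (b) and (c).} Once (a) holds, the equivalence $\Whit \simeq \Whit(-)_{\on{co}}$ transports commutation with colimits from $\Whit(-)_{\on{co}}$ to $\Whit$; commutation with limits is immediate from the iterated-limit description of $\Whit$. The tensor product formula $\Whit(\bC) \otimes \bC_0 \simeq \Whit(\bC \otimes \bC_0)$ is tautological for $\Whit(-)_{\on{co}}$ and transports via (a). Part (c) is then formal: any functor $\fL(G)\mmod \to \DGCat_{\on{cont}}$ commuting with colimits and with tensoring by arbitrary DG categories sends dualizable objects to dualizable objects with dual computed by applying the functor to the dual; the identification $\Whit(\bC)^\vee \simeq \Whit(\bC^\vee)$ proceeds along the lines of \corref{c:dual dualizable}, using that $\fL(G)$ is reductive so $\bC^\vee$ naturally inherits an $\fL(G)$-module structure.
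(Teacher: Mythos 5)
Your bar-resolution strategy is a genuinely different route from the paper's, and it has a real gap at exactly the point you flag as the ``technical crux.''

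The paper does \emph{not} use a bar resolution. Instead it exploits the one-step co-free presentation $\bC \simeq (\Shv(\fL(G))\otimes\bC)^{\fL(G)}$ together with the nontrivial \thmref{t:coinvariants and limits, loop}, which for $G$ reductive identifies $\fL(G)$-invariants with $\fL(G)$-coinvariants and shows that $(-)^{\fL(G)}$ commutes with colimits. With this in hand, $\Whit$ and $(-)^{\fL(G)}$ commute ``tautologically'' (they impose equivariance against independent actions, $\fL(N)$ on the left and $\fL(G)$ on the right of $\Shv(\fL(G))$), while $\Whit(-)_{\on{co}}$ is brought to the same form via commutation with colimits (\propref{p:Whit co ten}). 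Everything is then pushed to the free case $\Shv(\fL(G))\otimes\bC'$, where \thmref{t:inv vs coinv inf} applies. Your proposal never invokes \thmref{t:coinvariants and limits, loop}, and this is precisely what is missing.

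The gap is in the claim that splitness of the bar resolution in $\DGCat_{\on{cont}}$ makes the realization ``absolute'' for the functor $\Whit$. The extra degeneracy of $\bA^{\otimes\bullet+1}\otimes\bC$ inserts a unit on the \emph{left}, i.e.\ precisely in the factor carrying the $\fL(G)$-module structure; it is therefore not a morphism in $\fL(G)\mmod$. Splitness in $\DGCat_{\on{cont}}$ only tells you that the \emph{forgetful} image of the realization is $\bC$, which you already knew. It does not entitle $\Whit:\fL(G)\mmod\to\DGCat_{\on{cont}}$ to commute with this realization, since $\Whit$ is not postcomposed with a functor out of $\DGCat_{\on{cont}}$. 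Concretely, your comparison map is the canonical arrow $\Whit(\Shv(\fL(G)))\underset{\Shv(\fL(G))}\otimes\bC\to\Whit(\bC)$ (the levelwise computation $\Whit(\bA^{\otimes n+1}\otimes\bC)\simeq\Whit(\bA)\otimes\bA^{\otimes n}\otimes\bC$ identifies your realization with the relative tensor product); but the paper only establishes that this arrow is an equivalence \emph{as a corollary of} \thmref{t:abstract}, so trying to use it as the engine of the proof is circular. The appeal to $\Av_*^{N^\alpha,\chi}$ being continuous does not repair this: a cofiltered limit of full subcategories with continuous right adjoints still need not commute with a geometric realization. If you want to make the bar-resolution route work, you will have to import \thmref{t:coinvariants and limits, loop}(a) (or an equivalent dualizability statement for $\Vect^{\fL^+(G)}$ over the Hecke category) to control the realization; at that point the argument essentially collapses to the paper's.

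Your free-module step (via the $\fL(G)$-variant of \thmref{t:Whit and tens}), the commutation of $\Whit(-)_{\on{co}}$ with colimits, and the reductions in (b)--(c) given (a) are all correct and in line with the paper.
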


\sssec{}

The proof of \thmref{t:abstract} is based on the following observation. For a pair of categories $\bC_1$ and $\bC_2$ acted on by $\fL(G)$
we can consider the category. 
$$(\bC_1\otimes \bC_2)^{\fL(G)}.$$

It is a basic fact (see \thmref{t:coinvariants and limits, loop}) that for $G$ reductive, the functor
$$\bC_1,\bC_2\mapsto (\bC_1\otimes \bC_2)^{\fL(G)}$$
commutes with colimits in each variable.

\begin{rem}
In fact the above functor is canonically isomorphic to the functor 
$$\bC_1,\bC_2\mapsto  \bC_1\underset{\Shv(\fL(G))}\otimes \bC_2,$$
see \thmref{t:coinvariants and limits, loop}(b). 
\end{rem}

\sssec{}

Note that for a category $\bC$ acted on by $\fL(G)$, we have a canonical identification 
$$\bC\simeq (\Shv(\fL(G))\otimes \bC)^{\fL(G)},$$
where we consider $\Shv(\fL(G))$ as acted on by $\fL(G)$ via right multiplication. The above equivalence is
an equivalence of categories acted on by $\fL(G)$, where we endow $\Shv(\fL(G))\otimes \bC)^{\fL(G)}$
with a $\fL(G)$-action via left multiplication.

\medskip

We will prove:

\begin{prop} \label{p:Whit co ten}
The natural map
$$\Whit(\bC)_{\on{co}}\simeq \Whit((\Shv(\fL(G))\otimes \bC)^{\fL(G)})_{\on{co}}\to (\Whit(\Shv(\fL(G)))_{\on{co}}\otimes \bC)^{\fL(G)}$$
is an equivalence.
\end{prop}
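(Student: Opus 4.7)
The plan is to present both sides as colimits over the index $\alpha$ from the presentation $\fL(N) \simeq \underset{\alpha \in A}{\on{colim}}\, N^\alpha$ of \eqref{e:loop N as colim}, and then to use the fact (stated immediately before the proposition and referring to \thmref{t:coinvariants and limits, loop}) that for $G$ reductive the bifunctor $(\bC_1,\bC_2)\mapsto (\bC_1\otimes \bC_2)^{\fL(G)}$ commutes with colimits in each variable. By construction, $\Whit(\bC)_{\on{co}} = \bC_{\fL(N),\chi} \simeq \underset{\alpha}{\on{colim}}\, \bC^{N^\alpha,\chi}$, and likewise $\Whit(\Shv(\fL(G)))_{\on{co}} \simeq \underset{\alpha}{\on{colim}}\, \Shv(\fL(G))^{N^\alpha,\chi}$, where on the right the $(N^\alpha,\chi)$-equivariance is taken with respect to \emph{left} multiplication.

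The key step will be to construct, for each $\alpha$, a canonical equivalence
\[
\bC^{N^\alpha,\chi} \simeq \bigl(\Shv(\fL(G))^{N^\alpha,\chi} \otimes \bC\bigr)^{\fL(G)},
\]
functorial in $\alpha$. Starting from the identification $\bC \simeq (\Shv(\fL(G))\otimes \bC)^{\fL(G)}$ as $\fL(G)$-modules (where the residual $\fL(G)$-action is via left multiplication on $\Shv(\fL(G))$), one observes that $N^\alpha \subset \fL(N) \subset \fL(G)$ acts via this left regular action, which commutes with the right $\fL(G)$-action that we are invariantizing. Passing to $(N^\alpha,\chi)$-invariants on both sides and invoking the elementary fact that invariants with respect to commuting actions can be taken in either order yields the displayed equivalence. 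Unwinding definitions shows that under this equivalence the transition maps $\Av^{N^{\alpha''}/N^{\alpha'},\chi}_*$ on $\bC^{N^\alpha,\chi}$ correspond to the analogous transition maps on $\Shv(\fL(G))^{N^\alpha,\chi}\otimes \bC$, so the identifications glue over $\alpha$.

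Finally, I would take the colimit over $\alpha$ and apply \thmref{t:coinvariants and limits, loop} to swap it past $(-\otimes \bC)^{\fL(G)}$:
\[
\underset{\alpha}{\on{colim}}\, \bigl(\Shv(\fL(G))^{N^\alpha,\chi}\otimes \bC\bigr)^{\fL(G)} \simeq
\bigl(\underset{\alpha}{\on{colim}}\, \Shv(\fL(G))^{N^\alpha,\chi}\otimes \bC\bigr)^{\fL(G)} =
\bigl(\Whit(\Shv(\fL(G)))_{\on{co}}\otimes \bC\bigr)^{\fL(G)}.
\]
The left-hand side equals $\underset{\alpha}{\on{colim}}\, \bC^{N^\alpha,\chi} = \Whit(\bC)_{\on{co}}$. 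Tracing through the definitions should verify that the resulting composite equivalence agrees with the map appearing in the statement. The main obstacle is the middle step: while the equivalence $\bC^{N^\alpha,\chi} \simeq (\Shv(\fL(G))^{N^\alpha,\chi}\otimes \bC)^{\fL(G)}$ is conceptually forced by ``commuting invariants commute,'' the genuine work is the bookkeeping that ensures naturality in $\alpha$ under the two different recipes for the transition functors, so that the two colimit presentations can be compared by a single natural transformation rather than object-by-object.
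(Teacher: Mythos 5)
Your argument is correct and follows essentially the same route as the paper's own proof: present both sides as colimits over $\alpha$ via the presentation $\fL(N)\simeq \underset{\alpha}{\on{colim}}\, N^\alpha$, use the commutation of $(-\otimes\bC)^{\fL(G)}$ with colimits, and identify the terms by commuting the $(N^\alpha,\chi)$-invariants past the $\fL(G)$-invariants together with the identification $\bC\simeq (\Shv(\fL(G))\otimes\bC)^{\fL(G)}$. The paper just runs the same chain of isomorphisms in the other direction (from the right-hand side to the left), which sidesteps the naturality-in-$\alpha$ bookkeeping you flag by manipulating the colimit expressions directly; and it writes $\Shv(\fL(G))_{N^\alpha,\chi}$ rather than $\Shv(\fL(G))^{N^\alpha,\chi}$, but for finite-dimensional unipotent $N^\alpha$ these agree by \thmref{t:coinvariants}, so the two readings are interchangeable.
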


\begin{proof}

We have:
$$\Whit(\bC)_{\on{co}}:=\underset{\alpha}{\on{colim}}\, \bC^{N^\alpha,\chi},$$
while
\begin{multline*}
(\Whit(\Shv(\fL(G)))_{\on{co}}\otimes \bC)^{\fL(G)}:=\left((\underset{\alpha}{\on{colim}}\, \Shv(\fL(G))_{N^\alpha,\chi}) \otimes \bC\right)^{\fL(G)}\simeq \\
\simeq \left(\underset{\alpha}{\on{colim}}\, \left(\Shv(\fL(G))_{N^\alpha,\chi}\otimes \bC\right)\right)^{\fL(G)}\overset{\text{commutation with colimits}}\simeq 
\underset{\alpha}{\on{colim}}\, \left(\left(\Shv(\fL(G))_{N^\alpha,\chi} \otimes \bC\right)^{\fL(G)}\right)\simeq \\
\simeq \underset{\alpha}{\on{colim}}\, (\Shv(\fL(G))\otimes \bC)^{(N^\alpha,\chi),\fL(G)}\simeq 
\underset{\alpha}{\on{colim}}\, \left((\Shv(\fL(G))\otimes \bC)^{\fL(G)}\right){}_{N^\alpha,\chi}\simeq 
\underset{\alpha}{\on{colim}}\, \bC_{N^\alpha,\chi},
\end{multline*}
as desired. 

\end{proof}

\begin{proof}[Proof of \thmref{t:abstract}]

Note that in addition to the equivalence of \propref{p:Whit co ten}, we have the tautological equivalence
$$\Whit(\bC)\simeq \Whit\left((\Shv(\fL(G))\otimes \bC)^{\fL(G)}\right)\simeq \left(\Whit(\Shv(\fL(G))\otimes \bC)\right)^{\fL(G)}.$$

We have a commutative diagram
$$
\CD
\Whit(\bC)_{\on{co}}   @>{\text{\propref{p:Whit co ten}}}>>  \left(\Whit(\Shv(\fL(G)))_{\on{co}}\otimes \bC\right)^{\fL(G)} @>{\sim}>> \left(\Whit(\Shv(\fL(G))\otimes \bC)_{\on{co}}\right)^{\fL(G)} \\
@VVV  & & @VVV  \\
\Whit(\bC)  &  @>{\sim}>>   &   \left(\Whit(\Shv(\fL(G))\otimes \bC)\right)^{\fL(G)}.
\endCD
$$

Now, the equivalence of \thmref{t:inv vs coinv inf} extends to an equivalence
$$\Whit(\Shv(\fL(G))\otimes \bC') _{\on{co}}\to \Whit(\Shv(\fL(G))\otimes \bC')$$
for any test DG category $\bC'$. Hence, the right vertical arrow in the above diagram is an equivalence.
Therefore, so is the left vertical arrow. 

\medskip

This proves point (a) of the theorem. 

\medskip

Point (b) follows from point (a): the functor 
$$\bC\mapsto \Whit(\bC)$$
manifestly commutes with limits (given the fact that the forgetful functor $\fL(G)\mmod \to \on{DGCat}_{\on{cont}}$
commutes with limits), while the functor 
$$\bC\mapsto \Whit(\bC)_{\on{co}}$$
manifestly commutes with limits (given the fact that the forgetful functor $\fL(G)\mmod \to \on{DGCat}_{\on{cont}}$
commutes with colimits). 

\medskip

To prove point (b), given (a), it suffices to establish a canonical isomorphism
$$\on{Funct}_{\on{cont}}(\Whit(\bC)_{\on{co}},\bC_0)\simeq \Whit(\bC^\vee)\otimes \bC_0$$
that functorially depends on the test DG category $\bC_0$.

\medskip

By definition, we have
$$\on{Funct}_{\on{cont}}(\Whit(\bC)_{\on{co}},\bC_0)\simeq 
\on{Funct}_{\on{cont}}(\bC,\bC_0)^{\fL(N),\chi}\simeq \Whit(\bC^\vee \otimes \bC_0),$$
and the assertion follows from point (b). 

\end{proof}

Let is also note:

\begin{cor}
The natural map
$$\Whit(\Shv(\fL(G)))\underset{\Shv(\fL(G))}\otimes \bC  \to \Whit(\Shv(\fL(G))\underset{\Shv(\fL(G))}\otimes \bC)\simeq \Whit(\bC)$$
is an equivalence.
\end{cor}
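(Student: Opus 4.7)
The plan is to deduce this from Theorem~\ref{t:abstract} together with Proposition~\ref{p:Whit co ten} and the identification of $\fL(G)$-invariants with relative tensor products for reductive $G$.

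First, I would pass to the coinvariant picture, since that is where the tensor-product structure is manifest. By Theorem~\ref{t:abstract}(a), the functor $\on{Ps-Id}$ gives a canonical equivalence $\Whit(-)_{\on{co}}\simeq \Whit(-)$ on all of $\fL(G)\mmod$. Applying this to $\bC$ and to $\Shv(\fL(G))$ (and noting that $\on{Ps-Id}$ is $\Shv(\fL(G))$-linear, so commutes with relative tensor products), the assertion reduces to showing that the natural map
$$\Whit(\Shv(\fL(G)))_{\on{co}}\underset{\Shv(\fL(G))}\otimes \bC\to \Whit(\bC)_{\on{co}}$$
is an equivalence.

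Next, I would invoke Proposition~\ref{p:Whit co ten}, which supplies an equivalence
$$\Whit(\bC)_{\on{co}}\simeq \bigl(\Whit(\Shv(\fL(G)))_{\on{co}}\otimes \bC\bigr)^{\fL(G)}.$$
It then remains to identify the right-hand side with the relative tensor product. This is exactly Theorem~\ref{t:coinvariants and limits, loop}(b), which (for $G$ reductive) gives a canonical equivalence
$$(\bC_1\otimes \bC_2)^{\fL(G)}\simeq \bC_1\underset{\Shv(\fL(G))}\otimes \bC_2,$$
applied here with $\bC_1=\Whit(\Shv(\fL(G)))_{\on{co}}$ (equipped with its residual $\fL(G)$-action coming from the \emph{left} multiplication on $\Shv(\fL(G))$) and $\bC_2=\bC$.

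The only non-routine point, and likely the main obstacle, is to verify that the composite of these equivalences indeed coincides with the natural map stated in the corollary. This is a matter of tracing the functoriality in $\bC$: both sides, viewed as functors $\fL(G)\mmod\to \on{DGCat}_{\on{cont}}$, preserve colimits (the source by definition of the relative tensor product, and the target by Theorem~\ref{t:abstract}(b)), and the natural transformation between them is tautologically the identity when $\bC=\Shv(\fL(G))$. Since every $\bC\in\fL(G)\mmod$ is canonically a self-colimit via $\bC\simeq \Shv(\fL(G))\underset{\Shv(\fL(G))}\otimes \bC$, this identification of the two maps propagates to all $\bC$, completing the proof.
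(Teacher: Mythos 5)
Your overall strategy matches the paper's: pass to coinvariants via the $\on{Ps\text{-}Id}$ equivalence of Theorem~\ref{t:abstract}(a), and then deal with $\Whit(-)_{\on{co}}$. Where you differ is in how you handle the coinvariant version. The paper treats it as immediate: $\Whit(-)_{\on{co}}$ is by construction $\underset{\alpha}{\on{colim}}\, (-)_{N^\alpha,\chi}$, each $(-)_{N^\alpha,\chi}$ is a Verdier quotient (a colimit of $\Shv(\fL(G))$-module categories), and $-\underset{\Shv(\fL(G))}\otimes\bC$ preserves all colimits; so
$$\Whit(\Shv(\fL(G)))_{\on{co}}\underset{\Shv(\fL(G))}\otimes\bC\simeq \underset{\alpha}{\on{colim}}\,\Bigl(\Shv(\fL(G))_{N^\alpha,\chi}\underset{\Shv(\fL(G))}\otimes\bC\Bigr)\simeq\underset{\alpha}{\on{colim}}\,\bC_{N^\alpha,\chi}=\Whit(\bC)_{\on{co}}$$
without detour. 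You instead route through Proposition~\ref{p:Whit co ten} and then re-identify $\fL(G)$-invariants with the relative tensor product via Theorem~\ref{t:coinvariants and limits, loop}(b); this is valid, but Proposition~\ref{p:Whit co ten} is itself proved with the same commutation-with-colimits observation, so the detour doesn't buy you anything and obscures why the map constructed this way agrees with the natural one. That last point is where your proposal is weakest: the closing functoriality paragraph is loosely phrased. The tautology $\bC\simeq\Shv(\fL(G))\underset{\Shv(\fL(G))}\otimes\bC$ does not by itself exhibit $\bC$ as a colimit of copies of $\Shv(\fL(G))$; if you want to propagate an identification from the free module to general $\bC$, you should invoke the bar resolution of $\bC$ over $\Shv(\fL(G))$, whose geometric realization computes the relative tensor product. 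Reasoning directly at the level of colimits, as the paper does, avoids having to make this propagation argument at all, since the map in the coinvariant picture is manifestly a map of the same colimit diagram.
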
 

\begin{proof}
Follows from \thmref{t:abstract} as the assertion is manifestly true for $\Whit(-)$ replaced by $\Whit(-)_{\on{co}}$,
\end{proof}

\sssec{The ultimate generalization}

We now fix a finite set $I$, and consider the group ind-scheme $\fL_I(G)$ over $X^I$. Consider the category $\fL_I(G)\mmod$,
whose objects are DG categories $\bC$ acted on by the monoidal category $\Shv(\fL_I(G))$. 

\medskip

Proceeding as above for $\bC\in \fL_I(G)\mmod$, we define the categories 
$$\Whit(\bC) \text{ and } \Whit(\bC)_{\on{co}}$$
and the functor
$$\on{Ps-Id}: \Whit(\bC)_{\on{co}}\to \Whit(\bC).$$

Using \secref{sss:inf level mult}, we prove the corresponding version of \thmref{t:abstract}
in this situation.

\appendix 

\section{Proof of \thmref{t:Raskin}}  \label{s:proof of Rask} 

We will essentially copy the proof from \cite[Sect. 2.12]{Ras}, adding a few details. 

\ssec{Compactification of the action map}

\sssec{}

Denote $\CY:=\fL(G)/K_n$; we will consider it as an ind-scheme equipped with an action of $\fL(G)$. Let
$\CF$ be an object of $\Shv(\CY)^{I^j\cap \fL(N),\chi}$. Then we have a well-defined 
$$\chi^!(\on{A-Sch})\tboxtimes \CF\in \Shv(\fL(N) \overset{I^j\cap \fL(N)}\times \CY),$$
where $\overset{H}\times$ means ``divide by the diagonal action of $H$". 

\medskip

The action of $\fL(N)$ on $\CY$ defines a map
$$\on{act}:\fL(N) \overset{I^j\cap \fL(N)}\times \CY\to \CY.$$

The object $\CF$ is $(\fL(N),\chi)$-adapted if the partially defined left adjoint of
$$\on{act}^!\otimes \on{Id}_\bC:\Shv(\CY)\otimes \bC\to \Shv(\fL(N) \overset{I^j\cap \fL(N)}\times \CY)\otimes \bC$$
is defined on objects of the form $(\chi^!(\on{A-Sch})\tboxtimes \CF)\otimes \bc$ for all $\bc\in \bC$ and equals 
$\on{act}_!(\chi^!(\on{A-Sch})\tboxtimes \CF)\otimes \bc$. 



\sssec{}

Let us introduce some short-hand notation: for $H\subset \fL(G)$ denote
$$H^j:=\on{Ad}_{\on{Ad}_{-j\check\rho(t)}(H)}.$$

Note that
$$\fL^+(N)^j=I^j\cap \fL(N).$$

\medskip

Consider the ind-scheme $\fL(G)$ equipped with the projection
\begin{equation} \label{e:loop to Gr}
\fL(G)\to \fL(G)/\fL^+(G)^j,
\end{equation}
where we note that $\fL(G)/\fL^+(G)^j$ is isomorphic to the affine Grassmannian.

\medskip

Taking the preimage of the $\fL(N)$-orbit through the origin in $\fL(G)/\fL^+(G)^j$, we obtain
a locally closed ind-subscheme, denoted
$$\fL(N)\fL^+(G)^j\subset \fL(G),$$
equipped with a free action of $\fL^+(G)^j$. Note that we have an identification
$$\fL(N)\fL^+(G)^j\simeq \fL(N)\overset{\fL^+(N)^j}\times \fL^+(G)^j.$$

\medskip

We can form the fiber product
$$\fL(N)\fL^+(G)^j \overset{\fL^+(G)^j}\times \CY,$$
equipped with a locally closed embedding into 
$$\fL(G)\overset{\fL^+(G)^j}\times \CY.$$

In particular, $\fL(N)\fL^+(G)^j \overset{\fL^+(G)^j}\times \CY$ is 
an ind-scheme of ind-finite type, and we have an isomorphism 
$$\fL(N) \overset{\fL^+(N)^j}\times \CY\simeq \fL(N)\fL^+(G)^j \overset{\fL^+(G)^j}\times \CY.$$

\medskip

Under the above identification, the map
$$\on{act}:\fL(N) \overset{\fL^+(N)^j}\times \CY\to \CY$$
equals the composition
$$\fL(N)\fL^+(G)^j \overset{\fL^+(G)^j}\times \CY\to \fL(G)\overset{\fL^+(G)^j}\times \CY\to \CY,$$
where the second arrow is given by the action of $\fL(G)$ on $\CY$. 

\sssec{}

Let 
$$\ol{\fL(N)/\fL^+(N)^j}\subset \fL(G)/\fL^+(G)^j$$
denote the closure of the $\fL(N)$-orbit $\fL(N)/\fL^+(N)^j$ through the origin in $\fL(G)/\fL^+(G)^j$.

\medskip

Let 
$$\ol{\fL(N)\fL^+(G)^j}\subset \fL(G)$$
denote the preimage of $\ol{\fL(N)/\fL^+(N)^j}$ under the projection \eqref{e:loop to Gr}. In other words,
$\ol{\fL(N)\fL^+(G)^j}$ is the closure of $\fL(N)\fL^+(G)^j$ in $\fL(G)$. 

\medskip

The group-scheme $\fL^+(G)^j$ still
acts freely on $\ol{\fL(N)\fL^+(G)^j}$ by right multiplication. We can form
$$\ol{\fL(N)\fL^+(G)^j} \overset{\fL^+(G)^j}\times \CY,$$
which is a closed ind-subscheme of $\fL(G)\overset{\fL^+(G)^j}\times \CY$
(and thus is an ind-scheme of ind-finite type). 

\medskip

We have an open embedding
$$\fL(N)\fL^+(G)^j \overset{\fL^+(G)^j}\times \CY \overset{j}\hookrightarrow 
\ol{\fL(N)\fL^+(G)^j} \overset{\fL^+(G)^j}\times \CY$$
and a map
$$\ol{\on{act}}:\ol{\fL(N)\fL^+(G)^j} \overset{\fL^+(G)^j}\times \CY\to \CY.$$

\sssec{}

We will deduce \thmref{t:Raskin} from the following observation:

\begin{prop}  \label{p:clean}
If $\CF\in \Shv(\CY)^{\fL(N)^j,\chi}$ lies in essential image of the forgetful functor
$$\oblv_{I^j/\fL(N)^j,\chi}:\Shv(\CY)^{I^j,\chi}\to \Shv(\CY)^{\fL(N)^j,\chi},$$
then for any $\bC$ and any $\bc\in \bC$, the functor left adjoint to 
$$(j^!\otimes \on{Id}_\bC):\Shv(\ol{\fL(N)\fL^+(G)^j} \overset{\fL^+(G)^j}\times \CY)\otimes \bC\to
\Shv(\fL(N)\fL^+(G)^j \overset{\fL^+(G)^j}\times \CY)\otimes \bC$$
is defined on 
$$(\chi^!(\on{A-Sch})\tboxtimes \CF)\otimes \bc\in \Shv(\fL(N) \overset{\fL^+(N)^j}\times \CY)\otimes \bC\simeq 
\Shv(\fL(N)\fL^+(G)^j \overset{\fL^+(G)^j}\times \CY)\otimes \bC$$
and equals 
$$j_*(\chi^!(\on{A-Sch})\tboxtimes \CF)\otimes \bc.$$
\end{prop}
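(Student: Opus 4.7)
The plan is to verify the proposition by establishing a cleanness statement: the canonical map $j_!(\chi^!(\on{A-Sch})\tboxtimes \CF)\to j_*(\chi^!(\on{A-Sch})\tboxtimes \CF)$ is an isomorphism in $\Shv(\ol{\fL(N)\fL^+(G)^j}\overset{\fL^+(G)^j}\times \CY)$. Once this cleanness is established in the $\bC=\Vect$ case, the statement for general $\bC$ (and its compatibility with $-\otimes \bc$) follows formally, because the partial left adjoint to $j^!\otimes \on{Id}_\bC$ is automatically computed by $j_!\otimes \on{Id}_\bC$ and the operation $j_!$ commutes with tensoring by external factors.

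First I would reformulate cleanness as vanishing on the boundary: the map $j_!\to j_*$ is an isomorphism precisely when the $*$-restriction of $j_*(\chi^!(\on{A-Sch})\tboxtimes \CF)$ to every locally closed stratum of the complement $\ol{\fL(N)\fL^+(G)^j}\setminus \fL(N)\fL^+(G)^j$ (crossed with $\CY$, modulo $\fL^+(G)^j$) vanishes. Using the $\fL(N)$-orbit stratification of $\ol{\fL(N)/\fL^+(N)^j}$ and pulling back along the projection to $\fL(G)/\fL^+(G)^j$, the boundary decomposes into locally closed pieces indexed by coweights $\mu$ strictly less than the ``open'' one in the closure order. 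Each such piece has a representative $g\in\fL(G)$; writing $H_g:=\fL(N)\cap g\fL^+(G)^j g^{-1}$ for the stabilizer of $g\fL^+(G)^j$, the fiber of $\ol{\fL(N)\fL^+(G)^j}\overset{\fL^+(G)^j}\times\CY$ over this stratum is of the form $\fL(N)\overset{H_g}\times (g\cdot \CY)$, and the $*$-restriction in question computes, after base change, the $H_g$-averaging against $\chi|_{H_g}$ of the translate $g\cdot \CF$ on $\CY$.

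The key input is then the following character-mismatch assertion: for every boundary $g$, one can find a subgroup $N'\subset H_g$ (finite-dimensional after further quotienting by a pro-unipotent subgroup of $I^j$) such that $\on{Ad}_{g^{-1}}(N')\subset I^j$ and, comparing the two characters
\[
\chi|_{N'}\quad\text{and}\quad \chi\circ \on{Ad}_{g^{-1}}|_{N'},
\]
their difference is a \emph{non-trivial} character of $N'$. Since $\CF$ lies in the essential image of $\oblv_{I^j/\fL(N)^j,\chi}$, it is $(I^j,\chi)$-equivariant, so the second character governs the action on $g\cdot\CF$; the first is the one we are averaging against. The non-triviality of the difference then forces the averaging to vanish by the basic fact that integration of a non-trivial Artin--Schreier-type local system over an affine space is zero. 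Patching these vanishings over the stratification yields $i^*j_*(\chi^!(\on{A-Sch})\tboxtimes\CF)=0$ on the entire boundary, hence cleanness.

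The main obstacle will be the last step---exhibiting, for every boundary stratum, a witness subgroup $N'\subset H_g$ together with the verification that the two characters disagree. This is a combinatorial computation in the loop group, using in an essential way (i) the definition of $I^j$ as $\on{Ad}_{-j\check\rho(t)}(\overset{\circ}I{}^j)$ and of $\chi$ on $I^j$ as obtained by transport from $\fL^+(N)_j$, and (ii) the fact that boundary strata in $\ol{\fL(N)/\fL^+(N)^j}$ correspond to coweights whose pairing with $\check\rho$ is strictly smaller than for the open stratum. These two ingredients combine so that the ``adjustment'' to $\chi$ produced by conjugation by $g$ always introduces a simple-root coordinate on which $\chi$ and $\chi\circ \on{Ad}_{g^{-1}}$ agree in sign on the opposite sides, making their difference non-trivial; this is precisely the mechanism identified in \cite[Sect.~2.11]{Ras}, and the plan is to run Raskin's verification in this parameterization to conclude.
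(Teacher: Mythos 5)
Your plan is correct and takes essentially the same route as the paper, with one notable difference in packaging worth flagging.

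The paper factors \propref{p:clean} through two intermediate statements. First, \propref{p:K-equiv} extracts from the $(I^j,\chi)$-equivariance of $\CF$ only the pieces that are needed: the $(\fL(N),\chi)$-equivariance of the left action on $\fL(N)\fL^+(G)^j\times\CY$, and the $K^j_j$-equivariance of the right action (the latter exists precisely because $\chi|_{K^j_j}$ is trivial). Then \propref{p:pure clean} is a purely geometric cleanness statement that applies to \emph{any} sheaf with these two equivariance properties. The boundary vanishing is then proved on the quotient $\CZ=\ol{\fL(N)\fL^+(G)^j}/K^j_j\times\CY$, where the stabilizer of a boundary point $z=\lambda(t)$ (with $\lambda\in-\Lambda^{\on{pos}}$ nonzero, hence non-dominant) contains $\on{Ad}_{-\lambda(t)}(\fL^+(N))\subset\fL(N)\cap\on{Ad}_{\lambda(t)}(K^j_j)$, and $\chi$ restricted there is non-trivial. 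Because the stabilizer acts \emph{trivially} on the fiber (this is the role of the $K^j_j$-equivariance), the non-triviality of $\chi$ is already the full obstruction.

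Your version compares two characters on a witness subgroup $N'\subset H_g=\fL(N)\cap g\fL^+(G)^jg^{-1}$: you require $\on{Ad}_{g^{-1}}(N')\subset I^j$ and ask that $\chi|_{N'}$ and $\chi\circ\on{Ad}_{g^{-1}}|_{N'}$ differ. This is more general than the paper's argument, but strictly subsumes it: the paper's choice $N'=\on{Ad}_{-\lambda(t)}(\fL^+(N))$ satisfies $\on{Ad}_{g^{-1}}(N')=\fL^+(N)\subset K^j_j$, on which $\chi$ vanishes, so your ``mismatch'' condition degenerates to ``$\chi|_{N'}$ non-trivial''. The paper's factorization through $K^j_j$-equivariance buys a cleaner inductive step: one never needs to track the second character $\chi\circ\on{Ad}_{g^{-1}}$ or verify the condition $\on{Ad}_{g^{-1}}(N')\subset I^j$ beyond inclusion into $K^j_j$; and \propref{p:pure clean}, being stated for an arbitrary sheaf with the two equivariances, is reusable. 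If you carry out your plan, you would in practice be forced into essentially the same specific $N'$, so the payoff of the extra generality is limited.

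Two smaller points. Your formal reduction from general $\bC$ to $\bC=\Vect$ is correct---once $j_!\CG\simeq j_*\CG$ is known, $j_*\CG$ is automatically compact and the adjunction identity for $j^!\otimes\on{Id}_\bC$ against $\CG\otimes\bc$ follows from \lemref{l:internal Hom}---but the paper simply runs the boundary-vanishing argument with $\bC$ carried along, noting the proof is verbatim the same. Finally, the ``patching over the stratification'' step in your plan conceals a real technical wrinkle: $\on{Av}_*^{\fL(N),\chi}$ does not a priori preserve the subcategory of sheaves supported on the boundary. The paper circumvents this by an explicit contradiction argument on a finite-dimensional subscheme $Z$, finding a single large enough $N^\alpha$ (uniform over $Z\cap(\CZ-\CZ^0)$) on which the character mismatch is visible; when you ``run Raskin's verification'', you should spell this out rather than treat the boundary vanishing as a formal localization.
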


Note that \propref{p:clean} says, in particular, that the object 
$$\chi^!(\on{A-Sch})\tboxtimes \CF\in \Shv(\fL(N)\fL^+(G)^j \overset{\fL^+(G)^j}\times \CY)$$
is \emph{clean} with respect to the map $j$. 

\sssec{}

Let us assume \propref{p:clean} and deduce that for $\CF$ in the essential image of $\oblv_{I^j/\fL(N)^j,\chi}$, the object 
$\on{act}_!(\chi^!(\on{A-Sch})\tboxtimes \CF)$ exists and 
$$\on{act}_!(\chi^!(\on{A-Sch})\tboxtimes \CF)\otimes \bc$$
provides the value of the left adjoint to $\on{act}^!\otimes \on{Id}_\bC$ on $(\chi^!(\on{A-Sch})\tboxtimes \CF)\otimes \bc$. 

\medskip

Using \propref{p:clean}, it suffices to show that that for \emph{any}
$$\wt\CF\in \Shv(\ol{\fL(N)\fL^+(G)^j} \overset{\fL^+(G)^j}\times \CY),$$
the object 
$$\ol{\on{act}}_!(\wt\CF)\in \Shv(\CY)$$ exists, and 
$$\ol{\on{act}}_!(\wt\CF)\otimes \bc\in \Shv(\CY)\otimes \bC$$ 
provides the value of the left adjoint to 
$$\ol{\on{act}}^!\otimes \on{Id}_\bC:\Shv(\CY)\otimes \bC\to \Shv(\ol{\fL(N)\fL^+(G)^j} \overset{\fL^+(G)^j}\times \CY)\otimes \bC$$
on $\wt\CF\otimes \bC$.

\sssec{}

In fact we claim that the left adjoint to $\ol{\on{act}}^!$  is given by $\ol{\on{act}}_*$ (which implies that the left adjoint to 
$\ol{\on{act}}^!\otimes \on{Id}_\bC$ is given by $\ol{\on{act}}_*\otimes \on{Id}_\bC$). 

\medskip  

Indeed, we claim that the morphism $\ol{\on{act}}$ is ind-proper. To show this, it is enough to show that the action morphism 
\begin{equation} \label{e:loop group on Y}
\fL(G)\overset{\fL^+(G)^j}\times \CY\to \CY
\end{equation} 
is ind-proper.  

\medskip

For this, we note that the automorphism
$$(g,y)\mapsto (g,g\cdot y)$$
isomorphes $\fL(G)\overset{\fL^+(G)^j}\times \CY$ to the product $\fL(G)/\fL^+(G)^j\times \CY$, and the action morphism 
\eqref{e:loop group on Y} gets transformed to the projection on the second factor. 

\medskip

Now the assertion follows from the fact that the ind-scheme $\fL(G)/\fL^+(G)^j$
is ind-proper (being isomorphic to the affine Grassmannian). 

\ssec{Proof of the cleanness statement}

In this subsection we will prove \propref{p:clean}. In order to unburden the notation we will take $\bC=\Vect$ and $\bc=\sfe$;
the proof in the general case is literally the same. 

\sssec{}

We need to show that objects of the form 
$$\chi^!(\on{A-Sch})\tboxtimes \CF$$
for $\CF$ in the essential image of 
$$\oblv_{I^j/\fL(N)^j,\chi}:\Shv(\CY)^{I^j,\chi}\to \Shv(\CY)^{\fL(N)^j,\chi}$$
are clean with respect to $j$.

\medskip

With no restriction of generality we can assume that $\CF$ is supported on a $\fL^+(G)^j$-stable (finite-dimensional)
subscheme $\CY'\subset \CY$. The action of $\fL^+(G)^j$ on such $\CY'$ factors through a quotient by a normal
subgroup $H\subset \fL^+(G)^j$.  

\medskip

In what follows, when we write
$$\ol{\fL(N)\fL^+(G)^j} \overset{\fL^+(G)^j}\times \CY$$
we will actually mean
$$\ol{\fL(N)\fL^+(G)^j}/H \overset{\fL^+(G)^j/H}\times \CY'.$$

When we will write
$$\ol{\fL(N)\fL^+(G)^j}\times \CY$$
we will actually mean 
$$\ol{\fL(N)\fL^+(G)^j}/H \times \CY'.$$

We perform this manipulation in order to emphasize that we are dealing with ind-schemes of
ind-finite type. However, we will omit $H$ and $\CY'$ from the notation in order to unburden the formulas.

\sssec{}

Consider the pullback of $\chi^!(\on{A-Sch})\tboxtimes \CF$ to
$$\fL(N)\fL^+(G)^j\times \CY$$
(see the above conventions)
along the projection
$$\fL(N)\fL^+(G)^j\times \CY\to \fL(N)\fL^+(G)^j \overset{\fL^+(G)^j}\times \CY\simeq
\fL(N) \overset{\fL^+(N)^j}\times \CY.$$

\medskip

Recall that $K_j$ denotes the $j$-th congruence subgroup in $\fL^+(G)$; and recall our notation
$$K_j^j\subset \fL^+(G)^j.$$

The assertion of \propref{p:clean} is obtained as a combination of the following two statements:

\begin{prop} \label{p:K-equiv}
For $\CF$ in the essential image of $\oblv_{I^j/\fL(N)^j,\chi}$, the pullback of $\chi^!(\on{A-Sch})\tboxtimes \CF$ 
to $\fL(N)\fL^+(G)^j\times \CY$ is $(\fL(N),\chi)$-equivariant with respect to the action 
$$n\cdot (n_1g,y)=(nn_1g,g)$$
and is $K^j_j$-equivariant with respect to the action 
$$k\cdot (n_1g,y)=(n_1gk^{-1},y).$$
\end{prop}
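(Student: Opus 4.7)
The plan is to verify the two equivariance assertions separately by tracing through the descent structure of $\chi^!(\on{A-Sch}) \tboxtimes \CF$ and its pullback $\wt\CF$ to $\fL(N)\fL^+(G)^j \times \CY$. First record that the canonical isomorphism
\[
\fL(N)\fL^+(G)^j \overset{\fL^+(G)^j}{\times} \CY \simeq \fL(N) \overset{\fL^+(N)^j}{\times} \CY
\]
realizes the left-hand side as the quotient of $\fL(N)\fL^+(G)^j \times \CY$ by the diagonal action $g \cdot (h,y) = (hg^{-1}, gy)$ of $\fL^+(G)^j$, so $\wt\CF$ is tautologically $\fL^+(G)^j$-equivariant under this diagonal action. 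Each of the two claimed equivariances will then be reduced to a property either built into the definition of $\chi^!(\on{A-Sch})$ or inherited from the hypothesis on $\CF$.

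For assertion (1), the $(\fL(N),\chi)$-equivariance under $n \cdot (h,y) = (nh, y)$ comes entirely from the $(\fL(N),\chi)$-equivariance of $\chi^!(\on{A-Sch})$ on $\fL(N)$ under left multiplication. Since $\chi$ is a group homomorphism, we have $m^*(\chi^!(\on{A-Sch})) \simeq \chi^!(\on{A-Sch}) \boxtimes \chi^!(\on{A-Sch})$, and this equivariance acts only on the first tensor factor of $\chi^!(\on{A-Sch}) \boxtimes \CF$ on $\fL(N) \times \CY$, hence descends to the quotient and pulls back to $\wt\CF$.

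For assertion (2), the key observation is that the two $K_j^j$-actions on $\fL(N)\fL^+(G)^j \times \CY$ — namely the right-multiplication action $\alpha_1(k)(h,y) = (hk^{-1}, y)$ and the restriction of the diagonal action $\alpha_2(k)(h,y) = (hk^{-1}, ky)$ — differ by an action of $K_j^j$ on the $\CY$-factor alone, namely $\beta(k)(h,y) = (h, ky)$; explicitly, $\alpha_1(k) = \alpha_2(k) \circ \beta(k^{-1})$. Equivariance of $\wt\CF$ under $\alpha_2$ is already available as the restriction to $K_j^j \subset \fL^+(G)^j$ of the diagonal equivariance, so equivariance under $\alpha_1$ reduces to equivariance of $\wt\CF$ under $\beta$, which in turn reduces to $K_j^j$-equivariance (with trivial character) of $\CF$ on $\CY$. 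This is where the hypothesis is used: by definition $K_j^j \subset I^j$ (since $K_j \subset \overset{\circ}I{}^j$), and the character $\chi$ on $I^j$ constructed in Section~\ref{ss:Rask1} is pulled back from $\fL^+(N)_j$ via $I^j \overset{\on{Ad}}{\to} \overset{\circ}I{}^j \to \fL^+(N)_j$, the composite of which annihilates $K_j^j$. Hence $\CF \in \Shv(\CY)^{I^j,\chi}$ is automatically $K_j^j$-equivariant with trivial character, giving the required property.

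Main obstacle: There is no genuinely deep step here; the argument is a careful bookkeeping exercise with compatible equivariance structures. The one substantive feature that really matters is that the extended character $\chi$ on $I^j$ is trivial on $K_j^j$ — a direct consequence of how it was constructed, without which assertion (2) would fail. The remaining care required is only to verify that the two candidate $K_j^j$-actions on $\fL(N)\fL^+(G)^j \times \CY$ differ exactly by the action on the $\CY$-factor recorded above.
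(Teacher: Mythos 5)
Your handling of assertion (1) is correct: the map $(n_1 g, y)\mapsto [n_1, gy]$ to $\fL(N)\overset{\fL^+(N)^j}\times\CY$ is $\fL(N)$-equivariant for left multiplication, and the $(\fL(N),\chi)$-equivariance of $\chi^!(\on{A-Sch})\tboxtimes\CF$ (coming from the character-sheaf property) pulls back along it. This is the same argument as the paper's, phrased slightly differently.

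For assertion (2), however, there is a genuine gap. Your decomposition $\alpha_1(k) = \alpha_2(k)\circ\beta(k^{-1})$ is correct at the level of maps, and $\alpha_2$-equivariance of $\wt\CF$ is indeed automatic from the descent. But the step ``equivariance of $\wt\CF$ under $\beta$, which in turn reduces to $K_j^j$-equivariance of $\CF$ on $\CY$'' does not go through as stated, because $\wt\CF$ is \emph{not} of the form (something on $\fL(N)\fL^+(G)^j$) $\boxtimes\,\CF$ on the product $\fL(N)\fL^+(G)^j\times\CY$. Rather, $\wt\CF$ is the pullback of $\chi^!(\on{A-Sch})\tboxtimes\CF$ along the map $q\colon (ng,y)\mapsto[n,gy]$, which entangles the $\fL^+(G)^j$-factor with the $\CY$-factor. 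Applying $\beta(k)$ and composing with $q$ gives $(ng,y)\mapsto [n, gky]$, and it is \emph{not} true that $[n,gky]=[n,gy]$ in the balanced product, nor does $\beta$ descend to a map of the target of $q$ along which equivariance of $\CF$ could be invoked directly. The point you are missing is that one must rewrite $gky=(gkg^{-1})\cdot gy$ and use that $gkg^{-1}\in K_j^j$ --- i.e., the \textbf{normality of $K_j^j$ in $\fL^+(G)^j$} --- to conclude that the composite map $(ng,y)\mapsto [n,[gy]]\in \fL(N)\overset{\fL^+(N)^j}\times(K_j^j\backslash\CY)$ is $\beta$-invariant, whence $\wt\CF$, being a pullback of $\chi^!(\on{A-Sch})\tboxtimes\CF''$ along this map, is $\beta$-equivariant. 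Without normality, $K_j^j$-equivariance of $\CF$ alone does not yield $\beta$-equivariance of $\wt\CF$, so your ``Main obstacle'' paragraph (which claims the only substantive ingredient is $\chi|_{K_j^j}=1$) is wrong. The paper's proof does exactly this: it exhibits $\CF'=\on{act}^*\CF$ on $\fL^+(G)^j\times\CY$ as the pullback of an object on $K_j^j\backslash\CY$ along a map which is $K_j^j$-invariant for right multiplication \emph{by normality of $K_j^j$ in $\fL^+(G)^j$}.
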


\begin{prop} \label{p:pure clean}
For any (ind-scheme) $\CY$, any $\wt\CF\in \Shv((\fL(N)\fL^+(G)^j)\times \CY)$ with the equivariance 
properties specified in \propref{p:K-equiv}, its *-extension to $\ol{\fL(N)\fL^+(G)^j}\times \CY$ equals
the !-extension. 
\end{prop}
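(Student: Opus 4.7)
The plan is to establish cleanness by verifying that both $j_!\wt\CF$ and $j_*\wt\CF$ have vanishing restriction to every boundary stratum of the pair $\bigl(\ol{\fL(N)\fL^+(G)^j}\overset{\fL^+(G)^j}\times\CY,\,\fL(N)\fL^+(G)^j\overset{\fL^+(G)^j}\times\CY\bigr)$. Since $\ol{\fL(N)\fL^+(G)^j}$ is stable under both the left $\fL(N)$- and right $\fL^+(G)^j$-actions used in \propref{p:K-equiv}, both $j_!\wt\CF$ and $j_*\wt\CF$ inherit those equivariance properties. It therefore suffices to show that on every boundary stratum $Z$, the category of sheaves carrying those equivariances -- in particular, $(\fL(N),\chi)$-equivariant for the left $\fL(N)$-action -- is zero; once this vanishing is in hand, the natural map $j_!\wt\CF\to j_*\wt\CF$ is an isomorphism, which is exactly cleanness.

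First I would stratify the closure $\ol{\fL(N)/\fL^+(N)^j}\subset\fL(G)/\fL^+(G)^j$ by its $\fL(N)$-orbits: the open orbit is $\fL(N)/\fL^+(N)^j$ itself (the $\fL(N)$-orbit through the base-point $[\on{id}]$), and each other orbit in the closure has the form $\fL(N)\cdot[g]$ for some $g\in\fL(G)$ whose class lies strictly below $[\on{id}]$ in the closure order. Pulling back along $\fL(G)\to\fL(G)/\fL^+(G)^j$ and taking the product with $\CY$ produces the desired boundary stratification of $\ol{\fL(N)\fL^+(G)^j}\overset{\fL^+(G)^j}\times\CY$, each stratum being locally closed and invariant under the left $\fL(N)$-action. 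As in the proof of \propref{p:clean}, one reduces to the case of a finite-dimensional $\CY'\subset\CY$ and factors $\fL^+(G)^j$ through a finite-dimensional quotient, so one is working with ind-schemes of ind-finite type and the usual sheaf-theoretic machinery applies.

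For each non-open orbit $O=\fL(N)\cdot[g]$, vanishing of $\Shv(O)^{\fL(N),\chi}$ will be established by the same strategy as in \propref{p:generated}(a): the stabilizer $\fL(N)\cap g\fL^+(G)^j g^{-1}$ of $[g]$ contains a root subgroup
$$\BG_a \overset{t^{-k}\check\alpha_i}\longrightarrow \fL(N)$$
for some simple coroot $\check\alpha_i$ and integer $k\geq 1$ on which the character $\chi$ is non-trivial; since this $\BG_a$-subgroup acts trivially on $O$, any $(\fL(N),\chi)$-equivariant sheaf on $O$ must vanish, whence $\Shv(O\times\CY')^{\fL(N),\chi}=0$ for any finite-dimensional $\CY'$. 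The main obstacle is producing, for each non-open boundary orbit, the explicit ``bad'' root subgroup in its stabilizer: this requires a combinatorial analysis of the $\fL(N)$-double cosets in $\fL(G)/\fL^+(G)^j$, keeping track of the twist $\fL^+(G)^j=\on{Ad}_{-j\check\rho(t)}(\overset{\circ}{I}^j)$ and using that the orbit is \emph{not} the open one in $\ol{\fL(N)/\fL^+(N)^j}$. Once this combinatorial step is carried out, the combination with the reduction above yields the cleanness assertion.
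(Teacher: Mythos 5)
Your approach is correct in outline and rests on exactly the same key geometric fact as the paper's, but the two arguments are organized differently. The paper proves \lemref{l:character non-trivial}, which says that for a fixed finite-dimensional $Z\subset\CZ$ there is a single $N^\alpha$ such that $\chi$ is nontrivial on $\on{Stab}_{N^\alpha}(z)$ for every $z\in Z\cap(\CZ-\CZ^0)$; then, rather than deducing ``equivariant category on the boundary is zero, hence cleanness'' directly, it argues by contradiction: a nonzero map $j_*\CF\to\CF'$ with $\CF'$ supported on the boundary would factor through $\Av_*^{N^\alpha,\chi}(\CF')$ (using only $(N^\alpha,\chi)$-equivariance of $j_*\CF$), and the lemma forces $\Av_*^{N^\alpha,\chi}(\CF')|_Z=0$. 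The reason the paper does \emph{not} take your more direct route is made explicit: it flags the worry that the right adjoint $\Av_*^{\fL(N),\chi}$ (which is an infinite limit over $\alpha$) ``does not a priori preserve the subcategory of objects supported on the boundary,'' so it avoids applying any $\fL(N)$-averaging to boundary-supported sheaves. Your argument, which applies $i^*$ to $j_*\wt\CF$ and checks $(\fL(N),\chi)$-equivariance term-by-term in $\alpha$ (via smooth base change for $j_*\circ\Av_*^{N^\alpha,\chi}$, and descent of the closed embedding to the $N^\alpha$-quotients for $i^*$), genuinely sidesteps that worry and is a clean alternative organization; what it buys is that one never has to touch the discontinuous functor $\Av_*^{\fL(N),\chi}$. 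What it does \emph{not} buy is any shortcut on the substantive content, which is the combinatorial step you defer: producing, on each boundary orbit, a root subgroup in the stabilizer on which $\chi$ is nontrivial. The paper's resolution of this (\lemref{l:character non-trivial}, Steps 2--3) is worth internalizing, since it is less a double-coset calculation than a two-line trick: using normality of $K^j_j$ in $\fL^+(G)^j$ one reduces to the coset representative $z=\lambda(t)$ with $0\neq\lambda\in-\Lambda^{\pos}$; then the crucial containment $\fL^+(N)\subset K^j_j$ (this is where the right $K^j_j$-equivariance of $\wt\CF$, i.e.\ the $/K^j_j$ quotient, is used in an essential way) gives $\on{Ad}_{\lambda(t)}(\fL^+(N))\subset\on{Stab}_{\fL(N)}(z)$, and since $\lambda$ is non-dominant, $\chi$ is nontrivial on this subgroup. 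If you supply that step, and spell out that the vanishing of $i^*j_*\wt\CF$ on each stratum implies $i^*j_*\wt\CF=0$ (so that the excision triangle gives $j_!\wt\CF\overset\sim\to j_*\wt\CF$), your proof closes.
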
 

\begin{proof}[Proof of \propref{p:K-equiv}]

Let $\CF'$ denote the pullback of $\CF$ to $\fL^+(G)^j\times \CY$ along the action map
$$\fL^+(G)^j\times \CY\to \CY$$
(see our conventions). 

\medskip

We can write
$$\fL(N)\fL^+(G)^j\times \CY\simeq  \fL(N) \overset{\fL^+(N)^j}\times (\fL^+(G)^j\times \CY),$$
and with respect to this identification, the pullback of $\chi^!(\on{A-Sch})\tboxtimes \CF$ to $\fL(N)\fL^+(G)^j\times \CY$
goes over to
$$\chi^!(\on{A-Sch})\tboxtimes \CF'.$$

\medskip

This makes the assertion about $(\fL(N),\chi)$-equivariance is immediate. For the assertion regarding $K^j_j$-equivariance, 
it suffices to show that $\CF'$ is $K^j_j$-equivariant with respect to the action
$$k\cdot (g,y)=(g\cdot k^{-1},y).$$

\medskip

Note that $K^j_j\subset I^j$ and $\chi|_{K^j_j}$ is trivial. Hence, $\CF$ is obtained as pullback of an object $\CF''$ on the quotient stack
$K^j_j\backslash \CY$. Our $\CF'$ is thus the pullback of $\CF''$ along the composite map 
$$\fL^+(G)^j\times \CY\to \CY\to K^j_j\backslash \CY.$$

Hence, it suffices to show that the above composite map is $K^j_j$-\emph{invariant} for the above action of $K^j_j$ on $\fL^+(G)^j\times \CY$.
However, this follows from the normality of $K^j_j$ in $\fL^+(G)^j$.

\end{proof}

\ssec{Proof of \propref{p:pure clean}}

\sssec{}

The idea of the proof is that the category 
$$\Shv((\ol{\fL(N)\fL^+(G)^j}-\fL(N)\fL^+(G)^j)/K^j_j\times \CY)^{\fL(N),\chi)}$$
is zero (which follows from \lemref{l:character non-trivial} below). 

\medskip

However, that fact on its own does not seem to suffice for the proof
of the cleanness statement, because the functor 
$$\Av_*^{\fL(N),\chi}:\Shv(\ol{\fL(N)\fL^+(G)^j}/K^j_j\times \CY)\to
\Shv(\ol{\fL(N)\fL^+(G)^j}/K^j_j\times \CY)^{\fL(N),\chi)}$$
does not a priori preserve the subcategory of objects supported on
$$(\ol{\fL(N)\fL^+(G)^j}-\fL(N)\fL^+(G)^j)/K^j_j\subset \ol{\fL(N)\fL^+(G)^j}/K^j_j.$$

So we need a employ more delicate analysis. 

\sssec{}
Denote 
$$\CZ:=\ol{\fL(N)\fL^+(G)^j}/K^j_j\times \CY \text{ and } \CZ^0:=\fL(N)\fL^+(G)^j/K^j_j\times \CY.$$


\medskip

We will argue by contradiction, so let us be given a non-zero map
\begin{equation} \label{e:map to vanish}
j_*(\CF)\to \CF',
\end{equation}
where $\CF_1$ is supported on $\CZ-\CZ^0$.

\medskip

Let $Z\subset \CZ$ be a (finite-dimensional) subscheme of $\CZ$ such that the resulting map
\begin{equation} \label{e:map to vanish Z}
j_*(\CF)|_Z\to \CF'|_Z
\end{equation}
is non-zero. 

\medskip

Consider the intersection $Z\cap (\CZ-\CZ^0)$. We will prove:

\begin{lem} \label{l:character non-trivial}
There exists a large enough subgroup $N^\alpha\subset \fL(N)$ so that for every point $z$ in the intersection $Z\cap (\CZ-\CZ^0)$,
the restriction of the character $\chi$ to
$$\on{Stab}_{N^\alpha}(z)\subset N^\alpha\subset \fL(N)$$
is non-trivial. 
\end{lem}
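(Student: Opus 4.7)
The character $\chi|_{\fL(N_{\alpha_i})}$ for a simple root $\alpha_i$ is the residue map on $\fL(N_{\alpha_i}) = k((t))$, so it is non-trivial on any subgroup containing $t^{-1}\cdot k[[t]]$. The strategy is to show that for each $z \in Z \cap (\CZ-\CZ^0)$ the stabilizer $\on{Stab}_{\fL(N)}(z)$ contains an $\fL(N)$-conjugate of such an element, and then to absorb finitely many of these conjugates into one sufficiently large $N^\alpha$.

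First I will reduce to finitely many $\fL(N)$-orbits on the affine Grassmannian. Since $Z$ is a finite-type subscheme of $\CZ = \ol{\fL(N)\fL^+(G)^j}/K^j_j \times \CY$, its image under the projection to $\fL(G)/\fL^+(G)^j \cong \Gr$ is contained in some $\ol{S^0} \cap \grb^\lambda$, and by the standard finite-dimensionality of such intersections (Mirkovi\'c--Vilonen) this meets only finitely many semi-infinite strata $S^\mu$. Let $A \subset -\Lambda^{\on{pos}}\setminus\{0\}$ be the resulting finite set. For each $\mu \in A$, writing $\mu = -\sum n_j \alpha_j^\vee$ with $n_j \geq 0$ not all zero, one has $\langle \mu, 2\rho\rangle = -2\sum n_j < 0$; combined with $2\rho = \sum c_j \alpha_j$ with $c_j > 0$, this produces a simple root $\alpha_i(\mu)$ with $\langle \mu, \alpha_i(\mu)\rangle \leq -1$.

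Next I will compute the stabilizer. By (the corrected reading of) \propref{p:K-equiv}, $\fL(N)$ acts on $\CZ$ by left multiplication on the first factor only, so the stabilizer of $z = (\bar g, y)$ depends only on $\bar g$. For a lift $g = n_0 \cdot t^\mu \cdot k$ of $\bar g$ with $n_0 \in \fL(N)$ and $k \in \fL^+(G)^j$, and using that $K^j_j$ is normal in $\fL^+(G)^j$, one gets
$$\on{Stab}_{\fL(N)}(\bar g) = n_0\bigl(\fL(N)\cap t^\mu K^j_j t^{-\mu}\bigr)n_0^{-1}.$$
Thanks to $K^j_j \cap \fL(N_{\alpha_i}) = \fL^+(N_{\alpha_i})$ (a consequence of $\langle \check\rho, \alpha_i\rangle = 1$), the $\alpha_i(\mu)$-component of $\fL(N)\cap t^\mu K^j_j t^{-\mu}$ equals $t^{\langle \mu,\alpha_i(\mu)\rangle}\cdot k[[t]]$, which contains an element $h_\mu$ corresponding to $t^{-1}$, so $\chi(h_\mu) \neq 0$. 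Since $\chi$ factors through the abelianization of $\fL(N)$, the conjugate $n_0 h_\mu n_0^{-1} \in \on{Stab}_{\fL(N)}(z)$ still satisfies $\chi(n_0 h_\mu n_0^{-1})\neq 0$.

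Finally, as $z$ varies over the finite-type subscheme $Z \cap (\CZ - \CZ^0)$, the parameter $n_0$ ranges in a bounded subset of $\fL(N)$, so for each $\mu \in A$ the elements $n_0 h_\mu n_0^{-1}$ form a bounded family. Enlarging $N^\alpha$ in the presentation \eqref{e:loop N as colim} to contain all such conjugates over the finitely many $\mu \in A$ guarantees that $\on{Stab}_{N^\alpha}(z)$ contains the relevant $n_0 h_{\mu(z)} n_0^{-1}$, on which $\chi$ is non-trivial. The main technical point is the orbit-finiteness in the first step, which depends on the semi-infinite stratification of $\ol{S^0}$; the remaining ingredients are elementary root-theoretic and conjugation manipulations.
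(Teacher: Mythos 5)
Your argument is correct and follows essentially the same route as the paper's, just with somewhat more explicit bookkeeping. Both proofs exploit the same pair of facts: (i) for $z$ ranging over a finite-type piece $Z$, the relevant elements of $\fL(N)$ land in a bounded subset, so can be absorbed into a single $N^\alpha$; and (ii) once $z$ is over a stratum $S^\mu$ with $\mu\in-\Lambda^{\on{pos}}\setminus\{0\}$ (hence non-dominant), the inclusion $\fL^+(N)\subset K^j_j$ forces the $\fL(N)$-stabilizer of $z$ to contain a $t^\mu$-conjugate of $\fL^+(N)$ (up to a further $\fL(N)$-conjugation), and along a simple root $\alpha_i$ with $\langle\alpha_i,\mu\rangle<0$ the residue character $\chi$ is visibly non-trivial on that conjugate.

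The differences are organizational rather than mathematical. The paper observes at the outset that the \emph{entire} stabilizer $\on{Stab}_{\fL(N)}(z)$ is a bounded subgroup, uniformly over $z\in Z$, and then chooses $N^\alpha$ to contain all of these; this collapses the problem to showing $\chi|_{\on{Stab}_{\fL(N)}(z)}\ne 0$ for a single $z$, which it handles after a WLOG conjugation that replaces $z$ by $\lambda(t)$. You instead carry the $n_0$-conjugation through the computation explicitly, exhibit the specific element $n_0 h_\mu n_0^{-1}$ (with $h_\mu$ corresponding to $t^{-1}$ in an appropriate simple-root subgroup), and then enlarge $N^\alpha$ to contain these finitely many conjugate families. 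Your reduction to finitely many strata via the Mirkovi\'c--Vilonen-type finiteness is correct but not strictly needed — it already follows from $Z$ being of finite type, which is what the paper leans on directly. Incidentally, your reading that the paper has a sign slip ($\on{Ad}_{\lambda(t)}$ rather than $\on{Ad}_{-\lambda(t)}$ in the displayed inclusion) is the right one.
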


The lemma will be proved below. Let is proceed with the proof of \propref{p:pure clean}.

\sssec{}
 
Since $\CF$ is $(\fL(N),\chi)$-equivariant, and in particular $(N^\alpha,\chi)$-equivariant, the map \eqref{e:map to vanish} factors
as $$j_*(\CF)\to \Av_*^{N_\alpha}(\CF')\to \CF'.$$

In particular, the map \eqref{e:map to vanish Z} factors as 
$$j_*(\CF)|_Z\to \Av_*^{N_\alpha}(\CF')|_Z\to \CF'|_Z.$$

\medskip

We will arrive to a contradiction by showing that 
$$\Av_*^{N_\alpha}(\CF')|_Z=0.$$

Indeed, \lemref{l:character non-trivial} implies that for any $\CF''\in \Shv(\CZ-\CZ^0)^{N^\alpha,\chi}$, the restriction $\CF''|_Z$ vanishes. 

\ssec{Proof of \lemref{l:character non-trivial}}

\sssec{Step 1}

Note that for any $z\in \CZ$, its stabilizer $\on{Stab}_{\fL(N)}(z)$ is a bounded subgroup in $\fL(N)$. Hence, given a finite-dimensional
$Z\subset \CZ$, there exists a large enough subgroup $N^\alpha\subset \fL(N)$ so that
$$\on{Stab}_{\fL(N)}(z)\subset N^\alpha,\,\,\forall z\in Z.$$

\medskip

Hence, to prove the lemma, it suffices to show that for any $z\in \CZ-\CZ^0$, 
the restriction of $\chi$ to $\on{Stab}_{\fL(N)}(z)$ is non-trivial. 

\sssec{Step 2}

Note that for the analysis of the stabilizers, the $\CY$ factor is irrelevant. Thus, let $z$ belong to 
$$\fL(N)\lambda(t)\fL^+(G)^j/K^j_j$$
with $0\neq \lambda\in -\Lambda^{\on{pos}}$. In particular, $\lambda$ is non-dominant. 

\medskip

Conjugating by an element of $\fL(N)$, we can further assume that $z\in  \lambda(t)\fL^+(G)^j/K^j_j\times \CY$. Furthermore, since 
$K^j_j$ is normal in $\fL^+(G)^j$. Hence, we can assume that $z=\lambda(t)$. 

\sssec{Step 3}

Note that 
$$\fL^+(N)\subset K^j_j,$$
hence
$$\on{Ad}_{-\lambda(t)}(\fL^+(N))\subset \on{Stab}_{\fL(N)}(z).$$

However, it is clear that for $\lambda$ non-dominant, the restriction of $\chi$ to $\on{Ad}_{-\lambda(t)}(\fL^+(N))$
is non-trivial. 

\section{Invariants vs coinvariants for group actions}  \label{s:coinv}

\ssec{The statement}

\sssec{}

Let $H$ be an algebraic group (of finite type). Let $\bC$ be a DG category equipped with an action of $H$,
which by definition means an action of the monoidal category $\Shv(H)$ (the monoidal structure is given
by *-convolution). 

\medskip

Consider the functor
\begin{equation} \label{e:Av functor}
\on{Av}^H_*:\bC\to \bC^H.
\end{equation}

The goal of this appendix is to prove the following result:

\begin{thm} \label{t:coinvariants}
The functor \eqref{e:Av functor} is universal among $H$-invariant functors from $\bC$ to categories 
equipped with the \emph{trivial} $H$-action. 
\end{thm}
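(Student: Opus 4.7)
The plan is to verify the universal property of $\on{Av}^H_*: \bC \to \bC^H$ (with $\bC^H$ regarded as carrying the trivial $H$-action) by interpreting both sides in terms of the module-category structure over $\Shv(H)$, and then to dispatch the comparison via a duality argument.

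The first step is to check that $\on{Av}^H_*$ is actually $H$-invariant. Using the cosimplicial realization $\bC^H \simeq \on{Tot}(\bC^\bullet)$ with $\bC^n = \Shv(H)^{\otimes n} \otimes \bC$ recalled in \secref{sss:inv}, this reduces to exhibiting coherent natural isomorphisms $\on{Av}^H_*(\CF \star c) \simeq \on{C}^\cdot(H,\CF) \otimes \on{Av}^H_*(c)$, which follow from the projection formula applied to the simplicial action groupoid of $H$. Granted this, the universal property of the coinvariants $\bC_H$ produces a comparison functor $\Phi: \bC_H \to \bC^H$, and the theorem is precisely the statement that $\Phi$ is an equivalence.

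To prove $\Phi$ is an equivalence, I would reinterpret both targets in terms of Hom and tensor over the monoidal category $\Shv(H)$:
$$\bC^H \simeq \on{Funct}_{\Shv(H)\mmod}(\Vect, \bC), \qquad \bC_H \simeq \Vect \otimes_{\Shv(H)} \bC,$$
where $\Vect$ carries the trivial $\Shv(H)$-action via the augmentation $\CF \mapsto \on{C}^\cdot(H,\CF)$. Under the self-duality $\Shv(H)^\vee \simeq \Shv(H)$ coming from Verdier duality on $H$, the augmentation dualizes to a co-augmentation $\Vect \to \Shv(H)$ (given by $\sfe \mapsto \omega_H$ up to normalization). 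Together these data exhibit $\Vect$ as dualizable in $\Shv(H)\mmod$ with $\Vect^\vee \simeq \Vect$, and the equivalence $\bC_H \simeq \bC^H$ then follows from the general identity $\on{Funct}_{\Shv(H)\mmod}(\M, \bC) \simeq \M^\vee \otimes_{\Shv(H)} \bC$ for a dualizable $\Shv(H)$-module $\M$. One finally checks that the resulting equivalence intertwines $\on{Av}^H_*$ with the canonical projection $p_H: \bC \to \bC_H$ by matching the cosimplicial and simplicial resolutions, using the comonadic Beck-Chevalley property of the adjunction $\oblv_H \dashv \on{Av}^H_*$ noted in \secref{sss:Averaging functors}.

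The main obstacle is the self-duality of $\Vect$ as a $\Shv(H)$-module. This is a finiteness input about the classifying stack $BH$: concretely, it amounts to the pushforward $\on{pr}_*: \Shv(H) \to \Vect$ admitting a continuous left adjoint together with a Verdier-dual description, which in turn relies on the properness of the diagonal of $H$ and on smoothness of $H$, both of which hold by the hypothesis that $H$ is algebraic of finite type. This is precisely the place where the argument uses finite-dimensionality in an essential way; for a group ind-scheme such as $\fL(N)$, the analogous self-duality requires a shifted normalization, which is why the pseudo-identity functor $\on{Ps-Id}$ of \secref{ss:pseudo-id} is needed rather than the naive averaging functor in the loop-group setting.
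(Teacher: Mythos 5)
Your high-level strategy captures the right conceptual content of the theorem: the assertion is equivalent to $\Vect$ (with trivial $H$-action) being \emph{dualizable} as a $\Shv(H)$-module, indeed self-dual, and once that is granted the equivalence $\bC_H\simeq\bC^H$ is formal from $\on{Funct}_{\Shv(H)\mmod}(\M,\bC)\simeq\M^\vee\otimes_{\Shv(H)}\bC$. The finishing step via the Beck--Chevalley property to match $\on{Av}^H_*$ with $p_H$ is correct, and your closing remark about the failure for $\fL(N)$ and the role of $\on{Ps-Id}$ shows you understand why the finite-type hypothesis matters.

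The gap is in the claimed dualizability, and the finiteness inputs you name are not the ones doing the work. The augmentation $\on{pr}_*$ admitting a continuous left adjoint is automatic ($V\mapsto V\otimes\sfe_H$) and gives nothing; properness of the diagonal and smoothness of $H$ are likewise not what make the duality datum exist. The serious obstruction is that $\Shv(H)$ is \emph{not rigid}, so you cannot invoke the general principle that a module dualizable as a plain DG category is dualizable as a module; and the Verdier self-duality of $\Shv(H)$, together with the pair (augmentation, coaugmentation $\sfe\mapsto\omega_H$), does not by itself yield coevaluation/evaluation data satisfying the triangle identities. The paper's proof supplies exactly the missing ingredient: since the module $\Vect$ factors through the monoidal colocalization $\Shv(H)^0\simeq\on{C}^\cdot(H)\mod$ (generated by the constant sheaf), the tensor product can be computed over $\Shv(H)^0$; and $\on{C}^\cdot(H)\mod$ is in turn a monoidal colocalization of the \emph{rigid} category $\on{C}^\cdot(H)\mod^{\on{ren}}$, whose rigidity rests on the actual finiteness fact used --- that $\on{C}^\cdot(H)$ is finite-dimensional over $\sfe$. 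Over a rigid base, tensoring with a plainly-dualizable module commutes with limits (\cite[Chapter 1, Prop.~9.5.3]{GR1}), which is what is needed.

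A further structural difference worth noting: the paper does not prove dualizability of $\Vect$ head-on. It first reduces the theorem (via a resolution of $\bC$ as the totalization of $\Shv(H)\otimes\bC\otimes\Shv(H)^{\otimes n}$, together with the tautological case $\bC=\Shv(H)$) to the weaker assertion that $\bC\mapsto\bC_H$ commutes with limits, and then proves that via the locally-constant/rigidity argument above. Your route through dualizability is conceptually cleaner, but it asks for strictly more (a full duality datum rather than just commutation with limits), and the only leverage available to produce it is still the rigidity of the renormalized locally constant category, which your proposal never reaches.
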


Another way to state \thmref{t:coinvariants} is that the $H$-invariant functor \eqref{e:Av functor}
defines an equivalence 
\begin{equation} \label{e:inv vs coinv}
\bC_H\simeq \bC^H. 
\end{equation}

\sssec{An example}  \label{sss:H reg}

Take $\bC=\Shv(H)$. We have
$$\Vect\simeq \Shv(H)^H, \quad \sfe\mapsto \sfe_H.$$
The functor $\on{Av}^H_*: \Shv(H)\to \Shv(H)^H\simeq \Vect$ identifies with
$$\CF\mapsto \on{C}^\cdot(H,\CF).$$

This makes the assertion of  \thmref{t:coinvariants} manifest in this case.

\sssec{}

As a formal corollary of \thmref{t:coinvariants}, we obtain:

\begin{cor} \label{c:inv and colimits}  \hfill

\smallskip

\noindent{\em(a)} 
The functor 
$$\bC\mapsto \bC^H, \quad H\mmod\to \on{DGCat}_{\on{cont}}$$ commutes
with colimits.

\smallskip

\noindent{\em(b)}  The functor
$$\bC\mapsto \bC_H, \quad H\mmod\to \on{DGCat}_{\on{cont}}$$ commutes
with limits.
\end{cor}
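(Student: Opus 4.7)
The plan is to deduce both parts of the corollary formally from \thmref{t:coinvariants}, exploiting the fact that one of the two descriptions of $\bC^H \simeq \bC_H$ is manifestly compatible with the required kind of limit/colimit.

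First I would observe that, by construction, the functor $\bC \mapsto \bC_H$ from $H\mmod$ to $\DGCat_{\on{cont}}$ commutes with colimits. Indeed, the universal property of $\bC_H$ identifies it with the relative tensor product $\Vect \underset{\Shv(H)}\otimes\, \bC$, where $\Vect$ carries the trivial $H$-action as in \secref{sss:inv}. Since the Lurie tensor product on $\DGCat_{\on{cont}}$ preserves colimits in each variable, and the relative tensor product is a geometric realization of a simplicial object formed from the absolute one, the assignment $\bC \mapsto \Vect \underset{\Shv(H)}\otimes \bC$ commutes with colimits in $\bC$ taken in $H\mmod$ (since the forgetful functor $H\mmod \to \DGCat_{\on{cont}}$ creates colimits). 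Dually, $\bC \mapsto \bC^H = \on{Funct}_{H\mmod}(\Vect,\bC)$ commutes with limits in $\bC$, since mapping out of a fixed object preserves limits and the forgetful functor $H\mmod \to \DGCat_{\on{cont}}$ creates limits as well.

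Next I would invoke \thmref{t:coinvariants}, which supplies a canonical equivalence $\bC_H \simeq \bC^H$ realized by the averaging functor $\Av^H_*$. The crucial point is that this equivalence is natural in $\bC \in H\mmod$: the functor $\Av^H_*$ is intrinsically defined from the $\Shv(H)$-action (it is convolution with $\sfe_H$ composed with the comonadic descent of \secref{sss:Averaging functors}), so it assembles into a natural transformation between the two functors $H\mmod \rightrightarrows \DGCat_{\on{cont}}$ given by $\bC \mapsto \bC_H$ and $\bC \mapsto \bC^H$. Hence the equivalence $\bC_H \simeq \bC^H$ is itself an equivalence of functors $H\mmod \to \DGCat_{\on{cont}}$.

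Combining these two ingredients: for part (a), $\bC \mapsto \bC^H$ is isomorphic to $\bC \mapsto \bC_H$ as functors on $H\mmod$, and the latter commutes with colimits by the first paragraph; for part (b), $\bC \mapsto \bC_H$ is isomorphic to $\bC \mapsto \bC^H$, and the latter commutes with limits. The main (minor) obstacle to watch for is verifying that the identification in \thmref{t:coinvariants} is genuinely functorial in $\bC$, which amounts to checking that the construction of $\Av^H_*$ is natural in morphisms of $H$-module categories; this is immediate from the description of $\Av^H_*\circ \oblv_H$ as the comonad $\sfe_H \star -$ recalled in \secref{sss:Averaging functors}, since convolution by a fixed object of $\Shv(H)$ is manifestly natural in the $\Shv(H)$-module.
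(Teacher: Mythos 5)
Your proposal is correct and follows exactly the route the paper takes: observe that coinvariants tautologically commute with colimits (as a relative tensor product) and invariants with limits, then transport both facts across the equivalence of \thmref{t:coinvariants}. Your attention to the naturality of $\Av^H_*$ in $\bC$ is a reasonable point to raise but is indeed immediate from its description as convolution with $\sfe_H$, as you note; the paper leaves this implicit.
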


\begin{proof}
The assertion about colimits is obvious for the functor $\bC\mapsto \bC_H$ and about limits for the functor $\bC\mapsto \bC^H$.
Now apply \eqref{e:inv vs coinv}.
\end{proof}

\sssec{}  \label{sss:limits enough}

Let also note that the conclusion of either point (a) or (b) of \corref{c:inv and colimits} implies \thmref{t:coinvariants}.
Let us prove this for point (b):

\begin{proof}

For any $\bC$ acted on by $H$, we have
$$\bC\simeq (\Shv(H)\otimes \bC)^H,$$
where $H$-invariants are taken with respect to the diagonal action on $\bC$ and the action on $\Shv(H)$
by right translations. The above equivalence respects the $H$-actions, where the action on the RHS comes from
the action on $\Shv(H)$ be left translations.

\medskip

In other words, we obtain that $\bC$ is isomorphic to the totalization of the cosimplicial DG category acted on by $H$ 
with terms 
$$\bC^n:=\Shv(H) \otimes \bC \otimes \Shv(H)^{\otimes n}, \quad n\geq 0.$$

As in Example \ref{sss:H reg}, the functor
$$(\bC^n)_H\to (\bC^n)^H$$
is an equivalence for every $n$. 

\medskip

We have a commutative diagram
$$
\CD
\bC_H  @>>>  \bC^H  \\
@V{\sim}VV  @VV{\sim}V  \\
\on{Tot}(\bC^\bullet)_H   @>>>  \on{Tot}(\bC^\bullet)^H \\
@VVV   @VV{\sim}V  \\
\on{Tot}((\bC^\bullet)_H)  @>{\sim}>> \on{Tot}((\bC^\bullet)^H). 
\endCD
$$

Assuming \corref{c:inv and colimits}(b), we obtain that the lower left vertical arrow is an equivalence.
Hence, $\bC_H\to \bC^H$ is also an equivalence, as desired. 

\end{proof}

\ssec{Locally constant actions}

\sssec{}  \label{sss:loc const}

For a scheme $Y$, let 
$$\Shv(Y)^0\subset \Shv(Y)$$
be the full subcategory generated by the constant sheaf $\sfe_Y\in \Shv(Y)$. Since  $\sfe_Y$ is compact, the tautological
embedding 
$$\Shv(Y)^0\hookrightarrow \Shv(Y)$$
admits a continuous right adjoint. 

\medskip

Let $\on{C}^\cdot(Y)$ denote the (commutative) algebra of cochains on $H$ (in our sheaf theory). I.e.,
$$\on{C}^\cdot(Y):=\CEnd_{\Shv(Y)}(\sfe_Y,\sfe_Y).$$

We have a canonical equivalence
$$\Shv(Y)^0\simeq \on{C}^\cdot(Y)\mod, \quad \CF\mapsto \CHom_{\Shv(Y)}(\sfe_Y,\CF)\simeq \on{C}^\cdot(Y,\CF).$$

\sssec{}

We take $Y=H$. Note that the subcategory 
$$\Shv(H)^0\hookrightarrow \Shv(H)$$
is preserved by the monoidal operation on $\Shv(H)$. Hence, $\Shv(H)^0$ acquires a monoidal structure.

\medskip

In terms of the identification
$$\Shv(H)^0\simeq \on{C}^\cdot(H)\mod,$$
this monoidal sructure corresponds to the structure of (commutative) Hopf algebra on $\on{C}^\cdot(H)$, given by the
group law on $H$.

\medskip

We note that the functor
\begin{equation} \label{e:coloc H}
\Shv(H)^0\twoheadleftarrow \Shv(H),
\end{equation} 
which is a priori lax-monoidal, is actually monoidal. This follows, e.g., from the fact that
$$\on{C}^\cdot(H,\CF_1\star \CF_2)\simeq \on{C}^\cdot(H,\CF_1)\otimes \on{C}^\cdot(H,\CF_2).$$
In particular, we obtain that $\Shv(H)^0$ is unital.

\sssec{}  \label{sss:loc const limits}

We obtain that $\Shv(H)^0$ is a retract of $\Shv(H)$ as a category acted on by $H$. In particular, we 
obtain that $\Shv(H)^0$ is dualizable \emph{as a $\Shv(H)$}-module category (see \cite[Chapter 1, Sect. 8.6]{GR1} for what this means).

\medskip

In particular, we obtain that the functor 
$$\bC\mapsto \Shv(H)^0\underset{\Shv(H)}\otimes \bC, \quad  \Shv(H)\mmod\to  \Shv(H)^0\mmod,$$
left adjoint to the restriction functor 
\begin{equation} \label{e:loc const actions}
\Shv(H)^0\mmod\to \Shv(H)\mmod=:H\mmod,
\end{equation}
commutes with \emph{limits}. 

\begin{rem}
Since the functor \eqref{e:coloc H} is a colocalization, we obtain that the functor \eqref{e:loc const actions},
and its left and right adjoints are isomorphic. Indeed, the right adjoint in question is given by
$$\bC\mapsto \on{Funct}_{\Shv(H)\mmod}(\Shv(H)^0\mmod,\bC).$$
However, the self-duality of $\Shv(H)$ as a left/right module category over itself implies that the dual of 
$\Shv(H)^0$ as a left $\Shv(H)$-module identifies with $\Shv(H)^0$ as a right $\Shv(H)$-module, so
\begin{equation} \label{e:ten with H0}
\on{Funct}_{\Shv(H)\mmod}(\Shv(H)^0\mmod,\bC)\simeq \Shv(H)^0\underset{\Shv(H)}\otimes \bC.
\end{equation}
\end{rem}

\begin{rem} \label{r:loc const act}
Note also that for $\bC$ as above, $\Shv(H)^0\underset{\Shv(H)}\otimes \bC$ is the colocalization of $\bC$,
and is the maximal full subcategory on which the action of $\Shv(H)$ factors through $\Shv(H)^0$.
\end{rem} 

\sssec{}

Since the functor
$$\Vect\to \Shv(H), \quad \sfe\mapsto \sfe_H$$
factors through $\Shv(H)^0$, the augmentation functor
$$\Shv(H) \to \Vect, \quad \CF\mapsto \on{C}^\cdot(H,\CF)$$
factors as
$$\Shv(H) \twoheadrightarrow \Shv(H)^0\to \Vect.$$

According to  \secref{sss:limits enough}, in order to prove \thmref{t:coinvariants}, it suffices
to show that the functor
$$\bC\mapsto \Vect\underset{\Shv(H)}\otimes \bC, \quad H\mmod\to \Vect$$
commutes with limits. We rewrite
$$\Vect\underset{\Shv(H)}\otimes \bC\simeq \Vect\underset{\Shv(H)^0}\otimes (\Shv(H)^0 \underset{\Shv(H)}\otimes \bC).$$
 
Hence, by \ref{sss:loc const limits}, in order to prove
\thmref{t:coinvariants}, it suffices to show that the functor
$$\bC'\mapsto \Vect\underset{\Shv(H)^0}\otimes \bC', \quad \Shv(H)^0\mmod\to \on{DGCat}_{\on{cont}}$$
commutes with limits. We will prove:

\begin{prop} \label{p:H0 limits}
The functor
$$\bC'\mapsto \bC^r\underset{\Shv(H)^0}\otimes \bC', \quad \Shv(H)^0\mmod\to \on{DGCat}_{\on{cont}}$$
commutes with limits for any right $\Shv(H)^0$-module category $\bC^r$ which is dualizable as a plain DG category.
\end{prop}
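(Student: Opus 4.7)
The plan is to express the functor $\bC' \mapsto \bC^r \otimes_{\Shv(H)^0} \bC'$ as a mapping-space functor out of a $\Shv(H)^0$-module dual of $\bC^r$, so that commutation with limits becomes manifest.

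First, I will use \secref{sss:loc const} to identify $\Shv(H)^0 \simeq \on{C}^\cdot(H)\mod$ as a (symmetric) monoidal DG category, with the monoidal structure coming from the commutative Hopf-algebra structure on the cochain algebra $\on{C}^\cdot(H)$. As a plain DG category, $\Shv(H)^0$ is compactly generated by $\sfe_H$ (whose endomorphism algebra is $\on{C}^\cdot(H)$), and hence is dualizable; moreover, the standard identification $(\on{C}^\cdot(H)\mod)^\vee \simeq \on{C}^\cdot(H)\mod$ gives a self-duality of $\Shv(H)^0$.

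Second, I will exploit the dualizability of $\bC^r$ as a plain DG category to endow the plain dual $\bC^{r,\vee}$ with a canonical $\Shv(H)^0$-module structure. Concretely, dualizing the action map $\Shv(H)^0 \otimes \bC^r \to \bC^r$ (using dualizability of both factors) produces a coaction
\[
\bC^{r,\vee} \to \Shv(H)^{0,\vee} \otimes \bC^{r,\vee} \simeq \Shv(H)^0 \otimes \bC^{r,\vee},
\]
and conversion of this coaction into an honest action makes use of the antipode encoded in the Hopf-algebra structure on $\on{C}^\cdot(H)$. The upshot is that $\bC^r$ becomes dualizable within the 2-category $\Shv(H)^0\mmod$, with module-dual $\bC^{r,\vee}$.

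Third, once module-dualizability is in hand, the general formalism of dualizable module categories over a symmetric monoidal DG category (see \cite{GR1}, Chapter 1, Sect. 8) yields a canonical equivalence
\[
\bC^r \otimes_{\Shv(H)^0} \bC' \;\simeq\; \on{Funct}_{\Shv(H)^0}(\bC^{r,\vee}, \bC'),
\]
functorial in $\bC' \in \Shv(H)^0\mmod$. The right-hand side is a $\Shv(H)^0$-linear mapping-space functor out of the fixed object $\bC^{r,\vee}$, hence manifestly preserves all limits in $\bC'$, proving the proposition.

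The main obstacle is the passage from plain to module dualizability in the second step: this is \emph{not} a formal consequence of plain dualizability, since $\Shv(H)^0$ fails to be rigid as a symmetric monoidal category in general (e.g., for $H = \BG_m$ the monoidal unit is not compact). What saves us is the specific form of $\Shv(H)^0$ as modules over a commutative \emph{Hopf} DG algebra: the coalgebra structure on $\on{C}^\cdot(H)$, which is what produces the monoidal structure on $\Shv(H)^0$ in the first place, is exactly what allows the dualization-of-action construction in Step 2 to produce an honest module structure on $\bC^{r,\vee}$. Verifying the required coherences is the technical heart of the argument.
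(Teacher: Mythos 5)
Your reduction is sound in its first and third steps: it is indeed enough to show that $\bC^r$ is dualizable as an object of $\Shv(H)^0\mmod$, since then $\bC^r \otimes_{\Shv(H)^0} \bC' \simeq \on{Funct}_{\Shv(H)^0\mmod}(\bC^{r,\vee}, \bC')$ is a mapping-out functor and manifestly preserves limits. The paper's proof proves exactly this module-dualizability, so you are not aiming at the wrong target.

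The gap is in your second step. You acknowledge that $\Shv(H)^0$ is not rigid and propose to compensate using the Hopf/antipode structure on $\on{C}^\cdot(H)$, but the proposed mechanism does not close the gap. Dualizing the action $\Shv(H)^0 \otimes \bC^r \to \bC^r$ gives a \emph{coaction} $\bC^{r,\vee} \to \Shv(H)^{0,\vee} \otimes \bC^{r,\vee}$, and the passage from such a coaction to an honest $\Shv(H)^0$-module structure is not a formal consequence of the antipode at the 2-categorical level: the antipode produces a monoidal autoequivalence of $\Shv(H)^0$, but it does not by itself furnish the evaluation and coevaluation 1-morphisms in $\Shv(H)^0\mmod$ nor verify the zigzag identities. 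The classical Hopf-theoretic duality of modules operates on the subcategory of modules that are dualizable (i.e., perfect) over the ground field, and its categorification is precisely the statement that the ind-completion $\on{C}^\cdot(H)\mod^{\on{ren}}$ of finite-dimensional modules \emph{is} rigid --- which is exactly the rigid monoidal category the paper introduces. In other words, what your ``use the antipode'' step actually buys is rigidity of $\on{C}^\cdot(H)\mod^{\on{ren}}$, not of $\on{C}^\cdot(H)\mod$ itself, and you still need the paper's observation that $\Psi : \on{C}^\cdot(H)\mod^{\on{ren}} \to \on{C}^\cdot(H)\mod$ is a monoidal colocalization (so that relative tensor products over the two coincide) to transfer the conclusion back to $\Shv(H)^0$. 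Without that renormalization device your argument does not go through; once you add it you have essentially reproduced the paper's proof, and the appeal to \cite[Chapter 1, Prop.\ 9.5.3]{GR1} is then the clean way to finish.
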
 

\ssec{Rigidity}

\sssec{}  \label{sss:ren}

For a finite type scheme $Y$, let 
$$\on{C}^\cdot(Y)\mod^{\on{fin.dim}}\subset \on{C}^\cdot(Y)\mod$$
be the full (but not cocomplete) subcategory consisting of modules are that
are finite-dimensional\footnote{In particular, have finitely many non-zero cohomology groups.}
over the field of coefficients $\sfe$.

\medskip

Let $\on{C}^\cdot(Y)\mod^{\on{ren}}$ denote the ind-completion of $\on{C}^\cdot(Y)\mod^{\on{fin.dim}}$.
The tautological embedding
$$\on{C}^\cdot(Y)\mod^{\on{fin.dim}}\hookrightarrow \on{C}^\cdot(Y)\mod$$
gives rise to a continuous functor
$$\Psi:\on{C}^\cdot(Y)\mod^{\on{ren}}\to \on{C}^\cdot(Y)\mod.$$

Since $\on{C}^\cdot(Y)$ is finite-dimensional, the functor $\Psi$ admits a left adjoint, denoted $\Xi$, given by sending 
the compact generator
\begin{equation} \label{e:generator}
\on{C}^\cdot(Y)\in \on{C}^\cdot(Y)\mod
\end{equation} 
to $\on{C}^\cdot(Y)$ viewed as an object of $\on{C}^\cdot(Y)\mod^{\on{fin.dim}}\subset \on{C}^\cdot(Y)\mod^{\on{ren}}$.

\medskip

It is clear that the co-unit of the adjunction
$$\on{Id}\to \Psi\circ \Xi$$
is an isomorphism when evaluated on the generator \eqref{e:generator}. Hence $\Xi$ is fully faithful, and so 
$\Psi$ is a colocalization. 

\sssec{}

Take $Y=H$. The subcategory 
$$\on{C}^\cdot(H)\mod^{\on{fin.dim}}\subset \on{C}^\cdot(H)\mod$$
is preserved by the monoidal operation. Hence, 
$\on{C}^\cdot(H)\mod^{\on{ren}}$ acquires a monoidal structure so that the functor $\Psi$ is monoidal.

\medskip

Hence, the restriction functor
$$(\on{C}^\cdot(H)\mod)\mmod \to (\on{C}^\cdot(H)\mod^{\on{ren}})\mmod$$
is fully faithful, and for a pair of a left/right $\on{C}^\cdot(H)\mod$-module categories
$\bC^l$ and $\bC^r$, we have 
$$\bC^r\underset{\on{C}^\cdot(H)\mod}\otimes \bC^l\simeq \bC^r\underset{\on{C}^\cdot(H)\mod^{\on{ren}}}\otimes \bC^l.$$

\sssec{}

Hence, in order to prove \propref{p:H0 limits}, it suffices to show that the functor
$$\bC^l\mapsto \bC^r\underset{\Shv(H)^0}\otimes \bC^l, \quad (\on{C}^\cdot(H)\mod^{\on{ren}})\mmod
\to \on{DGCat}_{\on{cont}}$$
commutes with limits for any $\bC^r$ which is dualizable as a plain DG category.

\medskip

However, this follows from the fact that the monoidal category $\on{C}^\cdot(H)\mod^{\on{ren}}$ is 
\emph{rigid}, see  \cite[Chapter 1, Prop. 9.5.3]{GR1}.  

\qed[\thmref{t:coinvariants}]

\ssec{Recovering the category category from invariants}

\sssec{}

The functor
$$\bC\mapsto \bC^H, \quad H\mmod\to \on{DGCat}_{\on{cont}}$$
naturally upgrades to a functor
\begin{equation} \label{e:inv enh}
\bC\mapsto \bC^{H,\on{enh}},\quad H\mmod\to \Shv(\on{pt}/H)\mmod,
\end{equation}
where we note that
$$\Shv(\on{pt}/H)\simeq \on{Funct}_H(\Vect,\Vect).$$

\medskip

The functor \eqref{e:inv enh} is clearly not conservative. However, by construction,
it factors as the composition
$$H\mmod\to \Shv(H)^0\mmod, \quad \bC\mapsto 
\on{Funct}_{H\mmod}(\Shv(H)^0,\bC) \overset{\text{\eqref{e:ten with H0}}}\simeq  \Shv(H)^0 \underset{\Shv(H)}\otimes \bC$$
and the functor 
$$\Shv(H)^0\mmod\to \Shv(\on{pt}/H)$$
equal to the restriction of \eqref{e:inv enh} along the fully faithful embedding $\Shv(H)^0\mmod\to H\mmod$.

\sssec{}

We claim:

\begin{thm}  \label{t:reconstr}
The functor 
\begin{equation}  \label{e:enh 0}
\bC\mapsto \bC^{H,\on{enh}},\quad \Shv(H)^0\mmod \to \Shv(\on{pt}/H)\mmod
\end{equation}
is an equivalence of categories.
\end{thm}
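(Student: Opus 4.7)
The plan is to exhibit explicit mutually inverse functors and verify the equivalence by reducing to a Morita-theoretic statement that uses the rigidity of $\on{C}^\cdot(H)\mod^{\on{ren}}$ established in the previous subsection.

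First I would rewrite the functor $\bC \mapsto \bC^{H,\on{enh}}$ in a more symmetric form. By definition, $\bC^{H,\on{enh}} = \on{Funct}_{\Shv(H)^0\mmod}(\Vect, \bC)$, where $\Vect$ is equipped with its canonical $\Shv(H)^0$-action through the augmentation $\Shv(H)^0 \to \Vect$. Using the rigidity of $\on{C}^\cdot(H)\mod^{\on{ren}}$, together with the fact that $\Shv(H)^0$ is the colocalization of this rigid monoidal category, one can identify
$$\bC^{H,\on{enh}} \simeq \Vect \underset{\Shv(H)^0}\otimes \bC,$$
where the right-hand side carries a natural left action of $\Shv(\on{pt}/H) = \on{End}_{\Shv(H)^0\mmod}(\Vect)$. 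The candidate quasi-inverse is then
$$\bD \longmapsto \Vect \underset{\Shv(\on{pt}/H)}\otimes \bD, \quad \Shv(\on{pt}/H)\mmod \to \Shv(H)^0\mmod,$$
where now $\Vect$ is viewed as a right $\Shv(\on{pt}/H)$-module with compatible left $\Shv(H)^0$-action.

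Next, the verification that these are mutually inverse reduces to two bimodule identifications: first, $\Vect \underset{\Shv(H)^0}\otimes \Vect \simeq \Shv(\on{pt}/H)$ as $(\Shv(\on{pt}/H), \Shv(\on{pt}/H))$-bimodules, and second, $\Vect \underset{\Shv(\on{pt}/H)}\otimes \Vect \simeq \Shv(H)^0$ as $(\Shv(H)^0, \Shv(H)^0)$-bimodules. The first is essentially tautological---it is the definition of $\Shv(\on{pt}/H)$ as endomorphisms of the trivial $H$-module, packaged symmetrically. Both compositions of the candidate functors are $\Shv(H)^0$- (respectively $\Shv(\on{pt}/H)$-) linear and colimit-preserving, so checking the unit and counit of the adjunction suffices on generators.

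The hard part will be the second bimodule identification, which is a Koszul-duality-type assertion for the commutative Hopf algebra $\on{C}^\cdot(H)$. The natural approach is via Barr-Beck-Lurie: the adjunction between the augmentation module $\Vect$ and its free lift $\Shv(H)^0$ in $\Shv(H)^0\mmod$ gives rise to a comonad whose category of comodules must recover $\Shv(H)^0\mmod$. This comonadic descent reduces to showing that $\Vect$ is a Morita generator for the $\Shv(H)^0$-action, which in turn follows from the fact that $\on{C}^\cdot(H)\mod$ is compactly generated by $\on{C}^\cdot(H)$ itself---the image of $\Vect$ under the left adjoint to the augmentation---combined with the rigidity input that guarantees the tensor products appearing above commute with the relevant limits. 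Carefully establishing this compatibility, and verifying the Beck-Chevalley hypotheses for the relevant adjunction, is the crux of the argument.
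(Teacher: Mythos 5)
Your overall plan coincides with the paper's: identify the adjoint functor $\bD\mapsto \Vect\underset{\Shv(\on{pt}/H)}\otimes \bD$, reduce to checking the unit and counit are isomorphisms, and by colimit-preservation reduce the counit to the single generator $\bC=\Shv(H)^0$, i.e. to establishing $\Vect\underset{\Shv(\on{pt}/H)}\otimes\Vect\simeq \Shv(H)^0$. You also correctly identify this last identification as the crux.

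Where you diverge is in how you propose to prove that crucial identification, and here there is a real gap. The paper makes the computation concrete: it passes through the colocalization $\on{C}_\cdot(H)\mod \twoheadleftarrow \on{C}^\cdot(\on{pt}/H)\mod\simeq \Sym(\fa)\mod$, computes $\Vect\underset{\Sym(\fa)\mod}\otimes\Vect\simeq \Sym(\fa[1])\mod$ by an explicit Koszul resolution, and then invokes the transgression isomorphism $\on{C}^\cdot(H)\simeq\Sym(\fa[1])$ to close the loop. Your sketch replaces this with an appeal to Barr--Beck--Lurie and a claim that ``$\Vect$ is a Morita generator for the $\Shv(H)^0$-action.'' This phrasing points at the wrong mechanism: $\Vect$ (the augmentation module) is emphatically \emph{not} a projective generator of $\Shv(H)^0\simeq \on{C}^\cdot(H)\mod$, so no Morita-theoretic recipe applies at face value; the identification is genuinely a Koszul-duality / bar--cobar statement. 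You do label it as ``Koszul-duality-type,'' but you never name the two inputs that actually make the computation close---the identification of $\on{C}^\cdot(\on{pt}/H)$ with $\Sym(\fa)$ and the transgression isomorphism $\on{C}^\cdot(H)\simeq\Sym(\fa[1])$---nor do you explain how compact generation of $\on{C}^\cdot(H)\mod$ by the free module yields the needed relative tensor identification. Absent those, the ``hard part'' remains unproved; the proposal as written only outlines the easy formal reductions and waves at the substantive step.

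A smaller point: you call the unit-side identification $\Vect\underset{\Shv(H)^0}\otimes\Vect\simeq\Shv(\on{pt}/H)$ ``essentially tautological.'' It is true once you have the self-duality of $\Vect$ as a $\Shv(H)^0$-module, but that self-duality is itself a consequence of the rigidity of $\on{C}^\cdot(H)\mod^{\on{ren}}$, which is the content of \thmref{t:coinvariants}. The paper instead checks the unit directly using \corref{c:inv and colimits}(a); either route is fine, but you should be explicit that a nontrivial input is being used rather than calling it tautological.
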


The rest of this subsection is devoted to the proof of \thmref{t:reconstr}

\sssec{}

The functor left adjoint to \eqref{e:inv enh} is given by
\begin{equation} \label{e:Lloc}
\bD\mapsto \Vect \underset{\Shv(\on{pt}/H)}\otimes \bD.
\end{equation}

We claim that its essential image belongs to $\Shv(H)^0\mmod\subset H\mmod$. 
Indeed, since \eqref{e:Lloc}
commutes with colimits, it is enough to show this for the generator, i.e., $\bD=\Shv(\on{pt}/H)$, and
in this case the assertion is clear.

\medskip

We will show that both the unit and the counit of the adjunction are isomorphisms. 

\sssec{}

For $\bD$
as above, the unit of the adjunction is the canonical map
$$\bD\simeq \Shv(\on{pt}/H) \underset{\Shv(\on{pt}/H)}\otimes \bD \simeq 
\Vect^H  \underset{\Shv(\on{pt}/H)}\otimes \bD \to (\Vect \underset{\Shv(\on{pt}/H)}\otimes \bD)^H.$$

Now, by \corref{c:inv and colimits}(a), the last arrow in the above composition is an equivalence, as desired.

\sssec{}

Since the functor \eqref{e:inv enh} commutes with colimits, in order to show that that the counit of the adjunction
is an isomorphism, it is enough to do so when evaluated on $\bC=\Shv(H)^0$. In this case, the assertion
amounts to the fact that the functor
\begin{equation} \label{e:rec H0}
\Vect \underset{\Shv(\on{pt}/H)}\otimes \Vect\to \Shv(H)^0
\end{equation}
is an equivalence. 

\sssec{}

We have:
$$\Shv(\on{pt}/H)\simeq \on{C}_\cdot(H)\mod,$$
where the algebra structure on $\on{C}_\cdot(H)$ is given by the group law on $H$. The (symmetric) monoidal structure 
on $\on{C}_\cdot(H)\mod$ is given by the structure of (cocommutative) Hopf algebra on $\on{C}_\cdot(H)$.

\medskip

As in \secref{sss:ren}, we can write $\on{C}_\cdot(H)\mod$ as a (symmetric) monoidal colocalization of
the category $\on{C}_\cdot(H)\mod^{\on{ren}}$, which is equivalent to
$\on{C}^\cdot(\on{pt}/H)\mod$. 

\medskip

Write $\on{C}^\cdot(\on{pt}/H)\simeq \Sym(\fa)\mod$. Then 
$$\Vect \underset{\Shv(\on{pt}/H)}\otimes \Vect\simeq \Vect\underset{ \Sym(\fa)\mod}\otimes \Vect\simeq \Sym(\fa[1])\mod.$$

The desired equivalence \eqref{e:rec H0} follows from the identification
$$\on{C}^\cdot(H)\simeq \Sym(\fa[1]),$$
given by transgression. 

\qed[\thmref{t:reconstr}]

\ssec{The maximal subcategory with a locally constant action}  \label{ss:max}

\sssec{}

Let $\bC$ be equipped with an action of $G$. Set
$$\bC_{\on{l.c.}}:=\Shv(H)^0\underset{\Shv(H)}\otimes \bC.$$

The adjunction
$$\Shv(H)^0\rightleftarrows \Shv(H)$$
as $H$-module categories defines an adjunction
\begin{equation} \label{e:max lc}
\bC_{\on{l.c.}}\rightleftarrows \bC
\end{equation} 
as $H$-module categories.

\medskip

In particular, we obtain that $\bC_{\on{l.c.}}$ is a colocalization of $\bC$
as a plain DG category. 

\medskip

Thus, we can think of  $\bC_{\on{l.c.}}$ as the maximal sub/quotient category of $\bC$
on which the action of $H$ is locally constant.

\sssec{}

Note that the functors in \eqref{e:max lc} induce equivalences on the corresponding categories
of $H$-coinvariants, and hence invariants
$$\bC_{\on{l.c.}}^H\simeq \bC^H.$$

It is is easy to see from the constructions that the resulting colocalization functor on $\bC$ can be explicitly described 
as follows 
\begin{equation} \label{e:coloc expl}
\bc\mapsto \sfe\underset{\on{C}^\cdot(H)}\otimes\oblv_H\circ \on{Av}^H_*(\bc).
\end{equation} 

Indeed, by construction, the functor \eqref{e:coloc expl} takes values in $\bC_{\on{l.c.}}\subset \bC$; hence
by \thmref{t:reconstr}, it suffices to show that it induces the identity endo-functor on $\bC^H$, which is immediate. 

\section{Sheaf theory in infinite type}  \label{s:placid}

In this section we collect miscellanea related to the definition of the category of sheaves on ``infinite-dimensional"
algebro-geometric objects. We will the use this to define the notion of action of the loop group $\fL(G)$ on a DG
category. 

\ssec{Placid (ind-)schemes}

Although one can, in principle, define the category $\Shv(\CZ)$ for \emph{any} $k$-scheme (or even prestack) $\CZ$,
the result would be rather unwieldy. In this subsection we single out a certain class of schemes (we call them \emph{placid}),
and for which the category $\Shv(\CZ)$ is manageable. 

\medskip

The main point of the notion of placidity is that it is a \emph{property} and \emph{not} extra structure on a scheme. 

\sssec{}

Let $\CZ$ be a scheme over $k$, but not necessarily of finite type. We shall say that $\CZ$ is \emph{placid} if $\CZ$
can be written as filtered limit
\begin{equation} \label{e:present placid}
\CZ\simeq \underset{\alpha}{\on{lim}}\, Z_\alpha,
\end{equation} 
where $Z_\alpha$ are schemes of finite type, and the transition maps $Z_\alpha\to Z_\beta$ are affine, smooth and surjective.

\sssec{}  \label{sss:indep placid}

It is not difficult to show show that if $\CZ$ is placid, then the category of presentations of $\CZ$ as \eqref{e:present placid}
has an initial object, and in particular is contractible. So any two presentations \eqref{e:present placid} are essentially
equivalent.

\sssec{}

Let $\CZ$ be a placid scheme and $\CZ'\subset \CZ$ a closed subscheme. We shall see that this closed embedding
is placid if for some/any presentation of $\CZ$ as \eqref{e:present placid}, there exists an index $i$ and a closed
subscheme $Z'_\alpha\subset Z_\alpha$ so that
$$\CZ'=Z'_\alpha\underset{Z_\alpha}\times \CZ.$$

\sssec{}

Let $\CY$ be an ind-scheme (not necessarily of ind-finite type). We shall say that $\CY$ is placid if it can be written
as a filtered colimit
\begin{equation} \label{e:placid indsch}
\underset{i}{\on{colim}}\, \CZ_i,
\end{equation} 
where $\CZ_j$ are placid schemes, and the transition maps $\CZ_i\to \CZ_j$ are placid closed embeddings.

\sssec{}  \label{sss:indep placid indsch}

It is not difficult to show that  the category of presentations of $\CY$ as \eqref{e:placid indsch}
has a final object, and in particular is contractible. So any two presentations \eqref{e:placid indsch} are essentially
equivalent.

\sssec{}  \label{sss:ind-pro}

Let $\alpha\mapsto \CY_\alpha$ be a filtered family of ind-schemes of ind-finite type with transition maps 
$f_{\alpha,\beta}:\CY_\alpha\to \CY_\beta$ affine, smooth and surjective. With no restriction of generality, we can assume that
the index category $A$ has an initial object $\alpha_0$. 

\medskip

Set $$\CY_{\alpha_0}\simeq \underset{i\in I}{\on{colim}}\, Y_i,$$
for a filtered category $I$, where $Y_i$ are schemes of finite type, and the transition maps
$Y_i\to Y_j$ are closed embeddings. 

\medskip

Set $\CZ_i:=\underset{\alpha}{\on{lim}}\, Y_i\underset{\CY_{\alpha_0}}\times \CY_\alpha$. Then 
$\CZ_i$ is a placid scheme, and for $i\leq j$, the corresponding map $\CZ_i\to \CZ_j$
is a placid closed embedding. 

\medskip

Set 
$$\CY:=\underset{i\in I}{\on{colim}}\, \CZ_i.$$

Then $\CY$ is a placid ind-scheme. 

\ssec{The category of sheaves on a placid (ind-)scheme}

\sssec{}

For a placid scheme $\CZ$ presented as in \eqref{e:present placid}
we let
$$\Shv(\CZ):=\underset{\alpha}{\on{colim}}\, \Shv(Z_\alpha),$$
where for a $Z_\alpha\to Z_\beta$, the corresponding functor 
$$\Shv(Z_\beta)\to \Shv(Z_\alpha)$$ 
is the *-pullback.

\medskip

By \secref{sss:indep placid}, this definition is canonically independent of the presentation. 

\begin{rem}
As the functor of *-pullback is not t-exact (in the perverse t-structure), the category $\Shv(\CZ)$ does \emph{not} 
come equipped with a t-structure. However, since we are taking *-pullbacks with respect to smooth maps, which
are t-exact up to a cohomological shift, one can define a certain $\BZ$-gerbe on $\CZ$, called \emph{the dimension gerbe},
such that a choice of its trivializations gives rise to a t-structure on $\CZ$. 
We will not pursue this in the present paper. 

\medskip

Similarly, if our sheaf theory is that of D-modules, one may wish to construct the category $\Dmod(\CY)$ equipped with a 
forgetful functor to an appropriately defined version of the category $\IndCoh(\CY)$. A choice of such a functor involves 
a trivialization of certain Picard gerbe. We will not pursue this in the present paper either. 
\end{rem} 

\sssec{}

Note that by \secref{sss:limits and colimits}, we have a canonical isomorphism 
$$\Shv(\CZ):=\underset{\alpha}{\on{lim}}\, \Shv(Z_\alpha),$$
where for $Z_\alpha\to Z_\beta$, the corresponding functor $\Shv(Z_\alpha)\to \Shv(Z_\beta)$ 
is the *-pushforward. 

\sssec{}

The latter presentation that the assignment
$$\CZ\mapsto \Shv(\CZ)$$
is functorial with respect to *-pushforwards. Explicitly, for a map $f:\CZ\to \CZ'$
written as 
$$\CZ\simeq \underset{\alpha}{\on{lim}}\, Z_\alpha \text{ and } \CZ'\simeq \underset{\alpha'}{\on{lim}}\, Z'_{\alpha'},$$
respectively, the corresponding functor
$$f_*:\Shv(\CZ)\to \Shv(\CZ')$$
is characterized by the property that for every $i'$ the composition
$$\Shv(\CZ)\to \Shv(\CZ')\to \Shv(Z'_{\alpha'})$$
equals 
$$\Shv(\CZ)\to \Shv(Z_\alpha)\overset{(f_{\alpha,\alpha'})_*}\longrightarrow \Shv(Z'_{\alpha'}),$$
where $i$ is some/any index such that the composite $\CZ\overset{f}\to \CZ'\to Z'_{\alpha'}$
factors as 
$$\CZ\to Z_\alpha\overset{f_{\alpha,\alpha'}}\longrightarrow Z'_{\alpha'}.$$

\sssec{}

Let $f:\CZ'\to \CZ$ be a placid closed embedding. It follows from base change that the functor
$$f_*:\Shv(\CZ')\to \Shv(\CZ)$$
admits a continuos right adjoint, to be denoted $f^!$. 

\medskip

Explicitly, if $\CZ'=Z'_{\alpha_0}\underset{Z_{\alpha_0}}\times \CZ$ for some index $i$, then $f^!$ is given by the
compatible family of functors
$$f_\alpha^!:\Shv(Z_\alpha)\to \Shv(Z'_\alpha), \quad Z'_\alpha:=Z'_{\alpha_0}\underset{Z_{\alpha_0}}\times Z_\alpha, \quad \alpha\geq \alpha_0.$$

\sssec{}

Let $\CY$ be a placid ind-scheme, presented as in \eqref{e:placid indsch}. We define
$$\Shv(\CY):=\underset{i}{\on{lim}}\, \Shv(\CZ_i),$$
with respect to the !-pullback functors. By \secref{sss:indep placid indsch}, the category $\Shv(\CY)$
defined in this way does not depend on the choice of presentation \eqref{e:placid indsch}.

\medskip

By \secref{sss:limits and colimits} we can also write
\begin{equation} \label{e:shv placid colim}
\Shv(\CY):=\underset{i}{\on{colim}}\, \Shv(\CZ_i),
\end{equation} 
with respect to the *-pushforward functors. 

\medskip

In particular, we obtain that $\Shv(\CY)$ is compactly generated. 

\sssec{}  \label{sss:functoriality of shv on placid indsch}

The presentation \eqref{e:shv placid colim} implies that if $f:\CY_1\to \CY_2$ is a map between placid ind-schemes,
we have a well-defined functor
$$f_*:\Shv(\CY_1)\to \Shv(\CY_2).$$

\sssec{}  \label{sss:prod placid}

Let $\CY_1$ and $\CY_2$ be two placid ind-schemes. In this case, $\CY_1\times \CY_2$ is also 
placid, and we have a canonically defined fully faithful functor
$$\Shv(\CY_1)\otimes \Shv(\CY_2)\to \Shv(\CY_1\times \CY_2),$$
which preserves compactness. 

\sssec{}  \label{sss:sheaves on ind-pro}

Let us be in the situation of \secref{sss:ind-pro}. The direct image functors
$$(f_{\alpha,\beta})_*:\Shv(\CY_\alpha)\to \Shv(\CY_\beta)$$
admit left adjoints, $f_{\alpha,\beta}^*$.

\medskip

By swapping the order of limits, we obtain that the projections
$$f_\alpha:\CY\to \CY_\alpha,$$
and the corresponding functors
$$(f_\alpha)_*:\Shv(\CY)\to \Shv(\CY_\alpha)$$
give rise to an equivalence
$$\Shv(\CY) \simeq \underset{\alpha}{\on{lim}}\, \Shv(\CY_\alpha),$$
where the limit is taken with respect to $(f_{\alpha,\beta})_*$ as transition functors. 

\medskip

By \secref{sss:limits and colimits}, we obtain that we also have an equivalence
$$\Shv(\CY) \simeq \underset{\alpha}{\on{colim}}\, \Shv(\CY_\alpha),$$
where the colimit is taken with respect to $(f_{\alpha,\beta})^*$ as transition functors. 

\medskip

In particular, we have well-defined functors
$$f_\alpha^*:\Shv(\CY_\alpha)\to \Shv(\CY).$$

\ssec{Sheaves on the loop group} 

\sssec{}

Consider the group ind-scheme $\fL(G)$. We claim that it is placid as an ind-scheme. Namely, we claim
that it falls in the paradigm of \secref{sss:ind-pro}. 

\medskip

Indeed, we take the category $A$ to be natural numbers, and we set
$$\CY_n:=\fL(G)/K_n.$$

\sssec{} \label{sss:sheaves on loop group}

In particular, we obtain that we have a well-defined category $\Shv(\fL(G))$.

\begin{rem}
We emphasize again that being a placid ind-scheme is a property and not extra structure.
So, accessing $\fL(G)$ via the schemes $K_n\backslash \fL(G)$ will lead to an equivalent
definition of the category of sheaves.
\end{rem} 

\sssec{}   \label{sss:loop group acting}

By virtue of Sects. \ref{sss:prod placid} and \ref{sss:functoriality of shv on placid indsch}, the group structure on $\fL(G)$
defines on $\Shv(\CY)$ a structure of monoidal category,

\medskip

Furthermore, if $\CY$ is another placid ind-scheme equipped with an action of $\fL(G)$, the 
category $\Shv(\CY)$ acquires an action of $\Shv(\fL(G))$. 

\sssec{}

The monoidal category $\Shv(\fL(G))$ is unital, where the unit object is $\delta_1$, i.e., the direct image 
of $\sfe$ under the unit map $\on{pt}\to \fL(G)$.  

\medskip

Note, however, that we have a canonical identification
\begin{equation} \label{e:delta as colim}
\delta_1\simeq \underset{n}{\on{colim}}\, \sfe_{K_n},
\end{equation} 
where by a slight abuse of notation we denote by $\sfe_{K_n}$ the direct image
of the constant sheaf under the tautological map $K_n\to \fL(G)$. 

\section{Invariants and coinvariants for loop group actions}  \label{s:loop coinv}

\ssec{Categories acted on by the loop group}  \label{ss:action on categ}

\sssec{}  \label{sss:action on categ}

By a category acted on by $\fL(G)$ we will mean a module category over $\Shv(\fL(G))$. We will denote the
totality of such categories by $\fL(G)\mmod$. 

\medskip

We will use a similar notation for $\fL^+(G)$ or $K_n$. We have the natural restriction functors
$$\fL(G)\mmod\to \fL^+(G)\mmod\to K_n\mmod.$$

\sssec{}

Note that for every $n\geq 1$, the object $\sfe_{K_n}\in \Shv(\fL(G))$ is an idempotent. For $\bC\in \fL(G)\mmod$,
let $\bC^{K_n}$ the image of that idempotent. 

\medskip

From \eqref{e:delta as colim}, we obtain that
$$\bC\simeq \underset{n}{\on{colim}}\, \bC^{K_n},$$
where the transition maps are the natural inclusions. 

\medskip

By \secref{sss:limits and colimits} we can also write
$$\bC\simeq \underset{n}{\on{lim}}\, \bC^{K_n},$$
where the transition maps 
$$\bC^{K_n}\to \bC^{K_m}$$
are the averaging functors $\on{Av}_*^{K_m/K_n}$. 

\sssec{}

The basic example of an object of $\fL(G)\mmod$ is $\Vect$, which is acted upon by $\Shv(\fL(G))$ via the 
augmentation functor
$$\Shv(\fL(G))\to \Shv(\on{pt})=\Vect,$$
given by direct image. 

\medskip

For $\bC\in \fL(G)\mmod$, set
$$\bC^{\fL(G)}:=\on{Funct}_{\fL(G)\mmod}(\Vect, \bC).$$

Similarly, let $\bC_{\fL(G)}$ be the universal recipient of a $\fL(G)$-invariant functor. 

\medskip

We have the following result: 

\begin{thm}  \label{t:coinvariants and limits, loop}
Let $G$ be reductive. Then:

\smallskip

\noindent{\em(a)} The functor
$$\bC\mapsto \bC^{\fL(G)}, \quad \fL(G)\mmod\to \on{DGCat}_{\on{cont}}$$
commutes with limits. 

\smallskip

\noindent{\em(b)} There exists a canonical isomorphism $\bC^{\fL(G)}\simeq \bC_{\fL(G)}$.

\smallskip

\noindent{\em(c)} If $\bC$ is dualizable as a plain category, then so is $\bC^{\fL(G)}$, and we have a 
canonical equivalence $(\bC^{\fL(G)})^\vee\simeq (\bC^\vee)^{\fL(G)}$. 

\end{thm}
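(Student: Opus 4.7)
The overall strategy is to reduce the theorem to the statement that $\Vect$, viewed as an object of $\fL(G)\mmod$ via the augmentation $\Shv(\fL(G)) \to \Vect$, is dualizable as a $\Shv(\fL(G))$-module category with dual canonically equivalent to $\Vect$ itself. Assertion (a) is essentially formal: since $\bC^{\fL(G)}$ is defined as $\on{Funct}_{\fL(G)\mmod}(\Vect, \bC)$ and limits in $\fL(G)\mmod$ agree with limits in $\on{DGCat}_{\on{cont}}$, commutation with limits in $\bC$ follows from the tautological fact that internal Hom commutes with limits in its second argument. Given the self-duality $\Vect^\vee \simeq \Vect$ as $\Shv(\fL(G))$-modules, assertion (b) follows from
\[
\bC^{\fL(G)} \simeq \Vect^\vee \otimes_{\Shv(\fL(G))} \bC \simeq \Vect \otimes_{\Shv(\fL(G))} \bC = \bC_{\fL(G)},
\]
and assertion (c) follows by standard manipulations: if $\bC$ is dualizable over $\on{DGCat}_{\on{cont}}$, then $\Shv(\fL(G)) \otimes \bC$ is dualizable as a $\Shv(\fL(G))$-module, and the relative tensor product with the self-dual $\Vect$ preserves dualizability and interchanges appropriately with taking $\bC^\vee$.

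To prove that $\Vect$ is self-dual as a $\Shv(\fL(G))$-module, I would first establish the corresponding statement for $\Shv(\fL^+(G))$. Write $\fL^+(G) \simeq \lim_n \fL^+(G)_n$, where each $\fL^+(G)_n$ is the finite-dimensional group scheme of $n$-jets, and let $K_n \subset \fL^+(G)$ denote the kernel of the projection. For $n \geq 1$ the group-scheme $K_n$ is pro-unipotent, so the functor of $K_n$-invariants is fully faithful and tautologically coincides with $K_n$-coinvariants on any module category. Combined with \thmref{t:coinvariants} applied to the finite-dimensional quotient $\fL^+(G)_n$, and with the identification $\bC^{\fL^+(G)} = (\bC^{K_n})^{\fL^+(G)_n}$ valid for every $n$, this yields the self-duality of $\Vect$ as a $\Shv(\fL^+(G))$-module after passing to the limit.

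The transition from $\fL^+(G)$ to $\fL(G)$ is where the reductivity of $G$ becomes essential, through the ind-properness of the affine Grassmannian $\Gr_G = \fL(G)/\fL^+(G)$. Ind-properness ensures that the direct image functor along the projection $\Gr_G \to \on{pt}$ is continuous on $\Shv(\Gr_G)$ and satisfies base change, and in particular that the dualizing complex $\omega_{\Gr_G}$ (up to appropriate shifts) supplies a canonical integration / trace datum. This extra structure furnishes the evaluation and coevaluation maps needed to upgrade the self-duality of $\Vect$ from $\Shv(\fL^+(G))$ to $\Shv(\fL(G))$, via a descent-type argument along the $\fL^+(G)$-torsor $\fL(G) \to \Gr_G$. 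The main obstacle of the proof will be the verification of the triangle identities for this enhanced duality datum: one must combine base change along the ind-proper projection with the self-duality over $\Shv(\fL^+(G))$ already established, while carefully tracking compatibilities with the full $\fL(G)$-action on both sides and ensuring independence from choices of exhaustion of $\Gr_G$ by closed finite-type subschemes.
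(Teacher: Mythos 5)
Your high-level reduction --- that everything should follow from $\Vect$ being self-dual as a $\Shv(\fL(G))$-module, and that the place reductivity enters is the ind-properness of $\Gr_G = \fL(G)/\fL^+(G)$ --- correctly identifies the two key ingredients (the finite-dimensional coinvariance theorem, and ind-properness). But your proposed mechanism for passing from $\fL^+(G)$ to $\fL(G)$ is not the paper's, and as stated it has a real gap.

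The paper does not attempt to exhibit $\Vect$ directly as a self-dual $\Shv(\fL(G))$-module. The problem with doing so is that $\Shv(\fL(G))$ is not rigid, so there is no general machine turning a dualizability statement at the level of plain DG categories into one at the level of $\Shv(\fL(G))$-modules; one would have to produce the evaluation and coevaluation maps for $\Vect$ over $\Shv(\fL(G))$ by hand, and your appeal to ``a descent-type argument along the $\fL^+(G)$-torsor $\fL(G)\to\Gr_G$'' does not do this --- $\Gr_G$ is not a group and there is no obvious duality datum for $\Vect\otimes_{\Shv(\fL(G))}\Vect$ coming from $\omega_{\Gr_G}$ alone. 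What the paper does instead is introduce the Hecke category $\BH = \Shv(\fL(G))^{\fL^+(G)\times\fL^+(G)}$ as an intermediate monoidal category: the functor $\bC\mapsto\bC^{\fL^+(G),\on{enh}}$ lands in $\BH\mmod$, has both a left and a right adjoint (Proposition \ref{p:right adj to Hecke}), and both adjoints send $\Vect^{\fL^+(G)}\in\BH\mmod$ to $\Vect\in\fL(G)\mmod$ (Proposition \ref{p:recover Vect}). This reduces the theorem to self-duality of $\Vect^{\fL^+(G)}$ as a $\BH$-module (Corollary \ref{c:inv via Hecke} and Proposition \ref{p:inv dualizable}). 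The latter \emph{is} accessible by a general mechanism, because the locally-finite renormalization $\BH^{\on{ren}}$ is \emph{rigid}, and ind-properness of $\Gr_G$ is precisely what makes $\BH^{\on{ren}}$ rigid (via Verdier duality on $\Gr_G$, compare \cite[Chapter 1, Lemma 9.1.5]{GR1}). So the dualizing complex $\omega_{\Gr_G}$ enters exactly as you suspect, but packaged through the rigidity of $\BH^{\on{ren}}$, not through a direct duality datum for $\Vect$ over $\Shv(\fL(G))$.

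Two further gaps: first, you never address Proposition \ref{p:recover Vect}, which is genuinely nontrivial --- its proof requires showing that a certain lax-compatible functor is in fact strictly compatible with the $\fL(G)$-action (Lemma \ref{l:adj inv} and the subsequent argument via the simplicial diagram and the commuting square \eqref{e:crucial square}). Second, your argument for (c) contains a slip: a dualizable $\bC$ is not of the form $\Shv(\fL(G))\otimes\bC_0$, so the reasoning you sketch for why $\bC^{\fL(G)}$ is dualizable doesn't go through as written; the correct derivation of (c) from (b) is simply $(\bC^{\fL(G)})^\vee\simeq\on{Funct}_{\on{cont}}(\bC_{\fL(G)},\Vect)\simeq\on{Funct}_{\on{cont}}(\bC,\Vect)^{\fL(G)}=(\bC^\vee)^{\fL(G)}$. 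I would suggest rewriting your argument to pass through the Hecke category and its renormalization, following the paper's structure; the ingredients you identified are the right ones, but the Hecke category is the place where they combine into a proof.
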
 

The rest of this section is devoted to the proof of \thmref{t:coinvariants and limits, loop}. First, let us note
that points (a) and (c) both follow from point (b). 

\ssec{Proof of \thmref{t:coinvariants and limits, loop}}   \label{ss:proof in loop}

\sssec{}

Consider the functor
\begin{equation} \label{e:plus inv}
\bC\mapsto \bC^{\fL^+(G)}, \quad \fL(G)\mmod\to \on{DGCat}_{\on{cont}},
\end{equation} 

The first observation is that this functor commutes with colimits. Indeed, the functor
$$\bC\mapsto \bC^{K_1},\quad \fL(G)\mmod\to \on{DGCat}_{\on{cont}}$$
commutes with colimits, because it is given by the image of an idempotent. Now, 
the above functor naturally lifts to a functor
$$\fL(G)\mmod\to G\mmod,$$
and we have 
$$\bC^{\fL^+(G)}\simeq (\bC^{K_1})^G.$$

Hence, the commutation with colimits follows from \corref{c:inv and colimits}(a). 

\sssec{}

Note that the functor \eqref{e:plus inv} can also be interpreted as 
$$\bC\mapsto \on{Funct}_{\fL(G)\mmod}(\Shv(\fL(G)/\fL^+(G)),\bC).$$

Set
$$\BH:=\on{Funct}_{\fL(G)\mmod}(\Shv(\fL(G)/\fL^+(G)),\Shv(\fL(G)/\fL^+(G))).$$

This is the Hecke category of $\fL(G)$ with respect to $\fL^+(G)$. As a plain DG category we can identify it with
$$\Shv(\fL(G))^{\fL^+(G)\times \fL^+(G)},$$
with the monoidal structure given by convolution.

\medskip

Hence, the functor \eqref{e:plus inv} upgrades to a functor
\begin{equation} \label{e:plus inv enh}
\bC\mapsto \bC^{\fL^+(G),\on{enh}}, \quad  \fL(G)\mmod\to\BH\mmod.
\end{equation} 


\sssec{}

The functor \eqref{e:plus inv enh} admits a left adjoint, given by 
\begin{equation} \label{e:tensor up from Hecke}
\bC'\mapsto \Shv(\fL(G)/\fL^+(G))\underset{\BH}\otimes \bC'.
\end{equation}

We claim:

\begin{prop}  \label{p:right adj to Hecke}
The functor 
\begin{equation} \label{e:Hom up from Hecke}
\bC'\mapsto \on{Funct}_{\BH\mmod}(\fL^+(G)\backslash\fL(G)),\bC'),
\end{equation}
provides a right adjoint to \eqref{e:plus inv enh}.
\end{prop}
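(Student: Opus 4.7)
The plan is to reduce \propref{p:right adj to Hecke} to a tensor-product description of the enhanced invariants functor, after which the adjunction becomes a formal manipulation. Concretely, the first step is to establish a canonical $\BH$-linear identification
\begin{equation*}
\bC^{\fL^+(G),\on{enh}} \;\simeq\; \Shv(\fL^+(G)\backslash\fL(G)) \underset{\Shv(\fL(G))}\otimes \bC,
\end{equation*}
natural in $\bC \in \fL(G)\mmod$, where $\Shv(\fL^+(G)\backslash\fL(G))$ is viewed as a $(\BH,\Shv(\fL(G)))$-bimodule: the right $\Shv(\fL(G))$-action comes from right multiplication on $\fL(G)$, while the left $\BH$-action is convolution (under the identification $\BH \simeq \Shv(\fL^+(G)\backslash\fL(G)/\fL^+(G))$).

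Given this identification, the proposition follows immediately from the standard hom-tensor adjunction for bimodule categories: for any $\bC'\in\BH\mmod$,
\begin{multline*}
\on{Funct}_{\BH\mmod}(\bC^{\fL^+(G),\on{enh}}, \bC') \simeq
\on{Funct}_{\BH\mmod}\bigl(\Shv(\fL^+(G)\backslash\fL(G)) \underset{\Shv(\fL(G))}\otimes \bC,\ \bC'\bigr) \\
\simeq \on{Funct}_{\fL(G)\mmod}\bigl(\bC,\ \on{Funct}_{\BH\mmod}(\Shv(\fL^+(G)\backslash\fL(G)), \bC')\bigr),
\end{multline*}
which is precisely the asserted right-adjoint description; the residual left $\fL(G)$-module structure on $\on{Funct}_{\BH\mmod}(\Shv(\fL^+(G)\backslash\fL(G)), \bC')$ is induced by the right $\fL(G)$-action on $\Shv(\fL^+(G)\backslash\fL(G))$.

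The main obstacle lies in the tensor-product identification, which amounts to showing that $\Shv(\fL(G)/\fL^+(G))$ is right-dualizable as a left $\Shv(\fL(G))$-module with dual $\Shv(\fL^+(G)\backslash\fL(G))$. The counit $\Shv(\fL^+(G)\backslash\fL(G)) \underset{\Shv(\fL(G))}\otimes \Shv(\fL(G)/\fL^+(G)) \to \BH$ is essentially tautological (given directly by the convolution structure). The construction of the unit $\Shv(\fL(G)) \to \Shv(\fL(G)/\fL^+(G)) \underset{\BH}\otimes \Shv(\fL^+(G)\backslash\fL(G))$, together with verification of the triangle identities, is the substantive content, and this is where the assumption that $G$ is reductive enters decisively: one exploits the extension $1 \to K_1 \to \fL^+(G) \to G \to 1$ with $K_1$ pro-unipotent and $G$ reductive, so that dualizability for $\fL^+(G)$-invariants factors through the (automatic) pro-unipotent case and the classical reductive case handled by \thmref{t:coinvariants}; simultaneously, the ind-properness of $\Gr_G = \fL(G)/\fL^+(G)$ supplies the geometric mechanism (via the self-Verdier-duality of $\Shv(\Gr_G)$ and the coincidence of $!$- and $*$-pushforwards along ind-proper morphisms, as used in \secref{ss:proof in loop}) needed to construct the unit at the level of sheaves and to verify compatibility.
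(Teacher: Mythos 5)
Your opening reduction is exactly the paper's: both you and the paper rewrite the sought-for adjunction, via the hom-tensor adjunction for bimodule categories, into the statement that there is a canonical $\BH$-linear equivalence
$$\bC^{\fL^+(G),\on{enh}} \;\simeq\; \Shv(\fL^+(G)\backslash\fL(G)) \underset{\Shv(\fL(G))}\otimes \bC,$$
natural in $\bC$. The divergence is in how this identification is established.

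The paper does \emph{not} construct a duality datum. It observes that $\Shv(\fL^+(G)\backslash\fL(G))\simeq \Shv(\fL(G))^{\fL^+(G),\on{enh}}$, writes down the canonical lax-compatibility map
$$\Shv(\fL(G))^{\fL^+(G),\on{enh}} \underset{\Shv(\fL(G))}\otimes \bC \;\longrightarrow\; \left(\Shv(\fL(G))\underset{\Shv(\fL(G))}\otimes \bC\right)^{\fL^+(G),\on{enh}}\simeq \bC^{\fL^+(G),\on{enh}},$$
and checks that it is an equivalence by commuting $(-)^{\fL^+(G)}$ with the bar resolution computing the relative tensor product. The only nontrivial input is that $(-)^{\fL^+(G)}$ commutes with colimits, which the paper establishes just before the proposition by factoring through the idempotent $\sfe_{K_1}$ and invoking \corref{c:inv and colimits}(a) (i.e., \thmref{t:coinvariants}) for the finite-type reductive quotient $G=\fL^+(G)/K_1$.

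Your proposal instead packages the tensor-product formula as dualizability of $\Shv(\fL(G)/\fL^+(G))$ as a $(\Shv(\fL(G)),\BH)$-bimodule and then sets out to construct unit and counit and check triangle identities. This is in principle a valid reformulation (a right adjoint of the form $\on{Funct}_A(M,-)$ that commutes with colimits is automatically given by tensoring with the dual bimodule, which is precisely the dualizability assertion), but the gap is where you claim the unit construction is "the substantive content" and that \emph{ind-properness of $\Gr_G$ enters decisively} via self-Verdier-duality and $!=*$ pushforward. That is incorrect for this proposition, and the paper is explicit about it: the discussion immediately preceding the proof of \propref{p:inv dualizable} points out that \propref{p:right adj to Hecke}, \propref{p:recover Vect} and \corref{c:inv via Hecke} hold for \emph{any} pair $\CG^+\subset\CG$ with $\CG^+$ admitting a homomorphism to a group-scheme of finite type with pro-unipotent kernel, and that ind-properness of $\CG/\CG^+$ is only required for \propref{p:inv dualizable}. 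In other words, the ingredients you correctly identified in the first half of your final sentence (the extension $1\to K_1\to\fL^+(G)\to G\to 1$, pro-unipotence of $K_1$, and \thmref{t:coinvariants} for $G$) already do all the work — they give the commutation with colimits, and that alone forces the dualizability, with no appeal to the geometry of $\Gr_G$. By invoking ind-properness here you are in effect proving (a version of) \propref{p:inv dualizable} in order to deduce \propref{p:right adj to Hecke}, which both inverts the paper's logical order and misdiagnoses which hypothesis is doing the work.
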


\begin{proof}

We need to show that for $\bC\in \fL(G)\mmod$, there exists a canonical equivalence
$$\on{Funct}_{\fL(G)\mmod}(\bC,\on{Funct}_{\BH\mmod}(\Shv(\fL^+(G)\backslash\fL(G)),\bC')) \simeq
\on{Funct}_{\BH\mmod}(\bC^{\fL^+(G),\on{enh}},\bC').$$

We rewrite the LHS as
$$\on{Funct}_{\BH\mmod}(\Shv(\fL^+(G)\backslash\fL(G)) \underset{\Shv(\fL(G)}\otimes \bC,\bC'),$$
hence it remains to establish an equivalence
$$\Shv(\fL^+(G)\backslash\fL(G)) \underset{\Shv(\fL(G)}\otimes \bC\simeq \bC^{\fL^+(G),\on{enh}}$$
as $\BH$-modules.

\medskip

We rewrite $\Shv(\fL^+(G)\backslash \fL(G))\simeq \Shv(\fL(G))^{\fL^+(G),\on{enh}}$ as categories acted on by $\fL(G)$ on the right and by $\BH$
on the left. We have a map 
$$\Shv(\fL(G))^{\fL^+(G),\on{enh}} \underset{\Shv(\fL(G)}\otimes \bC \to 
\left(\Shv(\fL(G))\underset{\Shv(\fL(G)}\otimes \bC\right)^{\fL^+(G),\on{enh}} \simeq  \bC^{\fL^+(G),\on{enh}}.$$

To show that this map is an equivalence, we need to show that the first arrow is an equivalence at the level
of the underlying DG categories. However, this follows from the commutation of the functor \eqref{e:plus inv}
with colimits.

\end{proof} 

\begin{rem}
The commutation of the functor \eqref{e:plus inv} with colimits implies that the functor \eqref{e:tensor up from Hecke}
is fully faithful. It follows formally that the functor \eqref{e:Hom up from Hecke} is fully faithful as well.
\end{rem}

\sssec{}

Consider $\Vect^{\fL^+(G)}$ as an object of $\BH\mmod$. We claim:

\begin{prop} \hfill  \label{p:recover Vect}

\smallskip

\noindent{\em(a)}
The functor \eqref{e:tensor up from Hecke} sends $\Vect^{\fL^+(G)}\in \BH\mmod$ to $\Vect\in \fL(G)\mmod$.

\smallskip

\noindent{\em(b)}
The functor \eqref{e:Hom up from Hecke} sends $\Vect^{\fL^+(G)}\in \BH\mmod$ to $\Vect\in \fL(G)\mmod$.

\end{prop}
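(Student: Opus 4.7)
My plan is to deduce both parts simultaneously by exploiting the adjunction triple
$$\eqref{e:tensor up from Hecke} \;\dashv\; \eqref{e:plus inv enh} \;\dashv\; \eqref{e:Hom up from Hecke}$$
established in Proposition~\ref{p:right adj to Hecke}, together with the fully faithfulness of \eqref{e:tensor up from Hecke} (and, by a formal adjunction argument, of \eqref{e:Hom up from Hecke}) noted in the remark following that proposition. The basic observation to start from is that the functor $\bC \mapsto \bC^{\fL^+(G), \on{enh}}$ carries $\Vect \in \fL(G)\mmod$ (equipped with the augmentation action) to $\Vect^{\fL^+(G)} \in \BH\mmod$; this is essentially tautological, as the $\BH$-module structure on $\Vect^{\fL^+(G)}$ is defined to be the one inherited from the $\fL(G)$-action via the functoriality of \eqref{e:plus inv enh}.

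For part~(a), I would use that the fully faithfulness of the left adjoint \eqref{e:tensor up from Hecke} implies the unit of adjunction
$$\bC' \;\longrightarrow\; \left(\Shv(\fL(G)/\fL^+(G)) \underset{\BH}\otimes \bC'\right)^{\fL^+(G),\on{enh}}$$
is an equivalence for every $\bC' \in \BH\mmod$. Specializing to $\bC' = \Vect^{\fL^+(G)}$ shows that the $\fL^+(G),\on{enh}$-invariants of the tensor product on the LHS of (a) are canonically isomorphic to $\Vect^{\fL^+(G)}$, matching those of the target $\Vect$. I would then produce a canonical comparison morphism in $\fL(G)\mmod$,
$$\Shv(\fL(G)/\fL^+(G)) \underset{\BH}\otimes \Vect^{\fL^+(G)} \;\longrightarrow\; \Vect,$$
as the counit of the $(\eqref{e:tensor up from Hecke},\eqref{e:plus inv enh})$-adjunction applied to the identity $\Vect^{\fL^+(G)} \to \Vect^{\fL^+(G)}$. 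The fully faithfulness of \eqref{e:tensor up from Hecke} reduces checking that this counit is an equivalence to the statement that $\Vect$ lies in the essential image of \eqref{e:tensor up from Hecke}; equivalently, it suffices that the tautological augmentation $\Shv(\fL(G)) \to \Vect$ in $\fL(G)\mmod$ factors through the relative tensor product in a way inducing the identity on $\fL^+(G),\on{enh}$-invariants.

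For part~(b), the argument is strictly parallel and follows from (a) by an abstract adjunction manipulation: since \eqref{e:Hom up from Hecke} is right adjoint to \eqref{e:plus inv enh}, and the composite $\eqref{e:plus inv enh} \circ \eqref{e:tensor up from Hecke}$ is naturally isomorphic to the identity by (a), the analogous composite $\eqref{e:plus inv enh} \circ \eqref{e:Hom up from Hecke}$ is also the identity (this is the formal statement that the right adjoint of a fully faithful left adjoint in a triple is itself fully faithful). Applying this to $\Vect^{\fL^+(G)}$ and using that $\Vect^{\fL^+(G),\on{enh}} \simeq \Vect^{\fL^+(G)}$ yields the desired identification, modulo the same essential-image verification as in (a).

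The main obstacle I expect is the reconstruction step—verifying that $\Vect$ itself lies in the essential image of the left adjoint \eqref{e:tensor up from Hecke}, as opposed to merely having the correct $\fL^+(G),\on{enh}$-invariants. Abstractly one wants a statement reminiscent of Theorem~\ref{t:reconstr}, namely that $\Vect \in \fL(G)\mmod$ is recovered from the pair $(\Vect^{\fL^+(G)}, \BH\text{-action})$. Unlike in the finite-type setting this is delicate because $\fL^+(G)$ is only pro-finite-dimensional and the bar resolution computing $\Shv(\fL(G)/\fL^+(G)) \otimes_{\BH} \Vect^{\fL^+(G)}$ involves infinite-dimensional convolutions; I would handle this by reducing to compact generators of $\Shv(\fL(G)/\fL^+(G))$ and checking the equivalence object-by-object, using the transitivity of the $\fL(G)$-action on $\fL(G)/\fL^+(G)$ to see that a single orbit generates everything.
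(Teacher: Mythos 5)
You have the right framing: by the adjunction triple, both parts reduce to checking that the counit $L(\Vect^{\fL^+(G)})\to\Vect$ is an equivalence, equivalently that $\Vect$ lies in the essential image of $L:=\eqref{e:tensor up from Hecke}$, and similarly for $R:=\eqref{e:Hom up from Hecke}$. But the substance of the proof lies precisely in that last step, and the passage where you say you would ``reduce to compact generators of $\Shv(\fL(G)/\fL^+(G))$ and check the equivalence object-by-object, using transitivity'' does not give an argument; it is not clear how a single orbit generating everything would let you verify that a functor in $\fL(G)\mmod$ is an equivalence, since the issue is precisely that we only control the relative tensor product after taking $\fL^+(G)$-invariants, not before.

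What the paper actually does is substantially more delicate and uses ingredients your outline does not invoke. First, it applies \thmref{t:reconstr} to conclude that the counit functor $\Phi$ restricts to an equivalence between the locally constant sub-pieces (in the sense of \secref{ss:max}) of the two sides; this shows that $\Phi$ admits a fully faithful left adjoint $\Psi$ compatible with the $\fL^+(G)$-actions. Second---and this is the technical heart---$\Psi$ is a priori only \emph{co-lax} compatible with the $\fL(G)$-action, and one must prove this co-lax structure is strict. This is done via \lemref{l:adj inv} and its placid-sheaf-theoretic adaptation in \secref{sss:Psi Y} (the isomorphism \eqref{e:Phi Y}), which requires a nontrivial computation with $i^R$ and $(p_\CY)_*$ on ind-schemes of ind-finite type. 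Once $\Psi$ is known to be genuinely $\fL(G)$-equivariant, the proof concludes by showing $\ker(\Phi)=0$ via a cofiber argument that uses $\Psi$. None of this appears in your outline, and I do not see how to reconstruct it from the compact-generator reduction you suggest.

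One more gap worth flagging: even granting (a), your deduction of (b) from ``$R$ fully faithful $\Rightarrow$ $MR\simeq\on{Id}$'' only tells you that $M(R(\Vect^{\fL^+(G)}))\simeq\Vect^{\fL^+(G)}$, which is the \emph{same} kind of enhanced-invariants agreement you already had; it does not settle whether $\Vect$ is in the essential image of $R$, which is a priori a different condition from being in the essential image of $L$. The paper instead deduces (b) from (a) by ``considering maps from both sides to $\Vect$'', i.e.\ comparing $\on{Funct}_{\fL(G)\mmod}(-,R(\Vect^{\fL^+(G)}))$ and $\on{Funct}_{\fL(G)\mmod}(-,\Vect)$, which is a Yoneda-type argument whose input is precisely (a). You should make this step explicit rather than asserting that the ``essential-image verification'' is identical to that in (a).
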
 

We will prove \propref{p:recover Vect} in \secref{ss:recover Vect}. We proceed with the proof of \thmref{t:coinvariants and limits, loop}. 

\begin{cor} \label{c:inv via Hecke} \hfill

\smallskip

\noindent{\em(a)} 
For $\bC\in \fL(G)\mmod$, we have a canonical isomorphism
$$\bC^{\fL(G)}\simeq \on{Funct}_{\BH\mmod}(\Vect^{\fL^+(G)},\bC^{\fL^+(G),\on{enh}}).$$

\smallskip

\noindent{\em(b)} 
For $\bC\in \fL(G)\mmod$, we have a canonical isomorphism
$$\bC_{\fL(G)}\simeq \bC^{\fL^+(G),\on{enh}}\underset{\BH}\otimes \Vect^{\fL^+(G)}.$$

\end{cor}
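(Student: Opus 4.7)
The plan is to deduce both statements as formal consequences of \propref{p:recover Vect}, combined with the two adjunctions to the functor $\bC \mapsto \bC^{\fL^+(G),\on{enh}}$ established above: the left adjoint \eqref{e:tensor up from Hecke}
$$\bC' \mapsto \Shv(\fL(G)/\fL^+(G)) \underset{\BH}{\otimes} \bC'$$
and the right adjoint \eqref{e:Hom up from Hecke}
$$\bC' \mapsto \on{Funct}_{\BH\mmod}(\Shv(\fL^+(G)\backslash\fL(G)), \bC').$$

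For part (a), I will start from the tautological identification $\bC^{\fL(G)} \simeq \on{Funct}_{\fL(G)\mmod}(\Vect, \bC)$. By \propref{p:recover Vect}(a), $\Vect \in \fL(G)\mmod$ is the image of $\Vect^{\fL^+(G)} \in \BH\mmod$ under \eqref{e:tensor up from Hecke}. Substituting and applying the $(\text{\eqref{e:tensor up from Hecke}}, \text{\eqref{e:plus inv enh}})$-adjunction, I get
$$\bC^{\fL(G)} \simeq \on{Funct}_{\fL(G)\mmod}\!\left( \Shv(\fL(G)/\fL^+(G)) \underset{\BH}{\otimes} \Vect^{\fL^+(G)}, \bC \right) \simeq \on{Funct}_{\BH\mmod}(\Vect^{\fL^+(G)}, \bC^{\fL^+(G),\on{enh}}),$$
which is the desired formula.

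For part (b), I will use the fact that the coinvariants $\bC_{\fL(G)}$ corepresent $\fL(G)$-invariant functors, giving the tautological identification
$$\bC_{\fL(G)} \simeq \bC \underset{\Shv(\fL(G))}{\otimes} \Vect.$$
The key observation established inside the proof of \propref{p:right adj to Hecke} is that
$$\bC^{\fL^+(G),\on{enh}} \simeq \Shv(\fL^+(G)\backslash\fL(G)) \underset{\Shv(\fL(G))}{\otimes} \bC$$
as $\BH$-module categories. Combining this with \propref{p:recover Vect}(a) and the associativity of the relative tensor product yields
$$\bC_{\fL(G)} \simeq \bC \underset{\Shv(\fL(G))}{\otimes} \Shv(\fL(G)/\fL^+(G)) \underset{\BH}{\otimes} \Vect^{\fL^+(G)} \simeq \bC^{\fL^+(G),\on{enh}} \underset{\BH}{\otimes} \Vect^{\fL^+(G)}.$$

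Both parts are thus purely formal once one has \propref{p:recover Vect} and the adjoint pair of \propref{p:right adj to Hecke} in hand; there is no real obstacle at this stage. The substantive content has already been absorbed into \propref{p:recover Vect}, whose part (a) rests on the commutation of the functor \eqref{e:plus inv} with colimits (via \corref{c:inv and colimits}(a) for the reductive group $G$ acting on $\bC^{K_1}$), and whose part (b) is what genuinely uses reductivity of $G$ to guarantee the correct identification of the right adjoint on $\Vect^{\fL^+(G)}$.
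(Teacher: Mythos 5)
Your argument is correct and supplies exactly the formal details that the paper implicitly relies on when passing from \propref{p:right adj to Hecke} and \propref{p:recover Vect} to \corref{c:inv via Hecke} (the paper itself omits the proof). Part (a) is the adjunction $(\text{\eqref{e:tensor up from Hecke}}, \text{\eqref{e:plus inv enh}})$ combined with the identification of $\Vect$ from \propref{p:recover Vect}(a), and part (b) is the same identification fed through associativity of the relative tensor product, using the equivalence $\Shv(\fL^+(G)\backslash\fL(G)) \underset{\Shv(\fL(G))}\otimes \bC \simeq \bC^{\fL^+(G),\on{enh}}$ established inside the proof of \propref{p:right adj to Hecke}. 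Both halves go through.

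One small correction to your closing remark: reductivity of $G$ is \emph{not} what \propref{p:recover Vect}(b) uses. As the paper notes explicitly at the start of the proof of \propref{p:inv dualizable}, Propositions \ref{p:right adj to Hecke}, \ref{p:recover Vect} and \corref{c:inv via Hecke} hold for any pair $\CG^+\subset\CG$ with $\CG$ a placid group ind-scheme and $\CG^+$ a placid closed subgroup admitting a homomorphism to a finite type group with pro-unipotent kernel; what uses reductivity (via ind-properness of $\CG/\CG^+$) is only \propref{p:inv dualizable}, i.e.\ the self-duality of $\Vect^{\fL^+(G)}$ as an $\BH$-module. Also worth noting: your proof only uses part (a) of \propref{p:recover Vect}; part (b) is recorded in the paper for completeness (or to exhibit the dual statement) but is not needed for this corollary.
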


Thus, from the above corollary we obtain that in order to prove \thmref{t:coinvariants and limits, loop}(b),
it remains to show the following:

\begin{prop}  \label{p:inv dualizable}
The object $\Vect^{\fL^+(G)}\in \BH\mmod$ is dualizable and self-dual.
\end{prop}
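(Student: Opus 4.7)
My plan for proving that $M := \Vect^{\fL^+(G)} \in \BH\mmod$ is dualizable and self-dual is to construct explicit unit and counit morphisms, taking $M^\vee$ to be $M$ itself viewed as a right $\BH$-module via the inversion involution of $\fL(G)$. The crucial structural input is \propref{p:recover Vect}, which says that both the left and right adjoints $L, R: \BH\mmod \to \fL(G)\mmod$ of the enhancement functor $\Phi := (-)^{\fL^+(G),\on{enh}}$ satisfy $L(M) \simeq R(M) \simeq \Vect$. This coincidence is the algebraic ``shadow'' of the self-duality I seek, and it is no accident that running the would-be duality through the equivalences of \corref{c:inv via Hecke} produces precisely the invariants-vs-coinvariants comparison in statement (b) of the theorem.

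The counit $\epsilon \colon M \otimes_\BH M \to \Vect$ I would construct using \corref{c:inv via Hecke}(b): it identifies the source with $\Vect_{\fL(G)}$ (for the trivial $\fL(G)$-action on $\Vect$), which carries a tautological universal augmentation to $\Vect$. Dually, the unit $\eta \colon \Vect \to M \otimes M^\vee$ would come from combining \corref{c:inv via Hecke}(a), which gives $\on{Funct}_{\BH\mmod}(M, M) \simeq \Vect^{\fL(G)}$, with the canonical map $\Vect \to \Vect^{\fL(G)}$; to upgrade this to a map into the (external) tensor product $M \otimes M^\vee$ rather than an internal Hom, I would use the self-duality of $\Shv(\Gr_G)$ as a placid ind-scheme (see \secref{sss:duality}) together with the identification $\Shv(\Gr_G) \simeq L(M)$'s ambient structure, thereby transporting the internal-Hom description into an external-tensor description.

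The heart of the argument will be verifying the triangle identities. Here the reductivity of $G$ enters essentially through the splitting $\fL^+(G) = G \ltimes K_1$ with $G$ reductive of finite type and $K_1$ pro-unipotent, so that $M \simeq (\Vect^{K_1})^G$. The pro-unipotent piece $K_1$ contributes trivially to the duality data (invariants and coinvariants for a pro-unipotent group acting on $\Vect$ through the augmentation agree in an essentially tautological way), while the reductive piece is handled by \thmref{t:coinvariants}, which gives self-duality of $\Vect^G$ as a module over the finite-dimensional Hecke category of $G$. The main obstacle, which I expect to be the substantive step, is verifying compatibility with the \emph{full} monoidal action of $\BH$ rather than just its Satake (level-zero) subcategory: this compatibility is what distinguishes self-duality over $\BH$ from the analogous (and easier) self-duality of $\Vect^G$ over $\Rep(G)$. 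I anticipate that this is enforced by the ind-properness of $\Gr_G = \fL(G)/\fL^+(G)$, which ensures $*$- and $!$-pushforwards along the structure map $\Gr_G \to \on{pt}$ agree up to a cohomological shift, thereby propagating the self-duality compatibly through all convolution operations in $\BH$.
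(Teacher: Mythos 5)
Your approach is genuinely different from the paper's: you attempt an explicit construction of the duality data (unit and counit) from the identifications of \propref{p:recover Vect} and \corref{c:inv via Hecke}, whereas the paper establishes dualizability \emph{indirectly} via rigidity. Concretely, the paper introduces the renormalized Hecke category $\BH^{\on{ren}}$ (the ind-completion of the ``locally finite'' objects, i.e.\ those mapping to compacts in $\Shv(\fL(G))$), observes that ind-properness of $\fL(G)/\fL^+(G)$ makes convolution preserve this subcategory so $\BH^{\on{ren}}$ becomes a \emph{rigid} monoidal category with $\BH$ as a monoidal colocalization, and then invokes \cite[Chapter 1, Proposition 9.5.3]{GR1} to conclude that $\Vect^{\fL^+(G)}$ -- being dualizable as a plain DG category since it is compactly generated -- is automatically dualizable and self-dual over $\BH^{\on{ren}}$, hence over $\BH$. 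No unit, counit, or triangle identities are ever written down.

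Your proposal, as written, has a genuine gap. The counit $\epsilon\colon M^\vee\otimes_\BH M\to \Vect$ is defensible (the identity $\Vect\to\Vect$ is $\fL(G)$-invariant and hence factors through coinvariants, and \corref{c:inv via Hecke}(b) converts this into the desired form). But your construction of the unit $\eta$ never actually closes: you observe that $\Vect\to\Vect^{\fL(G)}\simeq\on{Funct}_{\BH\mmod}(M,M)$ gives the identity as a point of the internal Hom, and you acknowledge you must ``upgrade this to a map into the external tensor product'' -- but the natural map $M\otimes M^\vee\to\on{Funct}_{\BH\mmod}(M,M)$ is an equivalence \emph{precisely when} $M$ is dualizable, which is what you are trying to prove. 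The sentence invoking ``the self-duality of $\Shv(\Gr_G)$ together with the identification $\Shv(\Gr_G)\simeq L(M)$'s ambient structure'' does not resolve this: by \propref{p:recover Vect}, $L(M)\simeq\Vect$, not $\Shv(\Gr_G)$, so it is unclear what structure is being transported or how. Likewise, you flag the triangle identities as ``the substantive step'' and propose to handle them by splitting $\fL^+(G)$ into its reductive and pro-unipotent pieces plus some incarnation of ind-properness, but this is a plan rather than an argument; and your stated consequence of ind-properness ($*$- and $!$-pushforwards agree up to a shift along $\Gr_G\to\on{pt}$) is not the feature that does the work here. What ind-properness actually buys in the paper is closure of $\BH^{\on{loc.fin}}$ under convolution, and hence rigidity of $\BH^{\on{ren}}$, which is what makes the duality data come for free rather than needing to be checked by hand. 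If you want to pursue the explicit route, the missing step is a direct construction of the unit map as an $\BH$-bimodule functor $\BH\to M\otimes M^\vee$ that does not presuppose dualizability -- this would require working with an explicit model of $M$ (e.g.\ via \thmref{t:reconstr}) and is substantially harder than the paper's two-line reduction to rigidity.
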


\ssec{Proof of \propref{p:inv dualizable}}

\sssec{}

Before we begin the proof, let us note that the contents of \secref{ss:proof in loop} up until
\propref{p:inv dualizable} were \emph{not} specific to the case of the loop group $\fL(G)$
for $G$ reductive. In fact, Propositions \ref{p:right adj to Hecke}, \ref{p:recover Vect} and 
\corref{c:inv via Hecke} remain valid for any pair $\CG^+\subset \CG$, where:

\begin{itemize}

\item $\CG$ is a placid group ind-scheme;

\item $\CG^+\subset \CG$ is a closed placid group-subscheme;

\item $\CG^+$ admits a homomorphism to a group-scheme of finite type with a pro-unipotent kernel.

\end{itemize}

By contrast, \propref{p:inv dualizable} (and with it, \thmref{t:coinvariants and limits, loop}) are specific to the
situation when $\CG=\fL(G)$ with $G$ reductive and $\CG^+=\fL^+(G)$. The key feature of this situation is that
the ind-scheme $\CG/\CG^+$ (which is of ind-finite type by assumption) is \emph{ind-proper}. 

\medskip

We will prove \propref{p:inv dualizable} in this slightly more general context. 

\sssec{}

Let $\BH^{\on{loc.fin}}\subset \BH$ be the full (but not cocomplete) category consisting of objects which get
sent to compact objects in $\Shv(\CG)$ under the forgetful functor
$$\BH\simeq \Shv(\CG)^{\CG^+\times \CG^+}\to \Shv(\CG).$$

Let
$$\BH^{\on{ren}}$$ be the ind-completion of $\BH^{\on{loc.fin}}$. Ind-extending the tautological embedding,
we obtain a functor
$$\Psi:\BH^{\on{ren}}\to \BH.$$

This functor admits a left adjoint, denoted $\Xi$, given by ind-extending the inclusion
$$\BH^c\subset \BH^{\on{loc.fin}}.$$

It is clear that the unit of the adjunction
$$\on{Id}\to \Psi\circ \Xi$$
is an isomorphism. Hence, $\Xi$ is fully faithful, and $\Psi$ is a localization.

\sssec{}

The assumption that $\CG/\CG^+$ is proper implies that the monoidal operation on $\BH$ preserves both
$\BH^{\on{loc.fin}}$. Ind-extending, we obtain that $\BH^{\on{ren}}$ acquires a monoidal structure, for which
the functor $\Psi$ is monoidal.

\medskip

Thus, we obtain that $\BH$ is a monoidal localization of $\BH^{\on{ren}}$. In particular, for a right (resp., left)
$\BH$-module category $\bC^r$ (resp., $\bC^l$), the functor
$$\bC^r\underset{\BH^{\on{ren}}}\otimes \bC^l\to \bC^r\underset{\BH}\otimes \bC^l$$
is an equivalence. 

\medskip

Hence, in order to prove \propref{p:inv dualizable}, it suffices to show that the dual of $\Vect^{\CG^+}$ 
considered as a left $\BH^{\on{ren}}$-module is $\Vect^{\CG^+}$ identifies canonically with $\Vect^{\CG^+}$ 
considered as a right $\BH^{\on{ren}}$-module.

\sssec{}

The key observation now is that the ind-properness assumption on $\CG/\CG^+$ implies that $\BH^{\on{ren}}$
is rigid. Indeed, this follows from \cite[Chapter 1, Lemma 9.1.5]{GR1}. Explicitly, the monoidal dual of an object
$$\CF\in \BH^{\on{loc.fin}}\simeq \Shv(\CG)^{\CG^+\times \CG^+}\simeq \Shv(\CG/\CG^+)^{\CG^+}$$
is 
$$\tau(\BD(\CF)),$$
where $\BD$ is Verdier duality on $\CG/\CG^+$ and $\tau$ is the involution on $\Shv(\CG)^{\CG^+\times \CG^+}$,
given by the inversion on $\CG$ (note that it swaps the two factors in $\CG^+\times \CG^+$). 

\sssec{}

Hence, the required self-duality of $\Vect^{\CG^+}$ follows from \cite[Chapter 1, Proposition 9.5.3]{GR1}.

\qed

\ssec{Proof of \propref{p:recover Vect}} \label{ss:recover Vect}

We will prove point (a). Point (b) is obtained by considering maps from both sides to $\Vect$. 

\medskip

In order to unburden the notation, we will write $\CG$ for $\fL(G)$ and $\CG^+$ for $\fL^+(G)$. 

\sssec{}

We need to show that the tautological functor
\begin{equation} \label{e:recover vect}
\Phi:\Shv(\CG/\CG^+)\underset{\BH}\otimes \Vect^{\CG^+}\to \Vect
\end{equation}
is an equivalence.

\medskip

First, the commutation of \eqref{e:plus inv} with colimits implies that the functor \eqref{e:recover vect}
induces an equivalence after taking $\CG^+$-invariants. Hence, by \thmref{t:reconstr}, we obtain 
that \eqref{e:recover vect} induces an equivalence on the full subcategories of both sides, on
which the action of $\Shv(\CG^+)$ factors through an action of $\Shv(\CG^+)^0$,
see Remark \ref{r:loc const act}. 

\medskip

In particular, we obtain that the functor \eqref{e:recover vect} admits a fully faithful left adjoint, to be denoted $\Psi$,  
(which is also a right inverse), compatible with the actions of $\CG^+$. A priori, the functor $\Psi$ is \emph{co-lax}
compatible with the action of $\CG$. I.e., for $\CF\in \Shv(\CG)$ we have a map
\begin{equation} \label{e:left lax}
\Psi(\CF\star \sfe)\to \CF\star \Psi(\CF).
\end{equation}

We claim, however, that this co-lax compatibility is strict, i.e., the maps \eqref{e:left lax} are isomorphisms. Let us assume 
that for a moment and finish the proof of \propref{p:recover Vect}. 

\sssec{}

To prove that the functors $\Psi$ and $\Phi$ are mutually inverse equivalences, it suffices to show that $\on{ker}(\Phi)=0$.
However, the embedding
$$\on{ker}(\Phi)\hookrightarrow \Shv(\CG/\CG^+)\underset{\BH}\otimes \Vect^{\CG^+}$$
admits a $\CG$-invariant left inverse given by
$$\on{coFib}(\Psi\circ \Phi\to \on{Id}).$$

Hence, it suffices to show that for any $\bC\in \CG\mmod$, a $\CG$-invariant functor
$$\Shv(\CG/\CG^+)\underset{\BH}\otimes \Vect^{\CG^+}\to \bC,$$
whose composition with $\Psi$ vanshes, is actually zero. 

\medskip

However, for any functor as above, the resulting functor
$$\Vect^{\CG^+}\simeq (\Shv(\CG/\CG^+)\underset{\BH}\otimes \Vect^{\CG^+})^{\CG^+}\to \bC^{\CG^+}$$
is zero. Hence, the original functor vanishes by adjunction. 

\sssec{}

We will now prove that the maps \eqref{e:left lax} are isomorphisms. This will be done 
in the following general framework, whose slogan is ``a functor lax-compatible with an action of a group is actually
strictly compatible".  First, we show that this principle literally applies when we work with $\Shv(-)=\Dmod(-)$. 

\begin{lem} \label{l:adj inv}
Let $F:\bC_1\to \bC_2$ be a functor between categories acted on by $\CG$. Suppose that $F$ is equipped with a structure
of lax/co-lax compatibility with the action of $\CG$. Then this compatibility is actually strict.
\end{lem}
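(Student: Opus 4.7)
My plan is to exploit the group structure on $\CG$ (as opposed to a mere monoid structure) in order to promote lax compatibility to strict compatibility. The slogan is that the two functors $\Shv(\CG)\otimes \bC_i \rightrightarrows \bC_i$ given, respectively, by the action $a_i$ and by ``second projection composed with augmentation'' (i.e., $\CF\otimes c \mapsto \on{C}^\cdot(\CG,\CF)\otimes c$) are related, as objects of $\Shv(\CG)\mmod$ for the diagonal action, by an automorphism of $\Shv(\CG)\otimes \bC_i$ --- namely the ``shearing'' induced by
$$\sigma_i: \CG \times \bC_i \xrightarrow{\sim} \CG \times \bC_i, \qquad (g,c)\mapsto (g, g\cdot c),$$
whose existence requires precisely that elements of $\CG$ be invertible. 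Lax compatibility against a projection, by contrast, is automatically strict, so transporting along $\sigma_i$ forces the compatibility against $a_i$ to be strict as well.

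Concretely, I would first unwind the co-lax compatibility as a natural transformation
$$\alpha: a_2\circ (\id_{\Shv(\CG)}\otimes F) \Rightarrow F\circ a_1,$$
and note that it suffices to show each component of $\alpha$ is an isomorphism. Next, I would establish that $\sigma_i$ induces an equivalence of $\Shv(\CG)$-module categories (acting via the first factor) $\sigma_i^*:\Shv(\CG)\otimes \bC_i \xrightarrow{\sim} \Shv(\CG)\otimes \bC_i$ that intertwines $a_i$ with the augmented projection $p_2^{\on{aug}}:\CF\otimes c \mapsto \on{C}^\cdot(\CG,\CF)\otimes c$. Conjugating $\alpha$ by these equivalences (one on each side) yields a natural transformation $p_2^{\on{aug}}\circ (\id \otimes F)\Rightarrow F\circ p_2^{\on{aug}}$ which is, by inspection, the identity, since $F$ has no interaction with the $\Shv(\CG)$ factor and $\on{C}^\cdot(\CG,-)$ is just a vector space tensored externally. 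Transporting back along $\sigma_i^{-1}$ gives that $\alpha$ was an isomorphism. The lax case follows by the dual argument.

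The main obstacle will be to verify rigorously that the geometric shearing $\sigma_i$ really does upgrade to an equivalence of $\Shv(\CG)$-module categories with the intertwining property claimed, and that this upgrade is natural in $\bC_i$ and in the $\CG$-action. In the setting $\Shv=\Dmod$ this is accessible: the external K\"unneth equivalence $\Dmod(\CG)\otimes \Dmod(-)\simeq \Dmod(\CG\times -)$ is a genuine equivalence on affine test schemes, so $\sigma_i$ (being an automorphism of $\CG\times \bC_i$ over $\CG$) gives what is needed. The group axiom $g\cdot g^{-1}=1$ translates into the coherence identities that allow the transport to be carried out as a strict (not merely lax) statement; keeping track of these coherences in the $\infty$-categorical framework is the principal technical burden.
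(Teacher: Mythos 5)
Your shearing idea is the right intuition --- the paper's map $g\mapsto(g,g^{-1},g)$ is nothing other than the factorization $\sigma^{-1}\circ\sigma=\on{id}$ of the identity through the shear --- but the step where you ``conjugate $\alpha$ by these equivalences, one on each side'' conceals a circularity. To carry out that conjugation you must commute $\on{id}_{\Shv(\CG)}\otimes F$ past the shears, i.e.\ you need $\sigma_2\circ(\on{id}_{\Shv(\CG)}\otimes F)\simeq(\on{id}_{\Shv(\CG)}\otimes F)\circ\sigma_1$. Unwinding $\sigma_i=(\on{id}_\CG\otimes\on{act}_i)\circ(\Delta_*\otimes\on{id}_{\bC_i})$, both sides of this equation take the form $(\on{id}_\CG\otimes G_i)\circ(\Delta_*\otimes\on{id})$ with $G_1=F\circ\on{act}_1$ and $G_2=\on{act}_2\circ(\on{id}\otimes F)$ --- precisely the two sides of the strict compatibility you are trying to establish. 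So you cannot straighten the square, observe that the conjugate of $\alpha$ is the identity, and transport back, without already having the conclusion in hand.

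The paper's argument is a careful implementation of your idea that sidesteps this. Rather than conjugating, it precomposes the target $\on{act}\circ(\on{Id}\otimes\Psi)$ with the identity written as $i_*$ followed by two applications of $\on{act}$ (geometrically, $\sigma^{-1}\circ\sigma$), inserts $\alpha$ at the inner layer, where it lives as a transformation between functors out of $\Shv(\CG^2)\otimes\bC_1$, and then simplifies the resulting composite to $\Psi\circ\on{act}$. That simplification uses only (i) associativity of the action, (ii) the group law $\on{mult}\circ i=\on{unit}$ on the first two coordinates of $i$, and (iii) the ``crucial square'' \eqref{e:crucial square} --- the commutation of $\on{id}_\CG\otimes\Psi$ with $\on{mult}_*\otimes\on{id}$, which is tautological because the two operations live on disjoint tensor factors. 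None of these presupposes compatibility of $\Psi$ (or $F$) with the $\CG$-action, so the circularity is avoided. Note also that the output is a candidate inverse $\beta$ to $\alpha$, not the claim that a conjugate of $\alpha$ is the identity; one must still check --- a verification the paper leaves to the reader --- that $\beta$ is indeed a two-sided inverse.
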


\begin{proof}

We will consider the co-lax case; the lax case is similar. By assumption, we are given a natural transformation
$$
\xy
(0,0)*+{\Shv(\CG)\otimes \bC_1}="A";
(40,0)*+{\bC_1}="B";
(0,-20)*+{\Shv(\CG)\otimes  \bC_2}="C";
(40,-20)*+{\bC_2,}="D";
{\ar@{->}_{\on{Id}\otimes \Psi} "A";"C"};
{\ar@{->}^{\on{act}}"A";"B"};
{\ar@{->}^{\on{act}} "C";"D"};
{\ar@{->}^{\Psi} "B";"D"};
{\ar@{=>}^\alpha "B";"C"};
\endxy
$$
i.e., $$\alpha: \Psi\circ \on{act}\to \on{act}\circ (\on{Id}_{\Shv(\CG)}\otimes \Psi).$$

We will explicitly construct an inverse of this natural transformation.   

\medskip

Let $i:\CG\to \CG\times \CG\times \CG$ be the map
$$g\mapsto (g,g^{-1},g).$$
Using the identification
\begin{equation} \label{e:triple}
\Shv(\CG)\otimes \Shv(\CG)\otimes \Shv(\CG)\simeq \Shv(\CG\times \CG\times \CG),
\end{equation} 
we obtain a functor
\begin{multline}  \label{e:triple comp1}
\Shv(\CG)\otimes \bC_1 \overset{i_*\otimes \on{Id}_{\bC_1}}\longrightarrow \Shv(\CG\times \CG\times \CG)\otimes \bC_1
\overset{\on{id}_\CG\otimes \on{id}_\CG \otimes \on{act}} \longrightarrow \\
\to \Shv(\CG\times \CG)\otimes \bC_1
\overset{\on{id}_\CG\otimes\on{act}} \longrightarrow \Shv(\CG)\otimes \bC_1 \overset{\on{Id}_{\Shv(\CG)}\otimes \Psi}
 \longrightarrow \Shv(\CG)\otimes \bC_2 \overset{\on{act}}\longrightarrow \bC_2.
 \end{multline} 

On the one hand, we claim the composition \eqref{e:triple comp1} identifies with
$$\on{act}\circ (\on{Id}_{\Shv(\CG)}\otimes \Psi).$$

Indeed, we claim that the composition of the first three arrows in  \eqref{e:triple comp1} , i.e., 
\begin{equation} \label{e:simple comp}
\Shv(\CG)\otimes \bC_1 \overset{i_*\otimes \on{Id}_{\bC_1}}\longrightarrow \Shv(\CG\times \CG\times \CG)\otimes \bC_1
\overset{\on{id}_\CG\otimes \on{id}_\CG \otimes \on{act}} \longrightarrow \Shv(\CG\times \CG)\otimes \bC_1
\overset{\on{id}_\CG\otimes \on{act}} \longrightarrow \Shv(\CG)\otimes \bC_1
\end{equation}
is the identity functor. Indeed, we rewrite \eqref{e:simple comp} as
$$
\Shv(\CG)\otimes \bC_1 \overset{i_*\otimes \on{Id}_{\bC_1}}\longrightarrow \Shv(\CG\times \CG\times \CG)\otimes \bC_1
\overset{\on{id}_\CG\otimes \on{mult}\otimes \on{Id}_{\bC_1}}\longrightarrow  \Shv(\CG\times \CG)\otimes \bC_1 
\overset{\on{id}_\CG\otimes \on{act}} \longrightarrow \Shv(\CG)\otimes \bC_1,$$
which is the same as
$$\Shv(\CG)\otimes \bC_1 \overset{\on{id}_\CG\otimes \on{unit}\otimes \on{Id}_{\bC_1}}
\longrightarrow  \Shv(\CG\times \CG)\otimes \bC_1 
\overset{\on{id}_\CG\otimes \on{act}} \longrightarrow \Shv(\CG)\otimes \bC_1,$$
and the latter is indeed the identity functor. 

\medskip

On the other hand, the natural transformation $\alpha$ defines a map from \eqref{e:triple comp1} to
\begin{multline}  \label{e:triple comp2}
\Shv(\CG)\otimes \bC_1 \overset{i_*\otimes \on{Id}_{\bC_1}}\longrightarrow \Shv(\CG\times\CG\times \CG)\otimes \bC_1
\overset{\on{id}_\CG\otimes \on{id}_\CG \otimes \on{act}} \longrightarrow \\
\to \Shv(\CG\times \CG)\otimes \bC_1 \overset{\on{id}_\CG\otimes \on{id}_\CG\otimes \Psi}\longrightarrow 
\Shv(\CG\times \CG)\otimes \bC_2 \overset{\on{id}_\CG\otimes \on{act}} \longrightarrow 
 \Shv(\CG)\otimes \bC_2 \overset{\on{act}}\longrightarrow \bC_2.
\end{multline}  

Now, we claim that the composition \eqref{e:triple comp2} identifies with $\Psi\circ \on{act}$. To prove this, we first rewrite 
\eqref{e:triple comp2} as 
\begin{multline}   \label{e:triple comp3}
\Shv(\CG)\otimes \bC_1 \overset{i_*\otimes \on{Id}_{\bC_1}}\longrightarrow \Shv(\CG\times\CG\times \CG)\otimes \bC_1
\overset{\on{id}_\CG\otimes \on{id}_\CG \otimes \on{act}} \longrightarrow \\
\to \Shv(\CG\times \CG)\otimes \bC_1 \overset{\on{id}_\CG\otimes \on{id}_\CG\otimes \Psi}\longrightarrow 
\Shv(\CG\times \CG)\otimes \bC_2 \overset{\on{mult}\otimes \on{id}_{\bC_2}} \longrightarrow 
 \Shv(\CG)\otimes \bC_2 \overset{\on{act}}\longrightarrow \bC_2.
\end{multline}  

Next we note that we have a commutative square
\begin{equation} \label{e:crucial square}
\CD
\Shv(\CG\times \CG)\otimes \bC_1  @>{\on{id}_\CG\otimes \on{id}_\CG\otimes \Psi}>>  \Shv(\CG\times \CG)\otimes \bC_2  \\
@V{\on{mult}\otimes \on{id}_{\bC_1}}VV  @VV{\on{mult}\otimes \on{id}_{\bC_2}}V  \\
\Shv(\CG)\otimes \bC_1 @>{\on{id}_\CG\otimes \Psi}>>   \Shv(\CG)\otimes \bC_2.
\endCD
\end{equation}

Hence, we can rewrite \eqref{e:triple comp3} as
\begin{multline}   \label{e:triple comp4}
\Shv(\CG)\otimes \bC_1 \overset{i_*\otimes \on{Id}_{\bC_1}}\longrightarrow \Shv(\CG\times\CG\times \CG)\otimes \bC_1
\overset{\on{id}_\CG\otimes \on{id}_\CG \otimes \on{act}} \longrightarrow \\
\to \Shv(\CG\times \CG)\otimes \bC_1 \overset{{\on{mult}\otimes \on{id}_{\bC_1}}}\longrightarrow \Shv(\CG)\otimes \bC_1 
\overset{\on{id}_\CG\otimes \Psi}\longrightarrow \Shv(\CG)\otimes \bC_2 \overset{\on{act}}\longrightarrow \bC_2.
\end{multline} 

Now, the composition of the first three arrows in \eqref{e:triple comp4}, i.e.,
$$\Shv(\CG)\otimes \bC_1 \overset{i_*\otimes \on{Id}_{\bC_1}}\longrightarrow \Shv(\CG\times\CG\times \CG)\otimes \bC_1
\overset{\on{id}_\CG\otimes \on{id}_\CG \otimes \on{act}} \longrightarrow 
\Shv(\CG\times \CG)\otimes \bC_1 \overset{{\on{mult}\otimes \on{id}_{\bC_1}}}\longrightarrow \Shv(\CG)\otimes \bC_1$$
is the functor
$$\Shv(\CG)\otimes \bC_1 \overset{\on{act}}\longrightarrow \bC_1 \overset{\on{unit}\otimes \on{Id}_{\bC_1}} \longrightarrow \Shv(\CG)\otimes \bC_1.$$

Hence, the composition in \eqref{e:triple comp4} is indeed isomorphic to 
$$\Shv(\CG)\otimes \bC_1 \overset{\on{act}}\longrightarrow \bC_1 \overset{\Psi}\otimes \bC_2,$$
as claimed. 

\medskip

By unwinding the construction, one checks that the natural transformation
$$\on{act}\circ (\on{Id}_{\Shv(\CG)}\otimes \Psi)\to \Psi\circ \on{act}$$
constructed above is indeed the inverse of $\alpha$. 

\end{proof}

\sssec{} \label{sss:Psi Y}

We will now adapt this argument in order to deduce the fact that the maps \eqref{e:left lax} are isomorphisms. For a scheme $\CY$ 
consider the action functor 
\begin{equation} \label{e:action Y}
\Phi_\CY:\Shv(\CY \times \CG/\CG^+)\underset{\BH}\otimes \Vect^{\CG^+}\to \Shv(\CY). 
\end{equation}

Let us denote by $\Psi_\CY$ the functor 
\begin{equation} \label{e:Psi Y}
\Shv(\CY) \overset{\on{Id}_{\Shv(\CY)}\otimes \Psi}\longrightarrow 
\Shv(\CY)\otimes \Shv(\CG/\CG^+)\underset{\BH}\otimes \Vect^{\CG^+} \to 
\Shv(\CY\times \CG/\CG^+)\underset{\BH}\otimes \Vect^{\CG^+}.
\end{equation}

We claim that $(\Psi_\CY,\Phi_\CY)$ form an adjoint pair.  Indeed, let us denote by $i$ the external tensor product functor
$$\Shv(\CY)\boxtimes \Shv(\CG/\CG^+)\to \Shv(\CY\times \CG/\CG^+).$$

The functor $i$ preserves compactness; hence it admits a continuous right adjoint, to be denoted $i^R$. We have
a tautological isomorphism
$$\on{Id}_{\Shv(\CY)}\otimes \Phi \simeq \Phi_\CY\otimes (i\underset{\BH}\otimes \on{Id}_{\Vect^{\CG^+}}).$$

From here we obtain a natural transformation
\begin{equation} \label{e:Phi Y}
(\on{Id}_{\Shv(\CY)}\otimes \Phi) \circ (i^R\underset{\BH}\otimes \on{Id}_{\Vect^{\CG^+}})\to \Phi_\CY.
\end{equation}

We have to show that \eqref{e:Phi Y} is an isomorphism.

\sssec{}

To prove this, it suffices to show that the corresponding natural transformation becomes an isomorphism after precomposition with 
\begin{equation} \label{e:Phi Y prec}
\Shv(\CY\times \CG/\CG^+) \overset{\on{Id}_{\Shv(\CY\times \CG/\CG^+) }\otimes \sfe}\longrightarrow 
\Shv(\CY\times \CG/\CG^+) \otimes \Vect^{\CG^+}\to \Shv(\CY\times \CG/\CG^+) \underset{\BH}\otimes \Vect^{\CG^+}.
\end{equation}

The precomposition of the LHS of \eqref{e:Phi Y} with \eqref{e:Phi Y prec} is the functor 
\begin{equation} \label{e:Phi Y prec 1}
\Shv(\CY\times \CG/\CG^+) \overset{(p_\CY)_*}\longrightarrow \Shv(\CY),
\end{equation}
where $p_\CY$ denotes the projection $\CY\times \CG/\CG^+\to \CY$.

\medskip

The precomposition of the LHS of \eqref{e:Phi Y} with \eqref{e:Phi Y prec} is the functor
\begin{equation} \label{e:Phi Y prec 2}
\Shv(\CY\times \CG/\CG^+)  \overset{i^R}\to \Shv(\CY)\boxtimes \Shv(\CG/\CG^+) \overset{\on{Id}\otimes \on{C}^\cdot(\CG/\CG^+,-)}
\longrightarrow \Shv(\CY).
\end{equation}

\sssec{}

We claim that the latter isomorphism takes place for $\CG/\CG^+$ replaced by any 
ind-scheme of ind-finite type
$$Z=\underset{\alpha}{\on{colim}}\, Z_\alpha.$$

\medskip

Indeed, for $\CF'\in \Shv(\CY\times Z)$ and $\CF''\in \Shv(\CY)^c$, we have:
$$\CHom_{\Shv(\CY)}(\CF'',(p_\CY)_*(\CF'))\simeq 
\underset{\alpha}{\on{colim}}\, \CHom_{\Shv(\CY\times Z)}(\CF''\boxtimes \sfe_{Z_\alpha},\CF'),$$
and
\begin{multline*}
\CHom_{\Shv(\CY)}\left(\CF'',(\on{Id}\otimes \on{C}^\cdot(\CG/\CG^+,-))\circ i^R(\CF')\right)\simeq
\underset{\alpha}{\on{colim}}\,  \CHom_{\Shv(\CY)\boxtimes \Shv(Z)}(\CF''\otimes \sfe_{Z_\alpha},i^R(\CF'))\simeq \\
\simeq \underset{\alpha}{\on{colim}}\, \CHom_{\Shv(\CY\times Z)}(\CF''\boxtimes \sfe_{Z_\alpha},\CF'),
\end{multline*}
establishing the desired isomorphism. 

\sssec{}

We finally return to the proof of \propref{p:recover Vect}.  Consider the simplicial categories
$$\bC^\bullet_1:=\Shv(\CG^\bullet) \text{ and }  \bC^\bullet_2:=\Shv(\CG^\bullet\times \CG/\CG^+)\underset{\BH}\otimes \Vect^{\CG^+}.$$
We have a naturally defined simplicial functor $\Phi^\bullet:\bC_2^\bullet\to \bC_1^\bullet$. By \secref{sss:Psi Y}, the functor $\Phi^\bullet$
admits a term-wise left adjoint, to be denoted $\Psi^\bullet$.

\medskip

The argument proving \lemref{l:adj inv} shows that the natural transformation
$$
\xy
(0,0)*+{\Shv(\CG)}="A";
(60,0)*+{\Vect}="B";
(0,-20)*+{\Shv(\CG\times \CG/\CG^+)\underset{\BH}\otimes \Vect^{\CG^+}}="C";
(60,-20)*+{\Shv(\CG/\CG^+)\underset{\BH}\otimes \Vect^{\CG^+}}="D";
{\ar@{->}_{\Psi^1} "A";"C"};
{\ar@{->}^{\on{act}}"A";"B"};
{\ar@{->}^{\on{act}} "C";"D"};
{\ar@{->}^{\Psi^0} "B";"D"};
{\ar@{=>}^\alpha "B";"C"};
\endxy
$$
is an isomorphism. Indeed, the only non-formal part of the argument was the commutation of the square \eqref{e:crucial square},
which commutes in our case due to the shape of $\Psi^\bullet$ established in \secref{sss:Psi Y}. 

\medskip

In addition, due to the shape of $\Psi^\bullet$, we obtain a commutative diagram of functors
\begin{equation} \label{e:BC} 
\CD
\Psi\circ \on{act}  @>>> \on{act}\circ (\on{Id}_{\Shv(\CG)}\otimes \Psi) \\
@V{\on{id}}VV  @VV{\sim}V  \\
\Psi\circ \on{act}  @>>>  \on{act}\circ \Psi^1
\endCD
\end{equation}

Knowing that the bottom horizontal arrow in \eqref{e:BC} is an isomorphism, we conclude that the natural
transformation
$$\Psi\circ \on{act}  \to \on{act}\circ (\on{Id}_{\Shv(\CG)}\otimes \Psi)$$
is an isomorphism, as desired.

\end{document}